\newtheorem{theorem}{Theorem}
\newtheorem{proposition}[theorem]{Proposition}
\newtheorem{corollary}[theorem]{Corollary}
\newtheorem{lemma}[theorem]{Lemma}
\newtheorem{fact}[theorem]{Fact}
\theoremstyle{definition}
\newtheorem{definition}[theorem]{Definition}
\newtheorem{conjecture}[theorem]{Conjecture}
\newtheorem{remark}[theorem]{Remark}
\numberwithin{theorem}{section}
\DeclareMathOperator{\Aut}{Aut}
\DeclareMathOperator{\Sym}{Sym}
\DeclareMathOperator{\dist}{dist}
\DeclareMathOperator{\mindeg}{mindeg}
\DeclareMathOperator{\motion}{motion}
\DeclareMathOperator{\spec}{spec}
\newcommand{\footremember}[2]{%
    \footnote{#2}
    \newcounter{#1}
    \setcounter{#1}{\value{footnote}}%
}
\title{A characterization of Johnson and Hamming graphs and proof of Babai's conjecture}
\author{Bohdan Kivva\footremember{alley}{University of Chicago, e-mail: bkivva@uchicago.edu}}
\begin{document}
\maketitle

\vspace{-0.8cm}
\begin{abstract}
One of the central results in the representation theory of distance-regular graphs classifies distance-regular graphs with $\mu\geq 2$ and second largest eigenvalue $\theta_1= b_1-1$. In this paper we give a classification under the (weaker) approximate eigenvalue constraint $\theta_1\geq (1-\varepsilon)b_1$ for the class of geometric distance-regular graphs.  As an application, we confirm Babai's conjecture on the minimal degree of the automorphism group of distance-regular graphs.
  
\end{abstract}


\section{Introduction}
 
In this paper we characterize Johnson and Hamming graphs as geometric distance-regular graphs satisfying certain relaxed spectral constraints. Classical characterization results of Hamming graphs $H(d, q)$ assume equality constraints on certain parameters such as the assumption $\theta_1 = b_1-1$ on the second largest eigenvalue (Theorem~\ref{thm:intro-classif} below) or the assumption $n = (\lambda+2)^d$ on the number of vertices (Enomoto \cite{Enomoto} and Egawa \cite{Egawa}). The principal novelty of our result is that we make no such tight assumptions. We apply our characterization to confirm Babai's conjecture on the minimal degree of the automorphism group of distance-regular graphs.

\subsection{Characterization of Johnson and Hamming graphs}

 A result of Terwilliger~\cite{ter-local} (see {\cite[Theorem 4.4.3]{BCN}}) implies that the icosahedron is the only distance-regular graph, for which the second largest eigenvalue $\theta_1$ (of the adjacency matrix) satisfies $\theta_1>b_1-1$ and a pair of vertices at distance 2 has $\mu\geq 2$ common neighbors. Another classical result gives the classification of distance-regular graphs with $\mu\geq 2$ and $\theta_1 = b_1-1$.      

\begin{theorem}[{\cite[Theorem 4.4.11]{BCN}}]\label{thm:intro-classif}
Let $X$ be a distance-regular graph of diameter $d\geq 3$ with second largest eigenvalue $\theta_1 = b_1-1$. Assume $\mu\geq 2$. Then one of the following holds:
\begin{enumerate}
\item $\mu =2$ and $X$ is a Hamming graph, a Doob graph, or a locally Petersen graph (and all such graphs are known).
\item $\mu = 4$ and $X$ is a Johnson graph.
\item $\mu = 6$ and $X$ is a half cube.
\item $\mu =10$ and $X$ is a Gosset graph $E_7(1)$.
\end{enumerate}
\end{theorem}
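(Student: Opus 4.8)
\medskip
\noindent The plan is to combine the relation between the second eigenvalue $\theta_1$ and the local graphs $\Delta(x)$ (the subgraph induced on the neighbourhood of $x$) with the classification of graphs of smallest eigenvalue at least $-2$. The starting point is Terwilliger's local eigenvalue inequality, which underlies the result quoted above: in any distance-regular graph every eigenvalue of $\Delta(x)$ other than its valency $a_1$ is at least $-1-b_1/(\theta_1+1)$. Under the hypothesis $\theta_1=b_1-1$ this equals $-2$, so each $\Delta(x)$ is an $a_1$-regular graph with smallest eigenvalue $\ge-2$. (For $a_1\le 2$ this conclusion is empty, since $0$-, $1$- and $2$-regular graphs automatically have smallest eigenvalue $\ge-2$; the low-valency subcases have to be handled directly, by the usual parameter arguments, and yield the Hamming graphs $H(d,2)$, $H(d,3)$, $H(d,4)$ and the Doob graphs. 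Assume $a_1\ge3$ in what follows, so $\Delta(x)$ has no isolated vertex.)

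Next I would apply the Cameron--Goethals--Seidel--Shult theorem: a connected graph with smallest eigenvalue $\ge-2$ is a generalized line graph or one of finitely many exceptional graphs representable in the root system $E_8$. Applied componentwise to $\Delta(x)$ --- which, being the local graph of a distance-regular graph, is $a_1$-regular and of a single isomorphism type --- this leaves the following possibilities for $\Delta(x)$: a disjoint union of cliques; a line graph $L(H)$ with $H$ connected, regular or semiregular bipartite --- notably a grid $K_a\square K_b=L(K_{a,b})$ or a triangular graph $T(m)=L(K_m)$; a disjoint union of cocktail-party graphs $K_{t\times2}$; or a graph whose components are drawn from the short list of regular exceptional graphs, among which are the Petersen, Shrikhande and Schl\"{a}fli graphs. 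In each case the intersection numbers $a_1,b_1,c_2$ are tightly constrained by the combinatorics of $\Delta(x)$.

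The bulk of the work is to run through these possibilities and, using $\mu=c_2\ge2$ and $d\ge3$ (and the fact that $\theta_1>b_1-1$ --- the icosahedron --- is excluded by hypothesis), to recover $X$ from the ``locally $\Delta(x)$'' condition. If $\Delta(x)$ is a disjoint union of cliques, the maximal cliques through $x$ give $X$ a line-structure; one shows $\mu=2$ and, via the classification of distance-regular graphs with Hamming-graph parameters (Egawa, Enomoto) or the theory of regular near $2d$-gons, that $X$ is a Hamming graph. If $\Delta(x)$ is a grid $K_a\square K_b$, then $X$ is locally a grid and the locally-grid characterisation forces $X$ to be a Johnson graph, whence $\mu=4$. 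If $\Delta(x)$ is a triangular graph $T(m)$, then $X$ is locally $T(m)$ and $X$ is a half cube, whence $\mu=6$. If $\Delta(x)$ is any other line graph, a disjoint union of cocktail-party graphs, or the Shrikhande graph or another of the non-Petersen, non-Schl\"{a}fli exceptional graphs, then a direct analysis using $d\ge3$ and $\mu\ge2$ shows that no distance-regular graph occurs. Finally, if $\Delta(x)$ is the Petersen graph one obtains the (known, finitely many) locally-Petersen distance-regular graphs, all with $\mu=2$; and if $\Delta(x)$ is the Schl\"{a}fli graph one obtains the Gosset graph $E_7(1)$, with $\mu=10$. Assembling these outcomes gives exactly the four cases of the statement.

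The main obstacle is not any single step but the breadth of the third one: the line-graph cases rest on the full strength of the ``locally grid'' and ``locally triangular graph'' characterisations of Johnson and half-cube graphs, the clique case on the classification of distance-regular graphs with Hamming parameters, and the exceptional cases on the separate classifications of locally-Petersen and locally-Schl\"{a}fli distance-regular graphs; the delicate points are verifying that each admissible local type is realised only by the graph claimed and that the remaining local types are incompatible with $d\ge3$ together with $\mu\ge2$. The only genuinely spectral ingredient is Terwilliger's local eigenvalue inequality, which comes from analysing the action of the Terwilliger algebra of a vertex on the $\theta_1$-eigenspace (equivalently, from interlacing on an appropriate quotient); everything afterward is combinatorial.
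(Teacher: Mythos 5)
This theorem is quoted from \cite[Theorem 4.4.11]{BCN}; the paper does not reprove it, but Section 1.3 describes the proof it relies on, and your proposal matches that description only in its first half. You correctly identify the first ingredient: Terwilliger's bound $-1-b_1/(\theta_1+1)$ on the smallest local eigenvalue, which under $\theta_1=b_1-1$ makes every local graph regular with smallest eigenvalue $\geq -2$, so that the Cameron--Goethals--Seidel--Shult classification applies. But the paper's second ingredient is entirely different from yours: one splits on whether $X$ contains an induced quadrangle. If not, CGSS forces each local graph to be a clique, a pentagon, or the Petersen graph, and these cases are handled directly; if so, the \emph{equality} $\theta_1=b_1-1$ is used to construct a root representation (an integral affine $(2,4)$-representation) of $X$, and the classification of amply regular graphs with $\mu\geq 2$ admitting a root representation (\cite[Theorems 3.15.1--3.15.4]{BCN}) finishes the proof. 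You replace this global, representation-theoretic step by a case analysis on the CGSS type of the local graph combined with ``locally $\Delta$'' characterization theorems. That is a genuinely different route, and it is not merely a stylistic difference: the root representation is what lets BCN dispose of all the local types uniformly.

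As written, your route has gaps exactly where the root representation does the work. (i) The subcase $a_1\leq 2$ is dismissed as ``the usual parameter arguments,'' but nothing you say forces such a graph to be $H(d,s)$ with $s\leq 4$ or a Doob graph; triangle-free distance-regular graphs with $\mu\geq 2$ and $\theta_1=b_1-1$ are not pinned down by local data alone. (ii) The elimination of the remaining local types --- line graphs other than $L(K_{a,b})$ and $L(K_m)$, disjoint unions of cocktail-party graphs, and the exceptional regular graphs other than Petersen and Schl\"afli --- is asserted via ``a direct analysis,'' yet this is the hardest part of the theorem and is precisely what the root-representation argument handles; no local-to-global theorem is available for most of these types. (iii) Even in the cases you keep, you need $\mu\leq 4$ before invoking the locally-grid characterization (Theorem~\ref{thm:Johnson-classif}), and the values $\mu=4,6,10$ must be extracted from $\theta_1=b_1-1$ via the induced-$K_{s,t}$ eigenvalue inequality (Theorem~\ref{thm:induced-bipartite-ineq}); your outline does not indicate how these are obtained. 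So the proposal is a plausible skeleton of an alternative argument, but to make it a proof you would either have to supply these missing classification steps or revert to the quadrangle/root-representation dichotomy that the cited proof actually uses.
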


In this paper we consider the case $\theta_1\geq (1-\varepsilon)b_1$ for sufficiently small $\varepsilon>0$. The relaxation of the assumption on the second largest eigenvalue comes at the cost of requiring additional structural constraints.
Our main structural assumption is that $X$ is a geometric distance-regular graph, meaning that there exists a collection of Delsarte cliques (see Sec.~\ref{sec:geometric}) $\mathcal{C}$ such that every edge of $X$ belongs to a unique clique in $\mathcal{C}$. Additional technical structural assumptions depend on whether the neighborhood graphs of $X$ are connected. We note that for a geometric distance-regular graph $X$ either the neighborhood graph $X(v)$ is connected for every vertex $v$, or $X(v)$ is disconnected for every vertex $v$ (see Lemma~\ref{lem:connected-disconn}). We give the following two characterizations.

\begin{theorem}\label{thm:pseudo-main-intr}
 Let $X$ be a geometric distance-regular graph of diameter $d\geq 2$ with smallest eigenvalue $-m$. Suppose that $\mu\geq 2$ and $\theta_1+1> (1-\varepsilon^*)b_1$ for an absolute constant $\displaystyle{0<\varepsilon^*\approx 0.0065}$. Moreover, assume that the vertex degree satisfies $k\geq \max(m^3, 29)$ and the neighborhood graph $X(v)$ is connected for some vertex $v$ of $X$. 
 
 Then $X$ is a Johnson graph $J(s, d)$ for $s=(k/d)+d\geq 2d+1$.
\end{theorem}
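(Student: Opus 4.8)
The plan is to pass from the spectral hypothesis to a rigid description of the local graphs $X(v)$, to identify these as rook's graphs, and then to recognize $X$ globally as a Johnson graph.

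\emph{Step 1: the local graphs have smallest eigenvalue just above $-2$.} First I would apply Terwilliger's local eigenvalue bound (\cite{ter-local}; \cite[Theorem~4.4.3]{BCN}): for any vertex $v$, every eigenvalue of $X(v)$ other than its valency is at least $-1-b_1/(\theta_1+1)$. Combining this with $\theta_1+1>(1-\varepsilon^*)b_1$ gives
\[
\lambda_{\min}\!\bigl(X(v)\bigr)\ \ge\ -1-\frac{b_1}{\theta_1+1}\ >\ -1-\frac{1}{1-\varepsilon^*}\ =:\ -2-\varepsilon',
\]
with $\varepsilon'>0$ small and explicit. For a Johnson graph these inequalities are equalities with $\lambda_{\min}=-2$, so the bound is essentially tight, and $\mu\ge 2$ keeps us out of the degenerate (Terwilliger) regime. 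By Lemma~\ref{lem:connected-disconn} the hypothesis makes $X(v)$ connected for \emph{every} $v$.

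\emph{Step 2 (the crux): $X(v)\cong K_m\,\square\,K_{s'}$.} Geometricity gives that every Delsarte clique has size $1+k/m$, that exactly $m$ of them pass through $v$, and that their traces on $X(v)$ partition the $k$ vertices of $X(v)$ into $m$ cliques of size $s':=k/m\ge m^2$ (using $k\ge m^3$); moreover $X(v)$ is $\lambda$-regular (where $\lambda=a_1$) and, by Step~1, connected with smallest eigenvalue $>-2-\varepsilon'$. Now I would invoke a perturbation-stable form of the Cameron--Goethals--Seidel--Shult classification of graphs whose smallest eigenvalue is close to $-2$: being regular of degree that is large once $k\ge 29$, $X(v)$ is a line graph, a generalized line graph, or one of finitely many exceptional graphs of bounded order. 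The exceptional graphs are excluded by $k\ge 29$ and by the clique partition; a generalized line graph with a petal has a vertex lying on only one clique of size exceeding $3$, incompatible with the partition into $m\ge 2$ cliques of size $s'\ge m^2$; so $X(v)=L(G)$. The $m$ long cliques of the partition must then be stars of $G$ at $m$ pairwise non-adjacent vertices, each of degree $s'$; since they partition $E(G)$, the graph $G$ is bipartite with one part $U$ of size $m$, it is biregular, say $(s',t)$-biregular with $t\le m$, and $|E(G)|=ms'$. Finally $s'\ge m^2$ together with the regularity of $X(v)$ forces $t=m$: otherwise the $m$-subsets $N(u_i)\subseteq W$ are not all equal, so between some two long cliques of $L(G)$ the induced bipartite graph is a non-perfect matching, and from there one derives a contradiction with the distance-regularity of $X$ (this is where the explicit value $\varepsilon^*\approx 0.0065$ is used, to keep the argument rigid). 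Hence $G=K_{m,s'}$ and $X(v)=L(K_{m,s'})=K_m\,\square\,K_{s'}$, the $m\times s'$ rook's graph; in particular all local graphs are strongly regular with these parameters and $X$ has valency $\lambda=m+s'-2$.

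\emph{Step 3: recognizing $X$ as $J(s,d)$.} Now $X$ is a connected distance-regular graph of diameter $d$ with $\mu\ge 2$ that is locally the rook's graph $K_m\,\square\,K_{s'}$ with $s'\ge m^2$. Tracking how the Delsarte cliques of $X$ meet along geodesics (the standard amalgam / local-recognition argument for Johnson graphs) reconstructs $X$ as the graph on the $m$-subsets of an $(m+s')$-set and forces the diameter to equal $m$; hence $m=d$ and $X$ has the intersection array of $J(s,d)$ with $s=s'+d=(k/d)+d$. Invoking the classical characterization --- Theorem~\ref{thm:intro-classif}, equivalently the characterization of Johnson graphs by their intersection array --- we conclude $X\cong J(s,d)$: Hamming and Doob graphs are excluded because their local graphs are disconnected, the locally Petersen graphs and the Gosset graph $E_7(1)$ by $k\le 27<29$, and the halved cubes already in Step~2 (their local graphs are triangular graphs, not rook's graphs). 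Finally $s=(k/d)+d\ge d^2+d\ge 2d+1$ for $d\ge 2$, since $k\ge m^3=d^3$, which gives the asserted inequality $s\ge 2d+1$. The main obstacle throughout is Step~2: converting the soft spectral inequality into the exact identification of the local graph requires both a version of the $(-2)$-eigenvalue theory that survives a small perturbation below $-2$ and tight control of the Delsarte-clique geometry of $X$, and it is precisely there that the explicit constant $\varepsilon^*$ enters.
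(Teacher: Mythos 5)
Your Steps 1 and 2 essentially reproduce the paper's route to the local structure: Terwilliger's bound puts the smallest eigenvalue of $X(v)$ above $-2-\varepsilon'$, the Hoffman/Bussemaker--Neumaier fact that no graph has smallest eigenvalue strictly between $\vartheta_1$ and $-2$ (this, not a ``perturbation-stable CGSS,'' is how the paper survives the soft inequality; it is exactly why $\varepsilon^*=\frac{-2-\vartheta_1}{-1-\vartheta_1}$) pins that eigenvalue at $\geq -2$, and then the Cameron--Goethals--Seidel--Shult classification together with the partition of $N(v)$ into $m$ Delsarte-clique traces of size $k/m$ yields $X(v)\cong L(K_{m,k/m})$ (Proposition~\ref{prop:local-structure} and Lemma~\ref{lem:reduction-local-bipartite}). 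Your mechanism for forcing the base bipartite graph to be complete is vaguer than the paper's (which uses the regularity of $X(v)$, the maximal clique size, and $\psi_1=2$ from Lemma~\ref{lem:local-line-graph}), and the explicit value of $\varepsilon^*$ plays no role there.

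The genuine gap is in Step 3. The ``standard amalgam / local-recognition argument'' you appeal to is precisely Theorem~\ref{thm:Johnson-classif} (\cite[Theorem~9.1.3]{BCN}), and that theorem has a second hypothesis you never verify: every pair of vertices at distance $2$ has at most $4$ common neighbors. Being locally a rook's graph does \emph{not} by itself reconstruct $X$ as a Johnson graph; a priori $\mu=\tau_2\psi_1=2\tau_2$ could be as large as $2m$, and without the bound $\mu\leq 4$ the recognition fails (nor can you fall back on Theorem~\ref{thm:intro-classif}, which needs the exact equality $\theta_1=b_1-1$, or on a determination by intersection array, which you have not derived and which is false for $J(8,2)$ in any case). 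The paper closes this gap with a second, independent use of the spectral hypothesis: Lemma~\ref{lem:local-line-graph} shows that a graph that is locally the line graph of a triangle-free graph has $\psi_1=2$ and contains an induced $K_{\tau_2,2}$, and the induced-complete-bipartite inequality $\frac{2st}{s+t}\leq \frac{b_1}{\theta_1+1}+1$ (Theorem~\ref{thm:induced-bipartite-ineq}) combined with $\frac{b_1}{\theta_1+1}<-1-\vartheta_1<\frac{7}{5}$ rules out an induced $K_{3,2}$, forcing $\tau_2\leq 2$ and hence $\mu\leq 4$. This is the second place where the precise value of $\varepsilon^*$ is needed. Finally, Theorem~\ref{thm:Johnson-classif} leaves open the quotient of $J(2d,d)$, which you must also exclude; the paper does so using $k\geq m^3$.
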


\begin{remark} We give the exact definition of $\varepsilon^*$ in Proposition \ref{prop:local-structure} (see also Def.~\ref{def:eig_bounds} and Theorem~\ref{thm:theta1}). We note that $\varepsilon^*$ can be set to be as large as $2/7$, if we additionally assume $k$ to be sufficiently large (see Remark~\ref{rem:varepsilon27}).
\end{remark}
   
\begin{theorem}\label{thm:main-Hamming-intr} Let $X$ be a geometric distance-regular graph of diameter $d\geq 2$ with smallest eigenvalue $-m$. Suppose that $\mu\geq 2$ and  $\theta_{1}\geq (1-\varepsilon)b_1$, where $0<\varepsilon< 1/(6m^4d)$. Moreover, assume  $c_t\leq \varepsilon k$ and $b_t\leq \varepsilon k$ for some $t\leq d$, and the neighborhood graph $X(v)$ is disconnected for some vertex $v$ of $X$.
 
 Then $X$ is a Hamming graph $H(d, s)$, for $s = 1+k/d$.
\end{theorem}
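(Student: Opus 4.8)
The plan is to first rigidify the local structure of $X$, then to promote it to the global Cartesian‑product coordinatization of a Hamming graph, using the magnitude hypotheses on $c_t,b_t$ to sharpen the approximate spectral bound to the exact intersection array. Since $X$ is geometric with smallest eigenvalue $-m$, every vertex lies in exactly $m$ Delsarte cliques, each of size $s:=1+k/m$, and these partition $X(v)$ into $m$ ``lines'' with $s-1$ vertices each; note $m\ge 2$ as $\operatorname{diam}X\ge 2$. Because $\varepsilon<1/(6m^4d)<\varepsilon^*$ (the inequality $1/(6m^4d)<\varepsilon^*$ holds for all $m\ge 2$, $d\ge 2$) and $X(v)$ is disconnected, Proposition~\ref{prop:local-structure} together with Theorem~\ref{thm:theta1} forces $X(v)$ to be precisely the disjoint union of these $m$ cliques $K_{s-1}$; in particular $\lambda=s-2$, $k=m(s-1)$, and $b_1=(m-1)(s-1)=(1-1/m)k$. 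By Lemma~\ref{lem:connected-disconn} this holds at every vertex. This is the step that consumes the local‑eigenvalue machinery behind $\varepsilon^*$.

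With $X$ geometric and locally a disjoint union of $m$ cliques, the Delsarte cliques form a ``net‑like'' partial linear space: two cliques meet in at most one vertex, two cliques through a common vertex have no further adjacencies, and a pair of vertices at distance $2$ sits in a genuine quadrangle. The next goal is to show that the intersection array of $X$ equals that of $H(d,s)$, i.e.\ $c_i=i$ and $b_i=(d-i)(s-1)$ for all $i$ — so that $m=d$, $\mu=c_2=2$, and $|V(X)|=s^d$ — and that the $m$ parallel classes of Delsarte cliques furnish a coordinatization of $V(X)$ by $d$‑tuples over an alphabet of size $s$. Here the hypothesis $c_t,b_t\le\varepsilon k$ enters: since $t\ge 2$ one has $c_t\ge\mu\ge 2$, so $c_t\le\varepsilon k=\varepsilon m(s-1)$ forces $s$ to be large (more than, say, $12m^3d$, which kills error terms), while $a_t=k-b_t-c_t\ge(1-2\varepsilon)k$, so from any base vertex almost every neighbour of a vertex of $\Gamma_t(v)$ stays inside $\Gamma_t(v)$. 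The mechanism I would use is an induction on the diameter via the restriction to $\Gamma_t(v)$: one checks that, with adjacency inherited from $X$, it is again a geometric distance‑regular graph, locally a disjoint union of cliques, of smaller diameter and satisfying a spectral constraint of the same shape, hence a Hamming graph by induction; reassembling it with the $m$ lines through $v$ recovers the product structure of $X$. Along the way $\theta_1\ge(1-\varepsilon)b_1$ is upgraded to $\theta_1=b_1-1$, since the full array is then determined. (An alternative to the induction is a direct counting argument bounding $\mu$ and $|V(X)|$ in terms of $c_t,b_t$; either way this is the substantive part.)

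It remains to conclude. For $d\ge 3$, once $\theta_1=b_1-1$ and $\mu=2$ we invoke Theorem~\ref{thm:intro-classif}, which leaves only the Hamming, Doob, and locally Petersen families; locally Petersen graphs have connected (Petersen) neighbourhoods, and proper Doob graphs have neighbourhoods containing hexagons, so both contradict Step~1. Hence $X\cong H(d,s)$ with $s=1+k/m=1+k/d$, as claimed. For $d=2$, $X$ is strongly regular with the parameters of $H(2,s)$ and locally a disjoint union of two cliques $K_{s-1}$; the classical grid‑graph characterization (the Shrikhande exception being locally $C_6$, not $2K_{s-1}$) then gives $X\cong H(2,s)$ directly.

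The main obstacle is the middle step. Soft information — the inequality $\theta_1\ge(1-\varepsilon)b_1$ together with magnitude bounds on $c_t,b_t$ — must be converted into the exact intersection array and, crucially, into a global Cartesian decomposition, and locally clique‑partitioned distance‑regular graphs with the correct array need not be Hamming (Doob graphs share the array of $H(2n_1+n_2,4)$). Keeping the error terms in the inductive/product step under control is precisely why $\varepsilon$ must be taken as small as $1/(6m^4d)$ — equivalently, why the factor size $s$ must be large — and essentially all of the difficulty lies there.
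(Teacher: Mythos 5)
Your first and last steps are fine (indeed the local structure needs none of the $\varepsilon^*$ machinery: geometricity plus a disconnected neighbourhood already gives $\psi_1=1$ and hence that every $X(v)$ is a disjoint union of $m$ cliques, by Lemma~\ref{lem:connected-disconn}), but the middle step --- the only substantive one, as you yourself note --- is a genuine gap, not a proof. You propose to induct on the diameter by restricting to $\Gamma_t(v)=N_t(v)$ and claiming that this induced subgraph ``is again a geometric distance-regular graph, locally a disjoint union of cliques, of smaller diameter and satisfying a spectral constraint of the same shape.'' None of this is justified: the subgraph induced on $N_t(v)$ of a distance-regular graph is $a_t$-regular but is not in general distance-regular, let alone geometric with a controlled second eigenvalue, and no mechanism is offered for ``reassembling'' a product structure from it. The alternative you mention in passing (``a direct counting argument'') is likewise not carried out. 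Since everything downstream (upgrading $\theta_1\geq(1-\varepsilon)b_1$ to $\theta_1=b_1-1$, invoking Theorem~\ref{thm:intro-classif}) depends on first pinning down the exact intersection array, the argument does not close.

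The paper's route through this step is entirely different and is worth internalizing. One bounds the multiplicity $f_1$ of $\theta_1$ from above via the Biggs formula: the hypotheses $c_t,b_t\leq\varepsilon k$ force $k_t\geq(1-4m\varepsilon)n$, and lower bounds on the standard sequence $(u_i)$ (Lemma~\ref{lem:stand-seq-ineq}, which uses the Terwilliger quadrangle inequality to make $(\tau_i)$ strictly increasing) show that $k_{t-1}u_{t-1}^2>n/k$, hence $f_1\leq k-1$, \emph{unless} $c_d=m=d$ (Proposition~\ref{prop:multiplicity}). But $f_1<k$ is impossible here: by Terwilliger's theorem (Theorem~\ref{thm:ensure-eigenvalue}) it would force each local graph to have an eigenvalue $-1-b_1/(\theta_1+1)<-1$, contradicting the fact that a disjoint union of cliques has least eigenvalue $-1$. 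So $c_d=m=d$, which combined with $\tau_i=i$, $\psi_i=1$ yields exactly the Hamming array $c_i=i$, $b_i=(d-i)k/d$, and Egawa's theorem (Theorem~\ref{thm:Hamming-array-class}) finishes, Doob graphs being excluded because $1+k/d=4$ is incompatible with $k\geq 1/\varepsilon\geq 6d$. In short: the disconnected local structure is used not as the seed of a coordinatization but as a spectral obstruction (no local eigenvalue below $-1$), and the smallness of $c_t,b_t$ is used to concentrate the Biggs denominator on a single $k_{t-1}u_{t-1}^2$ term. Your proposal never finds this, and the route it does sketch would need an entirely new (and currently unsupported) structural theory of the induced subgraphs $N_t(v)$ to work.
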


\begin{remark} If $s>6d^5+1$, then the Hamming graph $H(d, s)$ satisfies the assumptions of this theorem with $1/(s-1)\leq \varepsilon<1/(6d^5)$ and $t = d$.
\end{remark}

The assumption that a distance-regular graph is geometric excludes only finitely many graphs with $\mu\geq 2$, if the smallest eigenvalue of the graph is assumed to be bounded, as proved by  Bang and Koolen~\cite{Bang-Koolen-conj} in 2010.

\begin{theorem}[Koolen, Bang]
Fix an integer $m\geq 2$. There are only finitely many non-geometric distance-regular graphs of diameter $\geq 3$ with $\mu\geq 2$ and smallest eigenvalue at least $-m$. 
\end{theorem}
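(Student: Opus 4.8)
The plan is to prove the distance-regular analogue of Neumaier's classification of strongly regular graphs with a fixed smallest eigenvalue, the two tools being the Delsarte--Hoffman clique bound and a quantitative clique-geometry criterion of Neumaier--Metsch type. Fix $m\ge 2$ and let $X$ be a distance-regular graph of diameter $d\ge 3$ with $\mu=c_2\ge 2$ and smallest eigenvalue $\theta_d\ge -m$; write $r=-\theta_d$, so $1<r\le m$. By the Delsarte--Hoffman bound every clique of $X$ has at most $1+k/r$ vertices, and, by the definition of \emph{geometric} recalled above, $X$ is geometric exactly when the cliques of this maximum size partition the edge set. I claim it is enough to bound the valency $k$ by a function of $m$. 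Indeed, if $k\le g(m)$ then all the intersection numbers lie in $[0,k]$ and are bounded; moreover the diameter is then bounded too, since $\mu\ge 2$ forces quadrangles and the diameter of a distance-regular graph containing quadrangles is bounded in terms of the valency (tree bounds of Terwilliger and diameter bounds of Ivanov type). A distance-regular graph whose diameter and all intersection numbers lie in a prescribed finite range realizes one of finitely many intersection arrays, and there are only finitely many graphs with a given array; so finiteness would follow.

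The real content is the implication: \emph{if $k$ exceeds an explicit function of $m$, then $X$ is geometric.} The key spectral inputs are local and elementary. First, $X$ contains no induced star $K_{1,t}$ with $t>m^2$: such a star has smallest eigenvalue $-\sqrt{t}<-m$, so by interlacing $\theta_d<-m$, a contradiction; hence $\alpha(X(v))\le m^2$ for every vertex $v$. Second, interlacing of $\theta_d$ with the top-left $2\times 2$ principal block of the symmetrized intersection matrix gives $\theta_d\le\tfrac{1}{2}\bigl(a_1-\sqrt{a_1^{\,2}+4k}\bigr)$, hence $k\le a_1 m+m^2$; thus the $a_1$-regular neighbourhood graph $X(v)$ on $k$ vertices is dense. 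From ``independence number at most $m^2$'' together with density one runs the Neumaier--Metsch machinery: (i) the Delsarte--Hoffman bound and its near-equality analysis show that cliques of size close to $1+k/r$ are rigid --- two of them meet in at most one vertex and each vertex lies in at most about $m$ of them; (ii) density and the bound on the independence number force every edge into such a near-maximal clique; (iii) Metsch's clique-partition theorem then upgrades (i)--(ii) to a genuine partition of $E(X)$ into maximum cliques, i.e.\ $X$ is geometric.

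I expect steps (ii)--(iii) to be the main obstacle: one must make Metsch's criterion effective with all thresholds depending only on $m$, and verify its hypotheses --- above all the bounded number of near-maximal cliques through a vertex and their controlled overlaps --- using only the combinatorial consequences ($\alpha(X(v))\le m^2$, density, and the local structure imposed by distance-regularity) of the inequality $\theta_d\ge -m$, without recourse to exact equality. A secondary issue is the bookkeeping around $r=-\theta_d$, which need not be an integer and need not equal $m$: when $r$ is irrational or $1+k/r$ fails to be an integer the graph has no Delsarte cliques at all, so one must check that such graphs, and more generally those for which the Delsarte bound carries little information (e.g.\ $r$ close to $1$), contribute only finitely many, which follows from the standard integrality and feasibility conditions on the intersection array.
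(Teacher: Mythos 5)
The paper does not prove this statement; it quotes it as an external result of Koolen and Bang \cite{Bang-Koolen-conj}, so there is no internal proof to compare against. Your outline does follow the strategy of that cited paper --- show that $k$ large relative to $m$ forces a Metsch clique geometry and hence geometricity, and that bounded $k$ leaves only finitely many graphs --- and your two spectral inputs (no induced $K_{1,t}$ for $t>m^2$, and $k\le \lambda m+m^2$ by interlacing with the top $2\times 2$ block of the symmetrized intersection matrix) are correct and are indeed the right starting points.

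As a proof, however, the proposal has genuine gaps. The central implication, ``$k$ large implies geometric,'' is only described, not established: verifying Metsch's hypotheses with thresholds depending only on $m$ --- in particular hypothesis (3) of Theorem~\ref{Metsch}, which for a distance-regular graph reads $\lambda>(2m-1)(\mu-1)-1$ and therefore requires an upper bound on $\mu=c_2$ in terms of $m$ that you never prove (the bound $\mu\le m^2$ of Corollary~\ref{cor:mu-m2} is derived only for graphs already known to be geometric, so it cannot be invoked here) --- and then upgrading ``every vertex lies on at most $m$ lines'' to ``every vertex lies on exactly $c$ lines'' so that Proposition~\ref{suff-cond} applies, is the entire content of the theorem; you explicitly defer it as ``the main obstacle.'' Second, the claim that $\mu\ge 2$ forces an induced quadrangle is false: the icosahedron (and the Doro and Conway--Smith graphs) are distance-regular Terwilliger graphs with $\mu\ge 2$; Lemma~\ref{lem:mu-bound-psi1}(2) yields a quadrangle only under the geometric hypothesis, which is precisely what is unavailable here. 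Hence your diameter bound in the bounded-$k$ regime must combine Terwilliger's inequality (Theorem~\ref{thm:ter-inequality}, for graphs that do contain quadrangles) with the separate treatment of Terwilliger graphs with $\mu\ge 2$ in \cite[\S 5.2]{BCN}. Finally, your worry about non-integral $r=-\theta_d$ is resolved in the opposite direction from what you suggest: one does not hunt for Delsarte cliques with respect to $\theta_d$ a priori; Metsch's theorem together with edge-regularity produces a clique geometry with exactly $c$ cliques through every vertex, and Proposition~\ref{suff-cond} then forces the smallest eigenvalue to equal the integer $-c$.
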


Moreover, they conjecture the following classification of geometric distance-regular graphs with fixed smallest eigenvalue $-m$ and $\mu\geq 2$. 

\begin{conjecture}[Koolen, Bang]\label{conj:Bang-Koolen}
For a fixed integer $m\geq 2$, any geometric distance-regular graph with smallest eigenvalue $-m$, diameter $\geq 3$ and $\mu\geq 2$ is a Johnson graph, or a Hamming graph, or a Grassmann graph, or a bilinear forms graph, or the number of vertices is bounded by a function of $m$.
\end{conjecture}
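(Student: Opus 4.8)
The plan is to reduce to the geometric case, split along the connected/disconnected neighborhood dichotomy, apply the two characterization theorems of this paper to the cases they reach, and then isolate the part of the conjecture that genuinely lies beyond them. First I would reduce to geometric graphs: by the finiteness theorem of Koolen and Bang quoted above, for a fixed $m$ all but finitely many distance-regular graphs of diameter $\geq 3$ with $\mu\geq 2$ and smallest eigenvalue at least $-m$ are geometric, and the finitely many exceptions are absorbed into the ``bounded number of vertices'' clause. One is thus left to classify geometric distance-regular graphs $X$ of diameter $d\geq 3$ with $\mu\geq 2$ and smallest eigenvalue $-m$. By Lemma~\ref{lem:connected-disconn}, either $X(v)$ is connected for every vertex $v$, or $X(v)$ is disconnected for every vertex $v$, and I would treat the two cases separately.

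In the disconnected-neighborhood case a geometric distance-regular graph is locally a disjoint union of $m$ cliques of size $k/m$, so $\lambda=k/m-1$ and $b_1=(1-1/m)k$. The goal is to show that once the number of vertices is large (in terms of $m$ and $d$), the hypotheses of Theorem~\ref{thm:main-Hamming-intr} are forced — namely $\theta_1\geq(1-\varepsilon)b_1$ with $\varepsilon<1/(6m^4d)$ together with $c_t,b_t\leq\varepsilon k$ for some $t\leq d$ — whence $X=H(d,s)$ with $s=1+k/d$. The heuristic is that the competing ``locally $mK$'' extremal families of Theorem~\ref{thm:intro-classif} do not produce infinite families of geometric graphs with bounded smallest eigenvalue for size reasons (a locally Petersen geometric graph has $\lambda=0$, hence $m=k$, so bounded $m$ forces bounded valency; a genuine Doob graph is not geometric), so Hamming graphs should be the only possibility; the one nontrivial technical point is to verify that the spectral and intersection-number conditions of Theorem~\ref{thm:main-Hamming-intr} hold automatically once $X$ is a large locally-$mK$ geometric distance-regular graph.

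In the connected-neighborhood case, if in addition $\theta_1+1>(1-\varepsilon^*)b_1$ and $k\geq\max(m^3,29)$, then Theorem~\ref{thm:pseudo-main-intr} gives directly that $X$ is a Johnson graph $J(s,d)$. The remaining subcases are (a) $k<\max(m^3,29)$, a bounded-degree, bounded-diameter regime that yields only finitely many graphs; and (b) a large spectral gap, $\theta_1+1\leq(1-\varepsilon^*)b_1$. Subcase (b) is precisely where the Grassmann graphs $J_q(n,d)$ and the bilinear forms graphs $H_q(d,e)$ with $q\geq 2$ live: for the Grassmann graphs one computes from $b_1=q^3\,[d-1]_q[n-d-1]_q$ and $\theta_1+1=q^2\,[d-1]_q[n-d-1]_q$ that $(\theta_1+1)/b_1=1/q\leq 1/2$, well below $1-\varepsilon^*$, and an analogous computation for the bilinear forms graphs is likewise bounded away from $1$. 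To finish the conjecture one would need the converse recognition statement: every connected-neighborhood geometric distance-regular graph with bounded smallest eigenvalue, $\mu\geq 2$, diameter $\geq 3$, large spectral gap, and unboundedly many vertices is a Grassmann or a bilinear forms graph.

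The main obstacle is exactly subcase (b): it is a ``$q\geq 2$'' analogue of the Johnson/Hamming characterization and is not reached by the techniques developed here, which are tailored to $\theta_1$ close to $b_1$. A full proof would presumably require a $q$-analogue of the local (Terwilliger) eigenvalue analysis, combined with a recognition theorem that reconstructs the ambient projective or affine geometry from the clique structure of $X$ (in the spirit of Metsch's local characterizations of Grassmann and bilinear forms graphs) and a corresponding bound on the number of vertices when no such geometry exists. Everything else in the argument — the reduction to the geometric case, the disconnected-neighborhood case via Theorem~\ref{thm:main-Hamming-intr}, and subcase (a) — is comparatively routine given the machinery already in this paper; the real content of a complete proof lies in subcase (b).
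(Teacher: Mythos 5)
This statement is Conjecture~\ref{conj:Bang-Koolen}, which the paper does not prove and does not claim to prove: the text immediately following it says only that ``our characterizations confirm this conjecture in rather special cases.'' Your proposal is likewise not a proof, and you say so yourself: the entire Grassmann/bilinear-forms regime (your subcase (b), $\theta_1+1\leq(1-\varepsilon^*)b_1$ with connected neighborhoods) is left open, and that regime is the real content of the conjecture --- it contains the two infinite families that Theorems~\ref{thm:pseudo-main-intr} and~\ref{thm:main-Hamming-intr} cannot produce, plus whatever sporadic or unknown graphs might live there. An outline whose ``main obstacle'' paragraph concedes that the key case requires techniques not present in the paper is a research plan, not a proof.

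Beyond the conceded gap, the part you call ``comparatively routine'' is not. In the disconnected-neighborhood case you assert that the hypotheses of Theorem~\ref{thm:main-Hamming-intr} are ``forced'' once the graph is large; this is unsubstantiated, and part of it is provably false. By Lemma~\ref{lem:connected-disconn} disconnected neighborhoods mean $\psi_1=1$, so $\mu=\tau_2$ can be any value up to $m$; and Proposition~\ref{prop:mu-eigen-bound} shows that whenever $\mu\geq 3$ one has $\theta_1+1\leq\frac{5}{7}b_1$, so the spectral hypothesis $\theta_1\geq(1-\varepsilon)b_1$ of Theorem~\ref{thm:main-Hamming-intr} can never be satisfied there. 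Thus the disconnected case with $\mu\geq 3$ is simply outside the reach of the paper's Hamming characterization and needs a separate argument you have not supplied. Even for $\mu=2$ you must establish both the eigenvalue bound and the existence of $t$ with $c_t,b_t\leq\varepsilon k$ (equivalently, that no intermediate index has both $b_i$ and $c_{i+1}$ of order $k$); neither follows from the graph being large and locally a union of cliques. Finally, subcase (a) ($k<\max(m^3,29)$) gives bounded degree but you still need bounded diameter to conclude finitely many graphs, whereas the conjecture fixes only $m$, not $d$; polygons are excluded only because $\mu\geq 2$, so this point at least deserves a sentence. In short: no proof exists in the paper to compare against, and your proposal does not close the conjecture either.
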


Our characterizations confirm this conjecture in rather special cases.

Theorem~\ref{thm:main-Hamming-intr} is one of the main technical contributions of the paper. Even though the assumptions of Theorem~\ref{thm:pseudo-main-intr} seem weaker (for instance, $\varepsilon$ is absolute), we believe that, in comparison with known results, Theorem~\ref{thm:main-Hamming-intr} brings more novelty. The known characterization of Johnson graphs in terms of the local structure (Theorem~\ref{thm:Johnson-classif}) seems to be more easily applicable than the known characterizations of Hamming graphs. All characterizations of Hamming graphs known to the author, in terms of intersection numbers, eigenvalues or local structure, make strong equality constraints either on the number of vertices, or on the eigenvalues. In contrast,  Theorem~\ref{thm:main-Hamming-intr} makes no assumptions of such flavor and therefore might be more broadly applicable.

\subsection{Motion: Minimal degree of the automorphism group}

For a permutation $\sigma$ of a set $\Omega$ the number of points not fixed by $\sigma$ is called the \textit{degree} of the permutation $\sigma$. Let $G$ be a permutation group on a set $\Omega$. The minimum of the degrees of non-identity elements of $G$ is called the \textit{minimal degree} of $G$. We denote this quantity by $\mindeg(G)$. Lower bounds on the minimal degree of a group have strong constraints on the structure of the group. A result of  Wielandt \cite{Wielandt} implies that a linear lower bound on $\mindeg(G)$ provides a logarithmic upper bound on the degree of an alternating group involved in $G$ as a quotient of a subgroup. 
The study of the minimal degree goes back to 19th century (Jordan~\cite{Jordan}, Bochert~\cite{Bochert}). We briefly indicate some of the history and the motivation of this classical concept in the Outlook section (Sec.~\ref{sec:outlook}).

\begin{definition}[Motion]\label{def-motion}
Following~\cite{RS-motion}, for a combinatorial structure $\mathcal{X}$ we use term \textit{motion} for the minimal degree of the automorphism group $\Aut(\mathcal{X})$,
\[\motion(\mathcal{X}) := \mindeg(\Aut(\mathcal{X})).\]
\end{definition}

In 2014 Babai \cite{babai-srg}, \cite{babai-srg-2} classified the strongly regular graphs with sublinear motion.

\begin{theorem}[Babai]\label{thm:babai-srg} 
Let $X$ be a strongly regular graph on $n$ vertices. Then either
\[ \motion(X)\geq \frac{n}{8}\,,\]
or $X$, or its complement is the Johnson graph $J(s, 2)$, the Hamming graph $H(2, s)$ or a disjoint union of cliques of the same size.
\end{theorem}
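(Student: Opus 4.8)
The plan is to reduce the problem to the two main characterization theorems of this paper (Theorem~\ref{thm:pseudo-main-intr} and Theorem~\ref{thm:main-Hamming-intr}) specialized to diameter $d = 2$, combined with Babai's earlier structural dichotomy. A strongly regular graph is a distance-regular graph of diameter $2$, and the statement is symmetric under complementation (since $\Aut(X) = \Aut(\overline{X})$, so $\motion(X) = \motion(\overline{X})$), so I may assume throughout that $\mu \geq 1$ and, after swapping with the complement if necessary, that the relevant spectral parameter is on the "large" side, i.e. $\theta_1$ is close to $b_1 = k - 1 - \lambda$. The degenerate cases (disjoint unions of cliques, complete multipartite graphs) must be peeled off first: these are exactly the imprimitive strongly regular graphs, and for them the automorphism group contains a large symmetric/wreath factor, so one checks directly that the motion is sublinear and they appear in the exceptional list. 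So from now on $X$ is primitive.

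For a primitive strongly regular graph, the key input is a dichotomy on the second-largest eigenvalue $\theta_1$: either $\theta_1 \geq (1 - \varepsilon) b_1$ (equivalently, after complementation, the graph is "close" to the extremal case $\theta_1 = b_1 - 1$), or $\theta_1 < (1-\varepsilon)b_1$, in which case one wants to show $\motion(X) \geq n/8$ directly. In the second regime the bound on the spectral gap should translate, via standard spectral/combinatorial arguments (e.g. an eigenvalue-interlacing or Hoffman-type bound applied to the set of fixed points of a nontrivial automorphism $\sigma$, together with the observation that moved points come in $\sigma$-orbits that are "spread out"), into a linear lower bound on the number of moved points. This is essentially the argument Babai uses; one bounds the size of a "large" set with few edges leaving it using the eigenvalue ratio, and an automorphism with sublinear motion would force such a set to exist. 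In the first regime, $X$ is a primitive strongly regular graph with $\theta_1$ very close to $b_1$; one must then verify that $X$ is geometric (for diameter $2$ and primitive, being geometric with the right parameters follows from classical results once $\theta_1$ is near-extremal — e.g. via the Neumaier/Delsarte clique bound and the fact that $\theta_1 = b_1 - 1$ forces Delsarte cliques, with the approximate version handled by the paper's own Proposition~\ref{prop:local-structure}), and satisfies the degree hypothesis $k \geq \max(m^3, 29)$; small $k$ leaves only finitely many graphs, which can be absorbed into the bound by choosing the constant $1/8$ appropriately or listing them. Then Theorems~\ref{thm:pseudo-main-intr} and~\ref{thm:main-Hamming-intr} apply with $d = 2$ and identify $X$ (or its complement) as $J(s,2)$ or $H(2,s)$.

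I expect the main obstacle to be the "small parameter" bookkeeping: the characterization theorems require $k \geq \max(m^3, 29)$ and $\varepsilon < 1/(6m^4 d)$ or $\theta_1 + 1 > (1-\varepsilon^*)b_1$, and making the two regimes (spectral-gap-large versus spectral-gap-small) meet cleanly with a single universal constant $n/8$ requires checking that the finitely many strongly regular graphs left uncovered — those with bounded degree, or with $\theta_1$ in the narrow transitional window where neither the direct spectral bound nor the characterization is available — all have motion at least $n/8$. This is a finite check in principle (bounded parameters give boundedly many graphs, each with a known automorphism group), but it is the part that needs care; the rest is assembling the pieces. A secondary technical point is verifying the geometricity and the connected-versus-disconnected-neighborhood trichotomy for $d=2$ so that exactly one of the two characterization theorems applies, and confirming that the Hamming case $H(2,s) = K_s \times K_s$ and the Johnson case $J(s,2)$ (triangular graph $T(s)$) are the only outputs, matching Babai's list together with the imprimitive case already handled.
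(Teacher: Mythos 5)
A preliminary but important point: the paper does not prove this theorem. It is quoted from Babai's papers \cite{babai-srg}, \cite{babai-srg-2} and used as an external ingredient (for instance in Proposition~\ref{prop:mu1-line}), so there is no internal proof to match your plan against. Judged on its own merits, your plan has genuine gaps. The most serious one is in the regime $\theta_1<(1-\varepsilon)b_1$, where you propose to get $\motion(X)\geq n/8$ ``directly'' from a spectral bound. The available tool (Lemma~\ref{mixing-lemma-tool}) gives $\motion(X)\geq n(k-\xi-q)/k$ with $\xi=\max(|\theta_1|,|\theta_2|)$ and $q=\max(\lambda,\mu)$, and an upper bound on $\theta_1$ relative to $b_1$ controls none of these quantities: $b_1=k-\lambda-1$ may be tiny compared to $k$ when $\lambda$ is large, $|\theta_2|$ may be close to $k$, and $q$ may be of order $k$. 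Babai's actual proof spends most of its effort precisely here --- passing to the complement when $\lambda$ is large, invoking Neumaier's claw bound and the classification of graphs with least eigenvalue $-2$, and treating conference-type parameter sets separately --- and this is where the uniform constant $1/8$ comes from. Your fallback claim that the uncovered cases are ``finitely many graphs'' is false: the set of primitive strongly regular graphs for which the naive spectral bound yields a constant worse than $1/8$ is not finite, so the constant cannot be recovered by a finite check or by adjusting it at the end.

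The second gap is in the near-extremal regime. Before Theorem~\ref{thm:pseudo-main-intr} or Theorem~\ref{thm:main-Hamming-intr} can be invoked you must establish that $X$ is geometric with smallest eigenvalue $-m$ for an absolutely bounded integer $m$, that $k\geq\max(m^3,29)$, and, for the Hamming branch, that $c_2=\mu\leq\varepsilon k$ with $\varepsilon<1/(6m^4d)$ --- a threshold that itself depends on $m$. None of this follows from Proposition~\ref{prop:local-structure}, which only bounds the smallest eigenvalue of the neighborhood graphs; the geometricity-with-bounded-$m$ step is the diameter-$2$ analogue of Theorem~\ref{thm:main-motion-cited}, which this paper imports from \cite{kivva-drg} for $d\geq 3$ and does not prove for $d=2$. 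Your outline correctly identifies the overall shape (peel off imprimitive cases, split on the spectral gap, then characterize), but the two steps above are the substance of Babai's theorem rather than bookkeeping, and as written they are missing.
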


Non-trivial strongly regular graphs are primitive distance-regular graphs of diameter 2. Babai conjectured that a similar classification should be true for primitive distance-regular graphs of fixed diameter. 

\begin{conjecture}[Babai]\label{conj-dist-reg}

For any $d\geq 3$ there exists $\gamma_d>0$, such that for any primitive distance-regular graph $X$ of diameter $d$ on $n$ vertices either
\[ \motion(X) \geq \gamma_d n,\]
or $X$ is the Johnson graph or the Hamming graph.
\end{conjecture}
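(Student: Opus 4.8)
The plan is to show that a primitive distance-regular graph of diameter $d$ with sublinear motion must fall under Theorem~\ref{thm:pseudo-main-intr} or Theorem~\ref{thm:main-Hamming-intr}, and then to read off $\gamma_d$ as the implied constant. Fix $d\ge 3$, let $X$ be primitive distance-regular of diameter $d$ on $n$ vertices, with degree $k$, intersection numbers $c_2=\mu,\lambda,b_1,\dots$, and smallest eigenvalue $-m$, and suppose $\motion(X)<\gamma_d n$ for a small constant $\gamma_d$ to be fixed; the goal is to force $X$ to be a Johnson or Hamming graph. Two preliminary reductions come first. If $k$ is bounded by a function of $d$, then $n\le 1+k+k^2+\dots+k^{d}$ is bounded, so $X$ belongs to a finite list depending only on $d$, and it suffices to choose $\gamma_d$ below $\min_Y \motion(Y)/|V(Y)|$ over the graphs $Y$ in that list that are not Johnson or Hamming; so we may assume $k$ is large. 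And if $\mu=1$ we argue separately: a primitive distance-regular graph with $c_2=1$ has a very constrained local structure (each neighborhood graph is a disjoint union of cliques whose parameters are pinned down by $\lambda$ and $b_1$), and a Metsch-type clique-geometry analysis, combined with the spectral estimate used below, shows that such graphs have linear motion unless they lie in a finite list; so we may assume $\mu\ge 2$.

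I would next establish that sublinear motion forces the smallest eigenvalue $-m$, hence $m$, to be bounded in terms of $d$. The mechanism is local-to-global: an automorphism $\sigma$ with support of size $s<\gamma_d n$ fixes $n-s$ vertices, and since $\supp(\sigma)$ is small it meets at most a small fraction of the neighborhoods $X(v)$ in more than a few vertices; hence for a positive fraction of the fixed vertices $v$, the permutation $\sigma$ restricts to an automorphism of the (diameter $\le 2$) local graph $X(v)$ moving only few vertices. An averaging/individualization--refinement argument then promotes this to: the local graphs of $X$ have sublinear motion. Babai's Theorem~\ref{thm:babai-srg}, applied to the strongly-regular-like local graphs, forces each such $X(v)$ to be (the complement of) a disjoint union of cliques, a Johnson graph $J(\cdot,2)$, or a Hamming graph $H(2,\cdot)$; in each case Metsch's clique-geometry criterion applies to $X$ and shows that $X$ is geometric with a bounded number of Delsarte cliques through each vertex, i.e., $m$ is bounded. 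With $m$ bounded, Bang--Koolen's theorem reduces us to $X$ geometric, the non-geometric distance-regular graphs with $\mu\ge 2$ and smallest eigenvalue $\ge -m$ forming a finite list that is again absorbed into $\gamma_d$.

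It remains to produce the eigenvalue hypothesis of the characterization theorems, and the contrapositive is more convenient: if $X$ has a genuine spectral gap, $\theta_1<(1-\varepsilon)b_1$, then $\motion(X)\ge\gamma_d n$. The first ingredient is that, for a geometric distance-regular graph of diameter $d$ with bounded $m$, large degree, and such a gap, the multiplicity $m_{\theta_1}$ of the second eigenvalue is $\Omega_d(n)$; this follows from the interplay between the local eigenvalues (determined by the clique geometry) and the global spectrum, via the standard multiplicity formula. The second ingredient is the classical link between minimal degree and eigenvalue multiplicities: if $\sigma$ acts nontrivially on the $\theta_1$-eigenspace $V_{\theta_1}$, then $u-\sigma u$ is a nonzero vector of $V_{\theta_1}$ supported on $\supp(\sigma)$, so $\deg(\sigma)$ is at least the minimum support of a nonzero vector of $V_{\theta_1}$ --- which, for geometric distance-regular graphs, one bounds below by $\Omega_d(n)$ using the clique geometry --- while if $\sigma$ is trivial on $V_{\theta_1}$ it is detectable on another large eigenspace. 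Combining these, $\motion(X)=\Omega_d(n)$ under a spectral gap. Hence if $\motion(X)$ is sublinear then $\theta_1\ge(1-\varepsilon)b_1$ with the value of $\varepsilon$ needed downstream: the absolute $\varepsilon^*$ in the connected-neighborhood case, and the $m$-dependent bound $\varepsilon<1/(6m^4 d)$ in the disconnected case (bounded below since $m$ is), where the auxiliary ``thin layer'' condition $c_t,b_t\le \varepsilon k$ holds at $t=d$ once $k$ is large relative to $d$ and $m$.

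Finally, by Lemma~\ref{lem:connected-disconn}, either every local graph $X(v)$ is connected or every one is disconnected. In the connected case Theorem~\ref{thm:pseudo-main-intr} applies (since $k\ge\max(m^3,29)$, which holds because $k$ is large and $m$ bounded) and gives $X=J(s,d)$; in the disconnected case Theorem~\ref{thm:main-Hamming-intr} applies and gives $X=H(d,s)$. This proves that every primitive distance-regular graph of diameter $d\ge 3$ with $\motion(X)<\gamma_d n$ is a Johnson or Hamming graph, which is the conjecture. I expect the main obstacle to be the second step: rigorously propagating sublinear motion from $X$ to its neighborhoods and then extracting, from the resulting local symmetry, a clique geometry with boundedly many cliques per vertex (equivalently, bounded $m$), since this is where the combinatorial accounting of $\Aut(X)$ via individualization--refinement must be married to Metsch's geometric criterion. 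The spectral-gap step (establishing linear motion in the presence of a gap, in particular the lower bound on the minimum support of $\theta_1$-eigenvectors) and the final invocation of Theorems~\ref{thm:pseudo-main-intr}--\ref{thm:main-Hamming-intr} should be comparatively routine once the geometric structure with bounded parameters is in hand.
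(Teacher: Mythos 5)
Your high-level architecture coincides with the paper's (reduce to a geometric graph with bounded smallest eigenvalue, split on whether $\theta_1\geq(1-\varepsilon)b_1$, and finish with Theorems~\ref{thm:pseudo-main-intr} and~\ref{thm:main-Hamming-intr}), but the two load-bearing intermediate steps you sketch do not go through as stated. First, the reduction to ``geometric with smallest eigenvalue $\geq -m_d$'' is precisely the content of Theorem~\ref{thm:main-motion-cited}, imported from \cite{kivva-drg}, and your proposed substitute is flawed: the neighborhood graphs $X(v)$ of a distance-regular graph are regular but not in general strongly regular, so Theorem~\ref{thm:babai-srg} cannot be applied to them; moreover, a single automorphism $\sigma$ of small degree may restrict to the \emph{identity} on most local graphs $X(v)$, so nothing about $\motion(X(v))$ follows from the existence of one such $\sigma$, and even granting sublinear motion of the local graphs you would have no classification theorem to invoke for them. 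Second, your spectral-gap step rests on an $\Omega_d(n)$ lower bound for the minimum support of a nonzero $\theta_1$-eigenvector of a geometric distance-regular graph. No such bound is available (for the exceptional graphs themselves, e.g.\ $H(d,s)$, there are $\theta_1$-eigenvectors of support about $n/s$), and establishing it for the non-exceptional graphs is essentially as hard as the motion bound itself. The paper avoids eigenvalue multiplicities entirely at this point: under a gap it applies Babai's elementary zero-weight spectral radius bound (Lemma~\ref{mixing-lemma-tool}) directly to $X$, using $q(X)=\max(\lambda,\mu)$ and $b_1\geq(k-\mu)/2$.

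Two further concrete problems. Your claim that the thin-layer condition $c_t,b_t\leq\varepsilon k$ holds at $t=d$ once $k$ is large is false in general: $c_d=\tau_d\psi_{d-1}=m\,\psi_{d-1}$ can be of order $k$. The correct treatment is a dichotomy: either such a $t$ exists, or some $j$ satisfies $b_j\geq\varepsilon k$ and $c_{j+1}\geq\varepsilon k$, in which case primitivity and Proposition~\ref{primitive-distinguish} already give linear motion. Finally, the case $\mu=1$, which you dismiss in one sentence, requires a genuine argument: the paper passes to the dual clique graph, which (for $m=2$, via Lemma~\ref{m2-line} and Theorem~\ref{thm:Mohar-Shawe}) is a Moore graph or a generalized $2d$-gon, and then invokes Feit--Higman and spectral bounds on generalized polygons, and for $m\geq 3$ it needs the spectral gap theorem for the dual graph together with Corollary~\ref{cor:dual-motion}. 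In short, the proposal correctly identifies the skeleton of the proof but the reductions that carry the real weight are either unproved or justified by arguments that fail.
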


Using our characterizations of Johnson and Hamming graphs, we confirm the conjecture in the stronger form dropping the primitivity assumption. (The imprimitive case admits one more class of exceptions, the cocktail-party graphs). A large portion of the proof relies on the results we prove in~\cite{kivva-drg}.

\begin{theorem}\label{thm:main-motion} For any $d\geq 3$ there exists $\gamma_d>0$, such that for any distance-regular graph $X$ of diameter $d$ on $n$ vertices either
$$\motion(X)\geq \gamma_d n,$$
or $X$ is a Johnson graph, or a Hamming graph, or a cocktail-party graph. 
\end{theorem}

 In~\cite{kivva-drg} we reduced the case of imprimitive distance-regular graphs to confirming the conjecture for primitive graphs. Moreover, in one of the main results of~\cite{kivva-drg} we show that it is sufficient to study geometric distance-regular graphs with bounded smallest eigenvalue, which is the case we settle in this paper. 

\begin{theorem}[{\cite[Theorem 1.6]{kivva-drg}}]\label{thm:main-motion-cited}
For any $d\geq 3$ there exist $\gamma_d'>0$ and a positive integer $m_d$, such that for any primitive distance-regular graph $X$ of diameter $d$ on $n$ vertices  either 

 \[\motion(X)\geq \gamma_d' n,\]
 or $X$ is geometric with smallest eigenvalue at least $-m_d$. 
\end{theorem}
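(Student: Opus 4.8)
We argue the contrapositive. Fix $d\ge 3$ and a primitive distance-regular graph $X$ of diameter $d$ on $n$ vertices, and suppose $\motion(X)<\gamma_d n$ for a constant $\gamma_d>0$ to be fixed at the end; we must produce $m_d$ so that $X$ is geometric with smallest eigenvalue $\ge -m_d$. Let $\sigma\in\Aut(X)$ be non-identity with $|\supp(\sigma)|=m_0=\motion(X)$ and put $F=V(X)\setminus\supp(\sigma)$. Since $\sigma$ fixes $F$ pointwise, for $v\in\supp(\sigma)$ and $w=\sigma(v)$ we have $\dist(v,u)=\dist(\sigma v,\sigma u)=\dist(w,u)$ for all $u\in F$; as at most $m_0$ vertices at a given distance from $v$ lie outside $F$, this forces $v$ and $w$ to induce the \emph{same} distance partition on $F$, and in particular $p^{\dist(v,w)}_{ii}\ge k_i-m_0$ for every $i$. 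Taking $i=1$ rules out $\dist(v,w)\ge 3$ (such a pair has no common neighbour, whereas $k-m_0>0$ when $m_0<k$), so $\dist(v,w)\le 2$ and $\max(\lambda,\mu)\ge k-m_0$. Iterating $\sigma$ shows moreover that the entire $\sigma$-orbit of $v$ shares its distance partition on $F$. We split according to $\max(\lambda,\mu)\ge k-m_0$ into Case A, $b_1=k-1-\lambda<m_0$, and Case B, $\mu>k-m_0$.

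In Case A a short computation in the neighbourhood graph $X(v)$ (which is $\lambda$-regular on $k$ vertices with $b_1$-regular complement) shows $\mu\ge k-2b_1-1$, so once $b_1$ is small compared with $k$ we get $k_2=kb_1/\mu=O(b_1)$ and then $k_i=O_d(b_1^{\,i-1})$ for all $i\le d$, whence $n=k+O_d(b_1^{\,d-1})$. If $b_1$ is small in absolute terms — which is what $m_0\le f(d)$ forces for an explicit $f$ — the resulting near-complete intersection arrays are checked to be infeasible for a primitive graph of diameter $d\ge 3$, a contradiction. If instead $b_1$ is not absolutely small, we invoke Terwilliger's local eigenvalue bound (see \cite{BCN}), which gives $-\theta_{\min}(X)\le 1+b_1/(1+\theta_1(X(v)))$ with $\theta_1(X(v))$ the second eigenvalue of $X(v)$: either $-\theta_{\min}(X)$ is bounded by a function of $d$ alone (when $\theta_1(X(v))$ is bounded away from $-1$), or $X(v)$ is close to a disjoint union of cliques and one reads off the Delsarte clique structure directly. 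Metsch's clique criterion then certifies that $X$ is geometric, using that $\lambda\ge k-m_0$ is large relative to the now-bounded smallest eigenvalue.

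Case B ($\mu>k-m_0$) makes every pair at distance $2$ an ``almost twin'' pair — neighbourhoods agreeing outside a set of size $<m_0$ — and since a primitive distance-regular graph has no exact twins, the orbit-rigidity from the first paragraph bounds how many such pairs can coexist; this confines the intersection array to a narrow band that either reduces to Case A or lands in a bounded family, after which the eigenvalue bound and geometricity follow as above. The residual items are: the degenerate case $m_0\ge k$; the non-geometric distance-regular graphs with $\mu\ge 2$ and bounded smallest eigenvalue, of which by the Bang--Koolen theorem there are finitely many, so $n$ is bounded and $\motion(X)\ge 2\ge\gamma_d n$ for $\gamma_d$ small; and the graphs with $\mu=1$ of diameter $\ge 3$, where the feasible arrays are very restricted (Odd graphs, collinearity graphs of generalized polygons) and one uses that a collineation of a thick generalized polygon fixing a positive fraction of its points is the identity to obtain linear motion. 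The constants $\gamma_d$ and $m_d$ are then chosen compatibly across all branches.

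The genuinely delicate part is the \emph{medium regime}, where $k-\max(\lambda,\mu)$ is positive but $o(n)$ and the parameters are not forced into a bounded family: there the crude inequality $\motion(X)\ge k-\max(\lambda,\mu)$ is far too weak, and one must squeeze a \emph{linear} lower bound on the degree of $\sigma$ out of the full rigidity statement — that $v$, $\sigma(v)$, and in fact the whole $\sigma$-orbit share a $(d{+}1)$-cell distance partition on $n-m_0$ of the $n$ vertices — together with a quantitative count of how many vertices can be ``paired'' this way; for geometric graphs with a very negative smallest eigenvalue that count is run on the (then very rich) Delsarte-clique incidence structure. This is the step I expect to demand the most work.
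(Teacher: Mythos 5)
This statement is not proved in the paper you were given: it is Theorem~\ref{thm:main-motion-cited}, cited verbatim from the companion paper \cite{kivva-drg} (as ``Theorem 1.6'' there), and the present paper's Remark after it explicitly explains that the proof lives in that other paper. There is therefore no ``paper's own proof'' here to compare against; what the present paper does supply are a few fragments imported from \cite{kivva-drg} --- the spectral gap alternative (Theorem~\ref{thm:cited-main-spectral-gap}) and the dominant-distance motion bound (Proposition~\ref{primitive-distinguish}) --- which together suggest the actual proof runs through expander-mixing/spectral-gap estimates rather than through the neighbourhood-partition rigidity you sketch.

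Judged on its own merits, your proposal is not a proof but an honest outline, and you say so yourself at the end. The clean part is the first paragraph: if $\sigma$ has small support $F^c$ then $v$ and $\sigma(v)$ induce the same distance partition on $F$, forcing $p^{\dist(v,\sigma v)}_{11}\ge k-m_0$ and hence $\dist(v,\sigma v)\le 2$, so $\max(\lambda,\mu)\ge k-m_0$. From there the argument degrades. In Case A you want to conclude $n$ is controlled by $b_1$ and then leverage Terwilliger; but you invoke the local eigenvalue bound backwards --- the useful direction needs the second largest eigenvalue of $X(v)$ to be bounded \emph{away} from $-1$, and you offer no reason why the alternative (``$X(v)$ close to a disjoint union of cliques'') would come from $\theta_1(X(v))$ near $-1$ rather than from the \emph{smallest} eigenvalue of $X(v)$ near $-1$. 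Case B (``almost twins'') is stated at the level of an intention, not an argument: ``this confines the intersection array to a narrow band that either reduces to Case A or lands in a bounded family'' is exactly the claim that needs a proof. Most importantly, the whole contrapositive strategy presupposes that $m_0<\gamma_d n$ already forces $\max(\lambda,\mu)\ge k-m_0$ to be strong information, but when $k$ is much smaller than $n$ (which happens for fixed $d\ge 3$) the inequality $\motion(X)\ge k-\max(\lambda,\mu)$ is far from linear in $n$. You flag this (the ``medium regime'') as the step demanding the most work; unfortunately it is not a residual technicality but the core difficulty, and it is precisely where \cite{kivva-drg} appears to bring in the spectral gap theorem and quasirandomness (via Lemma~\ref{mixing-lemma-tool}) to upgrade local non-rigidity into a genuinely linear motion bound. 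So the gap is real: you have the right opening move and some of the right tools (Terwilliger, Metsch, Bang--Koolen), but no mechanism that converts a sub-linear bound on the degree of one automorphism into a linear lower bound on motion across the whole parameter range.
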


\begin{remark} The reason we split the proof of Theorem~\ref{thm:main-motion} between two papers is the different nature of techniques and results obtained. In particular, in~\cite{kivva-drg} we prove a spectral gap bound for distance-regular graphs which have a dominant distance. We use it as one of the ingredients in the analysis of $\mu=1$ case in this paper (see Theorem~\ref{thm:cited-main-spectral-gap}). 
\end{remark}



\subsection{A few words on the proof of the characterization theorems}

The proof of Theorem~\ref{thm:intro-classif} has two crucial ingredients. The first ingredient is Terwilliger's $1+b_1/(\theta_1+1)$ lower bound on the smallest eigenvalue of the neighborhood graphs. In the case when $\theta_1 = b_1-1$ this bound implies that the smallest eigenvalue of each neighborhood graph is at least $-2$ and all such graphs are classified by Cameron, Goethals, Seidel and Shult~\cite{regular-classif} (note that the neighborhood graphs are regular). In particular, if a neighborhood graph has no induced quadrangle, then the neighborhood graph is a clique, the pentagon or the Peterson graph.  The second ingredient is the existence of a root representation (an integral affine (2, 4)-representation) for a distance-regular graph with $\theta_1 = b_1-1$ and which contains a quadrangle. The classification of the amply regular graphs with $\mu\geq 2$, which have a root representation, is known (see {\cite[Theorems 3.15.1-3.15.4]{BCN}}).

We note that in the case of geometric distance-regular graphs the condition $\mu\geq 2$ implies the existence of an induced quadrangle.  The argument that shows the existence of the root representation heavily depends on the equality constraint $\theta_1 = b_1-1$ and does not survive when this condition is relaxed to $\theta_1\geq (1-\varepsilon)b_1$. However, under our assumptions, we are still able to make use of the first ingredient. Indeed, results of Hoffman \cite{Hoffman} and Bussemaker, Neumaier \cite{Bussem-Neum} imply that no graph has the smallest eigenvalue in the range $(-2-\delta, -2)$ for some constant $\delta>0$. This will be sufficient to prove Theorem~\ref{thm:pseudo-main-intr} (see Section~\ref{sec:Johnson}).

The proof of Theorem~\ref{thm:main-Hamming-intr} requires a different approach. A distance-regular graph is geometric with disconnected neighborhood graphs if and only if each of its neighborhood graphs is a disjoint union of cliques. We prove that if $X$ satisfies the assumptions of Theorem~\ref{thm:main-Hamming-intr} and is not a Hamming graph, then the second largest eigenvalue has multiplicity less than the vertex degree of $X$. Terwilliger \cite{ter-local} proved that this implies that each neighborhood graph has an eigenvalue less than $-1$. This leads to a contradiction, as the least eigenvalue of a disjoint union of cliques is $-1$. We prove Theorem~\ref{thm:main-Hamming-intr} in Section~\ref{sec:Hamming}.

\section*{Acknowledgments}

The author is grateful to Professor L\'aszl\'o Babai for introducing him to the problems discussed in the paper, for his invaluable assistance in framing the results, which significantly improved the presentation of the paper, and for his constant support and encouragement.

\section{Preliminaries}

In this section we give definitions and basic properties for the objects and notions that are used in the paper. For more about distance-regular graphs we refer the reader to the monograph \cite{BCN} and the survey  article \cite{Koolen-survey}. 

\subsection{Basic concepts and notation for graphs}

Let $X$ be a graph. We always denote by $n$ the number of vertices of $X$ and for a regular graph $X$ we denote by $k$ its degree.  We denote the diameter of $X$ by $d$. If the graph is disconnected, then its diameter is defined to be $\infty$. 

\begin{definition} A regular graph is called \textit{edge-regular} if every pair of adjacent vertices has the same number $\lambda$ of common neighbors.
\end{definition}
\begin{definition} A regular graph is called \textit{amply regular} if every pair of vertices at distance 2 has the same number $\mu$ of common neighbors.
\end{definition}

Let $N(v)$ be the set of neighbors of a vertex $v$ in $X$ and $N_i(v) = \{ w\in X| \dist(v,w) = i\}$ be the set of vertices at distance $i$ from $v$ in the graph $X$. By $X(v)$ we denote the \textit{neighborhood graph} of $v$ in $X$, namely, the graph induced by $X$ on $N(v)$. 

\begin{definition} Let $X$ be a graph. Denote by $V(X)$ and $E(X)$ the set of vertices and the set of edges of $X$. The \textit{line graph} of $X$ is the graph $L(X)$ with the set of vertices $E(X)$, in which distinct $e_1, e_2\in E(X)$ are adjacent if they (as edges of $X$) share a vertex. 
\end{definition}

By \textit{eigenvalues of a graph} we mean the eigenvalues of its \textit{adjacency matrix}.

\subsection{Distance-regular graphs}\label{sec:drg}

\begin{definition}
 A connected graph $X$ of diameter $d$ is called \textit{distance-regular} if for every $0\leq i\leq d$ there exist constants $a_i, b_i, c_i$ such that for any $v\in X$ and any $w\in N_i(v)$ the number of edges between $w$ and $N_i(v)$ is $a_i$, between $w$ and $N_{i-1}(v)$  is $c_i$, and between $w$ and $N_{i+1}(v)$ is $b_i$. The sequence
 \[\iota(X) = \{b_0, b_1,\ldots, b_{d-1}; c_1, c_2,\ldots, c_d\}\]
 is called the \textit{intersection array} of $X$.
\end{definition}

Note that every distance-regular graph is edge-regular and amply regular with $\lambda = a_1$ and $\mu = c_2$.


 By simple counting, the following properties of the parameters of distance-regular graphs hold.
\begin{enumerate}
\item $a_i+b_i+c_i = k$ for every $0\leq i\leq d$,
\item $b_{i+1} \leq b_{i}$ and $c_{i+1}\geq c_{i}$ for $0\leq i\leq d-1$.
\item $|N_{i}(v)|b_i = |N_{i+1}(v)|c_{i+1}$, for $0\leq i\leq d-1$ and any vertex $v$. 
\end{enumerate}
Thus, the numbers $k_i = |N_i(v)|$ do not depend on a vertex $v\in X$, and we can rewrite the last property as
\begin{enumerate}
\item[3'.] $k_i b_i = k_{i+1}c_{i+1}$, for $0\leq i\leq d-1$. 
\end{enumerate}

From the definition of a distance-regular graph it can be deduced that there are parameters $p_{i,j}^s$ such that for any $u,v\in X$ with $\dist(u,v) = s$ there exist precisely $p_{i,j}^{s}$ vertices at distance $i$ from $u$ and distance $j$ from $v$, i.e., $|N_i(u)\cap N_j(v)| = p_{i,j}^{s}$. The parameters $p_{i,j}^s$ are called \textit{intersection numbers}.

\begin{lemma}\label{lem:lambda-mu}
Let $X$ be a distance-regular graph of diameter $d\geq 2$. Then $2\lambda\leq k+\mu$.
\end{lemma}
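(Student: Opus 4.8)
The plan is to prove the slightly stronger inequality $2\lambda\leq k+\mu-1$ (which of course implies the statement) by a short inclusion--exclusion argument carried out inside a single neighborhood. First I would use the hypothesis $d\geq 2$ to produce the configuration I need: since the diameter is at least $2$ we have $k_2=|N_2(v)|\geq 1$, so there is a vertex $w$ with $\dist(v,w)=2$; and since $c_2=\mu\geq 1$ I may choose a common neighbor $u\in N(v)\cap N(w)$. Thus $u$ is adjacent to both $v$ and $w$, while $v\not\sim w$.

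The core of the argument is to estimate $|A\cup B|$, where $A:=N(u)\cap N(v)$ and $B:=N(u)\cap N(w)$. By edge-regularity with parameter $\lambda=a_1$, both $u\sim v$ and $u\sim w$ give $|A|=|B|=\lambda$. Since $A$ and $B$ are both contained in $N(u)$, which has exactly $k$ vertices, we get $|A\cup B|\leq k$. On the other hand, $|A\cup B|=2\lambda-|A\cap B|$, and $A\cap B=N(u)\cap\bigl(N(v)\cap N(w)\bigr)$ is contained in $N(v)\cap N(w)$; since $\dist(v,w)=2$, amply-regularity gives $|N(v)\cap N(w)|=\mu$, so $|A\cap B|\leq\mu$. (In fact $|A\cap B|\leq\mu-1$, because $u$ itself lies in $N(v)\cap N(w)$ but not in $N(u)$.) Combining the two bounds yields $2\lambda-\mu\leq 2\lambda-|A\cap B|=|A\cup B|\leq k$, i.e.\ $2\lambda\leq k+\mu$.

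There is essentially no serious obstacle here; the only points that require a moment of care are justifying the existence of the configuration (both $k_2\geq 1$ and $c_2\geq 1$, which follow from $d\geq 2$ and the definition of a distance-regular graph) and applying inclusion--exclusion correctly. The conclusion $2\lambda\leq k+\mu$ then follows, with one unit to spare.
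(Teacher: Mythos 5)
Your proof is correct and is essentially the paper's own argument: both rest on the two inclusions $N(u)\cap N(v)\,\cup\,N(u)\cap N(w)\subseteq N(u)$ and $N(u)\cap N(v)\,\cap\,N(u)\cap N(w)\subseteq N(v)\cap N(w)$ for a pair $v,w$ at distance $2$ with common neighbor $u$, combined by inclusion--exclusion. Your extra observation that $u$ itself is excluded, giving the marginally stronger $2\lambda\leq k+\mu-1$, is a valid but inessential refinement.
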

\begin{proof}
Denote $N(x, y) =N(x)\cap N(y)$ for vertices $x$ and $y$ of $X$. The inequality above follows from the two obvious inclusions below applied for $v$ and $w$ at distance $2$ and their common neighbor $v$
\[ N(u, v) \cup N(u, w)\subseteq N(u),\qquad\quad  N(u, v) \cap N(u, w) \subseteq N(v, w).\] 
\end{proof}

In Section \ref{sec:Hamming} we will need the following inequality proved by Terwilliger~\cite{terwilliger}.

\begin{theorem}[Terwilliger \cite{terwilliger}, see {\cite[Theorem 5.2.1]{BCN}}]\label{thm:ter-inequality} Let $X$ be a distance-regular graph. If $X$ contains an induced quadrangle, then 
\[ c_{i}-b_{i}\geq c_{i-1}-b_{i-1}+\lambda+2, \quad \text{for } i= 1, 2, \ldots, d.\]
\end{theorem}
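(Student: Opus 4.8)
The plan is to argue combinatorially, essentially as in Terwilliger's original proof. First dispose of $i=1$, where the inequality is in fact an equality and needs no quadrangle: since $c_1=1$, $c_0=0$, $b_0=k$, the claim reads $1-b_1\ge \lambda+2-k$, i.e. $\lambda\le k-b_1-1$, and this holds with equality because $a_1=k-b_1-c_1=k-b_1-1=\lambda$. So assume $i\ge 2$ from now on.

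For $i\ge 2$ the argument is local to a base vertex. Fix $x$ and consider the distance partition $N_0(x),\dots,N_d(x)$. The first step is to produce an induced quadrangle $q_1q_2q_3q_4$ (cyclic order, non-edges $q_1q_3$ and $q_2q_4$) positioned astride the boundary between $N_{i-1}(x)$ and $N_i(x)$: one opposite pair of the quadrangle lies in $N_{i-1}(x)$ and the other in $N_i(x)$ (or the forced variant of this pattern when $X$ is bipartite). Then the edge $\{q_1,q_2\}$ goes from $N_{i-1}(x)$ to $N_i(x)$, while $q_3\in N_{i-1}(x)$ is adjacent to $q_2$ but not $q_1$, and $q_4\in N_i(x)$ is adjacent to $q_1$ but not $q_2$. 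Every common neighbour of the adjacent pair $q_1,q_2$ lies in $N_{i-1}(x)\cup N_i(x)$ (by the triangle inequality applied from $x$ through $q_1$ and through $q_2$); write $\alpha$ of these $\lambda$ common neighbours in $N_{i-1}(x)$ and $\beta=\lambda-\alpha$ in $N_i(x)$. Counting the neighbours of $q_2$ inside $N_{i-1}(x)$ shows there are at least $\alpha+2$ of them --- the $\alpha$ common neighbours, together with $q_1$ and $q_3$, all distinct since $q_3\not\sim q_1$ --- so $c_i\ge\alpha+2$; symmetrically $b_{i-1}\ge\beta+2$. The remaining task is to combine these bounds with the elementary relations $a_j+b_j+c_j=k$, the monotonicities $c_{i-1}\le c_i$ and $b_i\le b_{i-1}$, and the analogous counts of the downward neighbours of $q_1$ in $N_{i-2}(x)$ and the upward neighbours of $q_2$ in $N_{i+1}(x)$, so that the contributions telescope into the stated form $c_i-b_i\ge c_{i-1}-b_{i-1}+\lambda+2$.

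Two steps will carry most of the difficulty. The first is the \emph{positioning} of the quadrangle: a distance-regular graph is locally homogeneous but not in general vertex-transitive, and one induced quadrangle need not sit through every edge, so one must slide the given quadrangle along a geodesic relative to $x$ --- tracking distances layer by layer and using positivity of the relevant intersection numbers to guarantee that some $x$ realises the required distance-profile --- and treat separately the parity obstruction in the bipartite case (only one straddling pattern occurs there) together with the finitely many degenerate small-diameter configurations. The second is the \emph{algebraic accounting}: a naive count across a single pair of layers only delivers weaker surrogates such as $c_i+b_{i-1}\ge\lambda+4$, and extracting precisely $c_i-b_i\ge c_{i-1}-b_{i-1}+\lambda+2$ requires bringing in layers $i-2$ and $i+1$ and using the monotonicity relations to turn a sum into the desired difference; getting this bookkeeping exactly right is, I expect, the genuinely delicate part.
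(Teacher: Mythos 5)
The paper does not prove this statement --- it is quoted from Terwilliger \cite{terwilliger} and \cite[Theorem 5.2.1]{BCN} --- so there is no in-paper argument to compare yours against, and I judge the proposal on its own terms. Your $i=1$ case is fine (it is indeed an identity, from $a_1+b_1+c_1=k$), and the local count around a straddling quadrangle is correct as far as it goes: common neighbours of $q_1,q_2$ do lie in $N_{i-1}(x)\cup N_i(x)$, and $c_i\ge\alpha+2$, $b_{i-1}\ge\beta+2$, whence $c_i+b_{i-1}\ge\lambda+4$. But the proposal stops exactly where the theorem begins, and both steps you yourself flag as ``carrying most of the difficulty'' are genuine gaps, not routine verifications.

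First, the positioning step is unsupported. The number of induced quadrangles through a given edge, or realizing a given distance profile relative to a base vertex, is not in general determined by the intersection array: one quadrangle forces $\mu\ge 2$, but it does not force every pair at distance $2$ to have two \emph{non-adjacent} common neighbours, so ``sliding along a geodesic using positivity of intersection numbers'' does not produce the configuration $q_1,q_3\in N_{i-1}(x)$, $q_2,q_4\in N_i(x)$ for every $x$ and every $i$. Second, and more seriously, the algebraic step cannot be completed with the tools you name. The monotonicity relations $c_{i-1}\le c_i$ and $b_i\le b_{i-1}$ bound $c_{i-1}+b_i$ \emph{above} by $c_i+b_{i-1}$, which is the wrong direction for upgrading $c_i+b_{i-1}\ge\lambda+4$ to the target $c_i+b_{i-1}\ge c_{i-1}+b_i+\lambda+2$; the latter is strictly stronger whenever $c_{i-1}+b_i>2$, i.e.\ in essentially all cases. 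What is actually required is to exhibit, for each of the $c_{i-1}$ neighbours of $q_1$ in $N_{i-2}(x)$ and each of the $b_i$ neighbours of $q_2$ in $N_{i+1}(x)$, further \emph{distinct} vertices contributing to $c_i$ or to $b_{i-1}$ beyond the $\lambda+4$ already counted. That injection is the heart of Terwilliger's proof, and it is entirely absent here; as written, the proposal establishes only the weaker surrogate it concedes, not the theorem.
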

We also need the following result regarding the increase $c_i-c_{i-1}$.
\begin{theorem}[{\cite[Theorem 5.4.1]{BCN}}]
Let $X$ be a distance-regular graph of diameter $d\geq 3$. If $\mu\geq 2$, then either $c_3\geq 3\mu/2$, or $c_3\geq \mu+b_2$ and $d=3$.
\end{theorem}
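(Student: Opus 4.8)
The plan is to study the local configuration around a pair of vertices at distance $3$; such a pair exists because $d\ge 3$. Fix $x,y$ with $\dist(x,y)=3$ and a geodesic $x-w-z-y$, and set
\[A:=N(x)\cap N(z),\qquad \Delta:=N(y)\cap N_2(x),\qquad P:=N(x)\cap N_2(y).\]
Because $\dist(x,z)=2$ we have $|A|=c_2=\mu\ge 2$ and $w\in A$; because $y\in N_3(x)$ we have $|\Delta|=c_3$ and $z\in\Delta$; symmetrically $|P|=c_3$. A short distance argument shows $A\subseteq P$, and that for every $u\in\Delta$ the $\mu$ common neighbours in $N(u)\cap N(x)$ all lie in $P$; hence each vertex of $\Delta$ has exactly $\mu$ neighbours in $P$ and, symmetrically, each vertex of $P$ has exactly $\mu$ neighbours in $\Delta$. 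In particular $z$ is adjacent to all of $A$, and for every $a\in A$ the set $N(a)\cap N(y)$ is a $\mu$-element subset of $\Delta$ containing $z$.

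I would first record that counting edges between $A$ and $\Delta$ gives $\mu^{2}=e(A,\Delta)\le\mu c_{3}$, so $c_{3}\ge\mu$, and then sharpen this. Any two of the sets $N(a)\cap N(y)$, $a\in A$, meet in $N(a)\cap N(a')\cap N(y)$, where $N(a)\cap N(a')$ --- the common neighbours of $a$ and $a'$, of size $\mu$ when $\dist(a,a')=2$ and $\lambda$ when $a\sim a'$ --- always contains the vertex $x\notin N(y)$; so two such sets with $\dist(a,a')=2$ overlap in at most $\mu-1$ vertices, already forcing $c_3\ge\mu+1$ whenever such a pair exists. To reach $c_{3}\ge\tfrac32\mu$ one must combine all $\mu$ of these sets at once with the constraint that no vertex of $\Delta$ has more than $\mu$ neighbours in $A$, and here I would split on whether some pair of $A$ is non-adjacent. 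If it is, then $a-x-a'-z$ is an induced quadrangle, so $X$ contains one, and Terwilliger's inequality (Theorem~\ref{thm:ter-inequality}), in the form $c_{3}-b_{3}\ge c_{2}-b_{2}+\lambda+2$ and $c_{2}-b_{2}\ge 1-b_{1}+\lambda+2$, supplies the leverage needed to account for the vertices of $\Delta$ not covered by the sets $N(a)\cap N(y)$. If $A$ is a clique, then $\{x\}\cup A$ and $\{z\}\cup A$ are $(\mu+1)$-cliques meeting in $A$, and one argues directly about how the sets $N(a)\cap N(y)$ spread through $\Delta$; the sub-case in which $X$ contains no induced quadrangle at all is very restrictive and is disposed of separately.

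The upshot should be $c_{3}\ge\tfrac32\mu$ unless these estimates are essentially tight, and in that tight regime the relevant induced subgraphs become complete; tracing $b_{3}$ through the inequalities then forces $b_{3}=0$, i.e.\ $d=3$, leaving the residual bound $c_{3}\ge\mu+b_{2}$. I expect the main obstacle to be exactly this final squeezing: the naive union bound over the sets $N(a)\cap N(y)$ only delivers $c_{3}\ge\mu+1$, so obtaining the constant $\tfrac32$ and cleanly isolating the diameter-$3$ exception requires careful bookkeeping, with the clique case the most delicate point.
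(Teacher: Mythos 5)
This statement is imported verbatim from \cite[Theorem 5.4.1]{BCN}; the paper gives no proof of it, so I can only judge your proposal on its own terms. Your setup is the standard and correct one: the sets $A$, $P$, $\Delta$ are well chosen, the distance computations ($A\subseteq P$, the $\mu$-regular bipartite structure between $P$ and $\Delta$, $z\in N(a)\cap N(y)\subseteq\Delta$ for all $a\in A$) are all right, and the conclusions you actually derive --- $c_3\geq\mu$ and $c_3\geq\mu+1$ when $A$ contains a pair at distance $2$ --- are correct. But the theorem begins where your argument stops. The factor $3/2$ and the exceptional alternative ``$d=3$ and $c_3\geq\mu+b_2$'' are the entire content of the result, and for these you offer only that Terwilliger's inequality ``supplies the leverage'' in one case and that ``one argues directly'' in the other, while candidly admitting that the union bound only gives $\mu+1$ and that the remaining squeezing is the main obstacle. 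That is a statement of the problem, not a proof.

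Moreover, the one concrete tool you name for the non-clique case cannot do the job. Theorem~\ref{thm:ter-inequality} yields $c_3\geq c_2+(\lambda+2)-(b_2-b_3)$, and this can be far weaker than $\tfrac32\mu$: for a Hadamard graph (intersection array $\{4\gamma,4\gamma-1,2\gamma,1;1,2\gamma,4\gamma-1,4\gamma\}$, so $\lambda=0$, $\mu=2\gamma$, $b_2=2\gamma$, $b_3=1$) it gives only $c_3\geq 3$, whereas the theorem asserts $c_3\geq 3\gamma$; and this example lies squarely in your ``$A$ contains a non-adjacent pair'' case. So the missing step is not bookkeeping but a genuinely different counting argument (one has to exploit the intersections of \emph{all} the sets $N(a)\cap N(y)$ through $z$ simultaneously, which is what the proof in \cite{BCN} does). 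The clique case, the quadrangle-free sub-case, and the derivation of $b_3=0$ and of the bound $c_3\geq\mu+b_2$ in the tight regime are likewise asserted rather than argued. As it stands the proposal establishes only $c_3\geq\mu+1$ under an extra hypothesis, not the theorem.
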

\begin{corollary}\label{cor:c3-mu}
Let $X$ be a distance-regular graph of diameter $d\geq 3$. If $\mu\geq 2$, then $c_3>\mu$.
\end{corollary}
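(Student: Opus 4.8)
The plan is to derive this immediately from the preceding dichotomy (\cite[Theorem 5.4.1]{BCN}), which asserts that under $\mu\geq 2$ either $c_3\geq 3\mu/2$ or ($c_3\geq \mu+b_2$ and $d=3$). So I would split into these two cases and check strict inequality in each.

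In the first case, since $\mu\geq 2>0$ we have $3\mu/2>\mu$, hence $c_3\geq 3\mu/2>\mu$. In the second case it suffices to observe that $b_2\geq 1$: indeed, since $X$ has diameter $d=3$, there is a vertex at distance $3$ from any fixed vertex, so $k_3\geq 1$, and then from the identity $k_2 b_2 = k_3 c_3$ together with $c_3\geq c_1=1$ and $k_2,k_3\geq 1$ we get $b_2\geq 1$. Therefore $c_3\geq \mu+b_2\geq \mu+1>\mu$.

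There is essentially no obstacle here; the only point requiring a word of justification is the positivity $b_2\geq 1$, which is the standard fact that $b_i\geq 1$ for all $0\leq i\leq d-1$ in a distance-regular graph of diameter $d$ (equivalently, $N_{i+1}(v)\neq\varnothing$). Combining the two cases yields $c_3>\mu$ unconditionally under the hypotheses.
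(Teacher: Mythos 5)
Your proof is correct and follows exactly the route the paper intends: the corollary is stated immediately after the dichotomy of \cite[Theorem 5.4.1]{BCN} and follows from it by the two-case check you give, with $3\mu/2>\mu$ in the first case and $b_2\geq 1$ (valid since $d\geq 3$) in the second. The paper leaves this verification implicit, so there is nothing to add.
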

A distance-regular graph $X$ of diameter $d$ has precisely $d+1$ distinct eigenvalues. Denote these eigenvalues by $\theta_0 = k> \theta_1> \ldots > \theta_d$. They are the eigenvalues of the tridiagonal \textit{intersection matrix} below.
 \[L_1 = \left(\begin{matrix} 
a_0 & b_0 & 0 & 0 & ... \\ 
c_1 & a_1 & b_1 & 0 &...\\
0& c_2 & a_2 & b_2 & ...\\
...& & \vdots & & ...\\
...& & 0& c_d & a_d 
\end{matrix}\right).\]
For an eigenvalue $\theta$, consider the sequence $(u_{i}(\theta))_{i=0}^{d}$ defined by the relations
\[ u_{0}(\theta) = 1, \qquad u_{1}(\theta) = \frac{\theta}{k},\]
\[ c_{i}u_{i-1}(\theta)+a_iu_i(\theta)+b_iu_{i+1}(\theta) = \theta u_{i}(\theta), \quad \text{for } i=1, 2, ..., d-1, \]
\[ c_du_{d-1}(\theta)+a_du_{d}(\theta) = \theta u_{d}(\theta).\]
The vector $u = (u_{0}(\theta), u_{1}(\theta), ..., u_{d}(\theta))^T$ is an eigenvector of $L_1$ corresponding to~$\theta$.

\begin{definition}
The sequence $(u_{i}(\theta))_{i=0}^{d}$ is called the \textit{standard sequence} of $X$ corresponding to the eigenvalue $\theta$.
\end{definition}

We denote by $f_i$ the \textit{multiplicity} of the eigenvalue $\theta_i$ of $X$. Since $X$ is a connected graph, $f_0 = 1$. In general, the multiplicities $f_i$ can be computed using the Biggs formula.

\begin{theorem}[Biggs \cite{biggs}, see {\cite[Theorem 4.1.4]{BCN}}] The multiplicity of an eigenvalue $\theta$ of a distance-regular graph $X$ can be expressed as
\[ f(\theta) = \frac{n}{\sum\limits_{i=0}^{d}k_iu_i(\theta)^2}.\]

\end{theorem}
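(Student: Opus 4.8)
The plan is to realize $f(\theta)$ as the trace of the orthogonal projection onto the $\theta$-eigenspace of $X$ and to evaluate its diagonal entries using distance-regularity. Let $A$ be the adjacency matrix of $X$ acting on $\mathbb{R}^{V(X)}$, let $V_\theta=\{x : Ax=\theta x\}$, and let $E$ be the matrix of the orthogonal projection onto $V_\theta$. Then $E$ is symmetric, $E^2=E$, $AE=\theta E$, and $f(\theta)=\dim V_\theta=\operatorname{rank}(E)=\operatorname{tr}(E)=\sum_{u\in V(X)}E_{uu}$. Thus it suffices to show that $E_{uu}=\bigl(\sum_{i=0}^d k_iu_i(\theta)^2\bigr)^{-1}$ for each vertex $u$.

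The first substantial step is to observe that $E_{xy}$ depends only on $\dist(x,y)$. Since $A$ has $d+1$ distinct eigenvalues, $E$ is a polynomial in $A$ (its Lagrange interpolation at the spectrum), so it is enough to check the claim for each power $A^\ell$. Writing $A_i$ for the $0$--$1$ matrix with $(A_i)_{xy}=1$ iff $\dist(x,y)=i$ (so $A_0=I$, $A_1=A$), distance-regularity gives the identity $AA_i=b_{i-1}A_{i-1}+a_iA_i+c_{i+1}A_{i+1}$ (with $A_{-1}=A_{d+1}=0$); by induction every $A^\ell$ is a linear combination of $A_0,\dots,A_d$, and every such combination has $(x,y)$-entry a function of $\dist(x,y)$ alone. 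So write $E_{xy}=\gamma_{\dist(x,y)}$ for reals $\gamma_0,\dots,\gamma_d$.

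Next I would identify $(\gamma_i)$ with a multiple of the standard sequence and then normalize it. Fix a vertex $u$. Reading off the $(u,v)$-entry of $AE=\theta E$ when $\dist(u,v)=i\ge 1$: among the $k$ neighbours of $u$, exactly $c_i$ lie in $N_{i-1}(v)$, $a_i$ in $N_i(v)$ and $b_i$ in $N_{i+1}(v)$, so
\[ c_i\gamma_{i-1}+a_i\gamma_i+b_i\gamma_{i+1}=\theta\gamma_i\qquad(1\le i\le d),\]
where the term $b_d\gamma_{d+1}$ is read as $0$; taking $v=u$ gives $k\gamma_1=\theta\gamma_0$. This is exactly the defining recurrence and initialization of the standard sequence scaled by $\gamma_0$, so $\gamma_i=\gamma_0u_i(\theta)$ for every $i$. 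Now use $E^2=E$ together with $E=E^T$: for any $u$,
\[ \gamma_0=E_{uu}=(E^2)_{uu}=\sum_{v}E_{uv}^2=\sum_{i=0}^d k_i\gamma_i^2=\gamma_0^2\sum_{i=0}^d k_iu_i(\theta)^2.\]
Since $\theta$ is an eigenvalue of $X$, $E\neq 0$, hence $\operatorname{tr}(E)=n\gamma_0\neq 0$; therefore $\gamma_0=\bigl(\sum_{i=0}^d k_iu_i(\theta)^2\bigr)^{-1}$, and summing the diagonal of $E$ over all $n$ vertices yields $f(\theta)=\operatorname{tr}(E)=n\gamma_0=n\big/\sum_{i=0}^d k_iu_i(\theta)^2$.

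I expect the only real obstacle to be the second step: showing that the entries of $E$ (equivalently, of every power of $A$) depend solely on the distance between the two vertices. This is precisely the place where the combinatorial structure of distance-regular graphs is invoked; everything else is elementary manipulation of a single symmetric idempotent. An alternative to the induction above is to appeal to the Bose--Mesner algebra directly --- the distance matrices $A_0,\dots,A_d$ span a commutative matrix algebra whose primitive idempotents are the spectral projections of $A$, so $E$ automatically lies in their span --- and I would use whichever packaging is cheapest given what the paper has already developed.
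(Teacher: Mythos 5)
Your proof is correct and complete: realizing $f(\theta)$ as the trace of the spectral idempotent $E$, showing $E$ lies in the span of the distance matrices so that $E_{xy}=\gamma_{\dist(x,y)}$, matching $(\gamma_i)$ to the standard sequence via $AE=\theta E$, and normalizing with $E^2=E=E^T$ is exactly the classical argument. The paper cites this result (Biggs; \cite[Theorem 4.1.4]{BCN}) without proof, and your write-up reproduces the standard textbook proof with no gaps.
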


\subsection{Geometric distance-regular graphs}\label{sec:geometric}

Let $X$ be a distance-regular graph, and $\theta_{\min}$ be its smallest eigenvalue. Delsarte proved in~\cite{Delsarte} that any clique $C$ in $X$ satisfies $\displaystyle{|C|\leq 1-\frac{k}{\theta_{\min}}}$. A clique in $X$ of size $\displaystyle{1-\frac{k}{\theta_{\min}}}$ is called a \textit{Delsarte clique}.
\begin{definition}\label{def-geom}
A distance-regular graph $X$ is called \textit{geometric} if there exists a collection $\mathcal{C}$ of Delsarte cliques such that every edge of $X$ lies in exactly one clique of $\mathcal{C}$.
\end{definition}

More generally, we say that a graph contains a \textit{clique geometry}, if there exists a collection $\mathcal{C}_0$ of maximal cliques, such that every edge is contained in exactly one clique of $\mathcal{C}_0$. Metsch proved that a graph satisfying rather modest assumptions contains a clique geometry.

\begin{theorem}[Metsch {\cite[Result 2.2]{Metsch}}]\label{Metsch}
Let $\mu\geq 1$, $\lambda^{(1)}, \lambda^{(2)}$ and $m$ be integers. Assume that $X$ is a connected graph with the following properties.
\begin{enumerate}
\item Each pair of 
adjacent vertices has at least $\lambda^{(1)}$ and at most $\lambda^{(2)}$ common neighbors.
\item Each pair of non-adjacent vertices has at most $\mu$ common neighbors.
\item \quad $2\lambda^{(1)}-\lambda^{(2)}>(2m-1)(\mu-1)-1$.
\item Every vertex has fewer than $(m+1)(\lambda^{(1)}+1)-\frac{1}{2}m(m+1)(\mu-1)$ neighbors.
\end{enumerate}

Define a \emph{line} to be a maximal clique $C$ satisfying $|C|\geq \lambda^{(1)}+2-(m-1)(\mu-1)$. Then every vertex is on at most $m$ lines, and every pair of adjacent vertices lies in a unique line. 

\end{theorem}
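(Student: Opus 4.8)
The plan is to follow the classical Bose--Laskar strategy in the refined form due to Metsch: the lines through a vertex $p$ (restricted to $N(p)$) will be the classes of an equivalence relation on $N(p)$, and a short list of counting estimates --- for which hypotheses (3) and (4) are exactly calibrated --- will do the rest. First I would localise at a vertex $p$ and work inside $X(p)$, which has $k$ vertices. Every common neighbour of an edge $\{p,u\}$ lies in $N(p)$, so $\lambda^{(1)}\le|N(u)\cap N(p)|\le\lambda^{(2)}$ for each $u\in N(p)$; two adjacent $u,v\in N(p)$ have $|N(u)\cap N(v)\cap N(p)|\ge\lambda^{(1)}-1$ (discard $p$); and two non-adjacent $u,v\in N(p)$ have $|N(u)\cap N(v)\cap N(p)|\le\mu-1$, because $p$ together with those common neighbours are all common neighbours of the non-adjacent pair $\{u,v\}$ in $X$. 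Write $\ell:=\lambda^{(1)}+2-(m-1)(\mu-1)$ for the size threshold in the definition of a line.

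Next I would introduce on $N(p)$ the relation $u\equiv_p v$: either $u=v$, or $u\sim v$ and $|N(u)\cap N(v)\cap N(p)|\ge\tau$ for a suitable threshold $\tau$ comparable to $\ell$. It is reflexive and symmetric, and the heart of the matter is transitivity. Given $u\equiv_p v\equiv_p w$ with $u,v,w$ distinct: if $u\not\sim w$, then the sets $A=N(u)\cap N(v)\cap N(p)$ and $B=N(v)\cap N(w)\cap N(p)$ have at least $\tau$ elements each, both lie in $N(v)\cap N(p)$, their intersection lies in $N(u)\cap N(w)\cap N(p)$ and so has at most $\mu-1$ elements, and $u,w$ lie in $N(v)\cap N(p)$ but in neither $A$ nor $B$; inclusion--exclusion forces $\lambda^{(2)}\ge|N(v)\cap N(p)|\ge 2\tau-(\mu-1)+2$, which for the right $\tau$ is exactly the inequality ruled out by hypothesis (3). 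The remaining case, $u\sim w$ with $|N(u)\cap N(w)\cap N(p)|<\tau$, is the delicate one: one must analyse how the common neighbours of $u,v$ and of $v,w$ are positioned relative to $w$ and to $u$, and hypotheses (1)--(3) are again precisely what is needed. Granting transitivity, $\equiv_p$ is an equivalence relation, its classes are cliques in $X(p)$, and they partition $N(p)$.

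Then I would show the classes are lines. Since $|N(u)\cap N(p)|\ge\lambda^{(1)}$ for each $u\in N(p)$, averaging over the neighbours of $u$ in $X(p)$ produces a $v$ with $u\equiv_p v$, so no class is a singleton; pushing the count further, and using the $(\mu-1)$-bound to prevent a class from being split into many small pieces, shows that every class has at least $\ell-1$ elements. Adjoining $p$ and extending to a maximal clique turns each class into a clique of size $\ge\ell$ --- a line through $p$ --- which contains every edge from $p$ into that class; so every edge of $X$ lies in a line. For uniqueness, I would use that a clique in $X(x)$ large enough to extend to a line lies inside a single $\equiv_x$-class, so that the line through an edge $\{x,y\}$ must contain the whole $\equiv_x$-class through $y$; two lines $C_1,C_2$ through $\{x,y\}$ would then meet in at least that class together with $x$, forcing $|C_1\cap C_2|\ge\ell>\mu$, while maximality supplies non-adjacent $z_1\in C_1\setminus C_2$ and $z_2\in C_2\setminus C_1$ with $C_1\cap C_2\subseteq N(z_1)\cap N(z_2)$, so $|C_1\cap C_2|\le\mu$ --- a contradiction (the inequality $\ell>\mu$ itself follows from (3)).

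Finally, by uniqueness the lines through $p$ partition $N(p)$. Using the bound of at most $\mu-1$ common neighbours of an edge $\{p,u\}$ inside any other line through $p$ (pick a vertex of that line non-adjacent to $u$, which exists by maximality), one can order the lines as $L_1,\dots,L_s$ so that $|L_i\cap N(p)|\ge\lambda^{(1)}+1-(i-1)(\mu-1)$; then $s\ge m+1$ would force $k=\sum_{i=1}^{s}|L_i\cap N(p)|\ge(m+1)(\lambda^{(1)}+1)-\frac{1}{2}m(m+1)(\mu-1)$, contradicting (4), so $s\le m$. The hard part will be the quantitative core: the transitivity of $\equiv_p$ in the adjacent sub-case, the lower bound $\ell-1$ on class sizes together with the fine-tuning of $\tau$ against $\ell$ that uniqueness needs, and the ordered bounds $|L_i\cap N(p)|\ge\lambda^{(1)}+1-(i-1)(\mu-1)$. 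Each reduces to an inequality among $\lambda^{(1)},\lambda^{(2)},\mu,m$ with essentially no room to spare, and hypotheses (3) and (4) are manufactured precisely to supply it; everything else is bookkeeping around these estimates.
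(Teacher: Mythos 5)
This statement is quoted in the paper as a known result of Metsch (his Result 2.2) and no proof of it appears in the paper, so there is no internal argument to compare yours against; I can only assess your sketch on its own terms. Your overall architecture is the right one (it is the Bose--Laskar/Metsch strategy): localize at a vertex $p$, manufacture large cliques out of common-neighbour sets, get uniqueness of the line through an edge from hypothesis (3), and get the bound of $m$ lines per vertex from hypothesis (4) via the ordered estimates $|L_i\cap N(p)|\ge\lambda^{(1)}+1-(i-1)(\mu-1)$ (whose sum matches (4) exactly, as you note; the cruder bound $|L_i\cap N(p)|\ge\ell-1$ with disjointness would only work for $m\le 2$). Your uniqueness argument is sound --- indeed one can shortcut it: two lines $C_1,C_2$ through an edge $\{x,y\}$ both sit inside $\{x,y\}\cup(N(x)\cap N(y))$, so $|C_1\cap C_2|\ge 2\ell-\lambda^{(2)}-2>\mu$ by (3), against $|C_1\cap C_2|\le\mu$ from maximality. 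Your non-adjacent transitivity case also checks out with $\tau=\ell-2$.

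However, there are genuine gaps precisely where you flag ``the hard part.'' First, the inequality in your opening paragraph, that adjacent $u,v\in N(p)$ satisfy $|N(u)\cap N(v)\cap N(p)|\ge\lambda^{(1)}-1$, is false in general: the $\lambda^{(1)}$ common neighbours of $u$ and $v$ guaranteed by (1) need not lie in $N(p)$, and your later ``averaging'' step producing large $\equiv_p$-classes implicitly leans on this. Second, transitivity of $\equiv_p$ when $u,v,w$ are pairwise adjacent is not a routine refinement of the non-adjacent case: the same inclusion--exclusion only gives $|N(u)\cap N(w)\cap N(p)|\ge 2\tau-\lambda^{(2)}$, which by (3) with $\tau=\ell-2$ is merely $>\mu-2$, far below the threshold $\tau$; so the relation as you define it is not obviously transitive, and this (equivalently, that the surviving common neighbours of $\{p,u_i\}$ outside $L_1\cup\dots\cup L_{i-1}$ actually form a clique) is the technical core of Metsch's theorem. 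Third, the class-size lower bound $\ell-1$ and the ordered bounds $|L_i\cap N(p)|\ge\lambda^{(1)}+1-(i-1)(\mu-1)$ are asserted rather than derived. What you have is a correct plan with the decisive estimates deferred, not a proof.
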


The following sufficient condition for being geometric is a slightly reformulated version of a result from \cite{Koolen-survey}.
\begin{proposition}[{\cite[Proposition 9.8]{Koolen-survey}}]\label{suff-cond}
Let $X$ be a distance-regular graph of diameter $d\geq 2$. Assume there exist a positive integer $m$ and a clique geometry $\mathcal{C}$ of $X$ such that  every vertex $u$ is contained in exactly $m$ cliques of $\mathcal{C}$. If $k\geq m^2$, then $X$ is geometric with smallest eigenvalue $-m$.   
\end{proposition}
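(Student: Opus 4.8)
The plan is to encode the clique geometry by a vertex--clique incidence matrix and extract the smallest eigenvalue from positive semidefiniteness, then reduce everything to a single counting statement. Let $N$ be the $n\times|\mathcal{C}|$ matrix with $N_{v,C}=1$ iff $v\in C$. Since every edge lies in a unique clique of $\mathcal{C}$ and every vertex lies in exactly $m$ cliques, distinct cliques of $\mathcal{C}$ meet in at most one vertex, and a direct check gives $NN^{T}=A+mI$, where $A$ is the adjacency matrix of $X$. As $NN^{T}\succeq0$, every eigenvalue of $A$ is at least $-m$; in particular $\theta_{\min}\geq-m$, and $\theta_{\min}=-m$ holds if and only if $\operatorname{rank}N<n$, i.e.\ $-m\in\spec(X)$. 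Note also that, since $X$ is $k$-regular, $\mathbf 1$ is an eigenvector of $NN^{T}$ with eigenvalue $k+m$.

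I would first reduce the whole statement to the single claim that $\theta_{\min}=-m$. Indeed, suppose this is known. Then Delsarte's bound says every clique of $X$ has at most $1-k/\theta_{\min}=1+k/m$ vertices. Fix a vertex $v$ and let $C_{1},\dots,C_{m}$ be the cliques of $\mathcal{C}$ through $v$; since the sets $C_{i}\setminus\{v\}$ partition $N(v)$ we get $\sum_{i=1}^{m}(|C_{i}|-1)=k$, and as each summand is at most $k/m$ while the sum is $k=m\cdot(k/m)$, each $C_{i}$ has exactly $1+k/m$ vertices (in particular $m\mid k$). By connectedness every clique of $\mathcal{C}$ has size $1+k/m=1-k/\theta_{\min}$, i.e.\ is a Delsarte clique; since $\mathcal{C}$ covers every edge exactly once, $X$ is geometric with smallest eigenvalue $-m$.

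It remains to prove $\theta_{\min}=-m$, for which it suffices to show $\operatorname{rank}N<n$, and for that it suffices to show $|\mathcal{C}|<n$. Double-counting incident $(v,C)$ pairs gives $\sum_{C\in\mathcal{C}}|C|=mn$, and double-counting ordered pairs of distinct adjacent vertices against the clique containing the edge gives $\sum_{C\in\mathcal{C}}|C|(|C|-1)=nk$, hence $\sum_{C}|C|^{2}=n(m+k)$. By Cauchy--Schwarz, $(mn)^{2}=\bigl(\sum_{C}|C|\bigr)^{2}\leq|\mathcal{C}|\sum_{C}|C|^{2}=|\mathcal{C}|\,n(m+k)$, so $|\mathcal{C}|\geq m^{2}n/(m+k)$, with equality precisely when all cliques of $\mathcal{C}$ have a common size. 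This is where the hypothesis $k\geq m^{2}$ enters: $m^{2}n/(m+k)<n$ is equivalent to $k>m^{2}-m$, which holds. Thus if all cliques of $\mathcal{C}$ have the same size, then $|\mathcal{C}|=m^{2}n/(m+k)<n$, so $\operatorname{rank}N\leq|\mathcal{C}|<n$, the matrix $A+mI=NN^{T}$ is singular, $-m\in\spec(X)$, and $\theta_{\min}=-m$ as desired.

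The genuine obstacle, therefore, is to prove that the cliques of $\mathcal{C}$ all have the same size, without circular appeal to $\theta_{\min}=-m$. I would attempt this via edge-regularity: for an edge $vw$ contained in the clique $C\in\mathcal{C}$, exactly $|C|-2$ of the $\lambda$ common neighbours of $v$ and $w$ lie in $C$, and the remaining $\lambda-|C|+2$ ``external'' ones are in natural bijection with the pairs consisting of a clique of $\mathcal{C}$ through $v$ other than $C$ and a clique of $\mathcal{C}$ through $w$ other than $C$ that intersect. Comparing these local counts around the graph, combined with the per-vertex identity $\sum_{C\ni v}(|C|-1)=k$ and the inequality $k\geq m^{2}$, should force every clique to have size $1+k/m$. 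Once this uniformity is established, the three preceding paragraphs complete the proof.
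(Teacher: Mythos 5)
First, note that the paper does not prove this proposition at all: it is quoted from \cite{Koolen-survey} (Proposition 9.8), so there is no in-paper argument to compare against, and your proposal has to stand on its own.

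Your first three paragraphs are correct and well organized: $NN^{T}=A+mI$ gives $\theta_{\min}\geq-m$; assuming $\theta_{\min}=-m$, the Delsarte bound together with the partition $N(v)=\bigsqcup_{i}(C_{i}\setminus\{v\})$ forces every clique of $\mathcal{C}$ to have size exactly $1+k/m$, so $X$ is geometric; and the counts $\sum_{C}|C|=mn$, $\sum_{C}|C|^{2}=n(m+k)$ are right. The problem is that the whole proof is then made to rest on the claim that all cliques of $\mathcal{C}$ have equal size, and that claim is exactly what you do not prove. Cauchy--Schwarz only gives the \emph{lower} bound $|\mathcal{C}|\geq m^{2}n/(m+k)$; to conclude $|\mathcal{C}|<n$ you need an \emph{upper} bound on $|\mathcal{C}|$, i.e.\ a lower bound on clique sizes. (Incidentally, equal sizes is more than you need: since $\sum_{C}|C|=mn$, it would suffice to know that every clique has more than $m$ vertices, for then $|\mathcal{C}|<n$ at once, without Cauchy--Schwarz.)

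The sketched repair in your last paragraph does not close this gap under the stated hypothesis. The edge count you describe gives, for an edge $\{v,w\}\subseteq C$, that $\lambda=(|C|-2)+e$ where $e$ is the number of intersecting pairs $(D,E)$ with $D\ni v$, $E\ni w$, $D,E\neq C$; all it yields is $0\leq e\leq(m-1)^{2}$, hence $\lambda+2-(m-1)^{2}\leq|C|\leq\lambda+2$, a window of width $(m-1)^{2}$ rather than a single value. Combining with $\lambda\geq k/m-1$ (from the largest clique through a vertex), the best one gets this way is $|C|\geq k/m+1-(m-1)^{2}$, which exceeds $m$ only when $k>m^{2}(m-1)$ --- a substantially stronger assumption than $k\geq m^{2}$. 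Equivalently: if $|\mathcal{C}|\geq n$ then some clique has at most $m$ vertices, and the resulting inequality $k/m-1\leq\lambda\leq(m-2)+(m-1)^{2}$ is only contradictory for $k>m^{2}(m-1)$. So under the hypothesis $k\geq m^{2}$ a genuinely different idea is needed to force $-m\in\spec(X)$ (or to lower-bound the minimum clique size), and your proposal does not supply it.
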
 

The converse holds without the $k\geq m^2$ assumption. 

\begin{lemma}\label{lem:eigenvalue-clique-geom} Let $X$ be a geometric distance-regular graph of diameter $d\geq 2$ with smallest eigenvalue $-m$. Let $\mathcal{C}$ be a Delsarte clique geometry. Then $m$ is an integer and every vertex belongs to precisely $m$ Delsarte cliques in $\mathcal{C}$.  
\end{lemma}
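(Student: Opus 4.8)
The plan is to work directly from the equitability of the Delsarte clique geometry. Fix a vertex $v$ and let $C_1, \dots, C_r$ be the Delsarte cliques in $\mathcal{C}$ through $v$. Since every edge at $v$ lies in exactly one $C_i$, the neighborhoods $C_i \setminus \{v\}$ partition $N(v)$; each has size $|C_i| - 1 = -k/\theta_{\min}$, which is therefore a common value, call it $\ell$. Counting $|N(v)| = k$ gives $r\ell = k$, so $r = k/\ell = -\theta_{\min}$, independent of $v$. The content of the lemma is exactly the identities $\ell = -k/\theta_{\min}$ (which is the Delsarte bound, so already known for Delsarte cliques) and $r = -\theta_{\min}$; so once I know $-\theta_{\min} = m$ is the number of cliques through each vertex, integrality of $m$ is automatic.

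First I would recall the standard fact that a Delsarte clique $C$ is a \emph{completely regular} code of covering radius $d-1$ with a very rigid intersection pattern: for a vertex $v$ at distance $i$ from $C$ (meaning $\min_{c \in C}\dist(v,c) = i$), the number of vertices of $C$ at distance $i$ from $v$ is a constant depending only on $i$, and in particular for $i=0$ every vertex $v \in C$ sees exactly $|C|-1$ other vertices of $C$. The key step is to produce the eigenvalue $-m$ where $m = r$ is the clique count. The clean way is the quotient/interlacing argument: consider the characteristic vectors $\chi_{C}$ of the cliques in $\mathcal{C}$, or equivalently the clique–vertex incidence structure. Alternatively, and more in the spirit of Delsarte's original proof, one uses that for any clique $C$ meeting the Delsarte bound, the bound $|C| \le 1 - k/\theta_{\min}$ is tight \emph{iff} the inner distribution vector of $C$ is orthogonal to all eigenspaces except $\theta_0$ and $\theta_{\min}$; tightness for every clique in $\mathcal C$ then forces the all-ones-on-a-clique vectors to lie in the span of the $\theta_0$- and $\theta_{\min}$-eigenspaces, and a short computation with the incidence matrix $N$ (rows = vertices, columns = cliques) gives $N N^{\mathsf T} = A + r I$ where $A$ is the adjacency matrix. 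Reading off eigenvalues of $NN^{\mathsf T} \succeq 0$ forces $\theta_{\min} + r \ge 0$, i.e. $r \ge -\theta_{\min} = m$; combined with $r\ell = k$ and the Delsarte bound $\ell \le -k/\theta_{\min}$, which gives $r \ge -\theta_{\min}$ again with equality iff $\ell$ meets the bound, we get $r = m$ exactly, and $m = r$ is an integer.

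The main obstacle is making the eigenvalue bookkeeping airtight: one has to be careful that $NN^{\mathsf T} = A + rI$ really holds (this uses that every \emph{edge} is in a unique clique, so off-diagonal entries of $NN^{\mathsf T}$ count exactly the one common clique of two adjacent vertices, and are $0$ for non-adjacent vertices precisely because two cliques meet in at most one vertex — which itself needs $\mu \ge 1$ / the geometry axioms, valid here since $d \ge 2$), and that $r$ is genuinely vertex-independent before one knows it equals $m$. The second subtlety is ruling out $r > m$: here the positive semidefiniteness of $NN^{\mathsf T}$ alone only gives $r \ge m$, so one needs the reverse inequality from $|C_i| - 1 \le -k/\theta_{\min}$ together with the partition count $r(|C_i|-1) = k$; combining, $k = r(|C_i|-1) \le r(-k/\theta_{\min})$, hence $r \ge -\theta_{\min} = m$, and substituting back shows $|C_i| - 1 = k/r \le -k/\theta_{\min} \le k/r$, forcing equality throughout, so $r = m$ and every clique in $\mathcal{C}$ has exactly $1 - k/\theta_{\min}$ vertices. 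I would close by noting integrality of $m$ follows since $m = r$ is a count of cliques.
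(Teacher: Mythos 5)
Your first paragraph is already a complete proof and it is exactly the paper's argument: the cliques of $\mathcal{C}$ through $v$ partition $N(v)$ into sets of size $|C_i|-1 = k/m$ (equality, not just an upper bound, because the members of $\mathcal{C}$ are \emph{Delsarte} cliques, i.e.\ they attain the Delsarte bound by definition), so $r\cdot(k/m)=k$ gives $r=m$, and $m$ is an integer because it counts cliques. Everything after that is unnecessary machinery, and the final ``forcing equality throughout'' step is actually broken as written: both the positive semidefiniteness of $NN^{\mathsf T}$ and the chain $k=r(|C_i|-1)\le r(-k/\theta_{\min})$ yield the \emph{same} inequality $r\ge m$, so nothing in that paragraph pins down $r\le m$ --- the inequality $-k/\theta_{\min}\le k/r$ you invoke is equivalent to $r\le m$ and has not been established at that point. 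The gap is harmless only because the reverse direction is free from the definition of a Delsarte clique geometry, which is precisely what your opening paragraph (and the paper) uses; I would delete the incidence-matrix detour entirely.
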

\begin{proof} By the definition of a Delsarte clique, its size is $1+k/m$. Let $C_1, C_2, \ldots, C_t$ be the cliques in $\mathcal{C}$ which contain a vertex $v$. Since $\mathcal{C}$  is a clique geometry, any distinct $C_i$ and $C_j$ for $i, j\in [t]$ have only $v$ in their intersection, and every vertex adjacent to $v$ belongs to one of the cliques $C_1, C_2, \ldots, C_t$. Therefore, $k = t(|C_i|-1) = tk/m$.   
\end{proof}

In the case, when the smallest eigenvalue of a geometric distance-regular graph is $-2$, we can deduce that the graph is a line graph.

\begin{lemma}\label{m2-line}
Let $X$ be a geometric distance-regular graph with smallest eigenvalue $-2$. Then $X$ is the line graph $L(Y)$ for some graph $Y$.
\end{lemma}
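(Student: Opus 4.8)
The smallest eigenvalue being $-2$ means, by Lemma~\ref{lem:eigenvalue-clique-geom}, that $X$ carries a Delsarte clique geometry $\mathcal C$ in which \emph{every vertex lies on exactly two} cliques. The plan is to recover the graph $Y$ as the ``clique graph'' of this geometry: take $V(Y) = \mathcal C$, and join two cliques $C, C' \in \mathcal C$ by an edge of $Y$ precisely when they share a vertex of $X$. Each vertex $v$ of $X$ then determines the unordered pair $\{C_1(v), C_2(v)\}$ of the two cliques of $\mathcal C$ through it, and I claim the assignment $v \mapsto \{C_1(v), C_2(v)\}$ is an isomorphism from $X$ onto $L(Y)$.

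\textbf{Key steps.} First I would record the basic separation property: two distinct cliques of $\mathcal C$ meet in at most one vertex of $X$. Indeed, if they shared two vertices $u, w$, then the edge $uw$ would lie in two cliques of $\mathcal C$, contradicting that $\mathcal C$ is a clique geometry. Consequently $Y$ is a simple graph (at most one edge between any two of its vertices), and the map $v \mapsto \{C_1(v), C_2(v)\}$ lands in $E(Y)$. Second, injectivity: if two vertices $v \neq w$ gave the same pair $\{C_1, C_2\}$, then $C_1$ and $C_2$ would both contain $v$ and $w$, contradicting the separation property. Third, surjectivity: an edge of $Y$ is by definition a pair of cliques $\{C, C'\}$ with a (unique) common vertex $v$; since $v$ lies on exactly two cliques of $\mathcal C$, these must be $C$ and $C'$, so the edge is the image of $v$. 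Fourth, adjacency is preserved in both directions: $v$ and $w$ are adjacent in $X$ iff the edge $vw$ lies in some clique $C \in \mathcal C$, iff some $C \in \mathcal C$ contains both $v$ and $w$, iff $C$ is simultaneously one of the two cliques through $v$ and one of the two through $w$, iff the corresponding edges of $Y$ share the vertex $C$ of $Y$ — which is exactly adjacency in $L(Y)$. Combining these four points gives $X \cong L(Y)$.

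\textbf{Main obstacle.} There is essentially no serious obstacle here once Lemma~\ref{lem:eigenvalue-clique-geom} is in hand; the only points requiring care are the bookkeeping ones above — verifying that $Y$ is genuinely simple (no multiple edges, via the separation property) and that ``exactly two cliques per vertex'' is used in the surjectivity and adjacency arguments rather than merely ``at most two''. One should also note that the degenerate possibility $d=1$ (i.e.\ $X$ complete) does not arise, since a complete graph has smallest eigenvalue $-1 \neq -2$, so the hypothesis $d \geq 2$ of Lemma~\ref{lem:eigenvalue-clique-geom} is automatic.
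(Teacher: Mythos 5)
Your construction is exactly the one in the paper: take $V(Y)=\mathcal C$, join two cliques when they meet, use Lemma~\ref{lem:eigenvalue-clique-geom} to get that every vertex lies on exactly two cliques, and check that the induced map between $V(X)$ and $E(Y)$ is an adjacency-preserving bijection. The argument is correct and matches the paper's proof, with your write-up merely spelling out the injectivity/surjectivity bookkeeping a bit more explicitly.
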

\begin{proof}
Let $\mathcal{C}$ be a Delsarte clique geometry of $X$. 
Define the graph $Y$ with the set of vertices $V(Y) = \mathcal{C}$, in which a pair of distinct vertices $C_1, C_2 \in \mathcal{C}$ in $Y$ is adjacent if and only if $C_1\cap C_2 \neq \emptyset$. We claim that $L(Y) \cong X$. Indeed, since every edge of $X$ is in exactly one clique from $\mathcal{C}$, $|C_1\cap C_2| \leq 1$ for any distinct $C_1, C_2 \in \mathcal{C}$. So there is a well-defined map $f:E(Y)\rightarrow V(X)$. Moreover, by Lemma~\ref{lem:eigenvalue-clique-geom}, every vertex of $X$ is in exactly two cliques from $\mathcal{C}$, so $f$ is bijective. Additionally, a pair of edges in $Y$ share a vertex if and only if there is an edge between the corresponding vertices in $X$. Hence, $L(Y) \cong X$. 
\end{proof}

 Suppose that $X$ is a geometric distance-regular graph with a Delsarte clique geometry $\mathcal{C}$. Consider a Delsarte clique $C\in \mathcal{C}$. Assume $x\in X$ satisfies $\dist(x, C) = i$. Define 
 \begin{equation}
 \psi_i(C, x) := |\{y\in C \mid d(x, y) = i\}|.
 \end{equation}
By~\cite{delsarte-geom}, numbers $\psi_i(C, x)$ do not depend on $C$ and $x$, but only on the distance $\dist(x, C) = i$. Thus, we may define $\psi_i := \psi(C, x)$. 
 
 For $x, y\in X$ with $\dist(x, y) = i$ define 
 \begin{equation}
 \tau_i(x, y; \mathcal{C}) = |\{C\in \mathcal{C}\mid x\in C,\, d(y, C) = i-1\}|.
 \end{equation}
  Again, in \cite{delsarte-geom} it is shown that for a geometric distance-regular graph $X$ the number $\tau_i(x, y; \mathcal{C})$ does not depend on the pair $x,y$, but only on the distance $\dist(x, y) = i$. Therefore, we may define $\tau_i := \tau_i(x, y; \mathcal{C})$.
 
\begin{lemma}[Bang, Hiraki, Koolen {\cite[Proposition 4.2]{delsarte-geom}}]\label{lem:geometric-param}
Let $X$ be a geometric distance-regular graph of diameter $d$, with smallest eigenvalue $-m$. Then
\begin{enumerate}
\item $c_i = \tau_i\psi_{i-1}$, for $1\leq i\leq d$;
\item $\displaystyle{b_i = (m-\tau_i)\left(\frac{k}{m}+1-\psi_i\right)}$, for $1\leq i\leq d-1$.
\end{enumerate}
\end{lemma}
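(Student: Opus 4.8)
The plan is to fix two vertices $x,y$ with $\dist(x,y)=i$ and prove both identities by double counting the edges at $x$, organised according to the clique geometry $\mathcal{C}$. The starting point is that, by Lemma~\ref{lem:eigenvalue-clique-geom}, $x$ lies in exactly $m$ cliques of $\mathcal{C}$, each of size $1+k/m$, and these cliques pairwise meet only in $x$; hence every neighbour of $x$ belongs to a unique clique of $\mathcal{C}$ through $x$.

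First I would record the elementary ``clique versus point'' fact: if $C\in\mathcal{C}$ and $\dist(y,C)=j$, then, since any two vertices of $C$ are equal or adjacent, every vertex of $C$ is at distance $j$ or $j+1$ from $y$; moreover the number of vertices of $C$ at distance $j$ from $y$ equals $\psi_j$, so the number at distance $j+1$ equals $(1+k/m)-\psi_j$. Applied to a clique $C\in\mathcal{C}$ containing $x$: we have $\dist(y,C)\le\dist(y,x)=i$, and $\dist(y,C)\ge i-1$, since a vertex of $C$ at distance $\le i-2$ from $y$ would be a neighbour of $x$ forcing $\dist(y,x)\le i-1$. So each of the $m$ cliques through $x$ is at distance $i-1$ or $i$ from $y$; by definition exactly $\tau_i$ of them are at distance $i-1$, hence exactly $m-\tau_i$ are at distance $i$.

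For part (1), a neighbour $z$ of $x$ with $\dist(y,z)=i-1$ determines the clique $C_z\in\mathcal{C}$ with $x,z\in C_z$, and then $\dist(y,C_z)=i-1$ (it is $\le i-1$ via $z$ and $\ge i-1$ by the previous paragraph). Conversely, a clique $C\in\mathcal{C}$ through $x$ with $\dist(y,C)=i-1$ contains exactly $\psi_{i-1}$ vertices at distance $i-1$ from $y$, each of them a neighbour of $x$ (it is distinct from $x$ since $\dist(y,x)=i$). Since the cliques through $x$ partition the neighbours of $x$, summing over the $\tau_i$ relevant cliques yields $c_i=|N(x)\cap N_{i-1}(y)|=\tau_i\psi_{i-1}$. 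For part (2), with $1\le i\le d-1$, a neighbour $z$ of $x$ with $\dist(y,z)=i+1$ lies in a clique $C_z\in\mathcal{C}$ through $x$ with $\dist(y,C_z)=i$; no clique through $x$ at distance $i-1$ from $y$ contains a vertex at distance $i+1$ from $y$, while each of the $m-\tau_i$ cliques through $x$ at distance $i$ from $y$ contains exactly $(1+k/m)-\psi_i=\tfrac{k}{m}+1-\psi_i$ vertices at distance $i+1$ from $y$, all neighbours of $x$. Hence $b_i=(m-\tau_i)\bigl(\tfrac{k}{m}+1-\psi_i\bigr)$.

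The only point that needs care is the bookkeeping of $x$ itself: in each of its cliques, $x$ lies among the distance-$i$ vertices, so it is automatically excluded from the $\psi_{i-1}$-count used in (1) and included in the $\psi_i$-count used in (2) — which is exactly why the ``$+1$'' survives in $\tfrac{k}{m}+1-\psi_i$. Beyond that, everything is routine once one invokes (from~\cite{delsarte-geom}) that $\psi_j$ and $\tau_i$ depend only on the indicated distance; I do not expect any serious obstacle.
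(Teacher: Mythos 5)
The paper states this lemma only as a quotation of Bang--Hiraki--Koolen \cite[Proposition 4.2]{delsarte-geom} and gives no proof of its own, so there is nothing internal to compare against. Your double-counting argument is correct and complete: the key observations --- that every clique of $\mathcal{C}$ through $x$ lies at distance $i-1$ or $i$ from $y$, that such cliques partition $N(x)$, and that a clique at distance $j$ from $y$ splits as $\psi_j$ vertices at distance $j$ and $\tfrac{k}{m}+1-\psi_j$ at distance $j+1$ --- are all justified, and your bookkeeping of where $x$ itself falls in each count is exactly right. This is essentially the standard proof of the identity in the cited source.
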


\begin{lemma}\label{lem:tau2} Let $X$ be a geometric distance-regular graph of diameter $d\geq 2$. Then
$\tau_2\geq \psi_1$.
\end{lemma}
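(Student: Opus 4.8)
The plan is to avoid any delicate computation and instead produce, from one conveniently chosen configuration realizing $\psi_1$, a family of exactly $\psi_1$ distinct Delsarte cliques that witness $\tau_2\geq \psi_1$. Since by the results of Bang, Hiraki and Koolen quoted above both $\psi_1$ and $\tau_2$ are independent of the configuration used to compute them, it suffices to make one good choice and count.

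First I would fix a Delsarte clique $C\in\mathcal{C}$ together with a vertex $x$ satisfying $\dist(x,C)=1$; such a pair exists because $d\geq 2$ (take $u,v$ at distance $2$, a common neighbour $w$, let $C\in\mathcal{C}$ be the clique containing the edge $uw$, and set $x=v$). Put $A:=N(x)\cap C$, so that $|A|=\psi_1$ and $A\neq\emptyset$. Because a Delsarte clique is a maximum clique and hence a maximal clique, and $x\notin C$, the vertex $x$ is not adjacent to every vertex of $C$, so there is some $y\in C\setminus A$. Then $y\neq x$ and $y$ is not adjacent to $x$, while $y$ is adjacent to every vertex of $A$ (all of $A\cup\{y\}$ lies in the clique $C$); hence $\dist(x,y)=2$.

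Next, for each $a\in A$ let $D_a\in\mathcal{C}$ be the unique Delsarte clique containing the edge $xa$. I would then check three things: (i) each $D_a$ contains $x$, which is immediate; (ii) each $D_a$ satisfies $\dist(y,D_a)=1$, since $a\in D_a$ is adjacent to $y$ while $y\notin D_a$ (otherwise $y$ would be adjacent to $x$); and (iii) the cliques $D_a$ are pairwise distinct, for if $D_a=D_{a'}$ with $a\neq a'$ in $A$, then the edge $aa'$ lies in $D_a$, but $aa'$ also lies in $C$, so uniqueness of the clique through an edge forces $D_a=C$, contradicting $x\in D_a$, $x\notin C$. Consequently $\{D_a:a\in A\}$ is a set of $\psi_1$ distinct cliques of $\mathcal{C}$ through $x$ at distance $1$ from $y$, whence $\tau_2=\tau_2(x,y;\mathcal{C})\geq\psi_1$.

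The only step requiring genuine care — and essentially the only obstacle — is the existence of $y\in C\setminus A$, i.e. that $x$ cannot be adjacent to all of $C$; this is precisely the point where the maximality of a Delsarte clique is invoked. Everything else is a direct unwinding of the definition of a geometric distance-regular graph and of the invariants $\psi_1$ and $\tau_2$.
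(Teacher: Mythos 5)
Your proposal is correct and follows essentially the same route as the paper: pick a clique $C$ and a vertex at distance $1$ from it, use maximality of $C$ to find a non-neighbour $y\in C$ at distance $2$, and exhibit the $\psi_1$ distinct cliques through the chosen vertex and its neighbours in $C$, each at distance $1$ from $y$. Your justification of distinctness (via uniqueness of the clique through the edge $aa'$) is a harmless variant of the paper's observation that $C$ meets each such clique in at most one vertex.
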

\begin{proof}
Let $C\in \mathcal{C}$ be a Delsarte clique of $X$ and let $v$ be a vertex with $\dist(v, C) = 1$. Since $C$ is a maximal clique, there exists a vertex $y\in C$ non-adjacent to $v$. Then $\dist(v, y) = 2$. Let $u_1, u_2, \ldots, u_{\psi_1}$ be the neighbors of $v$ in $C$. Denote by $C_i\in \mathcal{C}$ a Delsarte clique that contains $v$ and $u_i$ for $i\in [\psi_1]$. Note that since $C$ intersects each of $C_i$ in at most one vertex, all $C_i$ are distinct. Moreover, $\dist(y, C_i) = 1$, while $\dist(v, y) = 2$. Therefore, $\tau_2\geq \psi_1$.
\end{proof}

\begin{corollary}\label{cor:mu-m2} Let $X$ be a geometric distance-regular graph of diameter $d\geq 2$, with smallest eigenvalue $-m$. Then $\mu\leq m^2$.
\end{corollary}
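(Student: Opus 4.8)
The plan is to express $\mu$ in terms of the geometric parameters $\tau_i$ and $\psi_i$ and then bound each factor by $m$. By definition $\mu = c_2$, so Lemma~\ref{lem:geometric-param}(1) (with $i=2$) gives the identity $\mu = c_2 = \tau_2\psi_1$. It therefore suffices to show $\psi_1 \leq m$ and $\tau_2 \leq m$.

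For the bound on $\tau_2$: recall that $\tau_2 = \tau_2(x,y;\mathcal{C})$ counts, for a fixed pair $x,y$ with $\dist(x,y)=2$, the Delsarte cliques $C \in \mathcal{C}$ with $x \in C$ and $\dist(y,C) = 1$. This is a subset of the set of all Delsarte cliques through $x$, which by Lemma~\ref{lem:eigenvalue-clique-geom} has size exactly $m$; hence $\tau_2 \leq m$. For the bound on $\psi_1$, I would simply invoke Lemma~\ref{lem:tau2}, which states $\tau_2 \geq \psi_1$, so $\psi_1 \leq \tau_2 \leq m$ as well. Combining these, $\mu = \tau_2\psi_1 \leq m\cdot m = m^2$, which is the claim.

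I do not anticipate a genuine obstacle here: the statement is an immediate consequence of the three preceding lemmas (the geometric parameter identity of Bang--Hiraki--Koolen, the inequality $\tau_2 \geq \psi_1$, and the fact that each vertex lies in exactly $m$ Delsarte cliques). The only point requiring a moment's care is making sure the definitions of $\tau_2$ and $\psi_1$ are applied to a valid configuration, i.e. that there genuinely exist vertices at distance $2$ (guaranteed since $d \geq 2$) and that $\tau_2,\psi_1$ are well defined independently of the chosen pair, which is exactly the content cited from \cite{delsarte-geom} just before Lemma~\ref{lem:geometric-param}.
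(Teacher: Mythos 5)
Your proposal is correct and matches the paper's own one-line argument, which reads $\mu = \tau_2\psi_1 \leq \tau_2^2 \leq m^2$, using exactly the same three ingredients (Lemma~\ref{lem:geometric-param}, Lemma~\ref{lem:tau2}, and the bound $\tau_2\leq m$ from the clique count at a vertex).
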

\begin{proof} $\mu = \tau_2\psi_1\leq \tau_2^2\leq m^2$. 
\end{proof}

\begin{lemma}\label{lem:connected-disconn} Let $X$ be a geometric distance-regular graph of diameter $d\geq 2$. 
\begin{enumerate}
\item If $\psi_1 = 1$, then for each vertex $v\in X$ its neighborhood graph $X(v)$ is a disjoint union of $m$ cliques, where $-m$ is the smallest eigenvalue of $X$.
\item If $\psi_1\geq 2$, then for each vertex $v\in X$ its neighborhood graph $X(v)$ is connected.
\end{enumerate}
Thus, in particular, either each neighborhood graph of $X$ is connected, or each neighborhood graph of $X$ is disconnected.
\end{lemma}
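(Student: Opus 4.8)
The plan is to argue directly from a fixed Delsarte clique geometry $\mathcal{C}$ of $X$, exploiting the partition of each neighbourhood into the cliques of $\mathcal{C}$ through the central vertex. Fix $v\in X$. By Lemma~\ref{lem:eigenvalue-clique-geom}, $m$ is an integer and $v$ lies in exactly $m$ cliques $C_1,\dots,C_m\in\mathcal{C}$, each of size $|C_i|=1+k/m\ge 2$. Since every edge incident to $v$ lies in a unique clique of $\mathcal{C}$ --- necessarily one of the $C_i$ --- the sets $C_i\setminus\{v\}$ ($1\le i\le m$) partition $N(v)$ into $m$ nonempty cliques of $X(v)$. Thus $X(v)$ is $m$ vertex-disjoint cliques plus possibly some ``cross-edges'', and the whole question reduces to understanding those. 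The basic observation I would record first is: if $i\ne j$, $w_1\in C_i\setminus\{v\}$ and $w_2\in C_j\setminus\{v\}$, then $w_2\notin C_i$ (otherwise both $C_i$ and $C_j$ are the unique clique of $\mathcal{C}$ on the edge $vw_2$, forcing $C_i=C_j$); since $w_2\sim v\in C_i$ this gives $\dist(w_2,C_i)=1$, hence $\psi_1(C_i,w_2)=\psi_1$ is defined.

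For part (1), assume $\psi_1=1$ and show there are no cross-edges. If $w_1\in C_i\setminus\{v\}$ and $w_2\in C_j\setminus\{v\}$ with $i\ne j$ were adjacent, let $C'\in\mathcal{C}$ be the unique clique on the edge $w_1w_2$. Then $v\notin C'$ (else $C'=C_i=C_j$), and $v\sim w_1\in C'$ gives $\dist(v,C')=1$; but then $w_1,w_2\in C'$ are two vertices of $C'$ at distance $1$ from $v$, forcing $\psi_1(C',v)\ge 2$, a contradiction. Hence $X(v)=\bigsqcup_{i=1}^{m}(C_i\setminus\{v\})$ is a disjoint union of $m$ cliques.

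For part (2), assume $\psi_1\ge 2$; I would in fact prove $X(v)$ has diameter $\le 2$. Take $w_1,w_2\in N(v)$ not lying in a common $C_i$, say $w_1\in C_i\setminus\{v\}$ and $w_2\in C_j\setminus\{v\}$ with $i\ne j$. By the observation $\psi_1(C_i,w_2)\ge 2$, and since $v$ already is one vertex of $C_i$ adjacent to $w_2$, there is $y\in C_i\setminus\{v\}$ with $y\sim w_2$. Then $w_1,y,w_2$ is a walk in $X(v)$: here $y\in N(v)$, $y$ is adjacent to or equal to $w_1$ (both lie in the clique $C_i$), and $y\sim w_2$. So $X(v)$ is connected. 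Finally, $\psi_1$ is an invariant of $X$, so either $\psi_1=1$ and every $X(v)$ is a disjoint union of $m$ cliques, which is disconnected because $m\ge 2$ (if $m=1$ every $X(v)$ would be a clique, making $X$ a disjoint union of cliques, contradicting connectedness and $d\ge 2$); or $\psi_1\ge 2$ and every $X(v)$ is connected.

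I do not anticipate a genuine obstacle: the proof is short, and the only delicate points are checking that the $m$ cliques through $v$ really partition $N(v)$ into nonempty parts, and the repeated appeals to uniqueness of the clique of $\mathcal{C}$ on an edge (used to conclude $v\notin C'$, resp.\ $w_2\notin C_i$). The single external input is the fact recorded above, that $\psi_i(C,x)$ depends only on $\dist(x,C)$, which is exactly what makes the hypotheses on $\psi_1$ meaningful.
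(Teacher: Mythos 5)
Your proof is correct and follows essentially the same route as the paper: partition $N(v)$ into the $m$ Delsarte cliques through $v$ and use that a neighbour of $v$ outside $C_j$ has exactly $\psi_1$ neighbours in $C_j$ to rule out (resp.\ produce) cross-edges. Your part (1) phrases the contradiction via the clique on the putative cross-edge rather than counting $w$'s neighbours in $C_j$ directly, but this is only a cosmetic variation, and your explicit remarks on $m\ge 2$ and the partition being into nonempty parts are welcome bits of care the paper leaves implicit.
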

\begin{proof}
Fix a Delsarte clique geometry $\mathcal{C}$ of $X$. Let $C_1, C_2, \ldots, C_m\in \mathcal{C}$ be the cliques which contain $v$. Take $w\in N(v)$ and let $C_i$ be the clique which contains $w$. If $\psi_1 = 1$, then $v$ is the only neighbor of $w$ in $C_j$ for $j\neq i$. Thus $X(v)$ is a disjoint union of $m$ cliques, where by Lemma~\ref{lem:eigenvalue-clique-geom}, $-m$ is the smallest eigenvalue of $X$. If $\psi_1\geq 2$, then $w$ is adjacent with at least one vertex in $C_j$ distinct from $u$ for every $j\neq i$. Thus $X(v)$ is a connected graph in this case. 
\end{proof}

\subsection{Johnson graphs and Hamming graphs}

\subsubsection{Johnson graphs}

\begin{definition}
Let $d\geq 2$ and $\Omega$ be a set of $s\geq 2d$ points. The \textit{Johnson graph} $J(s,d)$ is a graph on the set $V(J(s,d)) = \binom{\Omega}{d}$ of $n = \binom{s}{d}$ vertices, for which two vertices are adjacent if and only if the corresponding subsets $U_1, U_2\subseteq \Omega$ differ by exactly one element, i.e., $|U_1\setminus U_2| = |U_2\setminus U_1| = 1$.
\end{definition}

It is not hard to check that $J(s, d)$ is a distance-regular graph of diameter $d$ with intersection numbers
\begin{equation}
b_i = (d-i)(s-d-i) \quad \text{and} \quad c_{i+1} = (i+1)^{2}, \quad \text{for } 0\leq i<d.
\end{equation} 
In particular, $J(s, d)$ is regular of degree $k = d(s-d)$ with $\lambda = s-2$ and $\mu = 4$.  The eigenvalues of $J(s,d)$ are
\begin{equation}
 \xi_j = (d-j)(s-d-j)-j \quad \text{with multiplicity}\quad \binom{s}{j} - \binom{s}{j-1}, \, \text{for } 0\leq j\leq d.
 \end{equation}
Using Lemma~\ref{lem:geometric-param} it is easy to see that for the Johnson graph $J(s, d)$
\[ \tau_i =i \quad \text{and} \quad \psi_{i-1} =i ,\quad  \text{for } 1\leq i\leq d.\]

For $s\geq 2d+1$, the automorphism group of $J(s,d)$ is the induced symmetric group $S_s^{(d)}$, which acts on $\binom{\Omega}{d}$ via the induced action of $S_s$ on $\Omega$. Indeed, it is clear, that $S^{(d)}_s\leq \Aut(J(s,d))$. The opposite inclusion can be derived from the Erd\H{o}s-Ko-Rado theorem.

Thus, for a fixed $d$ and $s\geq 2d+1$, the order is $|\Aut(J(s,d)| = s! = \Omega(\exp(n^{1/d}))$, the thickness satisfies $\theta(\Aut(J(s, d))) = s = \Omega(n^{1/d})$ and  
\begin{equation}
\motion(J(s,t)) = O(n^{1-1/d}).
\end{equation}

\begin{theorem}[{\cite[Theorem 9.1.3]{BCN}}]\label{thm:Johnson-classif}
Let $X$ be a connected graph such that 
\begin{enumerate}
\item for each vertex $v$ of $X$ the graph $X(v)$ is the line graph of  $K_{s, t}$; 
\item if $\dist(x,y) = 2$, then $x$ and $y$ have at most $4$ common neighbors. 
\end{enumerate}
Then $X$ is a Johnson graph or is doubly covered by a Johnson graph. More precisely, in the latter case $X$ is the quotient of the Johnson graph $J(2d, d)$ by an automorphism of the form $\tau\omega$, where $\tau$ is the automorphism sending each $d$-set to its complement, and $\omega$ is an element of order at most $2$ in $Sym(X)$ with at least $8$ fixed points.
\end{theorem}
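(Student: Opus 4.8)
\medskip
\noindent\emph{A proof proposal.} The plan is to reconstruct the ground set $\Omega$ directly from the clique structure of $X$ and then to exhibit the claimed isomorphism. First dispose of degenerate cases: if $X(v)\cong L(K_{1,t})=K_t$ for one (hence, by connectedness, every) vertex $v$, then $X$ is locally a clique, so $X$ is a single clique $K_{t+1}=J(t+1,1)$; the finitely many further pairs with $\min(s,t)\le 2$ --- for instance $L(K_{2,2})=C_4$, where hypothesis (2) pins $X$ down to the octahedron $J(4,2)$ --- are treated by direct inspection. So assume every neighbourhood $X(v)$ is the line graph $L(K_{s,t})$, i.e.\ the $s\times t$ rook's graph, with $2\le s\le t$; here connectedness forces the unordered pair $\{s,t\}$ to be constant, since it is read off from the two edge-valencies $s-2$ and $t-2$ that occur in $L(K_{s,t})$.

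\medskip
\noindent The rook's graph $L(K_{s,t})$ has exactly two families of maximal cliques: $t$ cliques of size $s$ and $s$ cliques of size $t$, with every vertex on exactly one clique from each. Lifting this through $X(v)\cong L(K_{s,t})$, the maximal cliques of $X$ through $v$ are precisely the sets $\{v\}\cup R$ with $R$ a maximal clique of $X(v)$; hence $X$ carries two families of maximal cliques --- the \emph{thin lines} of size $s+1$ and the \emph{fat lines} of size $t+1$ --- every vertex lies on $t$ thin and $s$ fat lines, and every edge lies on exactly one line of each kind. The main role of hypothesis (2) is the following local observation: if $w,w'$ are non-adjacent neighbours of $v$ (equivalently, at distance $2$ inside $X(v)$), then $v$ and the two common neighbours of $w,w'$ lying in $X(v)$ are already three common neighbours of $w$ and $w'$, so (2) leaves room for at most one more; tracing from which configuration of lines through $v$ a fourth common neighbour could arise yields the exchange-type incidence axioms obeyed by the thin and fat lines of a Johnson graph, and the same analysis shows that (2) is inherited by the auxiliary graphs below.

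\medskip
\noindent Assume first $s<t$, so thin and fat lines are distinguished by size, and set up a recursion that lowers $\min(s,t)$. Let $X'$ be the graph whose vertices are the fat lines of $X$, two being adjacent when they meet --- necessarily in a single vertex, since an edge lies on a unique fat line. Using the incidence axioms I would show: $X'$ is connected, satisfies (2), and $X'(\ell)\cong L(K_{s-1,t+1})$ for every fat line $\ell$; moreover the $s$ fat lines through a vertex $v$ of $X$ form a clique of $X'$, and $v\mapsto\{\text{fat lines through }v\}$ embeds $V(X)$ into the set of cliques of $X'$ in such a way that $v\sim v'$ in $X$ exactly when the corresponding cliques of $X'$ share a vertex. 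Iterating, set $X^{(0)}=X$ and $X^{(i+1)}=(X^{(i)})'$; then $X^{(i)}$ is locally $L(K_{s-i,\,t+i})$, and after $s-1$ steps $X^{(s-1)}$ is locally $L(K_{1,\,s+t-1})=K_{s+t-1}$, hence equals the complete graph $K_{s+t}$. Put $\Omega:=V(X^{(s-1)})$, so $|\Omega|=s+t$. Composing the embeddings $V(X^{(i)})\hookrightarrow\{\text{cliques of }X^{(i+1)}\}$ from the bottom up, each vertex of $X$ becomes encoded by an $s$-element subset of $\Omega$, and a routine check with the incidence axioms shows this encoding is an isomorphism $X\cong J(s+t,s)$.

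\medskip
\noindent The case $s=t=:d$ is where the exceptional family arises, and it is the main obstacle. Now thin and fat lines both have size $d+1$, so although each vertex still meets two local families of $(d+1)$-cliques ($d$ of each), there may be no globally consistent $2$-colouring of all the $(d+1)$-cliques of $X$ in which every vertex sees $d$ cliques of each colour: a closed walk in $X$ can interchange the two local families. Either $X$ itself admits such a colouring, or its connected double cover $\widetilde X$ does; write $\widetilde X\to X$ for this cover (with $\widetilde X=X$ in the former case), and verify that $\widetilde X$ is connected, inherits the local structure $L(K_{d,d})$ and hypothesis (2), and carries the global colouring. On $\widetilde X$ the recursion of the previous paragraph applies with one colour class in the role of ``fat lines'': its first step produces a graph locally isomorphic to $L(K_{d-1,d+1})$, which lands us in the case $s<t$, and it yields $\widetilde X\cong J(2d,d)$. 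If $\widetilde X=X$, we are done. Otherwise $X=J(2d,d)/\langle g\rangle$ with $g$ the deck involution, which is fixed-point-free and, since it swaps the two sheets and hence the two colour classes, interchanges the two families of $(d+1)$-cliques of $J(2d,d)$ --- those indexed by $(d-1)$-subsets and those indexed by $(d+1)$-subsets of $\Omega$. For $d\ge 3$ we have $\Aut(J(2d,d))=S_{2d}^{(d)}\times\langle\tau\rangle$, where complementation $\tau$ is exactly the symmetry interchanging the two clique families while $S_{2d}^{(d)}$ fixes each setwise; hence $g=\tau\omega$ with $\omega\in S_{2d}^{(d)}$ and $\omega^2=1$, and the requirements that $g$ act freely and that the quotient still satisfy (2) unwind, after a short count of common neighbours in the quotient, into the stated condition that $\omega$ fix at least $8$ vertices --- recovering the description in the statement. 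The genuinely delicate steps, I expect, are (i) extracting the incidence axioms and the inheritance of (2) from the local hypotheses, and above all (ii) constructing and analysing the $2$-fold cover when $s=t$, including verifying that the deck transformation has the asserted form.
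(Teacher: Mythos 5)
This statement is not proved in the paper at all: it is quoted verbatim from Brouwer--Cohen--Neumaier \cite[Theorem 9.1.3]{BCN} and used as a black box (in the proof of Theorem~\ref{thm:pseudo-main-intr}), so there is no in-paper argument to compare yours against. Judged on its own terms, your outline is structurally consistent with how Johnson graphs actually behave: the two families of maximal cliques lifted from the rows and columns of the rook's graph, the passage to the graph $X'$ on one family (which for $J(s+t,s)$ is indeed $J(s+t,s-1)$, locally $L(K_{s-1,t+1})$), the termination at $K_{s+t}$, the recovery of vertices as $s$-subsets, and the appearance of the double cover and of $\tau\omega$ only when $s=t$ are all the right landmarks.

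The problem is that every load-bearing step is asserted rather than proved, and those steps are precisely where the entire difficulty of the theorem lives. Concretely: (i) you never actually extract the ``exchange-type incidence axioms'' from hypothesis~(2) --- the only thing you derive from $\mu\le 4$ is that two non-adjacent vertices of $X(v)$ have at most one common neighbour outside $N(v)\cup\{v\}$, and it is a genuine piece of work (not a tracing exercise) to get from this to control over how two fat lines can meet the neighbourhood of a third; (ii) the claims that $X'$ is connected, satisfies~(2), and is locally $L(K_{s-1,t+1})$ are exactly as strong as the theorem itself for the smaller parameters, and ``Using the incidence axioms I would show'' is not a proof --- in particular nothing you wrote rules out a non-Johnson $X$ whose derived graph $X'$ fails to be locally a rook's graph, which would silently break the recursion; (iii) in the case $s=t$ the existence and properties of the connected double cover carrying a global $2$-colouring (a monodromy argument on the clique complex), the identification $\widetilde X\cong J(2d,d)$, and the ``short count'' producing the bound of $8$ fixed points for $\omega$ are all deferred. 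So what you have is a credible roadmap, not a proof; if you intend to supply one, steps (i)--(iii) are where the actual content must go, and you should expect them to occupy essentially all of the argument.
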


\subsubsection{Hamming graphs}


\begin{definition}
Let $\Omega$ be a set of $s\geq 2$ points. The \textit{Hamming graph} $H(d, s)$ is a graph on the set $V(H(d, s)) = \Omega^{d}$ of $n = s^d$ vertices, for which a pair of vertices is adjacent if and only if the corresponding $d$-tuples $v_1, v_2$ differ in precisely one position. In other words, if the Hamming distance $d_H(v_1, v_2)$ for the corresponding tuples equals 1.
\end{definition}
Again, it is not hard to check that $H(d,s)$ is a distance-regular graph of diameter $d$ with intersection numbers
\begin{equation}\label{eq:hamming}
 b_i =(d-i)(s-1) \quad \text{and} \quad c_{i+1} = i+1 \quad \text{for } 0\leq i\leq d-1.
 \end{equation}
In particular, $H(d,s)$ is regular of degree $k = d(s-1)$ with $\lambda = s-2$ and $\mu = 2$. The eigenvalues of $H(d, s)$ are
\begin{equation}
 \xi_j = d(s-1) - js \quad \text{with multiplicity}\quad \binom{d}{j}(s-1)^{j}, \, \text{for } 0\leq j\leq d.
 \end{equation}
Using Lemma~\ref{lem:geometric-param} it is easy to see that for the Hamming graph $H(d, s)$
\[ \tau_i =i \quad \text{and} \quad \psi_{i-1} =1 ,\quad  \text{for } 1\leq i\leq d.\]

The automorphism group of $H(d, s)$ is isomorphic to the wreath product $S_s\wr S_d$. Hence,  its order is $|\Aut(H(d, s))| = (s!)^d d!$, the thickness satisfies $\theta(H(d,s)) \geq s = n^{1/d}$ and 
\begin{equation}
\motion(H(d, s))\leq 2s^{d-1} = O(n^{1-1/d}).
\end{equation}

In Section~\ref{sec:Hamming} we use the classification of distance-regular graphs with the same intersection array as Hamming graphs. In the case of diameter $2$, the unique non-Hamming graph that has the intersection array of a Hamming graph is the Shrikande graph. It has 16 vertices and has the same parameters as $H(2, 4)$.  

\begin{definition} The direct product of a Hamming graph $H(t, 4)$ with $\ell\geq 1$ copies of the Shrikande graph is called a \textit{Doob graph}. 
\end{definition}
One can check that Doob graphs are distance-regular and have the same intersection numbers as the Hamming graph $H(t+2\ell, 4)$. Yoshimi Egawa \cite{Egawa} proved that Doob graphs are the only graphs with this property. 

\begin{theorem}[Egawa~\cite{Egawa}, see {\cite[Corollary 9.2.5]{BCN}}]\label{thm:Hamming-array-class} A distance-regular graph of diameter $d$ with intersection numbers given by Eq.~\eqref{eq:hamming} is a Hamming graph or a Doob graph.
\end{theorem}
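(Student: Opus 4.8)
The plan is to induct on the diameter $d$: the base case $d=2$ is classical, and the reduction from $d$ to $d-1$ for $d\geq 3$ is Egawa's contribution. For $d=2$, a distance-regular graph with the intersection array of $H(2,s)$ is a strongly regular graph with parameters $(s^2,\,2(s-1),\,s-2,\,2)$, whose eigenvalues are $2(s-1)>s-2>-2$; in particular its smallest eigenvalue is $-2$. By the classification of connected graphs with smallest eigenvalue at least $-2$ \cite{regular-classif}, such a graph is a (generalized) line graph, a cocktail-party graph, or one of finitely many exceptional graphs on at most $28$ vertices. A cocktail-party graph with these parameters must be $C_4=H(2,2)$; a line graph with these parameters must be $L(K_{s,s})=H(2,s)$; and checking the exceptional graphs shows the only additional possibility is the Shrikhande graph, which occurs precisely for $s=4$ and is, by definition, the diameter-$2$ Doob graph. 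So the claim holds for $d=2$.

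Now let $d\geq 3$ and let $X$ have the intersection array of $H(d,s)$, so $k=d(s-1)$, $\lambda=s-2$, $\mu=2$, and the smallest eigenvalue is $\theta_d=-d$. If $s=2$ then $X$ is bipartite with the $d$-cube's array and a short direct argument (or the analysis below with trivial $K_1$ cliques) gives $X=H(d,2)$; so assume $s\geq 3$, in which case $X$ contains an induced quadrangle and Terwilliger's inequality (Theorem~\ref{thm:ter-inequality}) applies. A direct substitution of $c_i=i$, $b_i=(d-i)(s-1)$ and $\lambda=s-2$ shows that equality holds in that inequality for every $i$. The key structural step is to use this equality to pin down the local graphs: I would show that for every vertex $v$ the neighborhood graph $X(v)$ is a disjoint union of components, each of which is either a clique $K_{s-1}$ or---only when $s=4$, as the $d=2$ base case applied to a suitable $2$-dimensional subconfiguration then forces---a hexagon $C_6$, the local graph of the Shrikhande graph. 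In the all-clique case $\psi_1=1$ and $X$ is geometric: each maximal $s$-clique is a Delsarte clique (its size meets the Delsarte bound $1-k/\theta_d=s$) and each vertex lies in exactly $d$ of them (cf. Lemma~\ref{lem:connected-disconn}).

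With the local structure in hand, the final step is to assemble the local components into global ``parallel classes'': using $\mu=2$ to propagate consistency of these classes along geodesics, one obtains an isomorphism of $X$ with a direct product $X_1\times\cdots\times X_r$ of connected distance-regular graphs in which each $X_j$ is locally a single clique $K_{s-1}$ (forcing $X_j=K_s$) or a single hexagon $C_6$ (forcing, by the $d=2$ case, $s=4$ and $X_j$ the Shrikhande graph). Hence $X$ is a direct product of copies of $K_s$, possibly with some Shrikhande factors when $s=4$; that is, $X$ is a Hamming graph or a Doob graph. The main obstacle is the local-structure step: pure counting inside $X(v)$ does not suffice, precisely because $X(v)$ need not be a disjoint union of cliques---the Shrikhande and Doob graphs are exactly this obstruction---so one must exploit the equality case of Terwilliger's inequality together with a careful analysis of how a non-complete component can embed in a local graph with $\mu=2$, which is the substance of Egawa's original argument. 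A secondary subtlety is that the product decomposition is canonical only up to the $d=2$ exception, so the induction must be organized around indecomposable factors rather than by repeatedly extracting a $K_s$.
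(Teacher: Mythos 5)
The paper does not actually prove this statement: it is quoted as Egawa's theorem (via \cite[Corollary 9.2.5]{BCN}) and used as a black box in the proof of Theorem~\ref{thm:main-Hamming}, so the comparison here is against the literature rather than against an in-paper argument. Your outline does follow the standard architecture of that literature --- base case $d=2$ via the classification of strongly regular graphs with the parameters of $H(2,s)$, determination of the local graphs, then a global product decomposition --- and your parameter checks are correct: Terwilliger's inequality is indeed tight for the Hamming array, the local smallest-eigenvalue bound $-1-b_1/(\theta_1+1)$ equals $-2$, and the $s$-cliques meet the Delsarte bound $1-k/\theta_d=s$.

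As a proof, however, the proposal has two genuine gaps, and they are exactly the two steps that constitute Egawa's theorem. First, the local-structure step is asserted rather than proved: you say you ``would show'' that every component of $X(v)$ is $K_{s-1}$ or (for $s=4$) a hexagon, and you explicitly defer the argument to ``the substance of Egawa's original argument.'' Knowing that $X(v)$ is $(s-2)$-regular on $d(s-1)$ vertices with smallest eigenvalue at least $-2$, and that nonadjacent vertices of $X(v)$ have at most one common neighbor, does not by itself yield this decomposition; one must actually carry out the analysis of the components via the classification of \cite{regular-classif} (or Egawa's direct combinatorial argument), and that is where essentially all of the difficulty lives. Second, the passage from local components to a global direct-product decomposition (``using $\mu=2$ to propagate consistency of these classes along geodesics'') is likewise only named, not executed; showing that the parallel classes are globally well defined and that $X$ factors as a product of the claimed indecomposable pieces is a substantial argument in its own right. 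So what you have is a correct road map, not a proof; to be self-contained it would need to supply these two arguments, or else do what the paper does and simply cite Egawa.
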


\subsection{Dual graphs}

Let $X$ be a distance-regular graph which has a clique geometry $\mathcal{C}$.  

\begin{definition} By a \textit{dual graph of $X$} \textit{(that corresponds to $\mathcal{C}$)} we mean the graph $\widetilde{X}$ with the vertex set $\mathcal{C}$, in which $C_i$ and $C_j$ are adjacent if and only if $|C_i\cap C_j| = 1$. 
\end{definition}

\begin{lemma}\label{lem:dual-grapp-degree} Let $X$ be a geometric distance-regular graph of diameter $d$ with smallest eigenvalue $-m$. Then its dual graph $\widetilde{X}$ is an edge-regular graph of diameter $d-1$ with the vertex degree $\displaystyle{\widetilde{k} = (m-1)\left(\frac{k}{m}+1\right)}$ and $\displaystyle{\widetilde{\lambda} = (m-2)+(\psi_1-1)\frac{k}{m}}$.
\end{lemma}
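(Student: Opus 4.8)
The plan is to work directly with a fixed Delsarte clique geometry $\mathcal{C}$ of $X$, using three facts: every $C\in\mathcal{C}$ has $|C|=1+k/m$; every vertex of $X$ lies in exactly $m$ members of $\mathcal{C}$ (Lemma~\ref{lem:eigenvalue-clique-geom}); and $|C\cap C'|\le 1$ for distinct $C,C'\in\mathcal{C}$, since a clique geometry cannot contain two cliques sharing an edge. For the degree of $\widetilde X$ I would fix $C\in\mathcal{C}$ and count the $C'$ with $|C\cap C'|=1$: each vertex $v\in C$ lies in $m-1$ cliques besides $C$, each meeting $C$ in exactly $\{v\}$, and the families attached to distinct vertices of $C$ are disjoint (a clique through two vertices of $C$ is impossible). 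Hence $\widetilde k=|C|(m-1)=(m-1)(k/m+1)$.

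For edge-regularity and $\widetilde\lambda$ I would take $C_1\sim C_2$ in $\widetilde X$ with $C_1\cap C_2=\{v\}$ and count cliques $C'$ meeting each of $C_1,C_2$ in one vertex, splitting on whether $v\in C'$. If $v\in C'$, then $C'$ is one of the $m-2$ cliques through $v$ other than $C_1,C_2$, and all of these work. If $v\notin C'$, writing $C'\cap C_1=\{x\}$ and $C'\cap C_2=\{y\}$, one gets $x\ne v\ne y$, $x\ne y$ (else $x\in C_1\cap C_2=\{v\}$), and $x\sim y$ since both lie in $C'$, so $C'$ is the unique clique on the edge $xy$; conversely every edge $xy$ with $x\in C_1\setminus\{v\}$, $y\in C_2\setminus\{v\}$, $x\sim y$ arises this way, and the resulting clique meets $C_1,C_2$ each in one vertex and avoids $v$. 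Counting these edges: for each of the $k/m$ choices of $y\in C_2\setminus\{v\}$ we have $y\notin C_1$ but $y\sim v\in C_1$, so $\dist(y,C_1)=1$ and $y$ has exactly $\psi_1$ neighbours in $C_1$ (by the result of Bang, Hiraki and Koolen quoted before Lemma~\ref{lem:geometric-param}), one of which is $v$; this leaves $\psi_1-1$ admissible $x$. So every adjacent pair of $\widetilde X$ has $(m-2)+(\psi_1-1)(k/m)$ common neighbours, independently of the pair, and since the degree is also constant, $\widetilde X$ is edge-regular with the asserted $\widetilde\lambda$.

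Finally, the diameter. Connectivity of $\widetilde X$ is immediate, since a walk $x_0\sim x_1\sim\cdots$ in $X$ lifts to a walk in $\widetilde X$ through the cliques carrying its consecutive edges. The crucial identity is that, for distinct $C,C'\in\mathcal{C}$,
\[ \dist_{\widetilde X}(C,C')=1+\min\{\dist(x,y):x\in C,\ y\in C'\}. \]
The inequality ``$\le$'' follows by taking a shortest path $z_0,\dots,z_r$ in $X$ with $z_0\in C$, $z_r\in C'$ and $r$ minimal, and noting that $C,E_1,\dots,E_r,C'$ — where $E_i$ is the clique on $\{z_{i-1},z_i\}$ — is a walk of length $r+1$ in $\widetilde X$ (consecutive cliques are distinct by minimality of $r$ and geodesy, hence meet in exactly one vertex). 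The inequality ``$\ge$'' follows because a walk $C=D_0,\dots,D_\ell=C'$ in $\widetilde X$ produces $v_i\in D_{i-1}\cap D_i$ with $v_1\in C$, $v_\ell\in C'$ and $v_1,\dots,v_\ell$ a walk in $X$. Hence the diameter of $\widetilde X$ equals $1+\max_{C\ne C'}\min_{x\in C,\,y\in C'}\dist(x,y)$, and it remains to evaluate this maximum and confirm it yields the claimed value $d-2$. This is the step I expect to be the main obstacle: the plan is to show that two distinct Delsarte cliques cannot have every cross-pair of vertices at distance $\ge d-1$, by combining a geodesic between the two cliques with the local constraints (the numbers $\psi_i$) on how the cliques meet the neighbourhoods of that geodesic's endpoints — first excluding the value $d$, then $d-1$. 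This is where the geometric structure is used most essentially and will require the most care.
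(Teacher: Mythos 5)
Your computations of $\widetilde k$ and $\widetilde\lambda$ are correct and essentially identical to the paper's: count the $m-1$ cliques through each vertex of $C$ for the degree, and split the common neighbours of $C_1\cap C_2=\{v\}$ according to whether they contain $v$ (giving $m-2$) or meet $C_1,C_2$ in an edge $xy$ avoiding $v$ (giving $(\psi_1-1)\cdot k/m$ via the $\psi_1$ neighbours of $y\in C_2\setminus\{v\}$ in $C_1$).

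The diameter claim, however, is not proved. Your reduction $\dist_{\widetilde X}(C,C')=1+\min\{\dist(x,y):x\in C,\,y\in C'\}$ is fine, and the lower bound $\operatorname{diam}(\widetilde X)\ge d-1$ follows easily from it (take $w$ at distance $d$ from some $v\in C$; then $w$ is at distance $\ge d-1$ from all of $C$, so any clique through $w$ is at $\widetilde X$-distance $\ge d-1$ from $C$). But the upper bound --- that no vertex, hence no clique, is at distance $d$ from a Delsarte clique --- is exactly the statement that a Delsarte clique is a completely regular code of covering radius $d-1$, which the paper simply cites (Godsil, Lemma~7.2). This is not a routine combinatorial fact: it is where the \emph{Delsarte} (eigenvalue-extremal) property of the cliques enters, via interlacing/outer-distribution arguments, and a maximal clique that is not Delsarte can indeed have covering radius $d$. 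Your proposed route of ``combining a geodesic with the local constraints $\psi_i$'' is also somewhat circular, since the well-definedness of the $\psi_i$ (independence of the pair $(C,x)$) is itself a consequence of this complete regularity in the cited reference. So the step you flag as the main obstacle is a genuine gap, and the clean way to close it is to invoke the covering-radius result rather than to rederive it combinatorially.
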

\begin{proof} It is known that every Delsarte clique is completely regular, with covering radius $d-1$, see {\cite[Lemma 7.2]{Godsil}}. In particular, this implies that the diameter of $\widetilde{X}$ is $d-1$. Every clique in the Delsarte clique geometry $\mathcal{C}$ of $X$ has size $1+k/m$ and, by Lemma~\ref{lem:eigenvalue-clique-geom}, every vertex is in precisely $m$ cliques from $\mathcal{C}$. Since every pair of non-disjoint cliques intersects in precisely one vertex, we get $\widetilde{k} = (m-1)(1+k/m)$. 

Now assume that $C_1$ and $C_2$ are distinct cliques in $\mathcal{C}$ that share a vertex $v$. Let $u\in C_1$ be a vertex distinct from $v$. Then $u$ has $\psi_1$ neighbors in $C_2$. Let $u'$ be one of such neighbors distinct from $v$. Then the edge $\{u, u'\}$ belongs to some clique $C$ which is distinct from $C_1$ and $C_2$ and intersects both of them. Thus, $C$ is a common neighbor of $C_1$ and $C_2$. Note, that a common neighbor $C\in \mathcal{C}$ of $C_1$ and $C_2$, which does not contain $v$, is uniquely determined  by $u\in C_1$ and its neighbor $u'$ in $C_2$. Finally, note that any clique from $\mathcal{C}$ which contains $v$ and is distinct from $C_1$ and $C_2$ is their common neighbor. Hence, $\widetilde{\lambda} = (m-2)+(\psi_1-1)k/m$.
\end{proof}

Let $A$ and $\widetilde{A}$ be the adjacency matrices of $X$ and $\widetilde{X}$. Denote by $\spec(X)$ and $\spec(\widetilde{X})$ the sets of eigenvalues of $A$ and $\widetilde{A}$, respectively.

\begin{lemma}\label{lem:dual-spec} Let $X$ be a geometric distance-regular graph with smallest eigenvalue $-m$. If $k\geq m^2$, then $\displaystyle{\spec(\widetilde{X})\subseteq \{ \theta-\frac{k}{m}+m-1\mid \theta \in \spec(X)\}}$.
 
\end{lemma}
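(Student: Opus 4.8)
The plan is to compare the two adjacency matrices through the vertex--clique incidence matrix. Fix a Delsarte clique geometry $\mathcal{C}$ of $X$ and let $N$ be the $\{0,1\}$-matrix whose rows are indexed by $V(X)$ and columns by $\mathcal{C}$, with $N_{v,C}=1$ exactly when $v\in C$. First I would write down the two Gram matrices explicitly. In $NN^{T}$ the diagonal entry at a vertex $v$ counts the cliques of $\mathcal{C}$ through $v$, which is $m$ by Lemma~\ref{lem:eigenvalue-clique-geom}, while the entry at a pair $u\neq v$ counts the cliques of $\mathcal{C}$ containing both $u$ and $v$; since every edge of $X$ lies in a unique member of $\mathcal{C}$, this is $1$ if $u\sim v$ and $0$ otherwise. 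Hence $NN^{T}=A+mI$. In $N^{T}N$ the diagonal entry at a clique $C$ is $|C|=1+k/m$ since $C$ is a Delsarte clique, and the entry at a pair $C\neq C'$ is $|C\cap C'|\in\{0,1\}$ (two distinct members of a clique geometry meet in at most one vertex, for otherwise a common edge would lie in both), which equals $1$ precisely when $C$ and $C'$ are adjacent in $\widetilde{X}$. Hence $N^{T}N=\widetilde{A}+\bigl(\tfrac{k}{m}+1\bigr)I$.

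Next I would invoke the elementary fact that $MM^{T}$ and $M^{T}M$ have the same nonzero eigenvalues for any rectangular matrix $M$. Applying this with $M=N$, and using that both Gram matrices above are positive semidefinite, gives $\spec(N^{T}N)\subseteq\spec(NN^{T})\cup\{0\}$. Substituting the two identities, every $\widetilde{\theta}\in\spec(\widetilde{X})$ satisfies either $\widetilde{\theta}+\tfrac{k}{m}+1=\theta+m$ for some $\theta\in\spec(X)$, in which case $\widetilde{\theta}=\theta-\tfrac{k}{m}+m-1$ is of the required form, or $\widetilde{\theta}+\tfrac{k}{m}+1=0$, i.e.\ $\widetilde{\theta}=-\tfrac{k}{m}-1$; but then the smallest eigenvalue $\theta=-m$ of $X$ gives $\theta-\tfrac{k}{m}+m-1=-\tfrac{k}{m}-1=\widetilde{\theta}$, so this value is also of the required form. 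This establishes the claimed inclusion.

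I expect the two Gram-matrix identities to be entirely routine, so the one point requiring care is the possible extra eigenvalue $-\tfrac{k}{m}-1$ of $N^{T}N$, arising from the kernel of $N^{T}N$ (equivalently, from $N$ not having full column rank); it is absorbed precisely because $-m\in\spec(X)$. I do not expect to need the hypothesis $k\geq m^{2}$ for this particular argument: it ensures $|\mathcal{C}|=nm^{2}/(m+k)\le n$ and $1+k/m\geq m+1$, which is the natural regime and is consistent with the incidence-matrix picture, but the identities $NN^{T}=A+mI$, $N^{T}N=\widetilde{A}+(\tfrac{k}{m}+1)I$ and the nonzero-spectrum comparison hold without it; I would retain the hypothesis only for uniformity with the surrounding results, and if an edge case (such as $d=1$) needs separate attention the inclusion can be checked by hand there.
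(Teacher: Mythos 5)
Your proof is correct and follows essentially the same route as the paper: the same vertex--clique incidence matrix $N$, the same identities $NN^{T}=A+mI$ and $N^{T}N=\widetilde{A}+(\tfrac{k}{m}+1)I$, and the same comparison of nonzero spectra. The only divergence is at the zero eigenvalue of $N^{T}N$: the paper uses $k\geq m^{2}$ to get $|\mathcal{C}|<n$, hence $0\in\spec(NN^{T})$ and the clean inclusion $\spec(N^{T}N)\subseteq\spec(NN^{T})$, whereas you absorb the possible value $-\tfrac{k}{m}-1$ into the image of $\theta=-m$ --- a valid observation that indeed renders the hypothesis $k\geq m^{2}$ unnecessary for this lemma.
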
 
\begin{proof} Let $\mathcal{C}$ be a Delsarte clique geometry of $X$. Note that, by Lemma~\ref{lem:eigenvalue-clique-geom}, every vertex of $X$ belongs to precisely $m$ cliques of $\mathcal{C}$. Define $N$ to be an $n\times |\mathcal{C}|$ vertex-clique incidence matrix, i.e, $N_{i, j} = 1$ if the vertex $v_i$ belongs to the clique $C_j$, and $N_{i, j} = 0$ otherwise. Then
\begin{equation}\label{eq:spec}
 A = NN^{T} - mI \qquad \text{and} \qquad \widetilde{A} = N^TN - \left(\frac{k}{m}+1\right)I.
\end{equation}
From linear algebra it is known that non-zero eigenvalues of $NN^T$ and $N^TN$ coincide. Since $|\mathcal{C}|(1+k/m) = nm$, we get $|\mathcal{C}|< n$, so 0 is an eigenvalue of $NN^T$. Therefore, $\spec(N^TN)\subseteq \spec(NN^T)$ and the statement of the lemma follows from Eq.~\eqref{eq:spec}.   
\end{proof}

\section{Spectral gap characterization of Johnson graphs}\label{sec:Johnson}

In this section we prove Theorem~\ref{thm:pseudo-main-intr}, our characterization of Johnson graphs. Namely, we prove that a distance-regular graph with $\theta_1+1> (1-\varepsilon^*)b_1$ and connected neighborhood graphs is a Johnson graph (for sufficiently large $k$). We also show that the inequality $\theta_1+1> (1-\varepsilon^*)b_1$ can hold for a distance-regular graph with disconnected neighborhood graphs only if $\mu\leq 2$ (see Proposition~\ref{prop:mu-eigen-bound}).

%
%


The main idea of the proofs is to use the fact that for $\displaystyle{b^{+} = \frac{b_1}{\theta_1+1}}$ the expression $-1-b^{+}$ is a lower bound on the smallest eigenvalue of the neighborhood graph $X(v)$. More precisely, we use the following result of Terwilliger \cite{ter-local}.

\begin{theorem}[Terwilliger \cite{ter-local}, see {\cite[Theorem 4.4.3]{BCN}}]
Let $X$ be a distance-regular graph of diameter $d\geq 2$ with distinct eigenvalues $k = \theta_0>\theta_1>\ldots>\theta_d$, and put $\displaystyle{b^{+} = \frac{b_1}{\theta_1+1}}$, $\displaystyle{b^{-} = \frac{b_1}{\theta_d+1}}$. Then 
each neighborhood graph $X(v)$ has the smallest eigenvalue $\geq -1-b^+$, and the second largest eigenvalue $\leq -1-b^{-}$.
\end{theorem}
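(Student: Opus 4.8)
The plan is to exploit the positive semidefiniteness of the orthogonal projections onto the eigenspaces of the adjacency matrix $A$, applied to the extension‑by‑zero of a local eigenvector. Fix a vertex $v$ and let $B$ be the adjacency matrix of the neighborhood graph $X(v)$ on the set $N(v)$. Since $B$ is $a_1$‑regular, every eigenvalue $\eta$ of $B$ other than $a_1$ has an eigenvector $u$ orthogonal to the all‑ones vector $\mathbf 1$ on $N(v)$; normalize $\|u\|=1$. It suffices to bound such $\eta$: the smallest eigenvalue of $X(v)$ is either $a_1\ge 0>-1-b^{+}$ or one of these $\eta$'s, and the second‑largest eigenvalue of $X(v)$ is one of these $\eta$'s. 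I would extend $u$ to $\hat u$ on $V(X)$ by $\hat u|_{N(v)}=u$ and $\hat u=0$ elsewhere, and for $0\le i\le d$ let $E_i$ denote the orthogonal projection onto the $\theta_i$‑eigenspace of $A$.

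The crux is to compute $\hat u^{T}E_i\hat u$ for each $i$. Using the standard identity $(E_i)_{xy}=\tfrac{f_i}{n}\,u_{\dist(x,y)}(\theta_i)$ (where $(u_j(\theta_i))_j$ is the standard sequence of $X$ for $\theta_i$) and the fact that any two vertices of $N(v)$ lie at distance $0$, $1$, or $2$ in $X$, together with $\sum_w u(w)^2=1$, $\sum_{w\sim w'}u(w)u(w')=u^{T}Bu=\eta$, and $\sum_{\dist(w,w')=2}u(w)u(w')=\bigl(\sum_w u(w)\bigr)^2-1-\eta=-1-\eta$ (this last identity is where orthogonality to $\mathbf 1$ enters), one obtains
\[
\hat u^{T}E_i\hat u=\frac{f_i}{n}\Bigl(1+\eta\,u_1(\theta_i)-(1+\eta)\,u_2(\theta_i)\Bigr).
\]
Because $E_i$ is an orthogonal projection, the left‑hand side is $\ge 0$. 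Next I would substitute $u_1(\theta_i)=\theta_i/k$ and $u_2(\theta_i)=(\theta_i^{2}-a_1\theta_i-k)/(kb_1)$ (the latter from the defining recurrence of the standard sequence at index $1$, using $c_1=1$), clear denominators, and simplify with $a_1+b_1+1=k$; the resulting inequality factors as
\[
(k-\theta_i)\bigl[(\theta_i+1)\eta+(\theta_i+1+b_1)\bigr]\ge 0.
\]

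Since $\theta_i<\theta_0=k$ for $i\ge 1$, this gives $(\theta_i+1)\eta+(\theta_i+1+b_1)\ge 0$ for every $i=1,\dots,d$. Taking $i=1$ and dividing by $\theta_1+1>0$ (for a distance‑regular graph of diameter $\ge 2$ one has $\theta_1\ge 0$) yields $\eta\ge -1-\tfrac{b_1}{\theta_1+1}=-1-b^{+}$; as $-1-\tfrac{b_1}{\theta_i+1}$ increases with $\theta_i$, the choice $i=1$ is optimal, and hence the smallest eigenvalue of $X(v)$ is $\ge -1-b^{+}$. Dually, for $d\ge 2$ one has $\theta_d<-1$ (a connected graph with an edge has smallest eigenvalue $\le -1$, with equality only for a clique), so taking $i=d$ and dividing by $\theta_d+1<0$ reverses the inequality to $\eta\le -1-\tfrac{b_1}{\theta_d+1}=-1-b^{-}$ for every such $\eta$, giving the claimed bound on the second‑largest eigenvalue of $X(v)$.

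I expect the main obstacle to be discovering the right test vector and verifying that the crucial expression factors: namely, that feeding the zero‑extension $\hat u$ of a non‑principal local eigenvector into each $E_i$ produces a quadratic expression in $\theta_i$ and $\eta$ equal to $(k-\theta_i)$ times a linear form in $\eta$, so that after dividing out the positive factor $k-\theta_i$ and then by $\theta_i+1$ one reads off exactly $-1-b_1/(\theta_i+1)$. Once the setup and the idempotent‑entry identity are in place, the rest is a short manipulation with the intersection‑array relations. A minor caveat is the case where $X(v)$ is disconnected, so that $a_1$ is a repeated eigenvalue of $X(v)$ and is itself the ``second‑largest''; here one checks directly that $a_1\le -1-b^{-}$ still holds (with equality for Hamming graphs).
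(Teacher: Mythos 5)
The paper does not prove this statement—it is quoted from Terwilliger \cite{ter-local} (cf.\ \cite[Theorem 4.4.3]{BCN})—and your argument is correct and is essentially the standard proof from that source: apply the spectral idempotents $E_i=\frac{f_i}{n}\sum_j u_j(\theta_i)A_j$ to the zero-extension of a local eigenvector orthogonal to $\mathbf 1$, and the resulting quadratic in $\theta_i$ factors as $(k-\theta_i)\bigl[(\theta_i+1)\eta+\theta_i+1+b_1\bigr]\ge 0$, which with $\theta_1>-1$ and $\theta_d<-1$ gives the two bounds. Your closing ``caveat'' about disconnected $X(v)$ needs no separate check: in that case $a_1$ itself has an eigenvector orthogonal to $\mathbf 1$ (a difference of component indicators), so the identical computation applies with $\eta=a_1$ and yields $a_1\le -1-b^{-}$.
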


Recall, we assume that the second largest eigenvalue of $X$ satisfies $\theta_1+1\geq (1-\varepsilon)b_1$. In this case the smallest eigenvalue of the neighborhood graph $X(v)$ is at least $-2-\delta$, for $\delta = \varepsilon/(1-\varepsilon)$. We also observe that if $X$ is an edge-regular graph, its neighborhood graph $X(v)$ is regular for every vertex $v\in X$. 

First, we note that if the diameter $d$ of a distance-regular graph $X$ is at least 2, $\lambda>2$ and the neighborhood graph $X(v)$ is connected, then the smallest eigenvalue of $X(v)$ is at most $-2$. Indeed, if a regular connected graph has the smallest eigenvalue $>-2$, then it is a complete graph or an odd polygon. The neighborhood graph $X(v)$ cannot be complete as $d\geq 2$ and $X(v)$ cannot be an odd polygon as $\lambda>2$.

The graphs for which smallest eigenvalue is precisely $-2$ were classified by Cameron, Goethals, Seidel and Shult~\cite{regular-classif}. We use their classification in the case when a graph $Y$ is a connected regular graph.    

\begin{theorem}[Cameron et al. \cite{regular-classif}, see {\cite[Theorem 3.12.2]{BCN}}]
Let $Y$ be a connected regular graph with $n\geq 29$ vertices and smallest eigenvalue $\geq -2$. Then $Y$ is a line graph of a regular connected graph or of a bipartite semiregular connected graph.
\end{theorem}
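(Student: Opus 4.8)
The plan is to reconstruct the classical proof of Cameron, Goethals, Seidel and Shult through the theory of root systems, assuming only the Cartan--Killing classification of irreducible root systems and the standard dictionary between line systems in $D_m$ and generalized line graphs. Write $A$ for the adjacency matrix of $Y$. The hypothesis that the least eigenvalue of $Y$ is at least $-2$ is equivalent to $2I+A$ being positive semidefinite, so $2I+A$ is the Gram matrix of a family of vectors $\{\alpha_v : v\in V(Y)\}$ in a Euclidean space: each $\alpha_v$ has $\langle\alpha_v,\alpha_v\rangle=2$, while for $u\ne v$ we have $\langle\alpha_u,\alpha_v\rangle=1$ if $u\sim v$ and $\langle\alpha_u,\alpha_v\rangle=0$ otherwise. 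Thus $Y$ is represented by a set of vectors of squared length $2$ meeting pairwise at angles $60^\circ$ or $90^\circ$.

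The first step is to pass from this line system to a root system: let $R$ be the smallest set of squared-length-$2$ vectors containing all $\pm\alpha_v$ and closed under $\alpha,\beta\mapsto\alpha+\beta$ whenever $\langle\alpha,\beta\rangle=-1$ (each such sum again has squared length $2$). One checks $R$ is stable under the reflections in its own members, hence is a reduced root system in the subspace it spans; since $Y$ is connected, this root system is irreducible, so it has one of the types $A_m$, $D_m$ ($m\ge4$), $E_6$, $E_7$, $E_8$, and since $A_m\subseteq D_{m+1}$ we may assume $R$ has type $D_m$ or an exceptional type. Now one eliminates the exceptional types using $n\ge29$: since $E_6,E_7,E_8$ have only $72,126,240$ roots, an $E_8$-representable graph has at most $120$ vertices, and a finer, essentially case-by-case analysis of which connected \emph{regular} graphs are represented inside $E_8$ but inside no $D_m$ shows that every such graph has at most $28$ vertices (the extremal example being the Schl\"afli graph on $27$ vertices and a few of its relatives). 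Hence under the hypothesis $n\ge29$ the root system $R$ has type $D_m$, i.e. the $\alpha_v$ lie among the roots $\pm e_i\pm e_j$ of $D_m$.

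It remains to translate a $D_m$-representation into a line graph using regularity. Up to the signed-permutation symmetries of $D_m$, a connected set of such roots organizes into a generalized line graph $L(H;a_1,\dots,a_p)$: the vectors of the form $e_i-e_j$ yield the line graph $L(H)$ of a connected multigraph $H$ on $p$ vertices, and each integer $a_i\ge0$ contributes a cocktail-party gadget $K_{a_i\times2}$ attached at vertex $i$ via vectors $\pm e_i\pm e_t$ on fresh coordinates $t$. Imposing that $Y$ is regular and computing vertex degrees (the vertex of an edge $ij$ of $H$ has degree $\deg_H(i)+\deg_H(j)-2+2a_i+2a_j$, with further relations coming from the cocktail-party vertices) forces all $a_i=0$ and $H$ either regular or bipartite with each part of constant degree; correspondingly $Y=L(H)$ with $H$ a regular connected graph or a bipartite semiregular connected graph, as claimed. (In the full CGSS statement the degenerate case $Y=K_{m\times2}$ also survives; it is excluded or absorbed in the formulation we use.)

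The main obstacle is the middle step: reducing to type $D_m$ for $n\ge29$ is not formal, but rests on the careful analysis of regular graphs representable in $E_8$, which is precisely where the threshold $29$ enters. The passage to a Gram representation and the dictionary from $D_m$-line-systems to generalized line graphs, together with the degree computation that exploits regularity, are comparatively mechanical once the classification of irreducible root systems is granted.
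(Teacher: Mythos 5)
The paper does not prove this statement; it is quoted from \cite[Theorem 3.12.2]{BCN} as an external classical result, so there is no internal proof to compare against. Your sketch reconstructs the standard argument of Cameron--Goethals--Seidel--Shult combined with the Bussemaker--Cvetkovi\'c--Seidel analysis of regular exceptional graphs, and structurally it is the right proof: Gram representation of $2I+A$ by norm-$2$ vectors at $60^\circ/90^\circ$, star-closure to an irreducible root system, reduction to $D_m$ versus the exceptional types, and translation of a $D_m$-representation into a generalized line graph followed by the degree computation. You are also right that the entire content of the threshold $29$ lives in the claim that a connected \emph{regular} graph represented in $E_8$ but in no $D_m$ has at most $28$ vertices; this is the enumeration of the $187$ regular exceptional graphs and is only gestured at, not proved. (Minor correction there: the extremal examples on $28$ vertices are the three Chang graphs; the Schl\"afli graph has $27$.)

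The one genuine flaw is the parenthetical dismissal of the cocktail-party case. The graph $K_{m\times 2}$ with $m\geq 15$ is connected, regular, has $n=2m\geq 29$ vertices and smallest eigenvalue exactly $-2$, yet for $m\geq 4$ it is not the line graph of any graph: a Krausz edge-clique partition would need every vertex to lie in exactly two cliques of size $m$, hence exactly four cliques in total, and the incidence count $\sum_{i<j}|T_i\cap T_j|=2m>6$ forces two of them to share an edge. So the cocktail-party graphs satisfy every hypothesis of the statement and escape its conclusion; in the precise formulation of \cite[Theorem 3.12.2]{BCN} they occur as an explicit additional alternative, which the version quoted here suppresses. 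Your own $D_m$-step in fact \emph{produces} this family --- the degree computation shows that a regular connected generalized line graph with some $a_i>0$ forces $H$ to be a single vertex and $Y=K_{a_i\times 2}$ --- so the case cannot be ``excluded or absorbed'' without an additional hypothesis; it has to be carried as an exceptional conclusion (and then ruled out separately wherever the theorem is applied).
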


Next, we use the results of Hoffman, Bussemaker and Neumaier, which assert that for a small enough $\delta$ the smallest eigenvalue of a graph is never in the interval $(-2-\delta, -2)$. 

\begin{definition}\label{def:eig_bounds} Define $\vartheta_k$ to be the supremum of the smallest eigenvalues of graphs with minimal valency $k$ and smallest eigenvalue $<-2$.
\end{definition}

\begin{theorem}[Hoffman \cite{Hoffman}, see {\cite[Theorem 3.12.5]{BCN}}]\label{thm:Hoffman}
The sequence $(\vartheta_k)_k$ forms a monotone decreasing sequence with limit $-1-\sqrt{2}$.
\end{theorem}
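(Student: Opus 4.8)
My plan is to prove monotonicity by an elementary reduction and then locate the limit by a two-sided estimate, using the vector (Gram-matrix) representation of a graph with small least eigenvalue together with the Cameron--Goethals--Seidel--Shult classification quoted above. For \textbf{monotonicity}, first note that for every $k$ the class of graphs with minimal valency $k$ and least eigenvalue $<-2$ is nonempty --- e.g.\ $K_{k,5k}$ has minimal valency $k$ and least eigenvalue $-\sqrt5\,k<-2$ --- so each $\vartheta_k$ is a well-defined real number with $\vartheta_k\le-2$. To get $\vartheta_{k+1}\le\vartheta_k$, given a graph $G$ with minimal valency $k+1$ and $\lambda_{\min}(G)<-2$ I would pass to the disjoint union $G':=G\sqcup K_{k+1}$: it has minimal valency $k$, and since $\lambda_{\min}(K_{k+1})=-1>\lambda_{\min}(G)$ we have $\lambda_{\min}(G')=\lambda_{\min}(G)$. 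So every value entering the supremum defining $\vartheta_{k+1}$ also enters that of $\vartheta_k$, whence $\vartheta_{k+1}\le\vartheta_k$; being monotone decreasing, $(\vartheta_k)$ converges to $L:=\inf_k\vartheta_k$, and it remains to show $L=-1-\sqrt2$. (The same device shows it is immaterial whether ``minimal valency $k$'' means ``$=k$'' or ``$\ge k$''.)

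For the bound $L\ge-1-\sqrt2$ it suffices to exhibit, for each $k$, a graph of minimal valency $\ge k$ whose least eigenvalue lies in $(-1-\sqrt2,-2)$, since then $\vartheta_k>-1-\sqrt2$ for all $k$. For small $k$ I would use explicit small graphs (for instance $K_{1,5}$, with $\lambda_{\min}=-\sqrt5$, and complete split graphs $\overline{K_3}\vee K_t$ for a short range of $t$); for $k$ large I would invoke Hoffman's explicit family --- graphs obtained from line graphs by a bounded local modification --- whose minimal valency tends to infinity while the least eigenvalue remains strictly between $-1-\sqrt2$ and $-2$.

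For the reverse bound $L\le-1-\sqrt2$ I would prove the quantitative gap statement: for every $\varepsilon>0$ there is $k_0$ such that no graph of minimal valency $\ge k_0$ has least eigenvalue in $(-1-\sqrt2+\varepsilon,-2)$; this gives $\vartheta_{k_0}\le-1-\sqrt2+\varepsilon$. So suppose $G$ is connected with $\mu:=\lambda_{\min}(G)\in(-1-\sqrt2+\varepsilon,-2)$ and minimal valency $\ge k_0$. Since $A-\mu I\succeq0$ I would represent it as the Gram matrix of vectors $\{x_v\}_{v\in V(G)}$ with $\langle x_v,x_v\rangle=-\mu=:\beta\in(2,1+\sqrt2)$ and $\langle x_u,x_v\rangle=A_{uv}\in\{0,1\}$. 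For a vertex $v$ and pairwise non-adjacent neighbors $u_1,\dots,u_t$ of $v$, the projections $y_i:=x_{u_i}-\beta^{-1}x_v$ onto $x_v^{\perp}$ have common squared norm $\beta-\beta^{-1}$ and pairwise inner product $-\beta^{-1}<0$, so $0\le\big\|\sum_i y_i\big\|^2$ forces $t\le\beta^2<6$. Hence every neighborhood graph of $G$ has independence number at most $5$ and (by Ramsey's theorem) contains cliques whose size grows without bound as $k_0\to\infty$; in particular $G$ contains arbitrarily large cliques.

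The hard part will be the concluding step: Hoffman's analysis of how these large cliques and the remaining vertices can be arranged inside the representation. The point is that the bound $\beta<1+\sqrt2$ restricts the admissible local patterns so severely that, once $k_0$ is large, $G$ must coincide with a generalized line graph outside a bounded, eigenvalue-inessential part --- contradicting $\lambda_{\min}(G)<-2$. Isolating the precise threshold $1+\sqrt2$ here, rather than the weaker $\sqrt6$ coming from the crude independence-number estimate above, is the main obstacle, and is where the Cameron--Goethals--Seidel--Shult structure theory and the full strength of Hoffman's argument are needed.
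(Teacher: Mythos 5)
The paper does not prove this statement at all: it is quoted verbatim from Hoffman via \cite[Theorem 3.12.5]{BCN}, so there is no internal proof to compare your route against. Judged on its own terms, your proposal is a correct and cleanly executed \emph{setup} but not a proof. The parts you actually carry out are fine: the disjoint-union trick $G\mapsto G\sqcup K_{k+1}$ does give $\vartheta_{k+1}\le\vartheta_k$ (and well-definedness), and the Gram-vector computation $0\le\bigl\|\sum_i y_i\bigr\|^2=\tfrac{t}{\beta}(\beta^2-t)$ correctly bounds the independence number of every neighborhood by $\lfloor\beta^2\rfloor\le 5$, whence large cliques by Ramsey.

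The two halves of the actual limit statement, however, are both missing. For $L\ge-1-\sqrt2$ you write ``invoke Hoffman's explicit family,'' but exhibiting a family of graphs with minimal valency tending to infinity and least eigenvalue in $(-1-\sqrt2,-2)$ \emph{is} half of the content of the theorem; without a concrete construction (and a verification that its least eigenvalues stay above $-1-\sqrt2$) this direction is circular. For $L\le-1-\sqrt2$ you explicitly defer ``the hard part'' to ``Hoffman's analysis''; but this quantitative forbidden-interval statement is the entire difficulty, and the crude bound $t\le\beta^2$ only yields the threshold $-\sqrt6$, not $-1-\sqrt2$. Moreover, the pointer to the Cameron--Goethals--Seidel--Shult theorem is not quite the right tool here: that classification applies to graphs with least eigenvalue $\ge-2$ (via the root systems $A_n$, $D_n$, $E_8$), whereas your graph has least eigenvalue strictly below $-2$; the genuine argument needs Hoffman's representation-theoretic limit machinery (large cliques forcing an approximate generalized-line-graph structure, plus an eigenvalue perturbation step), none of which is supplied. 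So the proposal should be regarded as an outline with the theorem's core left unproved.
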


\begin{theorem}[Bussemaker, Neumaier \cite{Bussem-Neum}, see {\cite[Theorem 3.12.5]{BCN}}]\label{thm:theta1}
$\vartheta_1\approx -2.006594$ is the smallest root of the equation 
\[ \theta^2(\theta^2-1)^2(\theta^2-3)(\theta^2-4) = 1.\]
\end{theorem}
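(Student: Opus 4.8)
I would turn the determination of $\vartheta_1$ into a \emph{finite} problem by eigenvalue interlacing and then appeal to the root-lattice classification of graphs with least eigenvalue $\geq -2$. Write $\lambda_{\min}(\cdot)$ for the least eigenvalue of a graph, and call a graph $H$ \emph{minimal} if $\lambda_{\min}(H)<-2$ but $\lambda_{\min}(H')\geq -2$ for every proper induced subgraph $H'\subsetneq H$. A minimal graph has no isolated vertex (deleting one would only remove an eigenvalue $0$), so it has minimal valency at least $1$ and is among the graphs in Definition~\ref{def:eig_bounds}; conversely every graph $Y$ with $\lambda_{\min}(Y)<-2$ contains a minimal induced subgraph $H$, and Cauchy interlacing for induced subgraphs gives $\lambda_{\min}(Y)\leq\lambda_{\min}(H)$. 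Hence
\[ \vartheta_1=\sup\{\lambda_{\min}(Y):\lambda_{\min}(Y)<-2\}=\sup\{\lambda_{\min}(H): H\text{ minimal}\},\]
and as soon as the minimal graphs are finite in number this supremum is a maximum.

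\textbf{Finiteness of the minimal graphs.} A connected graph $G$ has $\lambda_{\min}(G)\geq -2$ iff $A(G)+2I$ is positive semidefinite, i.e.\ iff $G$ has a Gram representation by vectors of squared length $2$ with pairwise inner products $1$ (adjacent vertices) or $0$ (non-adjacent); such vectors generate a root lattice, and since $E_6,E_7$ embed into $E_8$ we may take it to be $A_n$, $D_n$, or $E_8$. Graphs representable in $A_n\cup D_n$ are exactly the generalized line graphs, and the remaining graphs with $\lambda_{\min}\geq -2$ form the finite family of ``exceptional'' graphs, all of bounded order \cite{regular-classif}. For a minimal $H$, every proper induced subgraph is a generalized line graph or an exceptional graph. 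If $H$ is large, then $H$ with any one vertex deleted is a large generalized line graph, and a graph that becomes a generalized line graph after deletion of \emph{any} single vertex yet is not itself one must have bounded order (an extension of Beineke's forbidden-subgraph theorem for line graphs); if instead $H$ has an exceptional proper induced subgraph, then $H$ is bounded because the exceptional graphs are. Either way $H$ has boundedly many vertices, so the minimal graphs can be listed exhaustively — historically this was carried out by a finite computer search.

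\textbf{The extremizer, and the main obstacle.} Computing $\lambda_{\min}$ over the resulting finite list, the maximum is attained by a single graph $H_0$; a direct evaluation of its characteristic polynomial shows that $\lambda_{\min}(H_0)$ is the smallest root of $\theta^2(\theta^2-1)^2(\theta^2-3)(\theta^2-4)=1$, numerically $\approx -2.006594$, so $\vartheta_1$ equals this value. (It lies above $-1-\sqrt2=\lim_k\vartheta_k$ from Theorem~\ref{thm:Hoffman}, as it must, being the largest term of that decreasing sequence.) The genuine difficulty is the content of the previous paragraph: bounding the order of, and then exhaustively enumerating, the minimal forbidden subgraphs, which rests on the full $A_n/D_n/E_8$ representation theory; and then verifying that among all those minimal graphs none has least eigenvalue exceeding the smallest root of the displayed sextic in $\theta^2$ — a finite but delicate check, since several minimal graphs have least eigenvalue only a little below $-2$.
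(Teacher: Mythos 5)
The paper does not prove this statement: it is imported verbatim from Bussemaker--Neumaier (via \cite[Theorem 3.12.5]{BCN}) and used as a black box, namely to define $\varepsilon^*$ in Proposition~\ref{prop:local-structure}. So there is no internal proof to compare against, and your proposal must be judged as a reconstruction of the original argument. At the level of strategy it is faithful: reducing $\vartheta_1$ to a maximum over the (finitely many) vertex-minimal graphs with least eigenvalue below $-2$ via interlacing, invoking the $A_n/D_n/E_8$ root-lattice representation of \cite{regular-classif} to bound the order of those minimal graphs, and then an exhaustive finite search is indeed how the value was determined.

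As a proof, however, the proposal has a genuine gap, and it is exactly the step that carries the theorem's content. Everything through your second paragraph only shows that $\vartheta_1$ is the maximum of a finite, effectively computable set of algebraic numbers; it says nothing about which number that is. The existence of the extremal graph $H_0$, its identification, and the verification that its least eigenvalue is the smallest root of $\theta^2(\theta^2-1)^2(\theta^2-3)(\theta^2-4)=1$ (and that no other minimal graph has a larger least eigenvalue below $-2$) are asserted, not carried out: you exhibit neither $H_0$ nor its characteristic polynomial. Since the theorem is precisely this quantitative identification, what you have is a correct proof plan rather than a proof. A secondary soft spot is the boundedness argument in the case where every one-vertex-deleted subgraph of a minimal graph is a generalized line graph: the appeal to ``an extension of Beineke's theorem'' should be replaced by the finite forbidden-induced-subgraph characterization of generalized line graphs (Cvetkovi\'c--Doob--Simi\'c), from which boundedness follows by the one-line argument you hint at (a forbidden subgraph sits inside $H$, and deleting any vertex outside it would leave a non-generalized-line graph).
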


The above discussion can be summarized in the following proposition.

\begin{proposition}\label{prop:local-structure} Let $X$ be a distance-regular graph of diameter $d\geq 2$. Assume that the second largest eigenvalue of $X$ satisfies $\theta_1+1> (1-\varepsilon^{*})b_1$, for $\displaystyle{0<\varepsilon^* = \frac{-2-\vartheta_1}{-1-\vartheta_1}}$. Then for every vertex $v$ of $X$, the neighborhood graph $X(v)$ is a regular graph with smallest eigenvalue at least $-2$. 

Moreover, if $X(v)$ is connected, $\lambda>2$, and the vertex degree in $X$ is at least $29$, then $X(v)$ is the line graph of a regular or of a bipartite semiregular connected graph. 
\end{proposition}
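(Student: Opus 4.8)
The plan is to glue together three quoted facts — Terwilliger's lower bound on the least eigenvalue of a neighbourhood graph (\cite[Theorem~4.4.3]{BCN}), the Hoffman spectral gap (Theorem~\ref{thm:Hoffman}) pinned down by the Bussemaker--Neumaier value of $\vartheta_1$ (Theorem~\ref{thm:theta1}), and the Cameron--Goethals--Seidel--Shult classification of regular graphs with least eigenvalue $\geq -2$ — using the arithmetic that makes $\varepsilon^{*}$ the correct threshold. First I would observe that $X$, being distance-regular, is edge-regular, so for every vertex $v$ the neighbourhood graph $X(v)$ is a $\lambda$-regular graph on $k$ vertices; in particular its minimal valency equals $\lambda$. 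Next, unwinding $\varepsilon^{*} = (-2-\vartheta_1)/(-1-\vartheta_1)$ gives $1-\varepsilon^{*} = 1/(-1-\vartheta_1)$, so the hypothesis $\theta_1+1 > (1-\varepsilon^{*})b_1$ is exactly $b^{+} := b_1/(\theta_1+1) < -1-\vartheta_1$, i.e. $-1-b^{+} > \vartheta_1$. Writing $\eta$ for the least eigenvalue of $X(v)$, Terwilliger's bound then gives $\eta \geq -1-b^{+} > \vartheta_1$.

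The second step is to upgrade $\eta > \vartheta_1$ to $\eta \geq -2$, exploiting that no graph has least eigenvalue just below $-2$. Suppose for contradiction $\eta < -2$. If $\lambda = 0$ then $X(v)$ is edgeless and $\eta = 0$, which is absurd; hence $\lambda \geq 1$, so $X(v)$ is a graph of minimal valency $\lambda \geq 1$ with least eigenvalue $< -2$, whence $\eta \leq \vartheta_\lambda$ by Definition~\ref{def:eig_bounds}. Since $(\vartheta_k)_k$ is monotone decreasing by Theorem~\ref{thm:Hoffman}, $\vartheta_\lambda \leq \vartheta_1$, so $\eta \leq \vartheta_1$, contradicting the bound of the first step. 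Therefore $\eta \geq -2$, which proves the first assertion.

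For the second assertion assume moreover that $X(v)$ is connected, $\lambda > 2$, and $k \geq 29$. A connected regular graph whose least eigenvalue exceeds $-2$ is a complete graph or an odd polygon; here $X(v)$ is not complete, since $d \geq 2$ forces a vertex $w$ at distance $2$ from $v$ and a common neighbour of $v$ and $w$ would then have at least $k+1$ neighbours, and $X(v)$ is not an odd polygon, since those are $2$-regular while $\lambda > 2$. Hence $\eta \leq -2$, so in fact $\eta = -2$. Now $X(v)$ is a connected regular graph on $k \geq 29$ vertices with least eigenvalue $\geq -2$, and \cite[Theorem~3.12.2]{BCN} identifies it as the line graph of a regular connected graph or of a bipartite semiregular connected graph.

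I do not anticipate a genuinely hard step: the proof is essentially a bookkeeping exercise over the cited results, and the one substantive idea is the reciprocal identity $1/(1-\varepsilon^{*}) = -1-\vartheta_1$, which lines up Terwilliger's lower bound $-1-b^{+}$ with the interval $(\vartheta_1, -2)$ that Hoffman and Bussemaker--Neumaier forbid. The places most prone to slips are keeping the inequalities correctly oriented across the two reciprocals, and disposing cleanly of the degenerate cases $\lambda \in \{0,1,2\}$ and $X(v)$ complete; those are where I would take the most care.
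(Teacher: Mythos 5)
Your proof is correct and follows essentially the same route as the paper, which presents this proposition as a summary of the preceding discussion: Terwilliger's bound combined with the identity $1-\varepsilon^{*}=1/(-1-\vartheta_1)$ forces the least eigenvalue of $X(v)$ above $\vartheta_1$, the Hoffman/Bussemaker--Neumaier forbidden interval then pushes it up to $-2$, and the complete-graph/odd-polygon observation plus the Cameron--Goethals--Seidel--Shult classification give the second assertion. Your handling of the degenerate cases ($\lambda\le 2$, $X(v)$ complete) is slightly more explicit than the paper's, but the argument is the same.
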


\begin{remark}\label{rem:varepsilon-star} One can compute that $\varepsilon^{*} \approx 0.006551$. Observe that, the neighborhood graph $X(V)$ is regular of degree $\lambda$, and by Theorem~\ref{thm:Hoffman}, $\lim\limits_{\lambda \rightarrow\infty}\vartheta_{\lambda} = -1-\sqrt{2}$. Thus,  we can replace $\varepsilon^*$ with any number less than $1-1/\sqrt{2}\approx 0.29289$, if we additionally require $\lambda$ to be sufficiently large.
\end{remark}

Next we analyze the structure of the local graph $X(v)$ in the case when $X$ is geometric.

\begin{lemma}\label{lem:reduction-local-bipartite} Let $X$ be a geometric distance distance-regular graph with smallest eigenvalue $-m$.  Suppose that $X(v)$ is the line graph of a regular or a bipartite semiregular connected graph. Assume that vertex degree $k\geq \max(m^3, 3)$. Then, $X(v)$ is the line graph of a complete bipartite graph $K_{s, t}$ for each vertex $v$ of $X$, where $\{s, t\} = \{m, k/m\}$.
\end{lemma}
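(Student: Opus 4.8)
The plan is to analyze the structure of the neighborhood graph $X(v)$ using the combinatorics of the Delsarte clique geometry $\mathcal{C}$. Fix a vertex $v$, and let $C_1,\ldots,C_m\in\mathcal{C}$ be the $m$ Delsarte cliques through $v$ (Lemma~\ref{lem:eigenvalue-clique-geom}). Each $C_i$ contributes a clique of size $k/m$ on $N(v)$ (namely $C_i\setminus\{v\}$), and these $m$ cliques partition $N(v)$. So $X(v)$ already carries a clique partition into $m$ cliques of equal size $k/m$; since $X(v)$ is connected (as $\psi_1\geq 2$ by Lemma~\ref{lem:connected-disconn}, because $X(v)$ is a line graph and hence not a disjoint union of cliques when $m\geq 2$), these cliques are genuinely glued together. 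First I would observe that the line graph structure $X(v)=L(H)$ must be \emph{compatible} with this clique partition: in a line graph $L(H)$, the maximal cliques are (with the exception of triangles coming from $K_3$ in $H$) exactly the stars $S_w=\{$edges of $H$ incident to $w\}$. When $k/m$ is large (which follows from $k\geq m^3$, so $k/m\geq m^2\geq 4$), each $C_i\setminus\{v\}$ is a large clique in $L(H)$, hence a subset of a star $S_{w_i}$ for some vertex $w_i$ of $H$; in fact for $k/m\geq 4$ a clique of $L(H)$ that is not contained in a single star has size $\leq 3$, so $C_i\setminus\{v\}$ sits inside $S_{w_i}$.

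Next I would pin down $H$ itself. Since the $m$ cliques $C_i\setminus\{v\}$ cover all $k$ edges of $H$ and $H=$ the ``preimage'' graph has exactly $k$ edges, and each edge of $H$ lies in exactly one $C_i\setminus\{v\}$ (because $\mathcal{C}$ is a clique geometry, every edge of $X(v)$ is in a unique clique of $\mathcal{C}$), the stars $S_{w_1},\ldots,S_{w_m}$ cover every edge exactly once — meaning $\{w_1,\ldots,w_m\}$ is a set of $m$ vertices of $H$ such that every edge of $H$ has at least one endpoint among them, and no edge has both endpoints among them. Thus $\{w_1,\ldots,w_m\}$ is an independent set that is also a vertex cover; equivalently $H$ is bipartite with one side exactly $\{w_1,\ldots,w_m\}$. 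The other side $\{z_1,\ldots,z_N\}$ consists of the remaining vertices of $H$ incident to at least one edge. Now regularity of $X(v)$ translates into regularity/semiregularity of $H$ as in the hypothesis, so $H$ is a semiregular bipartite graph with parts of sizes $m$ and $N$; the $w$-side has common degree $k/m$ (the size of each $C_i\setminus\{v\}$ being $|S_{w_i}|=\deg_H(w_i)$), and the $z$-side has common degree, say, $r$. Counting edges: $m\cdot(k/m)=N\cdot r$, so $Nr=k$.

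To finish I would show $H=K_{m,k/m}$, i.e. that $H$ is complete bipartite, using the constraint $\mu\geq 2$, or rather its refinement via $\psi_1$. The key point: for adjacent vertices $v,v'$ of $X$ lying in clique $C_i$, the parameter $\psi_1$ counts neighbors of an outside vertex inside $C_i$, and more relevantly, the number of common neighbors and the local intersection pattern force each pair $C_i,C_j$ of cliques through $v$ to interact in a prescribed way; concretely, a vertex $w\in C_i\setminus\{v\}$ has $\psi_1$ neighbors in $C_j\setminus\{v\}$ (this is the argument in Lemma~\ref{lem:tau2}), and $\psi_1=|C_i\cap S_{w_j}|$ translates, in $H$, to: $w_i$ and $w_j$ have exactly $\psi_1$ common neighbors on the $z$-side, \emph{for every} $i\neq j$. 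Combined with $\mu=\tau_2\psi_1$ and $\mu\leq m^2$, plus the edge count $Nr=k$ and the degree $r$ on the $z$-side, a short computation — each $z_\ell$ is adjacent to $r$ of the $w_i$'s, every pair $w_i,w_j$ sees the same number $\psi_1$ of $z$'s — gives a $2$-design-like identity $\binom{m}{2}\psi_1 = N\binom{r}{2}$; together with $r=k/N$ and the Delsarte clique size $k/m$ one forces $r=m$, hence every $z_\ell$ is adjacent to all $w_i$, hence $N=k/m$ and $H=K_{m,k/m}$. Then $X(v)=L(K_{m,k/m})=L(K_{s,t})$ with $\{s,t\}=\{m,k/m\}$, as claimed.

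The main obstacle I expect is the ``rigidity'' step: going from ``$H$ is semiregular bipartite with the right part sizes and a uniform pairwise-common-neighbor count among the $w_i$'s'' to ``$H$ is complete bipartite.'' A priori a semiregular bipartite graph with a constant number of common neighbors for pairs on the small side need not be complete bipartite (e.g. incidence graphs of designs). The leverage must come from the additional numerical constraints forced by $X$ being distance-regular and geometric — specifically the exact value of the Delsarte clique size $1+k/m$, the relation $b_1=(m-\tau_1)(k/m+1-\psi_1)=(m-1)(k/m+1-\psi_1)$ from Lemma~\ref{lem:geometric-param} (since $\tau_1=1$), and the bound $k\geq m^3$ which makes the ``large clique'' arguments valid and rules out sporadic small configurations. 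Making this counting airtight — verifying that no non-complete semiregular bipartite $H$ can satisfy all the constraints simultaneously — is where the real work lies; everything before it is bookkeeping about line graphs and clique geometries.
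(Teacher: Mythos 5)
Your setup (the cliques $C_i\setminus\{v\}$ partition $N(v)$, each is a clique of size $k/m\geq m^2\geq 4$ in $L(H)$ and hence lies in a star $S_{w_i}$, and the $w_i$ end up forming an independent vertex cover so that $H$ is bipartite with parts $W=\{w_1,\dots,w_m\}$ and $B$) is sound, modulo one easy fix: to get that the stars cover each edge \emph{exactly} once you should note that $S_{w_i}\cup\{v\}$ is a clique of $X$ containing the Delsarte clique $C_i$, so by maximality (or the Delsarte bound $|C|\leq 1+k/m$) in fact $S_{w_i}=C_i\setminus\{v\}$. The genuine gap is exactly where you place it: the final step from ``$H$ is semiregular bipartite with parts of sizes $m$ and $N$'' to ``$H=K_{m,k/m}$'' is not established. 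Your proposed double count is miscomputed ($w_i$ and $w_j$ have $(\psi_1-1)\cdot k/m$ common neighbours, not $\psi_1$; the parameter $\psi_1$ is a per-edge, not per-pair, count), and even the corrected identity $\binom{m}{2}(\psi_1-1)\frac{k}{m}=N\binom{r}{2}$ together with $Nr=k$ only reproduces $r=1+(\psi_1-1)(m-1)$ and does not force $r=m$ --- as you yourself observe with the design-incidence-graph caveat.

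The missing idea is local, not global. Since $w_i\not\sim w_j$, an edge $u=\{w_i,z\}$ of $H$ can meet the star $S_{w_j}=C_j\setminus\{v\}$ in at most one edge, namely $\{w_j,z\}$; on the other hand $u$ has exactly $\psi_1-1$ neighbours in $C_j\setminus\{v\}$ ($\psi_1$ neighbours in $C_j$, one of which is $v$). Hence $\psi_1\leq 2$, and $\psi_1=2$ because $X(v)$ is connected (Lemma~\ref{lem:connected-disconn}). But $\psi_1-1=1$ for \emph{every} $u\in C_i\setminus\{v\}$ and \emph{every} $j\neq i$ means every neighbour $z$ of $w_i$ is adjacent to every $w_j$, so $B=N(w_1)$ has size $k/m$ and $H=K_{m,k/m}$; equivalently, $r=1+(\psi_1-1)(m-1)=m=|W|$ forces completeness. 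Note for comparison that the paper takes a shorter route: it computes the degree of $X(v)$ as $k/m-1+(\psi_1-1)(m-1)$, compares it with the degree $2(t-1)$ and the maximal clique size $t\geq k/m$ of the line graph of a $t$-regular graph to exclude the regular case, and then reads off $\{s,t\}=\{m,k/m\}$ from the clique size and vertex count; it does not spell out the passage from ``bipartite semiregular'' to ``complete bipartite,'' which is precisely the step your argument, once repaired as above, supplies.
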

\begin{proof}
 Fix a Delsarte clique geometry $\mathcal{C}$ of $X$. Let $C_1, C_2, \ldots, C_m \in \mathcal{C}$ be the cliques that contain a vertex $v$. Since every edge of $X$ is contained in precisely one clique, every vertex of $N(v)$ is contained in precisely one of $C_1, C_2, \ldots, C_m$. Let $u\in C_1\setminus \{v\}$, by the definition of $\psi_i$ (see Section~\ref{sec:geometric}), $u$ is adjacent with precisely $\psi_1$ vertices of $C_i$ for all $i = 2, 3, \ldots, m$. Therefore, the degree of every vertex $u$ in $X(v)$ equals $k/m-1+(\psi_1-1)(m-1)$. 
 
 Assume that $Y$ is the line graph of a regular graph $Z$ with vertex degree $t$. Then the degree of a vertex in $Y$ is equal $2(t-1)$. Moreover, the size of a maximal clique in $Y$ is $t$, if $t\geq 3$. Since $Y$ contains a clique of size $k/m$, we get $t\geq k/m$. Therefore, if $k/m\geq 3$ and $(k/m-1)>(\psi_1-1)(m-1)$, then $X(v)$ is not a line graph of a regular graph. In particular this is true, if $k\geq \max(m^3, 3)$, as $\psi_1\leq \tau_2\leq m$ by the definition of $\tau_2$ and Lemma~\ref{lem:tau2}. 
 
  Hence, for every $v$, the neighborhood graph $X(v)$ is the line graph of a complete bipartite graph $K_{s, t}$. The size of the maximal clique in the line graph of $K_{s, t}$ is $\max(s, t)$. Thus, $\max(s, t) = k/m$. There are $k$ vertices in $X(v)$ and $st$ vertices in the line graph of $K_{s, t}$, so $\{s, t\} = \{m, k/m\}$.  
\end{proof}

In the case when $X(v)$ is the line graph of a complete bipartite graph $K_{s, t}$ and $1+\theta_1\geq (1-\varepsilon)b_1$ we show that $X$ is a Johnson graph. Our goal is to use the characterization of the Johnson graphs by local structure stated in Theorem~\ref{thm:Johnson-classif}. The only condition we still need to verify is $\mu \leq 4$. We prove that if $\mu>4$, then $X$ contains an induced subgraph $K_{3, 2}$ and so we can use the inequality provided by the theorem below.

\begin{theorem}[{\cite[Theorem 4.4.6]{BCN}}]\label{thm:induced-bipartite-ineq} Let $X$ be a distance-regular graph of diameter $d\geq 2$ with eigenvalues $k = \theta_0>\theta_1>\ldots>\theta_d$ and put $b^+ = b_1/(\theta_1+1)$. If $X$ contains a non-empty induced complete bipartite subgraph $K_{s, t}$, then 
\[ \frac{2st}{s+t}\leq b^{+}+1.\]  
\end{theorem}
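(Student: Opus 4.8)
The plan is to realise $X$ in the $\theta_1$-eigenspace and test a single well-chosen vector. Let $E$ denote the orthogonal projection of $\mathbb{R}^{V(X)}$ onto the eigenspace of the adjacency matrix $A$ for $\theta_1$, and for a vertex $v$ set $g_v := E\hat{v}$, where $\hat{v}$ is the standard basis vector. Since $E$ is a symmetric idempotent and a polynomial in $A$, its entries depend only on distance; concretely $\langle g_u,g_v\rangle = E_{uv} = \tfrac{f_1}{n}\,u_{\dist(u,v)}(\theta_1)$, where $f_1$ is the multiplicity of $\theta_1$ and $(u_i(\theta_1))_{i}$ is the standard sequence of $X$ for $\theta_1$. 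In particular $\|g_v\|^2 = \tfrac{f_1}{n}$ for every $v$.

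Now let $A=\{y_1,\dots,y_s\}$ and $B=\{z_1,\dots,z_t\}$ be the two sides of an induced $K_{s,t}$. Every $y$–$z$ pair is adjacent, while a pair inside $A$ (or inside $B$) is non-adjacent but shares a neighbour on the other side, so $\dist(y_i,z_j)=1$ and $\dist(y_i,y_{i'})=\dist(z_j,z_{j'})=2$. Consider the test vector
\[
R \;:=\; \frac{1}{s}\sum_{i=1}^{s} g_{y_i}\;-\;\frac{1}{t}\sum_{j=1}^{t} g_{z_j}.
\]
Expanding $0\le\|R\|^2$ with the inner products recorded above — the diagonal contributions, the within-part contributions $\tfrac{f_1}{n}u_2(\theta_1)$, and the cross contributions $\tfrac{f_1}{n}u_1(\theta_1)$ — gives
\[
\|R\|^2 \;=\; \frac{f_1}{n}\left[\Big(\tfrac1s+\tfrac1t\Big)\big(1-u_2(\theta_1)\big)\;-\;2\big(u_1(\theta_1)-u_2(\theta_1)\big)\right]\;\ge\;0 .
\]
The coefficients $1/s$ and $-1/t$ are chosen precisely so that the cross term collapses to $-2u_1(\theta_1)$, independently of $s$ and $t$; this is the one step that is not mere bookkeeping.

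It then remains to substitute the values $u_1(\theta_1)=\theta_1/k$ and $u_2(\theta_1)$. The $i=1$ relation $c_1u_0+a_1u_1+b_1u_2=\theta_1u_1$ with $c_1=1$, $a_1=\lambda$ and $k=\lambda+b_1+1$ yields, after a one-line manipulation, $u_1(\theta_1)-u_2(\theta_1)=\big(1-u_1(\theta_1)\big)/b^{+}$ and hence $1-u_2(\theta_1)=\big(1-u_1(\theta_1)\big)(b^{+}+1)/b^{+}$, where $b^{+}=b_1/(\theta_1+1)$. Plugging these in, the bracket factors as $\tfrac{1-u_1(\theta_1)}{b^{+}}\big[(\tfrac1s+\tfrac1t)(b^{+}+1)-2\big]$. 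Since $d\ge 2$ forces $\theta_1>-1$ (otherwise only $\theta_0$ would exceed $-1$, and a trace count makes $X$ complete), we have $b^{+}>0$; and $u_1(\theta_1)=\theta_1/k<1$, so the prefactor $\tfrac{1-u_1(\theta_1)}{b^{+}}$ is positive. Therefore $(\tfrac1s+\tfrac1t)(b^{+}+1)\ge 2$, which is exactly $\tfrac{2st}{s+t}\le b^{+}+1$.

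No separate treatment of the degenerate cases $s=1$ or $t=1$ is needed, since the displayed formula for $\|R\|^2$ is valid verbatim there. I expect the main obstacle not to be any calculation but the combinatorial guess of the test vector $R$; once it is in hand, the argument is a short computation with the standard sequence, in the same spirit as the other local-eigenvalue inequalities quoted in this section.
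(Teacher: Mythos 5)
Your proof is correct: the computation of $\|R\|^2$ via the idempotent entries $E_{uv}=\tfrac{f_1}{n}u_{\dist(u,v)}(\theta_1)$, the identity $u_1-u_2=(1-u_1)/b^+$ from the $i=1$ recurrence, and the observation $\theta_1>-1$ all check out. The paper does not prove this statement but cites it as {\cite[Theorem 4.4.6]{BCN}}, and your argument is essentially the standard one given there (testing a balanced vector in the $\theta_1$-eigenspace representation), so nothing further is needed.
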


\begin{lemma}\label{lem:mu-bound-psi1}
Let $X$ be a geometric distance-regular graph of diameter $d\geq 2$.
\begin{enumerate}
\item Assume that $\psi_1 = 1$, then $X$ contains an induced $K_{\tau_2, 2}$.
\item Assume that $\mu\geq 2$, then $X$ contains an induced $K_{2, 2}$ (a quadrangle).
\end{enumerate}  
\end{lemma}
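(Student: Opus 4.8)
\emph{Plan.} Both parts will hinge on one piece of bookkeeping about how the common neighbours of a distance-$2$ pair sit inside a fixed Delsarte clique geometry $\mathcal{C}$. I fix $x,y$ with $\dist(x,y)=2$ (such a pair exists since $d\geq 2$). If $w\in N(x)\cap N(y)$, then the unique clique $C\in\mathcal{C}$ through the edge $\{x,w\}$ satisfies $\dist(y,C)=1$: indeed $\dist(y,C)\leq\dist(y,w)=1$, while $y\notin C$ because $x\in C$ and $x\not\sim y$. Conversely, any clique $C\ni x$ with $\dist(y,C)=1$ contains exactly $\psi_1$ vertices of $N(y)$, each of them distinct from $x$ and hence (lying in the clique $C$ with $x$) a common neighbour of $x$ and $y$; and $w$ determines $C$ uniquely since the edge $\{x,w\}$ lies in a unique clique of $\mathcal{C}$. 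So $N(x)\cap N(y)$ is partitioned into $\tau_2$ blocks $C\cap N(y)$ of $\psi_1$ vertices each, one per clique $C\ni x$ at distance $1$ from $y$ (this re-derives $\mu=\tau_2\psi_1$, cf.\ Lemma~\ref{lem:geometric-param}).

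\emph{Part (1).} Assume $\psi_1=1$. Then the blocks are singletons, so there are exactly $\tau_2=\mu$ common neighbours $w_1,\dots,w_{\tau_2}$, with $w_i$ in a clique $C_i\ni x$ and the $C_i$ pairwise distinct. I claim the $w_i$ are pairwise non-adjacent. If $w_i\sim w_j$ for some $i\neq j$, let $D\in\mathcal{C}$ be the clique through $\{w_i,w_j\}$; since $x\in D$ would force $D=C_i$ and $D=C_j$ (the edges $\{x,w_i\},\{x,w_j\}$ being in unique cliques), we get $x\notin D$, hence $\dist(x,D)=1$ because $x\sim w_i\in D$, and yet $x$ has the two neighbours $w_i,w_j$ in $D$ — contradicting $\psi_1=1$. (Equivalently one can quote Lemma~\ref{lem:connected-disconn}: the cliques of the disjoint union $X(x)$ are exactly the sets $C_i\setminus\{x\}$.) Since $x$ and $y$ are each adjacent to every $w_i$ while $x\not\sim y$, the set $\{x,y,w_1,\dots,w_{\tau_2}\}$ induces $K_{\tau_2,2}$.

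\emph{Part (2).} Assume $\mu\geq 2$. If $\psi_1=1$ then $\tau_2=\mu\geq 2$, so part (1) supplies an induced $K_{\tau_2,2}$, which contains an induced $K_{2,2}$. If $\psi_1\geq 2$, then $\tau_2\geq\psi_1\geq 2$ by Lemma~\ref{lem:tau2}, so there are two distinct cliques $C^{(1)},C^{(2)}\in\mathcal{C}$ through $x$ with $\dist(y,C^{(i)})=1$, and $C^{(1)}\cap C^{(2)}=\{x\}$. I pick a common neighbour $q\in C^{(1)}\cap N(y)$ and write $C^{(2)}\cap N(y)=\{r_1,\dots,r_{\psi_1}\}$. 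Now $q\sim x\in C^{(2)}$ and $q\notin C^{(2)}$, so $\dist(q,C^{(2)})=1$ and $q$ has \emph{exactly} $\psi_1$ neighbours in $C^{(2)}$, one of which is $x$. Since $x\notin\{r_1,\dots,r_{\psi_1}\}$, the $\psi_1$-set $N(q)\cap C^{(2)}$ cannot equal $\{r_1,\dots,r_{\psi_1}\}$, so $q\not\sim r_j$ for some $j$; then $\{x,q,y,r_j\}$ induces a quadrangle, as $x\not\sim y$, $q\not\sim r_j$, and the remaining four pairs are edges.

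\emph{Where the difficulty sits.} Parts (1) and the $\psi_1=1$ branch of (2) are essentially bookkeeping. The delicate point is the $\psi_1\geq 2$ branch of (2): every obvious candidate for a quadrangle (two common neighbours of a distance-$2$ pair, or two neighbours of a vertex inside a single clique) comes out as $K_{2,2}$ with a chord, precisely because common neighbours contained in one clique are automatically mutually adjacent. The trick that breaks this is to notice that a common neighbour $q$ taken from one clique $C^{(1)}$ is forced to spend one of its at most $\psi_1$ neighbour-slots in the other clique $C^{(2)}$ on $x$ itself, so it must miss at least one of the $\psi_1$ common neighbours of $x,y$ lying in $C^{(2)}$. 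The only degeneracy worth a sentence — Delsarte cliques of size $2$ — does not occur once $\psi_1\geq 2$, since a maximal clique of size $2$ at distance $1$ from a vertex contains only one neighbour of that vertex.
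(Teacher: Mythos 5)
Your proof is correct and follows essentially the same route as the paper's: both parts rest on the observation that the $\mu$ common neighbours of a distance-$2$ pair split into $\tau_2$ cliques through one endpoint, each contributing exactly $\psi_1$ of them, and that the $\psi_1$-bound on neighbours in an adjacent clique forces the required non-adjacencies (the paper phrases part (2) contrapositively and applies the $\psi_1=1$ count to the pair $(w_i,C_j)$ rather than to the clique of a putative edge $\{w_i,w_j\}$, but these are cosmetic differences).
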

\begin{proof}
Let $u$ and $v$ be two vertices at distance $2$ in $X$. By the definition of $\tau_2$ there exist distinct cliques $C_1, C_2, \ldots, C_{\tau_2}$ which contain $u$ and have non-trivial intersection with $N(v)$. 

\begin{enumerate}
\item Each $C_i$ has precisely $\psi_1 = 1$ common vertices with $N(v)$. 
Denote $w_i = C_i\cap N(v)$. Note that $w_i$ is at distance $1$ from $C_j$ for $i\neq j$, moreover, $w_i$ is adjacent to $u$, while $u\in C_j$ and $u\neq w_j$. Thus $w_j$ is not adjacent to $w_i$ for $i\neq j$. Therefore, $X$ contains an induced $K_{\tau_2, 2}$ (on vertices $\{w_1, w_2, \ldots, w_{\tau_2}, u, v\}$).
\item By Lemma~\ref{lem:tau2}, $\tau_2\geq 2$, if $\mu\geq 2$. Take $w\in N(v)\cap C_1$. Assume there are no induced $K_{2, 2}$ in $X$. Then $w$ is adjacent to each vertex in $T = C_2\cap N(v)$. Note that $|T| = \psi_1$, $u\notin T$ and $w$ is adjacent to $u$, so $w$ has at least $\psi_1+1$ neighbors in $C_2$. This gives a contradiction with the definition of $\psi_1$.
\end{enumerate}
\end{proof}

Using the lemma above we obtain the following corollary to the Theorem~\ref{thm:induced-bipartite-ineq}.

\begin{proposition}\label{prop:mu-eigen-bound} Let $X$ be a geometric distance-regular graph of diameter $d\geq 2$. Assume that the neighborhood graphs of $X$ are disconnected. If $\mu\geq 3$, then the second largest eigenvalue of $X$ satisfies $\displaystyle{\theta_1+1\leq \frac{5}{7} b_1}$.  
\end{proposition}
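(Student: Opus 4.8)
The plan is to combine the local structure forced by disconnectedness with the induced complete bipartite subgraph inequality of Theorem~\ref{thm:induced-bipartite-ineq}. First I would observe that since the neighborhood graphs of $X$ are disconnected, Lemma~\ref{lem:connected-disconn} gives $\psi_1 = 1$ (the case $\psi_1 \geq 2$ produces connected neighborhood graphs). By Lemma~\ref{lem:geometric-param}(1), $\mu = c_2 = \tau_2\psi_1 = \tau_2$, so the hypothesis $\mu \geq 3$ is equivalent to $\tau_2 \geq 3$.

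Next I would invoke Lemma~\ref{lem:mu-bound-psi1}(1): because $\psi_1 = 1$, the graph $X$ contains an induced $K_{\tau_2, 2}$, that is, an induced $K_{\mu, 2}$. Since $\mu \geq 3$, in particular $X$ contains an induced $K_{3, 2}$ (restrict to three of the $\mu$ vertices on the large side).

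Finally I would apply Theorem~\ref{thm:induced-bipartite-ineq} with $s = 3$ and $t = 2$. Writing $b^{+} = b_1/(\theta_1+1)$, it yields
\[
\frac{2\cdot 3\cdot 2}{3+2} = \frac{12}{5} \leq b^{+} + 1,
\]
hence $b_1/(\theta_1+1) \geq 12/5 - 1 = 7/5$, which rearranges to $\theta_1 + 1 \leq \tfrac{5}{7} b_1$, as claimed. (One could instead use the full induced $K_{\mu,2}$ to get the sharper bound $\theta_1+1 \leq \tfrac{\mu+2}{3\mu-2} b_1$, but the stated form is exactly the $\mu = 3$ instance, which is the weakest and hence the one that holds for all $\mu \geq 3$.)

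There is no real obstacle here: the argument is a short chain of citations, and the only point requiring a moment of care is the identification $\tau_2 = \mu$ under $\psi_1 = 1$, which guarantees that the induced $K_{\mu,2}$ — and therefore the induced $K_{3,2}$ needed to feed Theorem~\ref{thm:induced-bipartite-ineq} — actually exists.
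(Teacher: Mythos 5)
Your proposal is correct and follows essentially the same route as the paper: disconnectedness forces $\psi_1=1$ via Lemma~\ref{lem:connected-disconn}, Lemma~\ref{lem:mu-bound-psi1} then yields an induced $K_{3,2}$, and Theorem~\ref{thm:induced-bipartite-ineq} gives $b_1/(\theta_1+1)\geq 7/5$. The only difference is cosmetic — you spell out the identification $\mu=\tau_2$ explicitly, which the paper leaves implicit.
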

\begin{proof}
Since $X$ is geometric and $X(v)$ is disconnected, by Lemma~\ref{lem:connected-disconn}, $\psi_1 = 1$.  Moreover, if $\mu\geq 3$, by Lemma~\ref{lem:mu-bound-psi1}, there is an induced $K_{3, 2}$. Therefore, by Theorem~\ref{thm:induced-bipartite-ineq}, 
\[ \frac{b_1}{\theta_1+1}\geq \frac{12}{5}-1 = \frac{7}{5}.\]
\end{proof}

In the next lemma we show the existence of an induced complete bipartite subgraph $K_{\tau_2, 2}$ in the case when a neighborhood graph is the line graph of a triangle-free graph. 
\begin{lemma}\label{lem:local-line-graph} Let $X$ be a geometric distance-regular graph. Assume that for each vertex $v$ of $X$ the induced subgraph $X(v)$ is the line graph of a triangle-free graph. Then $\psi_1= 2$ and $X$ contains induced $K_{\tau_2, 2}$. 
\end{lemma}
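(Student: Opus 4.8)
\emph{Strategy.} The plan is to first identify every neighbourhood graph $X(v)$ with a rook's graph $L(K_{m,k/m})$ and deduce $\psi_1=2$ from a degree count, and then to build the induced $K_{\tau_2,2}$ inside such a grid, using a pair $u,v$ at distance $2$ together with a suitably chosen transversal of $N(u)\cap N(v)$.

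\emph{Identifying $X(v)$ and proving $\psi_1=2$.} Fix a Delsarte clique geometry $\mathcal C$ and a vertex $v$, and write $X(v)=L(Z)$ with $Z$ triangle-free. In the case relevant here the neighbourhood graphs are connected, so $\psi_1\ge 2$ by Lemma~\ref{lem:connected-disconn} and $Z$ is connected. Let $B_1,\dots,B_m\in\mathcal C$ be the Delsarte cliques through $v$ and put $q=k/m$, so the sets $A_i:=B_i\setminus\{v\}$ are $m$ pairwise disjoint $q$-cliques covering $V(X(v))$; exactly as in the proof of Lemma~\ref{lem:reduction-local-bipartite}, every vertex of $X(v)$ has degree $q-1+(\psi_1-1)(m-1)$. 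Since in $L(Z)$ a vertex $e=\{a,b\}$ has degree $\deg_Z(a)+\deg_Z(b)-2$, the quantity $\deg_Z(a)+\deg_Z(b)$ is the same on every edge of $Z$; as $Z$ is connected this forces $Z$ to be regular or bipartite semiregular (two vertices joined by a walk of even length have equal degree). Lemma~\ref{lem:reduction-local-bipartite} now applies (its hypothesis $k\ge\max(m^3,3)$ being available) and gives $X(v)\cong L(K_{m,q})$. Comparing the vertex degree $q-1+(\psi_1-1)(m-1)$ with the degree $(q-1)+(m-1)$ in $L(K_{m,q})$ forces $\psi_1=2$. The same argument applies at every vertex, so $X(w)\cong L(K_{m,q})$ for all $w$.

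\emph{Constructing the induced $K_{\tau_2,2}$.} Choose $u,v$ with $\dist(u,v)=2$, and let $C_1,\dots,C_{\tau_2}\in\mathcal C$ be the Delsarte cliques through $u$ meeting $N(v)$. Then each $T_i:=C_i\cap N(v)$ has $|T_i|=\psi_1=2$, the $T_i$ are pairwise disjoint, and $N(u)\cap N(v)=T_1\cup\dots\cup T_{\tau_2}$ (a common neighbour of $u$ and $v$ lies together with $u$ in a unique Delsarte clique, necessarily one of the $C_i$). Realise $X(v)=L(K_{m,q})$ as an $m\times q$ grid whose $m$ rows are the cliques $A_i=B_i\setminus\{v\}$ and whose $q$ columns are the remaining maximal cliques. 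Each $T_i$ is a $2$-clique of the grid, hence lies in a single row or a single column; it cannot lie in a row $B_i\setminus\{v\}$, since the distinct Delsarte cliques $C_i\ni u$ and $B_i\ni v$ meet in at most one vertex whereas $|T_i|=2$. So $T_i$ lies in a column, say column $c_i$, occupying two rows $a_i\ne b_i$. Two facts remain to be checked: \textbf{(a)} the columns $c_1,\dots,c_{\tau_2}$ are pairwise distinct; and \textbf{(b)} each row is met by at most two of the $T_i$ --- indeed the cells of $N(u)\cap N(v)$ lying in a fixed row $r$ are distinct vertices of the Delsarte clique $B_r$ that are adjacent to $u$, and, by the definition of $\psi_1$, a Delsarte clique at distance $1$ from $u$ has exactly $\psi_1=2$ vertices at distance $1$ from $u$. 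Granting (a) and (b), choose $x_i\in T_i$ lying in pairwise distinct rows; such a system of distinct representatives for the $2$-element sets $\{a_i,b_i\}$ exists by Hall's theorem, because for any $s$ of these sets a double count of the $2s$ incidences against (b) shows they meet at least $s$ rows. Then $\{x_1,\dots,x_{\tau_2}\}$ is an independent set of $X$ (distinct rows by choice, distinct columns by (a)), each $x_i$ is adjacent to $u$ (it lies in $C_i$) and to $v$ (it lies in $N(v)$), and $u\not\sim v$; hence $\{x_1,\dots,x_{\tau_2}\}\cup\{u,v\}$ induces $K_{\tau_2,2}$ in $X$.

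\emph{Proof of (a) and the main obstacle.} Suppose $T_i$ and $T_j$ both lie in column $c$ of the $X(v)$-grid, $i\ne j$. Then the four vertices of $T_i\cup T_j$ lie in a common clique of $X(v)$, so they are pairwise adjacent in $X$, and a fortiori pairwise adjacent in $X(u)$. By the first step $X(u)\cong L(K_{m,q})$ is again a grid; there $T_i\subseteq C_i\setminus\{u\}$ and $T_j\subseteq C_j\setminus\{u\}$ lie in two \emph{distinct} rows, because $C_i\ne C_j$. A cell of $T_i$ and a cell of $T_j$ are adjacent but lie in different rows of this grid, hence share a column; thus every cell of $T_i$ shares a column with every cell of $T_j$, which is impossible since the two cells of $T_i$ already lie in distinct columns of the $X(u)$-grid. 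This proves (a). The one genuinely delicate point in the argument is exactly this cross-comparison of the local grids at the two endpoints of a distance-$2$ pair; the remainder is bookkeeping. A routine caveat: when $q\le m$ the ``rows'' and ``columns'' of $L(K_{m,q})$ are not canonically distinguished, but this does not occur in the regime of interest (e.g.\ $k\ge m^3$ gives $q=k/m>m$).
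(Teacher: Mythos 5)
Your construction does produce the required induced $K_{\tau_2,2}$, but as written your argument establishes a weaker statement than the lemma. The decisive issue is the first step: to identify $X(v)$ with $L(K_{m,k/m})$ you invoke Lemma~\ref{lem:reduction-local-bipartite}, whose hypothesis $k\geq\max(m^3,3)$ is \emph{not} among the hypotheses of the present lemma (you also need $k/m\neq m$ to tell rows from columns, as you yourself note). The lemma assumes only that $X$ is geometric and each $X(v)$ is the line graph of a triangle-free graph, and the paper's proof of $\psi_1=2$ uses nothing more: if $\psi_1\geq 3$, a vertex $w\notin C$ adjacent to two vertices $v_1,v_2\in C\setminus\{v\}$ creates a triangle in $X(v)$, so triangle-freeness of the base graph forces the base edges of $w,v_1,v_2$ to be concurrent; the same applies to every $x\in C\setminus\{v\}$, making $C\cup\{w\}$ a clique and contradicting maximality of $C$. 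Thus $\psi_1=2$ needs no lower bound on $k$. (Both your argument and the paper's quietly use that $X(v)$ is connected to get $\psi_1\geq2$; that caveat is shared.) To prove the lemma as stated you would have to replace the appeal to Lemma~\ref{lem:reduction-local-bipartite} by such a direct argument.

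For the second half your route is genuinely different and, granted the grid identification, correct: you place each $T_i=C_i\cap N(v)$ in a column of the $X(v)$-grid, prove the columns are pairwise distinct by switching to the $X(u)$-grid, bound the row-multiplicities by $\psi_1=2$, and extract an independent transversal via Hall's theorem. The paper instead shows directly, again using only the triangle-free base graphs of $X(u)$ and $X(v)$, that two vertices of $T=N(u)\cap N(v)$ are adjacent only if they lie in a common $C_i$ or a common $C_j'$; since each vertex of $T$ then lies on exactly one edge of each type, the graph induced on $T$ is a disjoint union of even cycles, and an independent set of size $|T|/2=\tau_2$ is immediate without Hall. Your step (a), the cross-comparison of the two local grids, is exactly the analogue of that key claim, so the combinatorial core is the same; what differs is that you buy the convenience of an explicit rook's-graph model at the cost of importing hypotheses the paper's proof avoids. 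In the one place the lemma is applied those hypotheses do hold, so your argument would suffice for the application, but not for the lemma in the generality in which it is stated.
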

\begin{proof}
Observe that if the line graph $Y$ of a triangle-free graph $Y'$ contains a triangle, then the corresponding edges (of the base graph $Y'$) of all three vertices of the triangle are incident to the same vertex in $Y'$.
  
Fix a Delsarte clique geometry $\mathcal{C}$ of $X$ . Let $v$ be a vertex of $X$, and $C\in \mathcal{C}$ be a Delsarte clique which contains $v$, and let $w\in N(v)\setminus C$. By Lemma~\ref{lem:mu-bound-psi1}, since $X(v)$ is connected, $\psi_1\geq 2$. Assume that $\psi_1\geq 3$. Then $w$ is adjacent to at least two vertices $v_1$ and $v_2$ in $C$ distinct from $v$. Since $w, v_1, v_2$ form a triangle in $X(v)$, the corresponding edges in the base graph are incident to the same vertex. Similarly, for any vertex $x\in C\setminus \{v\}$, the edges of the base graph that correspond to $x$, $v_1$ and $v_2$ are incident to the same vertex. Therefore, $\{w\}\cup C$ is a clique in $X$, which contradicts maximality of $C$. Therefore, $\psi_1 = 2$.
       
Let $u$ and $v$ be two vertices at distance $2$ in $X$. There exist distinct cliques $C_1, C_2, \ldots, C_{\tau_2}$ which contain $u$ and have non-trivial intersection with $N(v)$, and distinct cliques $C_1', C_2', \ldots, C_{\tau_2}'$ which contain $v$ and have non-trivial intersection with $N(u)$. 

Let $T = N(u)\cap N(v)$. Assume that $w_1, w_2\in T$ are adjacent, but $\{w_1, w_2\}$ is not a subset of $C_i$ or $C_i'$ for any $i\in [\tau_2]$. Since $\psi_1 = 2$ there exists a vertex $w\in T$ such that $\{w, w_1\}$ is a subset of some $C_i$. Similarly there is $w'\in T$ with $\{w', w_1\}\subseteq C_j'$ for some $j$. Assume that $X(v)$ is a line graph of a triangle-free graph $Y$. Assume that edges of $Y$ that correspond to $w_1, w'$ are incident with a vertex $x$ of $Y$. If corresponding to $w_2$ edge is incident with $x$, then by the argument as above, $w_2\in C_j'$. This contradicts the choice of $w_2$. Let $y$ be the vertex of $Y$ incident to the edges in $Y$ corresponding to $w_2$ and $w_1$. Since $|C_i\cap C_j'|\leq 1$, we have $w\notin C_j'$, so $w$ is not incident to $x$. Thus, $w$ is incident to $y$. Hence, $w$, $w_1$ and $w_2$ form a triangle.  

Since $\{w, w_1\}\subseteq C_i$ and $X(u)$ is also the line graph of a triangle-free graph we similarly get that $w_2\in C_i$. This gives a contradiction with the choice of $w_1, w_2$. Therefore, any two vertices in $T$ are adjacent if and only if they share the same clique $C_i$ or $C_i'$.

We obtain that an edge between a pair of vertices in $T$ is between vertices in $T\cap C_i$ or between vertices in $T\cap C_j'$ for some $i, j$. We refer to them as edges of type 1 and edges of type 2, respectively. Since $|T\cap C_i| = |T\cap C_i'| = \psi_1 = 2$, every vertex in $T$ is incident with precisely one edge of type 1 and precisely one edge of type 2. Therefore, the subgraph induced on $T$ is a union of even cycles. Hence, we may choose an independent set $S\subseteq T$ of the size $\frac{1}{2}|T| = \frac{1}{2}\psi_1\tau_2  = \tau_2$ in $X$. The graph induced on $S\cup\{u, v\}$ is $K_{2, \tau_2}$.
\end{proof}

Now we are ready to combine all the arguments into the proof of Theorem \ref{thm:pseudo-main-intr}.

\begin{proof}[Proof of Theorem \ref{thm:pseudo-main-intr}] Denote $\displaystyle{b^{+} = \frac{b_1}{\theta_1+1}}$. Since $\displaystyle{\varepsilon^* = \frac{-2-\vartheta_1}{-1-\vartheta_1}}$ the assumptions of the theorem imply $b^+<-1-\vartheta_1$. Since $X$ is not complete, by Lemma~\ref{lem:eigenvalue-clique-geom}, $m\geq 2$. Thus $k\geq m^3$ and $\lambda\geq k/m-1$ implies $\lambda\geq 3$. Hence,  by Proposition~\ref{prop:local-structure} and Lemma \ref{lem:reduction-local-bipartite}, either $X(v)$ is a disconnected graph for some vertex $v$, or $X(v)$ is the line graph of the complete bipartite graph $K_{s, t}$ for any vertex $v$ of $X$. 

In the latter case, by Lemma~\ref{lem:local-line-graph}, $X$ contains $K_{\tau_2, 2}$ and $\psi_1 = 2$. Since we assumed that $b^{+}\leq -1-\vartheta_1 < 7/5$, by Theorem~\ref{thm:induced-bipartite-ineq}, we get that $\tau_2\leq 2$ (as otherwise there is induced $K_{2, 3}$ subgraph). Hence, $\mu\leq 4$. By Theorem~\ref{thm:Johnson-classif}, we get that $X$ is a Johnson graph $J(s, d)$, or a graph which can be double covered by $J(2d, d)$. The later case is not possible as $k\geq m^3$.

\end{proof}
\begin{remark}\label{rem:varepsilon27} We note that in the light of Remark~\ref{rem:varepsilon-star}, if we additionally assume that $k$ is large enough, then we can replace $\varepsilon^*$ with $2/7$ in Theorem~\ref{thm:pseudo-main-intr}. Indeed, since $\lambda\geq (k/m)-1$ and $k\geq m^3$, the assumption that $k$ is large enough guarantees that $\lambda$ is large enough. Hence, the proof above will work as $b^{+}<7/5<\sqrt{2}$. 
\end{remark}

\section{Spectral gap characterization of Hamming graphs}\label{sec:Hamming}

In this section we prove a characterization of Hamming graphs in terms of the spectral gap and local parameters. As in the previous section, we assume that the second largest eigenvalue of $X$ satisfies $\theta_1\geq  (1-\varepsilon)b_1$. We show that if additionally for each vertex the neighborhood graph is a disjoint union of cliques, $\mu = 2$, and there is a dominant distance, then $X$ is a Hamming graph. Our proof strategy relies on the following theorem.  

\begin{theorem}[Terwilliger \cite{ter-local}]\label{thm:ensure-eigenvalue} Let $X$ be a distance-regular graph of diameter $d\geq 2$. Assume that the second largest eigenvalue $\theta_1$ has multiplicity $f_1<k$. Then each local graph $X(v)$ has eigenvalue $-1-b^{+}$ with multiplicity at least $k-f_1$, where $\displaystyle{b^{+} = \frac{b_1}{\theta_1+1}}$.
\end{theorem}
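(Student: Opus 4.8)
The plan is to realize the $\theta_1$-eigenprojector $E_1$ of $X$ explicitly as a polynomial in the distance matrices and to restrict it to a single neighborhood. Since $X$ is distance-regular, the orthogonal projection $E_1$ onto the $\theta_1$-eigenspace satisfies $(E_1)_{xy} = \frac{f_1}{n}\,u_{\dist(x,y)}(\theta_1)$, where $(u_i)_{i=0}^d$ is the standard sequence of $\theta_1$; in particular $E_1$ is positive semidefinite of rank $f_1$, and its entries depend only on distances. Fix a vertex $v$, abbreviate $u_i = u_i(\theta_1)$, and let $B$ be the principal submatrix of $E_1$ on the index set $N(v)$. Any two distinct vertices of $N(v)$ are at distance $1$ or $2$ in $X$, so, writing $A_v$ for the adjacency matrix of $X(v)$ and $J$ for the all-ones matrix on $N(v)$ (and using $u_0=1$),
\[ B = \frac{f_1}{n}\Bigl( (1-u_2) I + (u_1 - u_2) A_v + u_2 J\Bigr). \]

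Next I would diagonalize $B$ through the eigendecomposition of $A_v$. Since $X(v)$ is $\lambda$-regular, $\mathbf 1$ is an eigenvector of $A_v$ with eigenvalue $a_1$, and on $\mathbf 1^\perp$ the term $J$ vanishes, so there the eigenvalues of $B$ are $\frac{f_1}{n}\bigl((1-u_2) + (u_1-u_2)\sigma\bigr)$ as $\sigma$ runs over the local eigenvalues (those of $A_v$ on $\mathbf 1^\perp$), whereas $B\mathbf 1 = \frac{f_1}{n}\bigl((1-u_2)+(u_1-u_2)a_1 + u_2 k\bigr)\mathbf 1 = \frac{f_1}{n}\,\theta_1 u_1\,\mathbf 1$, the last equality being the $i=1$ instance $1 + a_1 u_1 + b_1 u_2 = \theta_1 u_1$ of the standard-sequence recurrence together with $k-1-a_1=b_1$. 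A short computation with $u_1 = \theta_1/k$ then yields $u_1 - u_2 = \frac{(\theta_1+1)(k-\theta_1)}{k b_1} > 0$ and $\frac{1-u_2}{u_1-u_2} = 1 + \frac{b_1}{\theta_1+1} = 1 + b^{+}$, so the $B$-eigenvalue attached to a local eigenvalue $\sigma$ is zero exactly when $\sigma = -1-b^{+}$ and strictly positive when $\sigma > -1-b^{+}$.

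Now I invoke positive semidefiniteness. As a principal submatrix of the PSD matrix $E_1$, $B$ is PSD, so every local eigenvalue satisfies $\sigma \geq -1-b^{+}$ and, since also $\frac{f_1}{n}\,\theta_1 u_1 = \frac{f_1}{n}\cdot\frac{\theta_1^2}{k} > 0$ (here I assume $\theta_1\neq 0$; the degenerate case $\theta_1=0$ is handled separately and does not occur in our applications), the kernel of $B$ equals precisely the eigenspace of $A_v$ for the eigenvalue $-1-b^{+}$. Hence $\mathrm{mult}_{X(v)}(-1-b^{+}) = k - \operatorname{rank}(B)$. Since deleting rows and columns cannot increase rank, $\operatorname{rank}(B) \leq \operatorname{rank}(E_1) = f_1$, and therefore $\mathrm{mult}_{X(v)}(-1-b^{+}) \geq k - f_1 > 0$; in particular $-1-b^{+}$ is genuinely an eigenvalue of every $X(v)$, which is the content of the theorem.

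The step requiring the most care is the algebraic identification $\frac{1-u_2}{u_1-u_2} = 1 + b^{+}$ together with the sign $u_1-u_2>0$: these are exactly what pins the kernel of $B$ to the $(-1-b^{+})$-eigenspace and not to some larger or incomparable subspace, and the companion check that the $\mathbf 1$-direction contributes a nonzero $B$-eigenvalue (which is where the harmless hypothesis $\theta_1\neq 0$ is used to prevent the kernel from being inflated). Everything else is standard: the closed form for $E_1$ and a Cauchy-interlacing-type rank inequality.
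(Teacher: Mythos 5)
Your argument is correct, and it is essentially Terwilliger's original proof; the paper itself offers no proof of this statement (it is imported verbatim from \cite{ter-local}), so there is nothing internal to compare against. The key computations all check out: $(E_1)_{xy}=\tfrac{f_1}{n}u_{\dist(x,y)}$, the identity $1+a_1u_1+b_1u_2=\theta_1 u_1$ giving $B\mathbf 1=\tfrac{f_1}{n}\tfrac{\theta_1^2}{k}\mathbf 1$, the sign $u_1-u_2=\tfrac{(\theta_1+1)(k-\theta_1)}{kb_1}>0$ (using $\theta_1\geq 0$ for diameter $\geq 2$), the identification $\tfrac{1-u_2}{u_1-u_2}=1+b^{+}$, and the rank bound for a principal submatrix of a rank-$f_1$ idempotent. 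One point worth making explicit: the $(-1-b^{+})$-eigenspace of $A_v$ lies entirely in $\mathbf 1^{\perp}$ because $a_1\geq 0>-1-b^{+}$, so the kernel dimension of $B$ really does count that multiplicity (plus the $\mathbf 1$-direction exactly when $\theta_1=0$). Your flagged exclusion of $\theta_1=0$ is the only loose end relative to the theorem as stated; it is easily closed, since $\theta_1=0$ forces $X$ to be complete multipartite, hence $K_{m\times t}$ of diameter $2$, where the conclusion is verified directly --- and in the paper's sole application (Theorem~\ref{thm:main-Hamming}) one has $\theta_1\geq(1-\varepsilon)b_1>0$ anyway.
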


We prove that if $X$ is not a Hamming graph, then the assumptions of Theorem~\ref{thm:main-Hamming-intr} imply that the second largest eigenvalue has multiplicity at most $k-1$. Therefore, by the theorem above, each neighborhood graph of $X$ has an eigenvalue less than $-1$. This contradicts the assumption that each neighborhood graph is a disjoint union of cliques. 

We start by showing that for a geometric distance-regular graph the sequence $(\tau_i)_{i=1}^{t-1}$ is increasing if $\mu\geq 2$ and $c_t$ is sufficiently small.

\begin{lemma}\label{lem:tau-ineq}
Let $X$ be a geometric distance-regular graph of diameter $d$, with smallest eigenvalue $-m$. Assume that $\mu\geq 2$ and $c_t\leq \varepsilon k$, where $t\leq d$ and $0<\varepsilon <1/m^2$. Then 
\[\tau_i<\tau_{i+1},\quad  \text{for any } i = 1, 2, \ldots, t-2.\]
\end{lemma}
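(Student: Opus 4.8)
The plan is to compare the contributions to $c_{i+1}$ and to $c_i$ coming from Delsarte cliques, using the factorizations $c_i = \tau_i \psi_{i-1}$ and $b_i = (m-\tau_i)(k/m + 1 - \psi_i)$ from Lemma~\ref{lem:geometric-param}. First I would record the elementary inequalities available to us: $\tau_i \le m$ and $\psi_{i-1}\le m$ for all $i$ (from Lemma~\ref{lem:tau2} and the definition of $\tau_2$, together with $\psi_1\le\tau_2$), and the monotonicity $c_i\le c_{i+1}$. Combining $c_{i+1}\le c_t\le \varepsilon k$ for $i+1\le t$ with $\varepsilon<1/m^2$ forces each $\psi_i$ (for $i\le t-1$) to be small relative to $k/m$; in particular $k/m + 1 - \psi_i$ is large, of order $k/m$, so that $b_i$ is essentially governed by the factor $m - \tau_i$.

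The heart of the argument is to show $\tau_{i+1}>\tau_i$. I expect to argue by contradiction: suppose $\tau_{i+1}\le\tau_i$ for some $i\in\{1,\dots,t-2\}$. Then on one hand $c_{i+1}=\tau_{i+1}\psi_i$ and $c_i=\tau_i\psi_{i-1}$, and on the other $b_i = (m-\tau_i)(k/m+1-\psi_i) \ge (m-\tau_{i+1})(k/m+1-\psi_i)$. The idea is to use the global constraint $a_i + b_i + c_i = k$ (so $b_i = k - \lambda_i' $-type identities), or more precisely the relation $b_i - b_{i+1} = (c_{i+1}-c_i) + (a_{i+1}-a_i)$ combined with $c_i\le c_{i+1}$, to pin down how $\tau_i$ must change. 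Since $k_i b_i = k_{i+1} c_{i+1}$ and $b_i\le b_{i-1}$, the sequence $b_i$ is strictly decreasing once $c_i$ starts growing; feeding $b_i = (m-\tau_i)(k/m+1-\psi_i)$ into this, and using that the second factor stays close to $k/m$ while $c_i\le\varepsilon k \ll k/m$ keeps $\psi$'s controlled, one concludes $m-\tau_i$ must strictly decrease, i.e. $\tau_i$ must strictly increase, as $i$ runs through $1,\dots,t-1$. A cleaner route may be: since $\mu = c_2 = \tau_2\psi_1\ge 2$, we already have $\tau_2\ge 2>\tau_1=1$; then induct, showing that if $\tau_{i+1}=\tau_i$ the two cliques-through-a-vertex counts at consecutive distances would force an equality in a place where the strict inequality $c_3>\mu$ (Corollary~\ref{cor:c3-mu}), or Terwilliger's inequality $c_i - b_i \ge c_{i-1}-b_{i-1}+\lambda+2$ (Theorem~\ref{thm:ter-inequality}, applicable since $\mu\ge2$ gives an induced quadrangle by Lemma~\ref{lem:mu-bound-psi1}(2)), is violated.

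Concretely, I would apply Terwilliger's inequality: it gives $c_{i+1}-b_{i+1} \ge c_i - b_i + \lambda + 2$, i.e. $b_i - b_{i+1} \ge (c_{i+1}-c_i) + \lambda + 2 \ge \lambda+2$. Writing $b_i - b_{i+1} = (m-\tau_i)(k/m+1-\psi_i) - (m-\tau_{i+1})(k/m+1-\psi_{i+1})$ and using $\psi_i\le\psi_{i+1}$ is false in general — so instead I bound $k/m+1-\psi_j \le k/m$ and $\ge k/m + 1 - m$ (from $\psi_j\le m$), so each factor lies in a window of width $m-1$ around $k/m$. If $\tau_{i+1}\le\tau_i$, then $m-\tau_{i+1}\ge m-\tau_i$, and a short estimate shows $b_i - b_{i+1} \le (m-\tau_i)\cdot(m-1) \le (m-1)^2$, while $\lambda+2 \ge k/m - 1 + 2 = k/m+1$ (since $\lambda \ge k/m - 1$ as each vertex lies in $m$ cliques of size $k/m+1$ meeting pairwise in that vertex, giving $\lambda\ge\psi_1 - 1 + (\psi_1-1)(m-1)$... — more robustly, $\lambda\ge k/m-1$ since a Delsarte clique on $v$ has $k/m$ other vertices all adjacent to $v$). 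Then $k/m+1 \le \lambda+2 \le b_i - b_{i+1} \le (m-1)^2$, contradicting $k\ge m^2\cdot$(something); but the hypothesis here is only $c_t\le\varepsilon k$, not a lower bound on $k$ beyond what geometricity gives, so I must be careful — the contradiction instead should come from bounding $b_i - b_{i+1}$ above by a quantity that $\varepsilon k$ smallness makes incompatible with $\lambda+2\ge k/m+1$. The main obstacle is getting the inequality $b_i - b_{i+1}\le (m-\tau_i)(m-1)$ or similar \emph{tight enough}: it requires showing $\psi_{i+1}\ge\psi_i$ or, failing that, that the change in the $(k/m+1-\psi)$ factor cannot compensate a non-increase in the $(m-\tau)$ factor. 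Resolving exactly how $\psi_i$ behaves under $c_t\le\varepsilon k$ — i.e. showing $\psi_i$ is small (say $\psi_i\le m$ and the product $\tau_i\psi_{i-1}=c_i\le\varepsilon k$ forces the relevant factor near $k/m$) — is where the $\varepsilon<1/m^2$ hypothesis does its work, and pinning that down cleanly is the crux of the proof.
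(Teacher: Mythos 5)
Your plan assembles exactly the ingredients of the paper's proof --- the factorizations $c_i=\tau_i\psi_{i-1}$ and $b_i=(m-\tau_i)(k/m+1-\psi_i)$ from Lemma~\ref{lem:geometric-param}, Terwilliger's inequality (correctly licensed by the quadrangle from Lemma~\ref{lem:mu-bound-psi1}), and $\lambda\ge k/m-1$ --- but it stops precisely at the decisive estimate, and you say so yourself (``pinning that down cleanly is the crux of the proof''). The missing step is a one-liner that you half-state in your final sentence: since $\tau_i\ge 1$, we have $\psi_{i-1}\le\tau_i\psi_{i-1}=c_i\le c_t\le\varepsilon k$ for all $i\le t$, which pins $b_i$ in the window $(m-\tau_i)(1/m-\varepsilon)k\le b_i\le(m-\tau_i)k/m$ for $i\le t-1$. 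Your alternative bound ``$\psi_j\le m$ for all $j$'' is not justified (only $\psi_1\le\tau_2\le m$ follows from Lemma~\ref{lem:tau2}; for $j\ge t$ one in fact has $\psi_j$ of order $k/m$), so the intermediate claim $b_i-b_{i+1}\le(m-\tau_i)(m-1)\le(m-1)^2$ should be discarded.

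With the window in hand no contradiction argument is needed. For $i\le t-2$ Terwilliger gives
\[ \frac{m-\tau_i}{m}\,k \;\ge\; b_i \;\ge\; b_{i+1}+\lambda+2+c_i-c_{i+1} \;\ge\; (m-\tau_{i+1})\Bigl(\frac{1}{m}-\varepsilon\Bigr)k+\frac{k}{m}+1-\varepsilon k, \]
and dividing by $k/m$ yields $\tau_{i+1}\ge\tau_i+1-m^2\varepsilon>\tau_i$, since $\varepsilon<1/m^2$; this is exactly the paper's computation. (Your contradiction route also closes once the window is available: if $\tau_{i+1}\le\tau_i$ then $b_i-b_{i+1}\le(m-\tau_i)(\psi_{i+1}-\psi_i)\le(m-1)\varepsilon k$, while Terwilliger forces $b_i-b_{i+1}\ge k/m+1-\varepsilon k$, giving $\varepsilon>1/m^2$.) So the approach is the right one, but the proof as written has a genuine gap at its quantitative core, and the one concrete bound you do commit to is unjustified.
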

\begin{proof}

Recall, by Lemma~\ref{lem:geometric-param},
\[ c_i = \tau_i\psi_{i-1}, \quad \quad \displaystyle{b_i = (m-\tau_i)\left(\frac{k}{m}+1-\psi_i\right)}.\]
Hence, in particular,  $\psi_{i-1}\leq c_i\leq c_t\leq \varepsilon k$, for $i\leq t$. So for $i\leq t-1$
\begin{equation}\label{eq:b-ineq}
 (m-\tau_i)\left(\frac{1}{m}-\varepsilon\right)k\leq b_i\leq \frac{m-\tau_i}{m}k.
\end{equation} 
By Lemma~\ref{lem:mu-bound-psi1}, a geometric distance-regular graph with $\mu\geq 2$ contains a quadrangle. Thus, by the Terwilliger inequality (see Theorem~\ref{thm:ter-inequality}) we have 
 \[ b_{i}\geq b_{i+1}+\lambda+2+c_{i}-c_{i+1}, \text{ for } i = 0, 1, \ldots, d-1.\]
Therefore, for $i\leq t-2$, using Eq.~\eqref{eq:b-ineq}, 
\[ \frac{m-\tau_i}{m}k\geq (m-\tau_{i+1})\left(\frac{1}{m}-\varepsilon\right)k+\lambda+2-\varepsilon k.\]
Since $\lambda\geq k/m-1$, for $i\leq t-2$, we get
\[ (m-\tau_i)\geq (m-\tau_{i+1})-m^2\varepsilon+1 \quad \Rightarrow \quad \tau_{i+1}\geq \tau_i+1-m^2\varepsilon.\]
\end{proof}

\begin{corollary} If the assumptions of Lemma~\ref{lem:tau-ineq} hold for $t=d$, then $\tau_i< \tau_{i+1}$ for $i\leq d-1$.
\end{corollary}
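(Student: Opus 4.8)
The plan is to reduce the statement to a single extra inequality and then settle it by a short structural fact. For $i\le d-2$ this is exactly the conclusion of Lemma~\ref{lem:tau-ineq} applied with $t=d$, so only $\tau_{d-1}<\tau_d$ remains. I would \emph{not} try to obtain this last case by pushing the Terwilliger-inequality argument of Lemma~\ref{lem:tau-ineq} to $i=d-1$: that argument compares $b_i$ with $b_{i+1}$ through the identity $b_i=(m-\tau_i)(k/m+1-\psi_i)$, but for $i=d-1$ the term $b_d=0$ carries no information about $\tau_d$, and the estimate only recovers the already-known bound $\tau_{d-1}\le m-1$. Instead I would prove directly that $\tau_d=m$ for every geometric distance-regular graph of diameter $d$.

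To establish $\tau_d=m$, fix a Delsarte clique geometry $\mathcal{C}$ and recall, as used in the proof of Lemma~\ref{lem:dual-grapp-degree} (via {\cite[Lemma 7.2]{Godsil}}), that every Delsarte clique is completely regular with covering radius $d-1$; in particular $\dist(y,C)\le d-1$ for every vertex $y$ and every $C\in\mathcal{C}$. Let $x,y$ be vertices with $\dist(x,y)=d$, and let $C\in\mathcal{C}$ be any of the $m$ cliques through $x$. Then $\dist(y,C)\le d-1$ by the covering-radius bound, while every vertex of $C$ is $x$ or a neighbor of $x$, so $\dist(y,z)\ge \dist(y,x)-1=d-1$ for all $z\in C$, giving $\dist(y,C)\ge d-1$. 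Hence $\dist(y,C)=d-1$ for each of the $m$ cliques through $x$, and by the definition of $\tau_d$ this means $\tau_d=m$.

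Finally, $\tau_{d-1}\le m-1$: by Lemma~\ref{lem:geometric-param} we have $b_{d-1}=(m-\tau_{d-1})(k/m+1-\psi_{d-1})$, and $b_{d-1}\ge 1$ since $X$ has diameter $d$, so the nonnegative integer $m-\tau_{d-1}$ is positive. Therefore $\tau_{d-1}\le m-1<m=\tau_d$, which together with Lemma~\ref{lem:tau-ineq} yields $\tau_i<\tau_{i+1}$ for all $i\le d-1$. All the ingredients are already in place, so I do not anticipate a real obstacle; the one thing to check is the degenerate range $d=2$, where Lemma~\ref{lem:tau-ineq} is vacuous, but there the argument above applies verbatim once one observes that $m\ge 2$ (e.g.\ from $\mu\ge 2$ and Corollary~\ref{cor:mu-m2}).
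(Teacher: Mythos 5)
Your proof is correct and follows essentially the same route as the paper: Lemma~\ref{lem:tau-ineq} handles $i\le d-2$, and the last step reduces to $\tau_d=m$ together with $\tau_{d-1}\le m-1$. The paper dispatches these two endpoint facts with the phrase ``by the definition of $\tau_i$''; you supply the justification explicitly (complete regularity of Delsarte cliques for $\tau_d=m$, and $b_{d-1}\ge 1$ in Lemma~\ref{lem:geometric-param} for $\tau_{d-1}\le m-1$), which is a harmless elaboration rather than a different argument.
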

\begin{proof}
By Lemma~\ref{lem:tau-ineq}, $\tau_i< \tau_{i+1}$ for $i\leq d-2$. Observe, that by the definition of $\tau_i$, we have $\tau_d = m$ and $\tau_{i}\leq m-1$ for any $i\leq d-1$.
\end{proof}

\begin{corollary}\label{cor:b-ineq} If the assumptions of Lemma~\ref{lem:tau-ineq} hold for $t=d$, then
\[ (d-i)\left(\frac{1}{m}-\varepsilon\right)k\leq b_i\leq \frac{m-i}{m}k,\quad \text{for } 1\leq i\leq d-1.\]
\end{corollary}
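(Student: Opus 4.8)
The plan is to read off the claim directly from the strict monotonicity of $(\tau_i)_{i=1}^{d}$ just established, together with the two-sided estimate of $b_i$ in terms of $m-\tau_i$ that already appeared inside the proof of Lemma~\ref{lem:tau-ineq}. In other words, this corollary is pure bookkeeping: no new structural input is needed.

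First I would record the consequence of the preceding corollary. Under the hypotheses (in particular $0<\varepsilon<1/m^2$ and $\mu\ge 2$), that corollary gives
\[
\tau_1<\tau_2<\cdots<\tau_{d-1}<\tau_d=m,
\]
a chain of $d$ strictly increasing positive integers terminating at $m$; positivity of $\tau_1$ follows from $\tau_1\ge 1$, or from $c_1=\tau_1\psi_0=1$ via Lemma~\ref{lem:geometric-param}. Hence $\tau_i\ge \tau_1+(i-1)\ge i$ and $\tau_i\le \tau_d-(d-i)=m-(d-i)$, so that
\[
d-i\ \le\ m-\tau_i\ \le\ m-i,\qquad 1\le i\le d-1.
\]

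Next I would invoke Eq.~\eqref{eq:b-ineq} from the proof of Lemma~\ref{lem:tau-ineq}, which is valid in exactly the range $1\le i\le t-1$, i.e.\ $1\le i\le d-1$ when $t=d$:
\[
(m-\tau_i)\Bigl(\tfrac1m-\varepsilon\Bigr)k\ \le\ b_i\ \le\ \frac{m-\tau_i}{m}\,k.
\]
Since $\varepsilon<1/m^2\le 1/m$, the factor $\tfrac1m-\varepsilon$ is positive, so substituting the lower bound $m-\tau_i\ge d-i$ into the left inequality yields $b_i\ge (d-i)\bigl(\tfrac1m-\varepsilon\bigr)k$, and substituting the upper bound $m-\tau_i\le m-i$ into the right inequality yields $b_i\le \tfrac{m-i}{m}k$. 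These are exactly the asserted inequalities.

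I do not anticipate a genuine obstacle. The only two points that require a moment of care are (i) checking that the index range of Eq.~\eqref{eq:b-ineq} ($i\le t-1$ with $t=d$) coincides with the claimed range $1\le i\le d-1$, and (ii) making sure the strictly increasing integer chain $\tau_1<\cdots<\tau_d=m$ starts at a value $\ge 1$, so that $\tau_i\ge i$ and hence $m-\tau_i\le m-i$ — both of which are immediate from the preceding corollary and Lemma~\ref{lem:geometric-param}.
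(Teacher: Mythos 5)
Your proof is correct and follows essentially the same route as the paper: the paper likewise derives $i\le\tau_i\le m-d+i$ from $\tau_1=1$, $\tau_{d-1}\le m-1$ and the strict monotonicity of Lemma~\ref{lem:tau-ineq}, and then substitutes into Eq.~\eqref{eq:b-ineq}. The only cosmetic difference is that you anchor the upper bound on $\tau_i$ at $\tau_d=m$ via the preceding corollary rather than at $\tau_{d-1}\le m-1$; both give the same chain of inequalities.
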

\begin{proof}
Since $\tau_1 = 1$ and $\tau_{d-1} \leq m-1$, by Lemma \ref{lem:tau-ineq}, we have $i\leq \tau_i\leq m-d+i$ for $i\leq d-1$. So the desired inequality directly follows from Eq.~\eqref{eq:b-ineq}.
\end{proof}

To get a bound on the multiplicity of the second largest eigenvalue $\theta_1$ of $X$ we first prove lower bounds on the elements of the standard sequence corresponding to $\theta_1$ (see Sec.~\ref{sec:drg}).

\begin{lemma}\label{lem:stand-seq-ineq} Let $X$ be a geometric distance-regular graph of diameter $d\geq 2$ with smallest eigenvalue $-m$. Let $\theta_1$ be its second largest eigenvalue and $(u_i)_{i=0}^{d}$ be the corresponding standard sequence. Assume that $\mu\geq 2$, $\theta_1\geq(1-\varepsilon)b_1$, and $c_t\leq \varepsilon k$ for some $2\leq t\leq d$, where $0<\varepsilon<1/(24m^2)$. 

Then, for $1\leq j\leq t-1$ 
\[u_j\geq (1-3m^2\varepsilon)^{j-1}\left(\frac{m-\tau_{j}}{m-\tau_j+j-1}\right)\frac{\theta_1}{k}.\]

\end{lemma}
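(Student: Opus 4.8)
The plan is to prove the claimed lower bound on $u_j$ by induction on $j$, using the three-term recurrence defining the standard sequence together with the structural facts about the $\tau_i$, $b_i$, and $c_i$ established in Lemmas~\ref{lem:tau-ineq} and~\ref{lem:geometric-param}. For the base case $j=1$ we have $u_1 = \theta_1/k$ and $\tau_1 = 1$, so the claimed bound reads $u_1 \geq (m-1)/(m-1)\cdot \theta_1/k = \theta_1/k$, which holds with equality; this fixes the normalization. For the inductive step, I would rewrite the recurrence $c_j u_{j-1} + a_j u_j + b_j u_{j+1} = \theta_1 u_j$ in the form
\[
b_j u_{j+1} = (\theta_1 - a_j) u_j - c_j u_{j-1},
\]
and since $a_j = k - b_j - c_j$, this becomes $b_j u_{j+1} = (\theta_1 - k + b_j + c_j) u_j - c_j u_{j-1}$. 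The monotonicity $u_0 = 1 > u_1 > \dots$ (valid for the second largest eigenvalue, since the standard sequence for $\theta_1$ changes sign once — actually it is positive and decreasing here as long as we stay below the index where it could turn) lets me bound $u_{j-1} \geq u_j$, so
\[
b_j u_{j+1} \geq (\theta_1 - k + b_j) u_j.
\]
Wait — one must be careful: $c_j(u_j - u_{j-1}) \leq 0$, so dropping it gives $b_j u_{j+1} \geq (\theta_1 - k + b_j + c_j)u_j - c_j u_j = (\theta_1 - k + b_j)u_j$, which is cleaner. Then dividing by $b_j$ gives $u_{j+1} \geq u_j\bigl(1 - (k-\theta_1)/b_j\bigr)$, and using $\theta_1 \geq (1-\varepsilon)b_1$ together with the upper bound $b_1 \leq \frac{m-1}{m}k$ from Eq.~\eqref{eq:b-ineq} (for $i=1$, $\tau_1=1$) and the lower bound $b_j \geq (m-\tau_j)(\tfrac1m - \varepsilon)k$ from Eq.~\eqref{eq:b-ineq} again, one estimates $(k-\theta_1)/b_j$ from above.

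The arithmetic core is then: $k - \theta_1 \leq k - (1-\varepsilon)b_1 \leq k - (1-\varepsilon)\cdot\frac{(m-1)}{m}k$... hmm, that's not quite small enough on its own; the point is rather that $k - \theta_1$ should be compared against $b_1 - b_j$ plus a lower-order term. Actually the sharper route: write $k - \theta_1 = (b_1 - \theta_1) + (k - b_1) = (b_1-\theta_1) + a_1 + c_1$, and here $c_1 = 1$, $a_1 = \lambda$, but that reintroduces $k$. Let me instead keep it crude: from $\theta_1 \geq (1-\varepsilon)b_1$ we get $k - \theta_1 \leq k - b_1 + \varepsilon b_1$. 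Using Lemma~\ref{lem:tau-ineq}'s consequence $i \leq \tau_i$ (so $b_1 \geq b_j + \text{(increments)}$) together with the Terwilliger inequality chain $b_i \geq b_{i+1} + \lambda + 2 + c_i - c_{i+1}$ summed from $1$ to $j-1$, one obtains $b_1 - b_j \geq (j-1)(\lambda+2) - c_j + c_1 \geq (j-1)(\lambda+1)$, so $k - b_1 \leq k - b_j - (j-1)(\lambda+1)$. Plugging $b_j \geq (m-\tau_j)(\tfrac1m-\varepsilon)k$ and $\lambda \geq k/m - 1$ and simplifying, the factor $1 - (k-\theta_1)/b_j$ should come out at least $1 - 3m^2\varepsilon$ times the ratio $\frac{m-\tau_j+j-1}{m-\tau_j+j}$ or similar telescoping factor; multiplying these telescoping factors across $j$ gives the $\frac{m-\tau_j}{m-\tau_j+j-1}$ in the final bound (since $\tau_j - \tau_{j-1} \geq 1$ makes the telescoping work out), and the $(1-3m^2\varepsilon)^{j-1}$ collects the error terms, with $\varepsilon < 1/(24m^2)$ ensuring each factor stays positive and the induction's monotonicity hypothesis is preserved.

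The main obstacle I anticipate is bookkeeping the telescoping of the ratio factors $\frac{m-\tau_{j+1}}{m-\tau_{j+1}+j}$ against $\frac{m-\tau_j}{m-\tau_j+j-1}$ while correctly absorbing the multiplicative $(1-O(m^2\varepsilon))$ slack: the inequality $\tau_{j+1} \geq \tau_j + 1$ from Lemma~\ref{lem:tau-ineq} is exactly what is needed to make $\frac{m-\tau_{j+1}}{m-\tau_j}\cdot\frac{m-\tau_j+j-1}{m-\tau_{j+1}+j} \leq 1$ (or close to it), but one has to check that the direction of this inequality is compatible with the lower-bound induction rather than fighting it, and that the residual discrepancy is genuinely of size $O(m^2\varepsilon)$ and not larger. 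A secondary subtlety is justifying that $u_j > 0$ and $u_{j-1} \geq u_j$ throughout the range $1 \leq j \leq t-1$ — this is where the hypothesis $\varepsilon < 1/(24m^2)$ (rather than the weaker $1/m^2$ of Lemma~\ref{lem:tau-ineq}) earns its keep, forcing each multiplicative step to exceed, say, $1 - 1/8 > 0$, so positivity and monotonicity propagate and the sign of the dropped term $c_j(u_j - u_{j-1})$ remains controlled. Everything else is the routine recurrence manipulation sketched above.
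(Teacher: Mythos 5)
The core recurrence step of your proposal contains a sign error that invalidates the argument. Writing $b_j u_{j+1} = (\theta_1 - k + b_j)u_j + c_j(u_j - u_{j-1})$, the term $c_j(u_j - u_{j-1})$ is indeed nonpositive when the standard sequence is decreasing; but discarding a nonpositive summand from the right-hand side of an equality yields $b_j u_{j+1} \leq (\theta_1 - k + b_j)u_j$, an \emph{upper} bound on $u_{j+1}$, not the lower bound $u_{j+1} \geq u_j\bigl(1 - (k-\theta_1)/b_j\bigr)$ that you then feed into the induction. To lower-bound $u_{j+1}$ you must upper-bound $u_{j-1}$ in terms of $u_j$, and the monotonicity $u_{j-1} \geq u_j$ points exactly the wrong way. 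The paper supplies the missing ingredient with a preliminary induction showing $0 \leq u_i \leq 3u_{i+1}$ for $i \leq t-2$ (using $\tau_i \leq m-2$, $b_i \geq (m-\tau_i)(1/m - \varepsilon)k$ and $c_i \leq \varepsilon k$); this is where the strengthened hypothesis $\varepsilon < 1/(24m^2)$ is actually spent, not on keeping the telescoping factors positive. With $u_{i-1} \leq 3u_i$ in hand one gets $u_{i+1} \geq u_i\bigl(1 - (k - \theta_1 + 2\varepsilon k)/b_i\bigr)$, and the extra $2\varepsilon k$ is absorbed into the $(1-3m^2\varepsilon)$ factor. Without some such reverse comparison your per-step inequality is simply false (for $c_j > 0$ and a strictly decreasing sequence it fails strictly), so the proof does not close.

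The remainder of your plan is essentially the paper's: the base case, the telescoping $\frac{m-\tau_i-1}{m-\tau_i} \geq \frac{m-\tau_j+j-i-1}{m-\tau_j+j-i}$ via $\tau_i \leq \tau_j - (j-i)$ from Lemma~\ref{lem:tau-ineq}, and the product collapsing to $\frac{m-\tau_j}{m-\tau_j+j-1}$ all work as you describe. However, your detour through Terwilliger's inequality to control $k - b_1$ is both unnecessary and not brought to a usable form; the paper instead uses the geometric formula $b_1 = (m-1)(k/m + 1 - \psi_1)$ together with $\psi_1 \leq c_2 \leq c_t \leq \varepsilon k$ to get $k - \theta_1 \leq k/m + m\varepsilon k$, which combined with the lower bound on $b_i$ gives $(k - \theta_1 + 2\varepsilon k)/b_i \leq (1 + 3m^2\varepsilon)/(m - \tau_i)$ directly. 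You should adopt that route and, above all, insert the $u_{i-1} \leq 3u_i$ step before manipulating the recurrence.
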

\begin{proof}
Recall that the standard sequence corresponding to the eigenvalue $\theta_1$ satisfies 
\[ u_0 = 1, \qquad u_1 = \frac{\theta_1}{k}, \qquad c_iu_{i-1}+a_iu_i+b_iu_{i+1} = \theta_1 u_i, \text{ for } i = 1, \ldots,  d-1.\]
We can rewrite this as
\begin{equation}\label{eq:u-recurrence}
 u_{i+1} = u_i\frac{(\theta_1+b_i+c_i-k)}{b_i} - u_{i-1}\frac{c_i}{b_i}\geq u_i\left(1-\frac{k-\theta_1}{b_i}\right)- u_{i-1}\frac{c_i}{b_i}.
 \end{equation}
For $2\leq i\leq t$, by the assumptions of the lemma,  $\psi_{i-1}\leq c_i\leq c_t\leq \varepsilon k$. So, by Lemma~\ref{lem:geometric-param}, 
\begin{equation}\label{eq:ktheta}
k-\theta_1\leq k-(1-\varepsilon)b_1\leq (k-b_1)+\varepsilon k\leq \frac{k}{m}+(m-1)\psi_1+\varepsilon k\leq \frac{k}{m}+m\varepsilon k;
\end{equation}
\begin{equation}\label{eq:blowerbound}
  (m-\tau_i)\left(\frac{1}{m}-\varepsilon\right)k \leq b_i,\quad \text{for } i\leq t-1. 
  \end{equation}
For the convenience of the future computations we first show that for $1\leq i\leq t-2$, inequality $3u_{i+1}\geq u_{i}\geq 0$ holds. Indeed, by Eq.~\eqref{eq:ktheta}, $u_1\geq \frac{1}{3}u_0$. Moreover, by Lemma~\ref{lem:tau-ineq}, $\tau_i\leq \tau_{t-1}-1\leq m-2$, so by Eq.~\eqref{eq:ktheta} and Eq.~\eqref{eq:blowerbound},
\[ 1 - \frac{k-\theta_1}{b_i}\geq 1 - \frac{1+m^2\varepsilon}{2-2m\varepsilon} \geq \frac{1}{2} - m^2\varepsilon.\]
Thus, using $\tau_i\leq m-2$ for $i\leq t-2$, by induction, we get from Eq.~\eqref{eq:u-recurrence} and Eq.~\eqref{eq:blowerbound}
\[u_{i+1}\geq \left(\frac{1}{2}-m^2\varepsilon\right)u_i - m\varepsilon u_{i-1}\geq \left(\frac{1}{2}-4m^2\varepsilon\right)u_i\geq \frac{1}{3}u_i.\]
Hence, for $i\leq t-2$, we can rewrite Eq.~\eqref{eq:u-recurrence}
\[ u_{i+1} \geq  u_i\frac{(\theta_1+b_i+c_i-k)}{b_i} - 3u_i\frac{c_i}{b_i}\geq u_i\left(1 - \frac{k-\theta_1+2\varepsilon k}{b_i}\right).\]
Thus, using Eq.~\eqref{eq:ktheta} and Eq.~\eqref{eq:blowerbound}, for $i\leq t-2$,
\[ u_{i+1}\geq u_i\left(1 - \frac{k+(m^2+2m)\varepsilon k}{mb_i}\right)\geq u_i\left(1 - \frac{1}{(m-\tau_i)}\frac{(1+2m^2\varepsilon)}{(1-m\varepsilon)}\right)\geq u_i\left(1-\frac{(1+3m^2\varepsilon)}{m-\tau_i}\right).\]
By Lemma~\ref{lem:tau-ineq}, $\tau_i\leq \tau_{j} - (j-i)$ for $i\leq j\leq t-1$. Thus, for $\delta = 3m^2\varepsilon$ and $i+1\leq j\leq t-1$, 
\[ u_{i+1}\geq (1-\delta)\left(1-\frac{1}{m-\tau_i}\right)u_i\geq (1-\delta)\frac{m-\tau_{j}+j-i-1}{m-\tau_{j}+j-i}u_i.\]
Therefore, for any $1\leq j\leq t-1$,
\[ u_{j}\geq (1-\delta)^{j-1}\prod\limits_{i=1}^{j-1}\frac{m-\tau_{j}+j-i-1}{m-\tau_{j}+j-i}u_1 =  (1-\delta)^{j-1}\left(\frac{m-\tau_{j}}{m-\tau_j+j-1}\right)\frac{\theta_1}{k}. \] 
\end{proof}

\begin{proposition}\label{prop:multiplicity} Let $X$ be a geometric distance-regular graph of diameter $d\geq 2$ with smallest eigenvalue $-m$. Take  
$\displaystyle{0<\varepsilon< 1/(6m^4d)}$.
Suppose that $\mu\geq 2$, $c_t\leq \varepsilon k$ and $b_t\leq \varepsilon k$ for some $2\leq t\leq d$. Assume, moreover, that the second largest eigenvalue of $X$ satisfies 
$\theta_1\geq (1-\varepsilon)b_1$.  

Then either the multiplicity $f_1$ of $\theta_1$ satisfies $f_1 \leq k-1$, or $m = d$, $t=d$ and $c_d = d$. 
\end{proposition}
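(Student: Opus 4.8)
The plan is to read off the multiplicity $f_1$ from Biggs' formula
\[ f_1=\frac{n}{\sum_{i=0}^{d}k_iu_i^2},\qquad (u_i)_{i=0}^{d}=\bigl(u_i(\theta_1)\bigr)_{i=0}^{d},\]
so that $f_1\le k-1$ becomes equivalent to $(k-1)\sum_i k_iu_i^2\ge n=\sum_i k_i$, i.e.\ to
\[ \sum_{i=0}^{d}k_i\bigl((k-1)u_i^2-1\bigr)\ \ge\ 0.\qquad(\star)\]
A preliminary remark to record: $c_t\le\varepsilon k$ and $c_t\ge c_2=\mu\ge 2$ force $k\ge 2/\varepsilon>12m^4d$, so $k$ is very large compared with $m$ and $d$ (and $m\ge2$); every $O(1/k)$ correction below will be negligible against $\varepsilon$.

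The first task is to show that the distance distribution $(k_i)$ is essentially concentrated at index $t$, namely $n=(1+O(m\varepsilon))k_t$. Since $\varepsilon<1/m^2$, Lemma~\ref{lem:tau-ineq} and integrality give $\tau_{i+1}\ge\tau_i+1$ for $1\le i\le t-2$, hence $i\le\tau_i\le m-t+i$ for $1\le i\le t-1$; combined with $b_i\ge(m-\tau_i)(1/m-\varepsilon)k$ from Eq.~\eqref{eq:b-ineq} and $c_{i+1}\le c_t\le\varepsilon k$, this yields $k_{i+1}/k_i=b_i/c_{i+1}>1/(2m\varepsilon)$ for $0\le i\le t-1$, so $1=k_0<k_1<\dots<k_t$ with each step multiplying by more than $1/(2m\varepsilon)$. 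For the tail, $X$ contains a quadrangle (Lemma~\ref{lem:mu-bound-psi1}), so Terwilliger's inequality (Theorem~\ref{thm:ter-inequality}) applied from index $t$, together with $b_t\le\varepsilon k$ and $\lambda\ge k/m-1$, gives $c_j\ge(1/m-\varepsilon)k$ for every $j\ge t+1$, whereas $b_{j-1}\le b_t\le\varepsilon k$; hence $k_j/k_{j-1}<2m\varepsilon$ for $j\ge t+1$. Summing the geometric tails on both sides of index $t$ gives $n\le(1+8m\varepsilon)k_t$; it is exactly here that Terwilliger's inequality is needed, to make $c_j$ grow past distance $t$.

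Next I would bound the left side of $(\star)$. For $0\le j\le t-1$, Lemma~\ref{lem:stand-seq-ineq} (applicable since $\varepsilon<1/(24m^2)$) together with $\theta_1\ge(1-\varepsilon)b_1$, $b_1=(m-1)(k/m+1-\psi_1)$ and $\psi_1\le c_2\le\varepsilon k$ gives
\[ u_j^2\ \ge\ \bigl(1-O(m^2d\varepsilon)\bigr)\left(\frac{m-\tau_j}{m-\tau_j+j-1}\right)^{2}\left(\frac{m-1}{m}\right)^{2},\]
a quantity bounded away from $0$, so $(k-1)u_j^2-1>0$ for every such $j$. Retaining in $(\star)$ only the index $j=t-1$ among these positive terms and bounding $(k-1)u_i^2-1\ge-1$ for $i\ge t$, it suffices to prove $k_{t-1}\bigl((k-1)u_{t-1}^2-1\bigr)\ge\sum_{i=t}^{d}k_i$; dividing by $k_{t-1}$, using $k_t=k_{t-1}b_{t-1}/c_t$ and the tail bound, this reduces to $(k-1)u_{t-1}^2\ge 1+(1+O(m\varepsilon))\,b_{t-1}/c_t$. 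Substituting $b_{t-1}=(m-\tau_{t-1})(k/m+1-\psi_{t-1})$, $c_t=\tau_t\psi_{t-1}$ (Lemma~\ref{lem:geometric-param}) and the lower bound on $u_{t-1}$, and using $t-1\le\tau_{t-1}\le m-1$ (so $m-\tau_{t-1}+t-2\le m-1$), $\psi_{t-1}\ge1$ and $c_t\ge c_{t-1}=\tau_{t-1}\psi_{t-2}\ge\tau_{t-1}$, the required inequality — after absorbing the $O(1/k)$ and $O(m^2d\varepsilon)$ errors — follows from an arithmetic inequality of the shape
\[ c_t\,(m-\tau_{t-1})\ \ge\ m\cdot\frac{(m-\tau_{t-1}+t-2)^2}{(m-1)^2}\,\bigl(1+O(m^2d\varepsilon)\bigr).\]
Its right side is at most $m(1+O(m^2d\varepsilon))$ and its left side is at least $\tau_{t-1}(m-\tau_{t-1})\ge m-1$.

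The main obstacle is the endgame: a case analysis on $\tau_{t-1}$ — and, when $\tau_{t-1}=m-1$, on the precise value of $c_t$ relative to $m$ — must show the displayed inequality holds with enough room to beat the $O(m^2d\varepsilon)$ error in every configuration except the single boundary one, $t=m$, $\tau_{t-1}=m-1$, $\psi_{t-1}=1$, $c_t=m$. For this one needs that enough explicit numerical slack has been propagated through both the profile estimate and the standard-sequence estimate (this is why the hypothesis insists on $\varepsilon<1/(6m^4d)$, hence $k>12m^4d$), and it may require an extra structural input on $(\tau_i)$ and $(\psi_i)$ — monotonicity of $\tau_i$ and the relation $\tau_{i+1}\ge\psi_i$ of Lemma~\ref{lem:tau2} type — to exclude spurious borderline parameters. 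Finally, in the genuine boundary case $c_t=\tau_t\psi_{t-1}=m$ with $\tau_t\ge\tau_{t-1}=m-1$ forces $\tau_t=m$, hence $t=d$ (since $\tau_i\le m-1$ whenever $i<d$), and then $m=c_d=\tau_d\psi_{d-1}$ gives $\psi_{d-1}=1$ and $m=d=t$, $c_d=d$ — precisely the exceptional configuration in the statement.
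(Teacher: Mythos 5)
Your skeleton is the paper's: Biggs' formula, concentration of the distance distribution at index $t$ (with the same $2m\varepsilon$ ratios on both sides of $t$), the lower bound on $u_{t-1}$ from Lemma~\ref{lem:stand-seq-ineq}, and reduction to the single term $k_{t-1}u_{t-1}^2$ versus $n/k$ — your criterion $(\star)$ is just a repackaging of $n/(k_{t-1}u_{t-1}^2)<k$, and both collapse to the same arithmetic inequality in $m$, $t$, $\tau_{t-1}$, $c_t$. (Your tail estimate via Terwilliger is a harmless variant of the paper's, which reads $\psi_i\geq k/m+1-\varepsilon k$ off $b_i=(m-\tau_i)(k/m+1-\psi_i)\leq\varepsilon k$.)

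The genuine gap is exactly where you flag ``the main obstacle'': the bounds you actually establish — left side $\geq\tau_{t-1}(m-\tau_{t-1})\geq m-1$, right side $\leq m(1+O(m^2d\varepsilon))$ — go the \emph{wrong} way and cannot close the argument; the case analysis you defer is the substance of the proof. The paper closes it by splitting on the value of $c_t$ against the a priori bound $c_t\geq c_{t-1}\geq\tau_{t-1}\psi_{t-2}\geq t-1$. If $c_t=t-1$ (so $c_t=c_{t-1}$ and $\tau_{t-1}=t-1$), one needs Corollary~\ref{cor:c3-mu} to force $t\geq 4$ and Terwilliger's inequality to get $b_{t-1}\geq\lambda+2\geq k/m+1$, hence $\tau_{t-1}\leq m-2$ and $t\leq m-1$, $m\geq 5$; only then does the factor become $(t-1)(m-t+1)/m\geq 6/5$, beating the error. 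If $c_t\geq t$, one needs the (nontrivial) Appendix inequality $\frac{(m-\tau_{t-1})(m-1)^2}{(m-\tau_{t-1}+t-2)^2}\geq\frac{m-1}{t-1}$ and then the subcases $t<m$, and $t=m$ with $c_t>t$, each yielding a factor $\geq 1+\frac{1}{m^2-1}$ against the error $(1-3m^2\varepsilon)^{2d}>1-\frac{1}{m^2}$. None of this is in your writeup, and it is not a routine verification. A second, smaller hole: in the boundary case you derive $t=d$ from ``$\tau_t\geq\tau_{t-1}$'', but Lemma~\ref{lem:tau-ineq} gives monotonicity of $\tau_i$ only up to index $t-1$, so $\tau_t\geq\tau_{t-1}$ is unjustified; the paper instead shows directly that $t<d$ is impossible, since $b_t\geq 1$ together with Terwilliger would give $b_{t-1}\geq\lambda+2\geq k/m+1$ while $b_{t-1}\leq(m-\tau_{t-1})k/m\leq k/m$ when $\tau_{t-1}=m-1$.
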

\begin{proof} Let $(u_i)_{i=0}^{d}$ be the standard sequence of $X$ corresponding to $\theta_1$.
Then, by the Biggs formula, multiplicity of an eigenvalue $\theta_1$ can be computed as
\[ f_1 = \frac{n}{\left(\sum\limits_{i=0}^{d}k_iu_i^{2}\right)}.\]
Note, as in Eq.~\eqref{eq:b-ineq}, $\displaystyle{b_{i-1}\geq b_{t-1}\geq \left(\frac{1}{m}-\varepsilon\right)k}\geq \frac{1}{2m}k$ and $c_i\leq c_t$ for any $1\leq i\leq t$. So
\begin{equation}\label{eq:degree-before-t} k_{i-1} = \frac{c_{i}}{b_{i-1}}k_i\leq \frac{c_t}{b_{t-1}}k_{i} \leq 2m\varepsilon k_i, \quad \text{for } i\leq t.
\end{equation}
For $d-1\geq i\geq t$, by Lemma~\ref{lem:geometric-param}, $b_i\leq b_t\leq \varepsilon k$ implies 
\[\displaystyle{\psi_i\geq \left(\frac{1}{m}-\varepsilon\right) k\geq \frac{1}{2m}k},\qquad \text{so} \qquad \displaystyle{c_{i+1} = \tau_{i+1}\psi_{i}\geq \frac{1}{2m}k}.\]
Hence, for $t\leq i\leq d-1$ we deduce,
\begin{equation}\label{eq:degree-after-t}
 k_{i+1} = \frac{b_{i}}{c_{i+1}}k_i\leq \frac{\varepsilon k}{k/(2m)}k_i = 2m\varepsilon k_i.
\end{equation}
Combining Eq.~\eqref{eq:degree-before-t} and Eq.~\eqref{eq:degree-after-t} we obtain
\[ n = \sum\limits_{i=0}^{d} k_i \leq k_t\left(\sum\limits_{i=0}^{t}(2m\varepsilon)^{i}+\sum\limits_{i=1}^{d-t}(2m\varepsilon)^{i}\right)\leq \frac{1}{1-4m\varepsilon}k_t \qquad \Rightarrow \qquad k_t\geq (1-4m\varepsilon)n.\]
As in Eq.~\eqref{eq:ktheta}, $\theta_1/k\geq (m-1)/m-m\varepsilon$. So, by Lemma~\ref{lem:stand-seq-ineq} and Eq.\eqref{eq:b-ineq}, for $t\geq 2$,
\[ k_{t-1}u_{t-1}^{2}\geq k_t\frac{c_t}{b_{t-1}}(1-3m^2\varepsilon)^{2t-4}\left(\frac{m-\tau_{t-1}}{m-\tau_{t-1}+t-2}\right)^2 \left(\frac{\theta_1}{k}\right)^2\geq\]
\[ \geq k_t\frac{c_t}{b_{t-1}}(1-3m^2\varepsilon)^{2t-4}\left(\frac{m-\tau_{t-1}}{m-\tau_{t-1}+t-2}\right)^2(1-2m\varepsilon)^2\left(\frac{m-1}{m}\right)^2\geq \]
\[ \geq(1-4m\varepsilon)n\cdot \frac{m c_t}{(m-\tau_{t-1})k }\cdot (1-3m^2\varepsilon)^{2d-1}  \left(\frac{m-\tau_{t-1}}{m-\tau_{t-1}+t-2}\right)^2\left(\frac{m-1}{m}\right)^2 \geq \]
\[ \geq \frac{n}{k}\cdot(1-3m^2\varepsilon)^{2d} \cdot\frac{c_t}{m}\cdot\frac{(m-\tau_{t-1})(m-1)^2}{(m-\tau_{t-1}+t-2)^2}.\]

Our goal is to deduce from this inequality that $k_{t-1}u_{t-1}^{2}> n/k$, unless $c_t = t = m = d$. We start by giving a bound on $c_t$.
Observe that $\psi_{t-2}\geq 1$, and $\tau_{t-1} \geq t-1$, by Lemma~\ref{lem:tau-ineq}. So we obtain
  \begin{equation}\label{eq:ctbound}
  c_t \geq c_{t-1} \geq \tau_{t-1}\psi_{t-2}\geq t-1.
  \end{equation}
  
\noindent\textbf{Case 1.} First, assume that $c_t = t-1$. \\
Then, Eq.~\eqref{eq:ctbound} implies $\tau_{t-1} = t-1$ and $c_t = c_{t-1}$. Thus, in particular, we can simplify 
\[ \frac{c_t}{m}\cdot\frac{(m-\tau_{t-1})(m-1)^2}{(m-\tau_{t-1}+t-2)^2} = \frac{t-1}{m}\cdot \frac{(m-t+1)(m-1)^2}{(m-1)^2} = \frac{(t-1)(m-t+1)}{m}.\]

Also, observe that the constraint $c_t = c_{t-1}$ implies $t>2$ as $1=c_{1}<2\leq \mu = c_2$. Moreover, by Corollary~\ref{cor:c3-mu}, $c_3>c_2$ for $\mu\geq 2$, so we should have $t\geq 4$ in this case.

At the same time, $c_t = c_{t-1}$, using Terwilliger's inequality (see Theorem~\ref{thm:ter-inequality}), implies
\[ b_{t-1}\geq c_{t-1}-c_{t}+b_t+\lambda+2\geq \lambda+2\geq \frac{k}{m}+1.\]
Since
\[ b_{t-1} = (m-\tau_{t-1})\left(\frac{k}{m}+1-\psi_{t-1}\right)\leq (m-\tau_{t-1})\frac{k}{m},\]
we deduce $\tau_{t-1}\leq m-2$. So $t \leq m-1$, as $\tau_{t-1} = t-1$. 

Thus $4\leq t\leq m-1$, which implies $m\geq 5$, and we get
\[ \frac{c_t}{m}\cdot\frac{(m-\tau_{t-1})(m-1)^2}{(m-\tau_{t-1}+t-2)^2} = \frac{(t-1)(m-t+1)}{m}\geq \frac{2(m-2)}{m}\geq 2-\frac{4}{m}\geq \frac{6}{5}.\]
Since, $3m^2\varepsilon<1$, by Bernoulli's inequality
\[ (1-3m^2\varepsilon)^{2d}\geq (1-6dm^2\varepsilon)>\frac{5}{6}.\]
Therefore, in this case,
\[ k_{t-1}u_{t-1}^2 > \frac{n}{k} \quad \Rightarrow\quad f_1\leq \frac{n}{k_{t-1}u_{t-1}^2}<k \qquad \Rightarrow \qquad f_1\leq k-1.\]

\noindent\textbf{Case 2.} Else we have $c_t\geq t$.\\ 
Lemma \ref{lem:tau-ineq} implies $t\leq \tau_{t-1}+1\leq m$. It is not hard to check (see Appendix \ref{appendix}), that 
\begin{equation}\label{eq:simplify}
  \frac{(m-\tau_{t-1})(m-1)^2}{(m-\tau_{t-1}+t-2)^2} \geq \frac{m-1}{t-1}. 
\end{equation}
Hence, applying inequality from Eq.~\eqref{eq:simplify},
\[  k_{t-1}u_{t-1}^2\geq \frac{n}{k}(1-3m^2\varepsilon)^{2d}\frac{c_t}{m}\left(\frac{m-1}{t-1}\right)\geq \frac{n}{k}(1-3m^2\varepsilon)^{2d}\left(\frac{t}{t-1}\right)\left(\frac{m-1}{m}\right).\]
 If $2\leq t<m$, then 
 $$\displaystyle{\left(\frac{t}{t-1}\right)\left(\frac{m-1}{m}\right)\geq \left(\frac{m-1}{m-2}\right)\left(\frac{m-1}{m}\right)= 1+\frac{1}{m^2-2m}\geq 1+\frac{1}{m^2-1}}.$$  
 If $2\leq t = m$, and $c_t>t$, then 
 $$\displaystyle{\left(\frac{c_t}{t-1}\right)\left(\frac{m-1}{m}\right)\geq \left(\frac{t+1}{t-1}\right)\left(\frac{m-1}{m}\right) = \frac{m+1}{m}\geq 1+\frac{1}{m^2-1}}.$$ In each of these two cases, we get

\[ k_{t-1}u_{t-1}^2\geq \frac{n}{k}\cdot(1-3m^2\varepsilon)^{2d}\left(1+\frac{1}{m^2-1}\right).\]
By Bernoulli's inequality, since $3m^2\varepsilon<1$,
\[ (1-3m^2\varepsilon)^{2d}\geq (1-6dm^2\varepsilon)> 1-\frac{1}{m^2} = \left(1+\frac{1}{m^2-1}\right)^{-1}.\]
Therefore, if $c_t>t$ or $m>t$, then $k_{t-1}u_{t-1}^2>n/k$ and so   
\[f_1\leq \frac{n}{k_{t-1}u_{t-1}^2}<k \qquad \Rightarrow \qquad f_1\leq k-1.\] 
Finally, assume $\tau_t\psi_{t-1} = c_t = t$ and $m = t$. We know from Lemma~\ref{lem:tau-ineq} that $\tau_{t-1}\geq t-1$. If $t<d$, then $b_t\geq 1$. So, by Terwilliger's inequality and Lemma~\ref{lem:geometric-param},
\[ \frac{k}{m}\geq (m-\tau_{t-1})\left(\frac{k}{m}+1-\psi_{t-1}\right) = b_{t-1}\geq 
c_{t-1}-c_{t}+b_t+\lambda+2\geq \lambda+2\geq \frac{k}{m}+1,\]
which gives a contradiction with the assumption $t<d$. Therefore, $t=d$ and $c_d = d$, $m=d$.
\end{proof}

Now we are ready to prove Theorem~\ref{thm:main-Hamming-intr} in the following equivalent form (see Lemma~\ref{lem:connected-disconn}).
\begin{theorem}\label{thm:main-Hamming} Let $X$ be a distance-regular graph of diameter $d\geq 2$. Suppose that every neighborhood graph $X(v)$ is a disjoint union of $m$ cliques. Moreover, assume $\mu\geq 2$,  $c_t\leq \varepsilon k$ and $b_t\leq \varepsilon k$ for some $t\leq d$ and $\theta_{1}\geq (1-\varepsilon)b_1$, where $0<\varepsilon < 1/(6m^4d)$. 

Then $X$ is a Hamming graph $H(d, s)$, for $s = 1+k/d$.
\end{theorem}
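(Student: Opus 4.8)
The plan is to show that the hypotheses of the theorem force the exceptional branch of Proposition~\ref{prop:multiplicity}, and then to identify $X$ directly in that branch.

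\emph{Setting up the geometric structure.} Since $\lambda=a_1$ is constant, each clique in the disjoint‑union decomposition of $X(v)$ has size $\lambda+1$, so $k=m(\lambda+1)$; hence $m\geq 2$ (otherwise $X$ is complete and $d=1$) and $b_1=k-\lambda-1=\tfrac{m-1}{m}k\geq\tfrac{k}{2}$. Because $c_t\geq 1$ and $c_t\leq\varepsilon k$ we get $k\geq 1/\varepsilon>6m^4d$, so in particular $k\geq m^2$, and since $b_1>\varepsilon k$ we must have $t\geq 2$. For a vertex $v$ and a clique $K$ of $X(v)$, the set $\{v\}\cup K$ is a maximal clique of $X$ (a vertex adjacent to all of it lies in the clique‑component $K$ of $X(v)$), and every maximal clique through an edge $uw$ equals $\{u,w\}\cup(N(u)\cap N(w))$; thus the maximal cliques form a clique geometry with exactly $m$ cliques through each vertex. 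By Proposition~\ref{suff-cond}, $X$ is geometric with smallest eigenvalue $-m$, and by Lemma~\ref{lem:connected-disconn} we have $\psi_1=1$.

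\emph{Forcing the exceptional case.} Apply Proposition~\ref{prop:multiplicity} with this $t$: either $f_1\leq k-1$, or $m=d$, $t=d$ and $c_d=d$. In the first case Theorem~\ref{thm:ensure-eigenvalue} yields an eigenvalue $-1-b^{+}$ with $b^{+}=b_1/(\theta_1+1)>0$ in every local graph $X(v)$; but $X(v)$ is a disjoint union of cliques, whose least eigenvalue is $-1>-1-b^{+}$ --- a contradiction. Hence $m=d$, $t=d$ and $c_d=d$.

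\emph{Identifying $X$ as a Hamming graph.} By the corollary to Lemma~\ref{lem:tau-ineq} (whose hypotheses hold for $t=d$) the sequence $(\tau_i)$ is strictly increasing; together with $\tau_1=1$, $\tau_{d-1}\leq m-1$ and $\tau_d=m=d$ this forces $\tau_i=i$ for all $i$. Then $c_i=\tau_i\psi_{i-1}=i\psi_{i-1}$, so $c_d=d$ gives $\psi_{d-1}=1$, and a downward induction using $c_i\leq c_{i+1}$ gives $\psi_i=1$ for every $i$: once $\psi_j=1$ we have $c_{j+1}=j+1$, whence $j\psi_{j-1}=c_j\leq j+1$ forces $\psi_{j-1}=1$. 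Substituting $\tau_i=i$, $\psi_i=1$ into Lemma~\ref{lem:geometric-param} gives $c_{i+1}=i+1$ and $b_i=(d-i)k/d=(d-i)(s-1)$ with $s=1+k/d$, i.e.\ precisely the Hamming array~\eqref{eq:hamming}. By Egawa's theorem (Theorem~\ref{thm:Hamming-array-class}), $X$ is a Hamming graph or a Doob graph; the latter is impossible, since a Doob graph has a local graph containing a hexagon as a connected component, contradicting the hypothesis that every $X(v)$ is a disjoint union of cliques. Therefore $X=H(d,s)$ with $s=1+k/d$. The real work is already supplied by Proposition~\ref{prop:multiplicity}; the only genuinely new points are that $c_d=d$ propagates to $\psi_i\equiv 1$ via monotonicity of the $c_i$, and that Doob graphs are excluded by the local hypothesis.
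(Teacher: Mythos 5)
Your proof is correct and follows essentially the same route as the paper's: establish that $X$ is geometric via Proposition~\ref{suff-cond}, force the exceptional branch of Proposition~\ref{prop:multiplicity} by using Theorem~\ref{thm:ensure-eigenvalue} to contradict the disjoint-union-of-cliques hypothesis, pin down the intersection array ($\tau_i=i$, $\psi_i=1$), and invoke Egawa's theorem. The only (cosmetic) divergence is the exclusion of Doob graphs: the paper notes that $1+k/d=4$ is incompatible with $k\geq 1/\varepsilon>6d$, whereas you use the correct but unreferenced fact that the Shrikhande graph is locally a hexagon.
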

\begin{proof} Pick some vertex $v$ and let $X(v) = \bigcup_{i=1}^{m} C_i$, where $C_i$ is a clique for every $i$. Since $X$ is distance-regular, all $C_i$ are of the same size $\lambda+1$. Note that $\{v\}\cup C_i$ is a maximal clique in $X$ of the size $k/m+1$. Since $k\geq 1/\varepsilon>m^2$,  by Proposition~\ref{suff-cond},  $X$ is geometric with smallest eigenvalue $-m$.

Hence, by Proposition~\ref{prop:multiplicity}, either we have $f_1\leq k-1$, or $c_d = m = d$. If $f_1\leq k-1$, by Theorem~\ref{thm:ensure-eigenvalue}, $\displaystyle{-1-\frac{b_1}{\theta_1+1}}$ is an eigenvalue of $X(v)$. However, $b_1>0$ and $\theta_1>0$, so $X(v)$ has an eigenvalue less than $-1$. This gives a contradiction with the assumption that $X(v)$ is a disjoint union of cliques.

Therefore, $c_d = m = d$. By Lemma~\ref{lem:tau-ineq}, we get $\tau_i \geq i$ for any $i\in [d]$. At the same time, $d = c_d = \tau_d\psi_{d-1}$, so $\tau_d = d$ and $\psi_{d-1} = 1$. We immediately deduce $\tau_i = i$ for all $i\in [d]$. Assume that $\psi_{i-1}\geq 2$, while $\psi_{i} = 1$ for some $2\leq i\leq d-1$. Then we get a contradiction with 
\[i+1 = \psi_{i}\tau_{i+1} = c_{i+1}\geq c_{i} = \psi_{i-1}\tau_i\geq 2i.\]  
Thus, $\psi_i = 1$ for every $i$. This means, that the intersection array of $X$ coincides with the intersection array of the Hamming graph $H(d, 1+k/d)$, as
\[ c_i = i \qquad \text{and} \qquad b_i = (d-i)\frac{k}{d}.\]
Using characterization of Hamming graphs by their intersection array (Theorem~\ref{thm:Hamming-array-class}), $X$ is a Hamming graph or a Doob graph. Note that $X$ may be a Doob graph, only if $1+k/d = 4$, which is not possible as $k\geq 1/\varepsilon\geq 6d$. Therefore, $X$ is a Hamming graph. 
\end{proof}

\begin{corollary}\label{cor:Hamming} Let $X$ be a geometric distance-regular graph of diameter $d\geq 2$. Suppose that $X$ has $\mu =2$ and smallest eigenvalue $-m$. Take  $0<\varepsilon <  1/(6m^4d)$. Assume, $c_t\leq \varepsilon k$ and $b_t\leq \varepsilon k$ for some $t\in [d]$, and $\theta_{1}\geq (1-\varepsilon)b_1$. Then $X$ is a Hamming graph $H(d, s)$.
\end{corollary}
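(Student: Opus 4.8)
The plan is to reduce the statement to Theorem~\ref{thm:main-Hamming} (equivalently Theorem~\ref{thm:main-Hamming-intr}), whose only hypothesis not literally present in the corollary is that every neighborhood graph $X(v)$ be a disjoint union of $m$ cliques. So the entire content of the argument is to deduce this structural condition from $\mu = 2$ together with the assumption that $X$ is geometric.

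First I would recall that, since $X$ is geometric, Lemma~\ref{lem:geometric-param} gives $\mu = c_2 = \tau_2\psi_1$, while Lemma~\ref{lem:tau2} gives $\tau_2 \geq \psi_1$. Hence $\psi_1^2 \leq \tau_2\psi_1 = \mu = 2$, which forces $\psi_1 = 1$ (and incidentally $\tau_2 = 2$). Equivalently, one may argue via Lemma~\ref{lem:connected-disconn}: if $\psi_1 \geq 2$ then each $X(v)$ would be connected, and a short count shows $\mu = \tau_2\psi_1 \geq \psi_1^2 \geq 4 > 2$, a contradiction.

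With $\psi_1 = 1$ established, part~(1) of Lemma~\ref{lem:connected-disconn} tells us that for every vertex $v$ the neighborhood graph $X(v)$ is a disjoint union of exactly $m$ cliques, where $-m$ is the smallest eigenvalue of $X$ (the two roles of $m$ coincide by Lemma~\ref{lem:eigenvalue-clique-geom}). All the remaining hypotheses of Theorem~\ref{thm:main-Hamming} are inherited verbatim: $\mu = 2 \geq 2$; the bounds $c_t \leq \varepsilon k$ and $b_t \leq \varepsilon k$ for some $t \in [d]$; the spectral gap bound $\theta_1 \geq (1-\varepsilon)b_1$; and $0 < \varepsilon < 1/(6m^4 d)$ with the same value of $m$. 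Applying Theorem~\ref{thm:main-Hamming} then yields that $X$ is the Hamming graph $H(d, s)$ with $s = 1 + k/d$, completing the proof.

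Since essentially all the work is done inside Theorem~\ref{thm:main-Hamming}, there is no genuine obstacle here; the only points requiring care are the elementary deduction $\psi_1 = 1$ from $\mu = 2$ and the bookkeeping that the parameter $m$ (minus the smallest eigenvalue) agrees with the number of cliques furnished by Lemma~\ref{lem:connected-disconn}.
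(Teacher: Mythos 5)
Your proof is correct and follows exactly the paper's own route: deduce $\psi_1=1$ (and $\tau_2=2$) from $\mu=\tau_2\psi_1=2$ and $\tau_2\geq\psi_1$, invoke Lemma~\ref{lem:connected-disconn} to get that each neighborhood graph is a disjoint union of $m$ cliques, and then apply Theorem~\ref{thm:main-Hamming}. No issues.
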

\begin{proof} By Lemma~\ref{lem:geometric-param}, $\mu = \tau_2\psi_1$, and by Lemma~\ref{lem:tau2}, $\tau_2\geq \psi_1$, so $\mu = 2$ implies $\tau_2 = 2$ and $\psi_1 = 1$. This means that a neighborhood graph of $X$ is a disjoint union of $-\theta_d = m$ cliques (Lemma~\ref{lem:connected-disconn}). Therefore, the statement follows from the theorem above.

\end{proof}

\section{Proof of Babai's conjecture}

In this section we prove Babai's conjecture on the motion of distance-regular graphs. We start our discussion with the elusive case $\mu=1$.
\subsection{Geometric distance-regular graphs with $\mu = 1$}

In the case of $\mu=1$ our strategy is to show that the dual graph of $X$ has linear motion and then to deduce that $X$ itself has linear motion. We use the following Spectral tool for obtaining motion lower bounds, proved by Babai in~\cite{babai-srg}.

For a $k$-regular graph $X$ let $k = \xi_1\geq \xi_2\geq...\geq \xi_n$ denote the eigenvalues of the adjacency matrix of $X$. We call quantity $\xi = \xi(X) = \max\{ |\xi_i|: 2\leq i\leq n\}$ the \textit{zero-weight spectral radius} of $X$. 

\begin{lemma}[Babai, {\cite[Proposition~12]{babai-srg}}]\label{mixing-lemma-tool}
Let $X$ be a regular graph of degree $k$ on $n$ vertices with the zero-weight spectral radius $\xi$. Suppose every pair of vertices in $X$ has at most $q$ common neighbors. Then 
\[\motion(X)\geq n\cdot \frac{(k-\xi-q)}{k}.\]
\end{lemma}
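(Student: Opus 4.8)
The plan is to prove this by the standard \emph{expander mixing} argument. Fix a non-identity automorphism $\sigma\in\Aut(X)$; let $F$ be its fixed-point set and $D=V(X)\setminus F$ its support. Since $\motion(X)=\min_{\sigma\neq\mathrm{id}}|D|$, it suffices to prove $|D|\geq n(k-\xi-q)/k$ for each such $\sigma$ (the inequality being vacuous when $k-\xi-q\leq 0$). I will bound the number of edges of $X$ with both endpoints in $D$ from below combinatorially and from above spectrally, and then compare.

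For the lower bound, the key observation is that for every $v\in D$, setting $w=\sigma(v)\neq v$, we have $N(v)\setminus N(w)\subseteq D$: if a fixed vertex $u$ were adjacent to $v$, then applying $\sigma$ (which fixes $u$ and sends $v$ to $w$) would force $u\sim w$, a contradiction. As $v\neq w$, they have at most $q$ common neighbours, so $|N(v)\cap D|\geq |N(v)\setminus N(w)|\geq k-q$. Summing over $v\in D$ and writing $A$ for the adjacency matrix and $\mathbf{1}_D$ for the indicator of $D$, this gives $\langle A\mathbf{1}_D,\mathbf{1}_D\rangle=\sum_{v\in D}|N(v)\cap D|\geq (k-q)|D|$.

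For the upper bound, write $\mathbf{1}_D=\tfrac{|D|}{n}\mathbf{1}+y$ with $y$ orthogonal to the all-ones eigenvector $\mathbf{1}$ (which has eigenvalue $k$). Using $A\mathbf{1}=k\mathbf{1}$ one gets $\langle A\mathbf{1}_D,\mathbf{1}_D\rangle=\tfrac{k|D|^2}{n}+\langle Ay,y\rangle$, and since $y\perp\mathbf{1}$ the Rayleigh quotient satisfies $\langle Ay,y\rangle\leq \xi\|y\|^2\leq \xi|D|$. Combining with the previous paragraph, $(k-q)|D|\leq \tfrac{k|D|^2}{n}+\xi|D|$; dividing by $|D|\geq 1$ yields $k-q\leq \tfrac{k|D|}{n}+\xi$, i.e.\ $|D|\geq n(k-\xi-q)/k$, which completes the proof.

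I expect no genuine obstacle here. The one non-routine step is the containment $N(v)\setminus N(\sigma v)\subseteq D$, which is exactly what converts the ``at most $q$ common neighbours'' hypothesis into a lower bound on the degrees of vertices of $D$ inside $D$; everything else is the textbook mixing estimate, with the only mild caveat that the inequality $\langle Ay,y\rangle\leq\xi\|y\|^2$ must be applied to a vector $y$ genuinely orthogonal to $\mathbf{1}$, so that the eigenvalue $k$ is excluded from the bound (when $X$ is disconnected one has $\xi=k$ and the statement is trivial anyway).
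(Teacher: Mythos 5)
Your proof is correct; the paper does not prove this lemma but simply cites it from Babai's paper on strongly regular graphs, and your argument (the observation that $N(v)\setminus N(\sigma v)$ lies in the support of $\sigma$, combined with the expander-mixing/Rayleigh-quotient estimate on $\mathbf{1}_D$) is exactly the standard proof given there. No gaps: the degenerate cases ($k-\xi-q\leq 0$, disconnected $X$) are handled, and the orthogonality of $y$ to $\mathbf{1}$ is used correctly.
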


%

Let $X$ be a geometric distance-regular graph with $\mu = 1$ and let $\widetilde{X}$ be its dual graph. By Lemma~\ref{lem:dual-grapp-degree}, every vertex in  $\widetilde{X}$ has degree $\displaystyle{\widetilde{k} = k\frac{m-1}{m}+(m-1)}$ and every pair of adjacent vertices in $\widetilde{X}$ has $\widetilde{\lambda} = m-2$ common neighbors. Every pair of vertices at distance two in $\widetilde{X}$ has $\widetilde{\mu} = 1$ common neighbours. Indeed, if there are at least two edges between a pair of cliques $C_1$ and $C_2$, that do not share a vertex, then either $\psi_1 \geq 2$ in $X$, or there is an induced quadrangle. In both cases we get $\mu\geq 2$ and we reach a contradiction.

Since $q(\widetilde{X}) = \max(\widetilde{\mu}, \widetilde{\lambda})$ is small, we are going to show that Lemma~\ref{mixing-lemma-tool} can be applied. For this, it is sufficient to show that $\widetilde{X}$ has a linear in $k$ spectral gap. First, we  bound the zero-weight spectral radius of a geometric distance-regular graph using the following lemma proven in \cite{kivva-drg}. 

\begin{theorem}[{\cite[Theorem 3.11]{kivva-drg}}]\label{thm:cited-main-spectral-gap}
 For every $d\geq 2$ there exist $\varepsilon = \varepsilon(d)>0$ and $\eta = \eta(d)>0$ such that for any  distance-regular graph $X$ of diameter $d$ one of the following is true.
\begin{enumerate}
\item For some $0\leq i\leq d-1$ we have $b_i\geq \varepsilon k$ and $c_{i+1}\geq \varepsilon k$.
\item The zero-weight spectral radius of $X$ satisfies 
$ \xi\leq k(1-\eta)$.
\end{enumerate} 
\end{theorem}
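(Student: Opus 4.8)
The plan is to prove the contrapositive: assume item~(1) fails and deduce the spectral gap in item~(2). First I would record the combinatorial meaning of the failure of (1): since $(b_i)$ is non-increasing and $(c_i)$ is non-decreasing, for every $0\le i\le d-1$ at least one of $b_i,c_{i+1}$ is $<\varepsilon k$, hence $b_ic_{i+1}<\varepsilon k^2$ and $\sqrt{b_ic_{i+1}}<\sqrt{\varepsilon}\,k$. (For $i=0$ this uses $c_1=1<\varepsilon k$, which holds once $k>1/\varepsilon$; the finitely many distance-regular graphs of diameter $d$ with $k\le 1/\varepsilon$ can be accommodated by shrinking the constants at the very end.)

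Next I would symmetrize the intersection matrix. Conjugating $L_1$ by $\operatorname{diag}(\sqrt{k_0},\dots,\sqrt{k_d})$ and using $k_ib_i=k_{i+1}c_{i+1}$ produces the symmetric tridiagonal matrix $J$ with diagonal $(a_0,\dots,a_d)$ and off-diagonal entries $(\sqrt{b_ic_{i+1}})_{i=0}^{d-1}$; its spectrum is $\{\theta_0,\dots,\theta_d\}$. Write $J=D+E$ with $D=\operatorname{diag}(a_i)$. The first step gives $\lVert E\rVert\le\lVert E\rVert_\infty<2\sqrt{\varepsilon}\,k$, so by Weyl's inequality the eigenvalues of $J$ and of $D$ agree up to $2\sqrt{\varepsilon}\,k$ when both are listed in decreasing order. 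Hence $\theta_0=k$ is matched with $\max_i a_i$, the second eigenvalue $\theta_1$ with the \emph{second largest} value among $a_0,\dots,a_d$, and $\theta_d$ with $\min_i a_i$. Since $a_0=0$ and all $a_i\ge 0$, this already yields $\theta_d\ge-2\sqrt{\varepsilon}\,k$, so the negative end of the spectrum is harmless, and the whole problem reduces to showing that the second largest $a_i$ is at most $(1-\eta_0)k$ for a suitable $\eta_0=\eta_0(d)>0$: given that, one has $\xi\le\max_{i\ne r}a_i+2\sqrt{\varepsilon}\,k\le(1-\eta)k$ with $\eta=\eta_0/2$, provided $\varepsilon$ is small enough in terms of $\eta_0(d)$.

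The crux — and the step I expect to be the main obstacle — is therefore the claim: \emph{if item~(1) fails, at most one index $i$ has $a_i$ within a $d$-dependent constant fraction of $k$.} Using monotonicity of $(b_i)$ and $(c_i)$ once more, $a_s,a_r>(1-\eta_0)k$ with $s<r$ would force $b_i,c_i<2\eta_0 k$, i.e.\ $a_i>(1-2\eta_0)k$, for all $s\le i\le r$; so it suffices to rule out two \emph{consecutive} large values $a_i,a_{i+1}$. (This really must be provable: a run of at least two near-$k$ diagonal entries with tiny off-diagonals yields, by Cauchy interlacing applied to the corresponding principal submatrix of $J$, a second eigenvalue of $X$ within $O(\sqrt{\varepsilon}+\eta_0)k$ of $k$, contradicting (2); so the structure of distance-regular graphs has to forbid it.) The base case is clean: $a_1=\lambda$ and $a_2=k-b_2-\mu$ cannot both be close to $k$, since $2\lambda\le k+\mu$ (Lemma~\ref{lem:lambda-mu}) forces $\mu$ close to $k$ once $\lambda$ is, hence $a_2$ small.

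For a general pair $a_i,a_{i+1}$ I would split on local structure. If $X$ contains an induced quadrangle and $\lambda$ is not too small, the Terwilliger inequality $c_{i+1}-b_{i+1}\ge c_i-b_i+\lambda+2$ (Theorem~\ref{thm:ter-inequality}) forces the $c_j$'s to grow by roughly $\lambda$ per step and hence contradicts $c_{i+1}$ being tiny. The delicate remaining regimes are the near-triangle-free ones — $\lambda$ small (in particular $\lambda=0$) and $\mu=1$ (which automatically rules out quadrangles) — where this argument gives nothing; there one must exploit the rigidity of such distance-regular graphs, using $c_3>\mu$ (Corollary~\ref{cor:c3-mu}) and, for $\mu=1$, the strong restrictions on graphs with $c_2=1$ together with the spectral-gap machinery for graphs with a genuinely dominant distance from~\cite{kivva-drg}. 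Carefully combining these cases produces $\eta_0(d)$, and with it $\varepsilon(d)$ and $\eta(d)$.
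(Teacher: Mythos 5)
This statement is not proved in the present paper at all: it is imported verbatim from the companion paper as \cite[Theorem 3.11]{kivva-drg}, so there is no in-paper proof to compare against. Judged on its own terms, your proposal correctly executes the easy reductions but leaves the actual content unproven.

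What works: the symmetrization of $L_1$ via $\operatorname{diag}(\sqrt{k_i})$, the bound $\lVert E\rVert\le 2\sqrt{\varepsilon}\,k$ from the failure of (1), Weyl's inequality, the observation that $\theta_d\ge -2\sqrt{\varepsilon}\,k$ because all $a_i\ge 0$, and the monotonicity argument reducing ``at most one $a_i$ near $k$'' to ``no two \emph{consecutive} $a_i,a_{i+1}$ near $k$.'' This is a legitimate reduction, and it is consistent with the extremal examples ($J(s,d)$ and $H(d,s)$ have second-largest diagonal entry $\approx(1-1/d)k$, which is why $\eta(d)$ must shrink with $d$).

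The gap is the crux itself. Your Terwilliger argument only shows that two consecutive large diagonal entries force $\lambda<4\eta_0 k$ (since $\lambda+2\le c_{i+1}+b_i$), so it eliminates nothing beyond the large-$\lambda$, quadrangle-containing regime; and your ``base case'' only covers the pair $(a_1,a_2)$. For the remaining regimes --- $\lambda$ small, and especially $\mu=1$ where no induced quadrangle exists and Theorem~\ref{thm:ter-inequality} is unavailable --- you offer no argument, only the assertions that ``the structure of distance-regular graphs has to forbid it'' and that one should invoke ``the spectral-gap machinery for graphs with a genuinely dominant distance from~\cite{kivva-drg}.'' The first assertion is explicitly circular (it deduces the combinatorial claim from conclusion~(2), which is what you are trying to prove), and the second appeals to the very theorem under proof: Theorem~3.11 \emph{is} that machinery in \cite{kivva-drg}. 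So the proposal reduces the theorem to an equivalent combinatorial statement about the diagonal of the intersection matrix and proves it only in the case where it is easiest; the essential difficulty is deferred rather than resolved.
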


%

Using the relationship between the spectrum of the geometric graph $X$ and its dual graph $\widetilde{X}$ we get the following corollary.

\begin{lemma}
Let $X$ be a geometric distance-regular graph of diameter $d\geq 2$ with smallest eigenvalue $-m$, where $m\geq 3$. Let $\widetilde{X}$ be its dual graph. Let $\varepsilon=\varepsilon(d)$ and $\eta = \eta(d)\leq 1/2$ be constants provided by Theorem~\ref{thm:cited-main-spectral-gap}. Assume $k\geq m^2$, $c_t\leq \varepsilon k$ and $b_t\leq \varepsilon k$ for some $t\in [d]$.  Then the zero-weight spectral radius of $\widetilde{X}$ satisfies \[ \xi(\widetilde{X}) \leq \widetilde{k}(1-\eta).\] 

\end{lemma}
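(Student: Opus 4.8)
The plan is to transfer the spectral gap from $X$ to $\widetilde{X}$ using Lemma~\ref{lem:dual-spec}, after first establishing that $X$ itself has a linear spectral gap. First I would argue that the hypotheses $c_t\leq\varepsilon k$ and $b_t\leq\varepsilon k$ rule out alternative~(1) of Theorem~\ref{thm:cited-main-spectral-gap}: indeed, by the monotonicity $c_1\leq c_2\leq\cdots\leq c_d$ and $b_0\geq b_1\geq\cdots\geq b_{d-1}$, for any index $i$ we have either $c_{i+1}\leq c_t$ (when $i+1\leq t$) or $b_i\leq b_t$ (when $i\geq t$), so no index $i$ can satisfy both $b_i\geq\varepsilon k$ and $c_{i+1}\geq\varepsilon k$ simultaneously — one should double-check the boundary case $i = t-1$ versus $i=t$, but at least one of the two required inequalities fails for every $i$. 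Hence alternative~(2) holds: $\xi(X)\leq k(1-\eta)$, i.e. every eigenvalue $\theta\in\spec(X)$ other than $\theta_0=k$ satisfies $|\theta|\leq k(1-\eta)$.

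Next I would use Lemma~\ref{lem:dual-spec}, whose hypothesis $k\geq m^2$ is exactly assumed: every eigenvalue $\widetilde\theta$ of $\widetilde{X}$ is of the form $\theta - k/m + m - 1$ for some $\theta\in\spec(X)$. The Perron eigenvalue $\widetilde{k} = (m-1)(k/m+1)$ of $\widetilde{X}$ corresponds to $\theta = k$. Any other eigenvalue $\widetilde\theta$ therefore comes from some $\theta\in\spec(X)$ with $|\theta|\leq k(1-\eta)$, so
\[
|\widetilde\theta| \;=\; \Bigl|\theta - \tfrac{k}{m} + m - 1\Bigr| \;\leq\; k(1-\eta) + \tfrac{k}{m} - (m-1).
\]
It then remains to check that the right-hand side is at most $\widetilde{k}(1-\eta) = (m-1)(k/m+1)(1-\eta)$. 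Rearranging, this amounts to showing
\[
k(1-\eta) + \tfrac{k}{m} \;\leq\; (m-1)\bigl(\tfrac{k}{m}+1\bigr)(1-\eta) + (m-1),
\]
i.e. $k(1-\eta)\bigl(1 - \tfrac{m-1}{m}\bigr) \leq \tfrac{k}{m}\bigl((m-1)(1-\eta) - 1\bigr) + (m-1)(1-\eta) + (m-1) - \text{(small terms)}$; since $1 - \tfrac{m-1}{m} = \tfrac1m$, the leading $k/m$ terms compare as $(1-\eta)$ versus $(m-1)(1-\eta) - 1$, and for $m\geq 3$ and $\eta\leq 1/2$ one has $(m-1)(1-\eta) - 1 \geq 2\cdot\tfrac12 - 1 = 0$ with enough slack (in fact $(m-1)(1-\eta)-1\geq (1-\eta)$ iff $(m-2)(1-\eta)\geq 1$, which holds when $m\geq 3$, $\eta\leq 1/2$, with equality only at $m=3$, where the $+(m-1)$ additive term provides the needed margin). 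So the inequality $\xi(\widetilde{X})\leq\widetilde{k}(1-\eta)$ follows.

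The main obstacle I anticipate is not conceptual but bookkeeping in that final comparison: one must be careful that the additive shift $-k/m + m - 1$ in the dual spectrum, which is negative of order $k/m$, does not eat up the multiplicative gap $\eta\widetilde{k}$. The hypotheses $m\geq 3$ and $\eta\leq 1/2$ are precisely what make the $k/m$-coefficient comparison go through; the edge case $m = 3$, $\eta = 1/2$ is tight at the level of the $k/m$ terms and is saved only by the lower-order $+(m-1)$ contribution, so I would state that boundary check explicitly rather than hide it in an "it is easy to see."
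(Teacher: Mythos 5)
There is a genuine gap in the second half of your argument: the final inequality you need is false. You bound every non-Perron eigenvalue of $\widetilde{X}$ by $|\widetilde\theta|\le |\theta|+\frac{k}{m}-(m-1)\le k(1-\eta)+\frac{k}{m}-(m-1)$ and then claim this is at most $\widetilde{k}(1-\eta)$. Writing $\widetilde{k}=k-\frac{k}{m}+m-1$, that claim simplifies to $\frac{k}{m}(2-\eta)\le (m-1)(2-\eta)$, i.e.\ $k\le m(m-1)$ — which contradicts the hypothesis $k\ge m^2$. Concretely, for $m=3$, $\eta=1/2$, $k=300$ your bound gives $|\widetilde\theta|\le 248$ while $\widetilde{k}(1-\eta)=101$. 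Your coefficient comparison ``$(1-\eta)$ versus $(m-1)(1-\eta)-1$'' only shows the latter is nonnegative, not that it dominates the former (at $m=3$, $\eta=1/2$ it equals $0<1/2$), and the deficit is of order $k/m$, so the additive $+(m-1)$ cannot rescue it. The root cause is the triangle inequality: it is tight only for $\theta$ near $-k(1-\eta)$, and you have discarded the crucial fact that no such eigenvalue exists.

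The paper's proof splits the spectrum into its two ends instead of using $|\theta|\le k(1-\eta)$ uniformly. For the top: $\widetilde\theta_1\le \theta_1-\frac{k}{m}+m-1\le (1-\eta)k-\frac{k}{m}+m-1=\widetilde{k}-\eta k\le \widetilde{k}(1-\eta)$, using $\widetilde{k}\le k$ (here the shift helps you). For the bottom: since $X$ is geometric with smallest eigenvalue exactly $-m$, Lemma~\ref{lem:dual-spec} gives $\widetilde\theta_{\min}\ge -m-\frac{k}{m}+m-1=-\frac{k}{m}-1=-\frac{\widetilde{k}}{m-1}\ge -\frac{\widetilde{k}}{2}\ge -\widetilde{k}(1-\eta)$; this is where the hypotheses $m\ge 3$ and $\eta\le 1/2$ are actually consumed. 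Your first paragraph (ruling out alternative~(1) of Theorem~\ref{thm:cited-main-spectral-gap} via monotonicity of the $b_i$ and $c_i$) is fine and is the intended way to invoke the spectral-gap theorem; it is the transfer to the dual graph that must be done one-sidedly.
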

\begin{proof} Assumption $k\geq m^2$ implies $\widetilde{k}<k$. Let $\widetilde{\theta}_1$ and $\widetilde{\theta}_{\min}$ denote the second largest and the smallest eigenvalues of $\widetilde{X}$. Then the statement of the lemma follows from the following two inequalities implied by Theorem~\ref{thm:cited-main-spectral-gap} and Lemma~\ref{lem:dual-spec},

 \[  \widetilde{\theta}_1\leq (1-\eta)k-\frac{k}{m}+m-1= \widetilde{k}-\eta k\leq \widetilde{k}(1-\eta),\] 
\[ \widetilde{\theta}_{\min}\geq -m-\frac{k}{m}+m-1 = -\frac{k}{m}-1= -\frac{\widetilde{k}}{m-1}\geq -\widetilde{k}(1-\eta). \]
\end{proof}

Thus, using Lemma~\ref{mixing-lemma-tool} we get a linear lower bound on the motion of $\widetilde{X}$.

\begin{proposition}\label{prop:motion-dual} Let $X$ be a geometric distance-regular graph of diameter $d\geq 2$ with $\mu =1$ and smallest eigenvalue $-m$, where $m\geq 3$. Let $\varepsilon=\varepsilon(d)$ and $\eta = \eta(d)\leq 1/2$ be constants provided by Theorem~\ref{thm:cited-main-spectral-gap}.  Assume $c_t\leq \varepsilon k$ and $b_t\leq \varepsilon k$ for some $t\in [d]$, and $k\geq \max(4m/\eta, m^2)$. Let $\widetilde{X}$ be the dual graph of $X$. Then 
\[ \motion(\widetilde{X})\geq \frac{\eta}{2} |V(\widetilde{X})|.\]
\end{proposition}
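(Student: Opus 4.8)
The plan is to combine the three ingredients that have just been assembled: the bound on the zero-weight spectral radius of the dual graph, the control on the number of common neighbors in the dual graph, and Babai's spectral mixing lemma (Lemma~\ref{mixing-lemma-tool}). First I would record the relevant parameters of $\widetilde{X}$ from the discussion preceding the previous lemma and from Lemma~\ref{lem:dual-grapp-degree}: the degree is $\widetilde{k} = k\frac{m-1}{m} + (m-1)$, adjacent vertices have $\widetilde{\lambda} = m-2$ common neighbors, and since $\mu = 1$ the dual graph has $\widetilde{\mu} = 1$ (two edges between a pair of disjoint cliques would force $\psi_1 \geq 2$ or an induced quadrangle, hence $\mu \geq 2$). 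Thus every pair of vertices of $\widetilde{X}$ has at most $q := \max(m-2, 1) = m - 2$ common neighbors, using $m \geq 3$.

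Next I would invoke the previous lemma, whose hypotheses $k \geq m^2$, $c_t \leq \varepsilon k$, $b_t \leq \varepsilon k$ are all contained in the hypotheses here (note $k \geq \max(4m/\eta, m^2) \geq m^2$), to conclude $\xi(\widetilde{X}) \leq \widetilde{k}(1-\eta)$, i.e. $\widetilde{k} - \xi(\widetilde{X}) \geq \eta \widetilde{k}$. Feeding this into Lemma~\ref{mixing-lemma-tool} with $q = m-2$ gives
\[ \motion(\widetilde{X}) \geq |V(\widetilde{X})| \cdot \frac{\widetilde{k} - \xi(\widetilde{X}) - q}{\widetilde{k}} \geq |V(\widetilde{X})| \cdot \frac{\eta \widetilde{k} - (m-2)}{\widetilde{k}}. \]
So it remains to check that $\eta \widetilde{k} - (m-2) \geq \frac{\eta}{2}\widetilde{k}$, equivalently $\frac{\eta}{2}\widetilde{k} \geq m - 2$. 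Since $\widetilde{k} = k\frac{m-1}{m} + (m-1) \geq \frac{k}{2}$ (as $m \geq 2$) and $k \geq 4m/\eta$, we get $\frac{\eta}{2}\widetilde{k} \geq \frac{\eta}{2}\cdot\frac{k}{2} = \frac{\eta k}{4} \geq m > m - 2$, which closes the estimate.

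This argument is essentially a bookkeeping exercise once the spectral bound on $\widetilde{X}$ is in hand, so I do not anticipate a genuine obstacle; the only point requiring a little care is making sure the threshold $k \geq 4m/\eta$ is exactly what is needed to absorb the additive error term $q = m-2$ against half of the spectral gap $\eta\widetilde{k}$, and checking the trivial lower bound $\widetilde{k} \geq k/2$ so that the chain of inequalities goes through cleanly. One should also double-check that $\widetilde{X}$ is connected (it has diameter $d-1$ by Lemma~\ref{lem:dual-grapp-degree}) so that talking about its motion and applying the mixing lemma is legitimate.
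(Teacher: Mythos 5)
Your proposal is correct and follows essentially the same route as the paper: bound $q(\widetilde{X})=\max(\widetilde{\lambda},\widetilde{\mu})=\max(m-2,1)$, import $\xi(\widetilde{X})\leq(1-\eta)\widetilde{k}$ from the preceding lemma, and absorb $q$ into $\tfrac{\eta}{2}\widetilde{k}$ via $\widetilde{k}\geq k/2$ and $k\geq 4m/\eta$ before applying Lemma~\ref{mixing-lemma-tool}. The arithmetic matches the paper's (which checks $\eta\widetilde{k}\geq 2m\geq 2q(\widetilde{X})$), so there is nothing to add.
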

\begin{proof}
Since $\mu=1$, by the discussion after Lemma~\ref{mixing-lemma-tool}, the maximal number of common neighbors of a pair of distinct vertices is equal $q(\widetilde{X}) = \max(\widetilde{\lambda}, \widetilde{\mu}) = \max(m-2, 1)$. Note that $\displaystyle{\widetilde{k} \geq \frac{m-1}{m} k\geq \frac{k}{2}}$. So $\eta \widetilde{k}\geq 2m\geq 2q(\widetilde{X})$. Hence, 
\[\xi(\widetilde{X})+q(\widetilde{X})\leq (1-\eta)\widetilde{k}+\frac{\eta}{2} \widetilde{k} = \left(1-\frac{\eta}{2}\right)\widetilde{k}.\]
Therefore, the statement of the proposition follows from Lemma~\ref{mixing-lemma-tool}.
\end{proof}

We show that this implies that $\motion(X)$ is linear in $n = |V(X)|$.

\begin{lemma}
Let $\mathcal{F}$ be a collection of size-$s$ subsets of the set $\Omega$ such that every element of $\Omega$ is in $m$ sets from $\mathcal{F}$ and any two distinct sets in $\mathcal{F}$ intersect in precisely one element of $\Omega$. Let $\sigma$ be a permutation of $\Omega$ which respects $\mathcal{F}$, namely for every $C\in \mathcal{F}$ its image $\sigma(C)$ is in $\mathcal{F}$, too. Assume that at most $\alpha|\mathcal{F}|$ sets $C\in \mathcal{F}$ are fixed by $\sigma$, then at most $\displaystyle{\left(\alpha+\frac{1-\alpha}{s}\right)|\Omega|}$ elements of $\Omega$ are fixed by $\sigma$.
\end{lemma}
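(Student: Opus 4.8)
The plan is to count fixed elements of $\Omega$ by splitting them according to whether they lie in a set $C \in \mathcal{F}$ fixed by $\sigma$ or not. Call an element $x \in \Omega$ a \emph{fixed point} if $\sigma(x) = x$, and similarly call $C \in \mathcal{F}$ \emph{fixed} if $\sigma(C) = C$. First I would observe that if $x$ is a fixed point, then $\sigma$ permutes the $m$ sets of $\mathcal{F}$ containing $x$ among themselves (since $\sigma$ respects $\mathcal{F}$ and preserves membership $x \in C \iff \sigma(x) = x \in \sigma(C)$). Now partition the fixed points of $\Omega$ into two types: (a) those contained in at least one fixed set $C \in \mathcal{F}$, and (b) those contained in no fixed set.

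For type (a): each fixed set $C$ has exactly $s$ elements, so the fixed points of type (a) number at most $s \cdot |\{C \in \mathcal{F} : \sigma(C) = C\}| \le s \alpha |\mathcal{F}|$. But I can do better, and in fact I must, to get the stated bound — so instead I would bound type (a) fixed points trivially by $|\Omega|$ is too weak; the right move is to bound the \emph{total} count differently. Let me reorganize: let $a$ be the number of type-(a) fixed points and $b$ the number of type-(b) fixed points. A type-(b) fixed point $x$ lies in $m$ sets, all moved by $\sigma$, and $\sigma$ permutes these $m$ sets in orbits of size $\ge 2$; the key point is that a set $C$ moved by $\sigma$ can contain at most one type-(b) fixed point, because if $x \ne y$ are both type-(b) fixed points in $C$, then $C$ and $\sigma(C)$ both contain $x$ and $y$, two distinct elements, contradicting $|C \cap \sigma(C)| = 1$ (as $C \ne \sigma(C)$). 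Hence each moved set contains at most one type-(b) fixed point, so counting incidences, $m \cdot b \le (\text{number of moved sets}) \le (1-\alpha)|\mathcal{F}|$, giving $b \le (1-\alpha)|\mathcal{F}|/m$.

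For type (a): counting incidences between type-(a) fixed points and fixed sets, each fixed set contributes at most $s$, and there are at most $\alpha|\mathcal{F}|$ of them, but each type-(a) fixed point lies in at least one fixed set, so $a \le s \alpha |\mathcal{F}|$. Finally I relate $|\mathcal{F}|$ to $|\Omega|$: counting element-set incidences, $m|\Omega| = s|\mathcal{F}|$, so $|\mathcal{F}| = m|\Omega|/s$. Substituting,
\[
a + b \le s\alpha|\mathcal{F}| + \frac{1-\alpha}{m}|\mathcal{F}| = \left(s\alpha + \frac{1-\alpha}{m}\right)\frac{m}{s}|\Omega| = \left(m\alpha + \frac{1-\alpha}{s}\right)|\Omega|,
\]
which is slightly worse than claimed (it has $m\alpha$ where the statement has $\alpha$). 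So the type-(a) estimate needs sharpening: rather than counting all type-(a) fixed points via fixed sets, I should note that a fixed point $x$ of \emph{any} type lies in $m$ sets permuted by $\sigma$, and if $x$ is fixed then \emph{either} at least one of those sets is fixed (type a) \emph{or} none is (type b) — but more usefully, I would count pairs $(x, C)$ with $x \in C$, $\sigma(x)=x$, $\sigma(C)=C$; the number of such pairs is at most $s \cdot (\alpha|\mathcal{F}|)$ from the $C$ side, and at least $1 \cdot a$ from the $x$ side only if every type-(a) point lies in exactly one fixed set — which need not hold, so this only gives $a \le s\alpha|\mathcal{F}|$ again. The genuine fix: observe that for a type-(a) fixed point $x$, among its $m$ containing sets the fixed ones form a sub-collection, and two \emph{distinct} fixed sets through $x$ are fine, but a fixed set $C$ and a type-(a) point $x \in C$ — here I claim $C$ can contain many type-(a) points, so this line is stuck. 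The main obstacle is exactly this: getting the coefficient of $\alpha$ down to $1$ requires observing that the fixed elements inside a single fixed set $C$, together with the structure, force $C$ to be "mostly fixed pointwise only if $\sigma|_C$ has few fixed points" — i.e., I should instead argue that the total number of fixed points equals $\sum_{C \text{ fixed}} |\mathrm{Fix}(\sigma|_C)| / (\text{multiplicity correction})$ is wrong too. The correct and clean approach, which I would ultimately commit to: directly bound $|\mathrm{Fix}(\sigma) \cap \Omega|$ by noting every fixed $x$ lies in $m \ge 1$ sets $C$ with $\sigma(C) \in \mathcal{F}$, pick one such $C$; if $\sigma(C) = C$ then $x \in \mathrm{Fix}(\sigma|_C)$, and summing $|\mathrm{Fix}(\sigma|_C)| \le |C| = s$ over fixed $C$ handles type (a) — but to avoid overcounting and to beat the bound, use that type-(b) points contribute the $b \le (1-\alpha)|\mathcal{F}|/m = (1-\alpha)|\Omega|/s$ term, and type-(a) points are simply \emph{all} at most $|\Omega|$... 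This confirms the real content is the type-(b) bound via $|C \cap \sigma(C)| = 1$; the type-(a) bound must be $\alpha|\Omega|$, which follows if fixed sets cover at most an $\alpha$-fraction of $\Omega$ — and indeed, since each element is in exactly $m$ sets and fixed sets number $\le \alpha|\mathcal{F}| = \alpha m |\Omega|/s$, the fixed sets cover at most $s \cdot \alpha m |\Omega| / s \cdot (1/m) $... carefully: incidences from fixed sets $\le s \cdot \alpha m|\Omega|/s = \alpha m |\Omega|$, divided by $m$ (each element counted $\le m$ times, but an element in a fixed set might be in several) — this gives the element is covered, counted with multiplicity, so number of distinct elements in some fixed set $\le \alpha m |\Omega| / 1$, useless unless each such element is in $m$ fixed sets. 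Hence the genuinely correct statement to prove is: type-(a) fixed points are at most $\alpha |\Omega|$ because a fixed point in a fixed set, by a double-counting on the orbit structure of $\sigma$ on sets through $x$, must actually have all $m$ of its sets fixed — no. I will therefore in the writeup adopt the cleanest valid argument: bound type-(b) by $(1-\alpha)|\Omega|/s$ as above, bound type-(a) crudely by the number of elements in fixed sets, and show this is $\le \alpha|\Omega|$ by a fractional/averaging argument using that $\sigma$ acting on all of $\mathcal{F}$ fixes at most $\alpha|\mathcal{F}|$ sets and that fixed points distribute proportionally — the hard part, and the one requiring care, is making this type-(a) estimate rigorous rather than heuristic.
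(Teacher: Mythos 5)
There is a genuine gap, and you acknowledge it yourself at the end: the bound on your type-(a) fixed points (those lying in some $\sigma$-fixed set) is never established. The separate estimate $a\le\alpha|\Omega|$ that your decomposition requires is not obviously true and is in any case not what you should be aiming for. The root of the trouble is the case split itself: it forces you to control the fixed sets and the moved sets by two different mechanisms, and the fixed-set side has no good handle.

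The fix is short, and you already have every ingredient. Your key observation --- if $C\ne\sigma(C)$ then $|C\cap\sigma(C)|=1$, so a moved set contains at most one fixed point --- holds for \emph{all} fixed points, not just your type-(b) ones (if $x\ne y$ are any two fixed points in $C$, both lie in $C\cap\sigma(C)$). Now do a single incidence count over all of $\mathcal{F}$: let $F$ be the set of fixed elements and count pairs $(C,v)$ with $v\in C$ and $\sigma(v)=v$. From the element side this is exactly $m|F|$, since every element lies in $m$ sets. From the set side, each of the at most $\alpha|\mathcal{F}|$ fixed sets contributes at most $s$, and each of the at least $(1-\alpha)|\mathcal{F}|$ moved sets contributes at most $1$, so $m|F|\le s\,\alpha|\mathcal{F}|+(1-\alpha)|\mathcal{F}|$. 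Substituting $s|\mathcal{F}|=m|\Omega|$ (your own incidence identity) gives $|F|\le\bigl(\alpha+\frac{1-\alpha}{s}\bigr)|\Omega|$, which is the claim. The paper runs the complementary count --- each moved set has at least $s-1$ \emph{moved} elements, so the number of moved elements is at least $(1-\alpha)|\mathcal{F}|(s-1)/m=(1-\alpha)\frac{s-1}{s}|\Omega|$ --- which is the same argument seen from the other side.
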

\begin{proof}
Note that if $C\in \mathcal{F}$ is not fixed as set by $\sigma$, then $|\sigma(C)\cap C|\leq 1$, as $\sigma(C)\in \mathcal{F}$ too. Hence, at most one element $x\in \Omega$ of $C$ is fixed by $\sigma$. 

Now let us count the number of pairs $(C, v)$, such that $v\in C$ and $\sigma(v)\neq v$. We just argued that each of at least $(1-\alpha)|\mathcal{F}|$ sets in $\mathcal{F}$ have $(s-1)$ elements that are not fixed by $\sigma$. Therefore, the number of pairs we count is at least $(1-\alpha)|\mathcal{F}|(s-1)$. Note that every element of $\Omega$ belongs to $m$ sets in $\mathcal{F}$. Therefore, the number of elements of $\Omega$ not fixed by $\sigma$ is at least $(1-\alpha)|\mathcal{F}|(s-1)/m$.

Using that every set in $\mathcal{F}$ has $s$ elements and every element belongs to $m$ sets in $\mathcal{F}$, we deduce that $s|\mathcal{F}| = m|\Omega|$. Therefore, the number of elements of $\Omega$ not fixed by $\sigma$ is at least $\displaystyle{(1-\alpha)\frac{(s-1)}{s}|\Omega|}$. 
\end{proof}

\begin{corollary}\label{cor:dual-motion} Let $X$ be a non-complete geometric distance-regular graph on $n$ vertices and let $\widetilde{X}$ be its dual graph on $\widetilde{n}$ vertices. Assume that $\displaystyle{\motion(\widetilde{X})\geq \gamma \widetilde{n}}$, then $\displaystyle{\motion(X)\geq \frac{\gamma}{2}n}$.
\end{corollary}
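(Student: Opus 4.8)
The plan is to relate automorphisms of $X$ to automorphisms of $\widetilde{X}$, transfer the motion lower bound through that correspondence, and use the preceding lemma to control how many vertices of $X$ can be fixed when few cliques are fixed. Recall that $X$ is geometric with a Delsarte clique geometry $\mathcal{C}$, and $\widetilde{X}$ has vertex set $\mathcal{C}$. Since $X$ is non-complete, every vertex lies in $m\geq 2$ cliques of $\mathcal{C}$, and since $\mu\geq 1$ one checks (this is standard for geometric distance-regular graphs) that $\mathcal{C}$ is the \emph{unique} clique geometry of $X$; hence any automorphism $\sigma\in\Aut(X)$ permutes the cliques in $\mathcal{C}$ and therefore induces an automorphism $\widetilde{\sigma}\in\Aut(\widetilde{X})$. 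This gives a homomorphism $\Aut(X)\to\Aut(\widetilde{X})$, and it is injective: if $\widetilde{\sigma}$ is the identity then $\sigma$ fixes every clique setwise, and since two cliques through a vertex $v$ meet exactly in $v$, $\sigma$ fixes $v$; so $\sigma$ is the identity.

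Now fix a non-identity $\sigma\in\Aut(X)$ and set $s = 1 + k/m$, the common size of the Delsarte cliques, and $m$ the number of cliques through each vertex. Apply the previous lemma with $\Omega = V(X)$ and $\mathcal{F} = \mathcal{C}$: if $\sigma$ fixes at most $\alpha|\mathcal{C}|$ cliques, then $\sigma$ fixes at most $\bigl(\alpha + \tfrac{1-\alpha}{s}\bigr)n$ vertices of $X$, i.e. it moves at least $(1-\alpha)\tfrac{s-1}{s}\,n$ vertices. Since $\sigma$ is not the identity, $\widetilde{\sigma}$ is not the identity either, so $\widetilde{\sigma}$ moves at least $\motion(\widetilde{X}) \geq \gamma\widetilde{n}$ cliques; that is, $\sigma$ fixes at most $(1-\gamma)\widetilde{n}$ cliques, so we may take $\alpha = 1-\gamma$. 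Plugging in,
\[
\motion(X) \;\geq\; \gamma\cdot\frac{s-1}{s}\,n \;=\; \gamma\cdot\frac{k/m}{1 + k/m}\,n \;=\; \gamma\cdot\frac{k}{k+m}\,n.
\]
Since $m \leq k$ (indeed $m^2 \leq k$ in the regime we care about, but $m\leq k$ always holds for a non-complete geometric graph as each clique has size $1+k/m \geq 2$ and there are $m$ of them through a vertex), we get $\tfrac{k}{k+m}\geq \tfrac12$, hence $\motion(X)\geq \tfrac{\gamma}{2}n$, as claimed.

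The one point requiring a little care — and the only place where I expect a genuine obstacle — is the claim that $\Aut(X)$ acts on $\mathcal{C}$, i.e. that the Delsarte clique geometry is canonical. For $\mu\geq 2$ this follows because every edge lies in a unique Delsarte clique and $\mathcal{C}$ is exactly the set of all Delsarte cliques (maximal cliques of size $1+k/m$ meeting every other such clique in at most one vertex), which is clearly automorphism-invariant; for $\mu = 1$ one argues similarly that the maximal cliques of the maximum size form the geometry. In all cases the essential input is that each edge determines its clique, so the partition of $E(X)$ into cliques — and hence $\mathcal{C}$ — is intrinsic to $X$. With that in hand the rest is the bookkeeping above.
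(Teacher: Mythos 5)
Your proof is correct and follows essentially the same route as the paper's: induce $\widetilde{\sigma}$ on the dual graph, observe the map $\sigma\mapsto\widetilde{\sigma}$ is injective because a vertex is determined by the cliques through it, and apply the counting lemma with clique size $s\geq 2$ to get the factor $\tfrac{s-1}{s}\geq\tfrac12$. Your extra paragraph on why $\Aut(X)$ preserves $\mathcal{C}$ addresses a point the paper passes over silently; it is a reasonable (if slightly informal) addition rather than a different argument.
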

\begin{proof} Let $\sigma$ be a non-identity element of $\Aut(X)$. Then $\sigma$ maps Delsarte cliques to Delsarte cliques. Thus $\sigma$ induces an automorphism $\widetilde{\sigma}$ of $\widetilde{X}$. Note that if $X$ is non-complete and geometric, then every vertex in $X$ is uniquely determined by the set of Delsarte cliques that contain it. Hence, if $\sigma$ is non-identity, then $\widetilde{\sigma}$ is non-identity as well. So by assumptions of the corollary, $\widetilde{\sigma}$ fixes at most $(1-\gamma)\widetilde{n}$ vertices of $\widetilde{X}$. Using that every Delsarte clique is of size at least 2, we get from the previous lemma that $\sigma$ fixes at most $(1-\gamma/2)n$ vertices.   
\end{proof}

We summarize the discussion of this section in the following theorem.

\begin{theorem}\label{thm:mu=1} Let $X$ be a geometric distance-regular graph of diameter $d\geq 2$ on $n$ vertices. Suppose $\mu =1$ and the smallest eigenvalue of $X$ is $-m$, where $m\geq 3$. Let $\varepsilon=\varepsilon(d)$ and $\eta = \eta(d)<1/2$ be constants provided by Theorem~\ref{thm:cited-main-spectral-gap}.  Assume $c_t\leq \varepsilon k$ and $b_t\leq \varepsilon k$ for some $t\in [d]$, and $k\geq \max(4m/ \eta, m^2)$. Then  $$\motion(X)\geq \frac{\eta}{4} n.$$
\end{theorem}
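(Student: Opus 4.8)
\textbf{Proof proposal for Theorem~\ref{thm:mu=1}.} The plan is to chain together the three ingredients established in this section: the spectral gap for the dual graph $\widetilde{X}$, the mixing-type motion bound of Babai, and the combinatorial transfer of motion from $\widetilde{X}$ back to $X$. First I would verify that the hypotheses of Proposition~\ref{prop:motion-dual} are met. We are given $\mu=1$, smallest eigenvalue $-m$ with $m\geq 3$, diameter $d\geq 2$, the local-parameter smallness $c_t\leq\varepsilon k$ and $b_t\leq\varepsilon k$ for some $t\in[d]$ with $\varepsilon=\varepsilon(d)$ from Theorem~\ref{thm:cited-main-spectral-gap}, and $k\geq\max(4m/\eta,m^2)$ with $\eta=\eta(d)<1/2$. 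These are exactly the hypotheses of Proposition~\ref{prop:motion-dual}, so that proposition applies verbatim and gives
\[
\motion(\widetilde{X})\geq \frac{\eta}{2}\,|V(\widetilde{X})|.
\]

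Next I would feed this into Corollary~\ref{cor:dual-motion} with $\gamma=\eta/2$. To invoke that corollary I need $X$ to be non-complete and geometric: it is geometric by hypothesis, and it is non-complete since a complete graph has diameter $1$ whereas $d\geq 2$. (Equivalently, by Lemma~\ref{lem:eigenvalue-clique-geom} the number of Delsarte cliques through a vertex is $m\geq 3>1$, so $X$ cannot be a single clique.) Corollary~\ref{cor:dual-motion} then yields
\[
\motion(X)\geq \frac{\gamma}{2}\,n = \frac{\eta}{4}\,n,
\]
which is the claimed bound. This completes the argument.

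The only point requiring a moment's care is the bookkeeping of constants and the compatibility of the various lower bounds on $k$: Proposition~\ref{prop:motion-dual} itself already assumes $k\geq\max(4m/\eta,m^2)$, and the dual-graph spectral gap lemma preceding it assumes $k\geq m^2$, so the hypothesis $k\geq\max(4m/\eta,m^2)$ in the present theorem is precisely what is needed and nothing more. There is no genuine obstacle here — the theorem is a clean corollary assembling Proposition~\ref{prop:motion-dual} and Corollary~\ref{cor:dual-motion} — so the ``hard part'' has really been pushed into those earlier results, in particular into the cited spectral-gap estimate Theorem~\ref{thm:cited-main-spectral-gap} from~\cite{kivva-drg} and into the verification (in the discussion before Proposition~\ref{prop:motion-dual}) that $\widetilde{\mu}=1$ and $\widetilde{\lambda}=m-2$, which is what makes Babai's mixing lemma (Lemma~\ref{mixing-lemma-tool}) applicable with a small value of $q(\widetilde{X})$.
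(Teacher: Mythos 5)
Your proposal is correct and follows exactly the paper's own proof: apply Proposition~\ref{prop:motion-dual} to get $\motion(\widetilde{X})\geq(\eta/2)|V(\widetilde{X})|$, then transfer this to $X$ via Corollary~\ref{cor:dual-motion} with $\gamma=\eta/2$. The extra hypothesis checks you record are accurate but routine.
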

\begin{proof} Let $\widetilde{X}$ be a dual graph of $X$ on $\widetilde{n}$ vertices. Then, Proposition~\ref{prop:motion-dual} implies  $\motion(\widetilde{X})\geq (\eta/2)\cdot \widetilde{n}$. Therefore, the statement of the theorem follows from Corollary~\ref{cor:dual-motion}. 
\end{proof}

\subsection{Distance-regular line graphs with $\mu=1$}

Note that, by Lemma~\ref{m2-line}, geometric distance-regular graphs with smallest eigenvalue $-2$ are line graphs. Thus, we can use the following result of Mohar and Shawe-Taylor \cite{Mohar-Shave}.

\begin{definition} A distance-regular graph of diameter $d$ with parameters
\[ k = s(t+1),\  \lambda = s-1,\  c_i = 1 \text{ and } b_i = k - s \text{ (for\ $i=1, \ldots, d-1$), } c_d = t+1\]
is called a \textit{generalized $2d$-gon of order $(s, t)$}. 
\end{definition}

\begin{theorem}[Mohar, Shawe-Taylor{\cite[Theorem 3.4]{Mohar-Shave}}]\label{thm:Mohar-Shawe}
Suppose the line graph $L(Y)$ of a graph $Y$ is distance-regular. Then, either $Y$ is a Moore graph, or $Y$ is a generalized $2d$-gon of order $(1, s)$ for some $s\geq 1$, or $Y = K_{1, s}$ for $s\geq 1$.  
\end{theorem}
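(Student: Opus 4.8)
The plan is to strip off the degenerate cases and reduce to the single genuinely hard situation, namely $Y$ triangle-free, regular, of girth at least $5$; the first two steps below are routine and the third is the crux.

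\emph{Reductions.} A distance-regular graph is connected and regular, and it is classical that $L(Y)$ is connected and regular exactly when $Y$ is connected and is either regular or bipartite semiregular (for adjacent edges $uv$ and $vw$ of $Y$ one gets $\deg_Y(u)=\deg_Y(w)$, so in a connected graph the vertex degree takes at most two values, realized on the two sides of a bipartition). If $Y$ is bipartite semiregular with side-degrees $p\neq q$, then edge-regularity of $L(Y)$ fails unless one side has all degrees equal to $1$: the number of triangles of $L(Y)$ through an edge whose shared endpoint lies on a given side is that side's degree minus $2$, and these must agree. Connectedness then forces $Y=K_{1,s}$, with $L(K_{1,s})=K_s$. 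If $Y$ is regular of degree $r$, the triangle count through an edge of $L(Y)$ equals $r-2$ or $r-1$ according to whether the remaining pair of endpoints spans an edge of $Y$, so edge-regularity forces either that $Y$ is a disjoint union of cliques (hence $Y=K_{r+1}$, a diameter-$1$ Moore graph, with $L(Y)$ the triangular graph) or that $Y$ is triangle-free.

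\emph{Triangle-free regular $Y$.} For two edges $ab$, $cd$ of $Y$ at distance $2$ in $L(Y)$, their common neighbours in $L(Y)$ are the edges of $Y$ joining $\{a,b\}$ to $\{c,d\}$; triangle-freeness makes this number $1$ or $2$, the value $2$ occurring exactly when $\{a,b,c,d\}$ spans a quadrangle. Hence $\mu(L(Y))\in\{1,2\}$. If $\mu(L(Y))=2$, then every path of length $3$ in $Y$ closes into a quadrangle; a short argument — no odd cycle can survive this (the first four vertices of a shortest one close via a chord into a shorter odd cycle or a triangle), and bipartiteness plus the same closing operation forces any two opposite-side vertices to be adjacent — yields $Y=K_{n,n}$. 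Since $L(K_{n,n})$ is the Hamming graph $H(2,n)$, this $Y$ is a generalized $4$-gon of order $(1,n-1)$. If $\mu(L(Y))=1$, then $Y$ has no quadrangle, i.e.\ girth at least $5$.

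\emph{The main step: girth $\geq 5$.} The case $r=2$ is $Y=C_n$, so $L(Y)=C_n$: a Moore graph if $n$ is odd and a generalized $2d$-gon of order $(1,1)$ (with $2d=n$) if $n$ is even; so assume $r\geq 3$. The cliques $C_v=\{\text{edges of }Y\text{ at }v\}$, $v\in V(Y)$, each of size $r$, partition the edge set of $L(Y)$, and because $Y$ is triangle-free every edge of $L(Y)$ lies in a \emph{unique} $C_v$; by Proposition~\ref{suff-cond} and Lemma~\ref{lem:eigenvalue-clique-geom}, $L(Y)$ is then geometric with smallest eigenvalue $-2$, clique geometry $\{C_v\}$, and dual graph $Y$. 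The remaining work is to read off the intersection numbers $c_i,b_i$ of $L(Y)$ from the local structure of $Y$ around the walks onto which short geodesics of $L(Y)$ project — the girth hypothesis removes all ambiguity in this projection — and to deduce that their being well-defined forces $Y$ to be distance-regular with $c_i(Y)=1$ for every $i$ below its diameter $D$. A distance-regular graph of girth $\geq 5$ with $c_i=1$ for $i<D$ has girth $2D-1$, $2D$, or $2D+1$; in the odd-girth cases it meets the Moore bound and is a Moore graph, and in the even-girth case it is bipartite and its parameters are exactly those defining a generalized $2d$-gon of order $(1,r-1)$.

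\emph{The main obstacle} is this last deduction: propagating the distance-regularity of $L(Y)$ back to $Y$ and pinning down $c_i(Y)=1$ below the diameter. The cleanest framework is the theory of distance-biregular graphs applied to the subdivision graph $S(Y)$ of $Y$: $L(Y)$ is a halved graph of $S(Y)$, one shows $S(Y)$ is distance-biregular, and the classification of distance-biregular graphs one of whose colour classes has valency $2$ (which the midpoint class of a subdivision always does) returns precisely the Moore graphs and the incidence graphs of generalized polygons. Carrying that out, together with checking the handful of small line graphs where the three families overlap (for example $K_3=L(K_3)=L(K_{1,3})$ and the octahedron $=L(K_4)$), is where the real effort lies.
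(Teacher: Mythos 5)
The paper does not prove this statement; it cites it from Mohar and Shawe--Taylor, so your proposal has to stand on its own. Your reductions are correct and cleanly executed: the dichotomy that $L(Y)$ regular and connected forces $Y$ regular or bipartite semiregular; the elimination of the semiregular case (other than $Y=K_{1,s}$) via the triangle count $\lambda=\deg(v)-2$ at the shared endpoint; the split of regular $Y$ into $K_{r+1}$ versus triangle-free; the computation $\mu(L(Y))\in\{1,2\}$ for triangle-free $Y$; and the derivation that $\mu=2$ forces $Y=K_{n,n}$ (no odd cycle survives the quadrangle-closing property, and bipartiteness plus closing forces completeness between the sides) are all sound, as is the identification of $K_{n,n}$ as a generalized $4$-gon of order $(1,n-1)$ and the handling of $r=2$.

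The genuine gap is that the heart of the theorem --- $Y$ regular of degree $r\geq 3$ with girth at least $5$ --- is not proved. You reduce it to the claim that distance-regularity of $L(Y)$ forces $Y$ to be distance-regular with $c_i(Y)=1$ below the diameter, describe this as ``reading off'' intersection numbers, and then concede that carrying out the argument (via the subdivision graph $S(Y)$, distance-biregularity, and a classification of distance-biregular graphs with a $2$-valent colour class) ``is where the real effort lies.'' That step is precisely the content of Mohar and Shawe--Taylor's result; everything preceding it is the routine part. None of its ingredients is supplied: you neither show that $S(Y)$ is distance-biregular when $L(Y)$ is distance-regular, nor state or prove the classification you would invoke, nor carry out the alternative direct propagation of the parameters of $L(Y)$ back to $Y$. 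The setup via Proposition~\ref{suff-cond} (that $L(Y)$ is geometric with smallest eigenvalue $-2$ and dual graph $Y$) is correct but is only scaffolding. As written, the proposal is a correct reduction to the main case together with a plausible plan for that case, not a proof of the theorem.
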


By the Hoffman-Singleton theorem, it is known that a Moore graph is either a complete graph, a polygon, or it is the Peterson graph ($k = 3$), the Hoffman-Singleton graph ($k=7$), or it has degree $k=57$ and diameter $d=2$.
%

Note that a generalized $2d$-gon of order $(1, s)$ has intersection numbers $a_i = 0$ for any $i\in [d]$. Thus it is bipartite. Recall, that each of two connected components of the distance-2 graph $X_2$ of a bipartite distance-regular graph $X$ is called a \textit{halved graph}. 

\begin{fact}[see {\cite[Theorem. 6.5.1]{BCN}}]\label{fact:bip-gon} If $X$ is a generalized $2d$-gon of order $(1, s)$, then $d$ is even and its halved graph is a generalized $d$-gon of order $(s, s)$. 
\end{fact}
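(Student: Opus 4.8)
This is the standard fact that the halved graphs of a bipartite distance-regular graph are themselves distance-regular (see \cite[Ch.~4]{BCN}), specialised to the intersection array of a generalized polygon. I would proceed as follows. From the given parameters $k=s+1$, $c_i=1$ and $b_i=s$ for $1\le i\le d-1$, and $c_d=s+1$, one gets $a_i=k-b_i-c_i=0$ for all $i$ (as the excerpt already notes), so $X$ is bipartite with parts $X^+,X^-$, and the distance-$2$ graph $X_2$ is the disjoint union of the two connected halved graphs. Since $X$ has the same valency on both sides, these two halved graphs have a common intersection array, so it suffices to analyse one of them, say $\Gamma$. (Equivalently, $X$ is the incidence graph of a generalized $d$-gon of order $(s,s)$, and the two halved graphs are the collinearity graph of that polygon and of its dual.)

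Next I would compute the intersection array of $\Gamma$ from that of $X$ by the halved-graph construction, pairing up consecutive steps of $X$. Its valency is $k_2=b_0b_1/c_2=s(s+1)$; two adjacent vertices of $\Gamma$, i.e.\ vertices at distance $2$ in $X$, have exactly $\bar\lambda=s-1$ common neighbours in $\Gamma$, namely the remaining $k-2=s-1$ neighbours of their (unique, since $c_2=1$) common $X$-neighbour --- there are no others, because an extra common neighbour would create a cycle of length less than $2d$ in $X$, contradicting $\mathrm{girth}(X)=2d$; and for interior indices $i$ one finds $\bar c_i=1$ and $\bar b_i=k_2-s=s^2$ by feeding in the values $c_j=1$, $b_j=s$ over the relevant range. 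This gives the array of $\Gamma$ the shape of a generalized polygon everywhere except possibly at its last index.

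It remains to deal with the one exceptional intersection number $c_d=s+1$ of $X$: the crux is that, under the pairing of steps, this value must land exactly at the terminal position of the array of $\Gamma$. This is possible only when the diameter of $\Gamma$ equals $d/2$, i.e.\ only when $d$ is even, $d=2e$; if $d$ were odd, the step of size $c_d$ would be paired with a unit step and would force $\bar c_i>1$ for some interior index $i<e$, which is impossible since geodesics of $X$ of length $<d$ are unique (again by $\mathrm{girth}(X)=2d$). Granting $d=2e$, one checks that $\Gamma$ has diameter exactly $e$ with $\bar c_e=s+1$, so its intersection array is $\{\,s(s+1),\,s^2,\dots,s^2;\,1,\dots,1,\,s+1\,\}$, which is exactly that of a generalized $2e$-gon (that is, a generalized $d$-gon) of order $(s,s)$. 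I expect this last step to be the main obstacle: the interior computation is routine bookkeeping with the halved-graph formulas, but isolating the terminal behaviour of $\Gamma$ and giving a clean argument that $d$ is forced to be even is where the real content lies. (An alternative route to the parity is to invoke the Feit--Higman-type nonexistence results for thin generalized polygons of odd girth, but I would prefer to keep the girth argument self-contained.)
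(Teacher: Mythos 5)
The paper offers no proof of this Fact (it is imported from \cite[Theorem 6.5.1]{BCN}), so your argument has to stand on its own, and its crucial step fails. Worse, the statement as literally written is false: the incidence graph of a projective plane of order $s$ (for $s=2$, the Heawood graph) is a distance-regular graph with intersection array $\{s+1,s,s;1,1,s+1\}$, hence a generalized $2d$-gon of order $(1,s)$ with $d=3$ odd, and its halved graphs are the complete graphs $K_{s^2+s+1}$. So no combinatorial bookkeeping can force $d$ to be even. Both of your girth-based deductions silently assume $d\ge 4$: an ``extra common neighbour'' of two adjacent vertices of $\Gamma$ closes a $6$-cycle in $X$, which contradicts girth $2d$ only when $2d>6$ (for $d=3$ your formula $\bar\lambda=s-1$ is already wrong, since adjacent vertices of $K_{s^2+s+1}$ have $s^2+s-1$ common neighbours); and for $d=3$ the halved graph has diameter $1$, so there is no interior index at which your claimed contradiction $\bar c_i>1$ could arise.

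For odd $d\ge 5$ the desired conclusion (no such graph exists) is true, but it is not accessible by counting: the array with odd $d$ passes the elementary feasibility checks (the $k_i$ are integral and the two colour classes are automatically equal in size), and the actual obstruction is the Feit--Higman eigenvalue-multiplicity argument. Thus the ``alternative route'' you mention only in passing is in fact the only route; the self-contained girth argument you hope for does not exist. The rest of your computation ($a_i=0$, $\bar k=s(s+1)$, $\bar\lambda=s-1$, interior $\bar c_i=1$ and $\bar b_i=s^2$, terminal $\bar c_{d/2}=s+1$) is correct once $d\ge 4$ is even and does recover the cited theorem in that case. A correct treatment must either add the hypothesis that $X$ is not the incidence graph of a projective plane (and then handle that case separately wherever the Fact is applied, e.g.\ in Proposition~\ref{prop:mu1-line}) or invoke Feit--Higman to settle the parity.
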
 

A celebrated theorem of W. Feit and G. Higman \cite{Feit-Higman} asserts that apart from polygons, generalized $2d$-gons exist only for $2d\in \{4, 6, 8, 12\}$. 

\begin{theorem}[W. Feit, G. Higman]\label{thm:Feit-Higman} A generalized $2d$-gon of order $(s, t)$ exists only for $2d \in\{4,6, 8, 12\}$ unless $s=t=1$. If $s>1$, then $2d\neq 12$.
\end{theorem}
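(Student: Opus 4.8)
The plan is to establish this via the classical spectral argument of Feit and Higman: compute the spectrum of the distance-regular graph $X$ in closed form, write down its eigenvalue multiplicities by Biggs' formula, and then extract Diophantine constraints by demanding that these multiplicities be positive integers respecting Galois conjugacy.

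First I would determine the spectrum explicitly. The intersection array of a generalized $2d$-gon of order $(s,t)$ is $\{s(t+1),st,\dots,st;1,\dots,1,t+1\}$, so $a_i=s-1$ for $1\le i\le d-1$, $a_d=(s-1)(t+1)$, and $b_i=st$ for $1\le i\le d-1$. Writing the three-term recurrence for the standard sequences (Sec.~\ref{sec:drg}) and substituting $\theta=s-1+2\sqrt{st}\,\cos\phi$, the bulk relation becomes $st\,u_{i+1}=(\theta-s+1)u_i-u_{i-1}$, a constant-coefficient recurrence with characteristic roots $e^{\pm i\phi}/\sqrt{st}$, so $u_i=(st)^{-i/2}(\cos i\phi+\beta\sin i\phi)$ with $\beta=\beta(\phi,s,t)$ forced by $u_1=\theta/k$. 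Imposing the remaining boundary relation coming from $c_d=t+1$ reduces to a trigonometric equation in $\phi$ whose real solutions are $\phi=j\pi/d$, $j=1,\dots,d-1$, producing the ``middle'' eigenvalues $\theta_j=s-1+2\sqrt{st}\,\cos(j\pi/d)$, while the imaginary (hyperbolic) branch of the recurrence supplies the two extreme eigenvalues $\theta_0=k=s(t+1)$ and $\theta_d=-(t+1)$; this gives all $d+1$ distinct eigenvalues.

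Next I would compute the multiplicities. From $k_0=1$ and $k_i=k_{i-1}b_{i-1}/c_i$ one reads off all the $k_i$, hence $n=\sum_i k_i$, as explicit polynomials in $s$ and $t$, and Biggs' formula then gives $f_j=n\big/\sum_i k_i\,u_i(\theta_j)^2$ as explicit expressions in $s$, $t$ and $\cos(j\pi/d)$. The crux is to impose that every $f_j$ is a positive integer: since each $u_i(\theta)$ is a polynomial in $\theta$ with rational coefficients, applying $\sigma\in\mathrm{Gal}(\mathbb{Q}(\sqrt{st},\zeta_{2d})/\mathbb{Q})$ carries $\theta_j$ to a conjugate eigenvalue and carries $f_j$ to $f_{\sigma(j)}$, so Galois-conjugate eigenvalues automatically have equal multiplicity; combining this with the trace identities $\sum_j f_j\theta_j=0$ and $\sum_j f_j\theta_j^2=nk$ yields the required integrality and rationality constraints. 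Carrying out the resulting elementary but lengthy case analysis on $d$ — split according to whether $st$ is a perfect square, which controls the degrees and conjugacy pattern of the $\theta_j$ — shows that the constraints are simultaneously satisfiable only for $d\in\{2,3,4,6\}$, i.e. $2d\in\{4,6,8,12\}$; and that for $d=6$ the surviving condition fails once $s\ge2$, forcing $s=1$ (apart from the ordinary $2d$-cycle $s=t=1$, which is excluded from the count and which accounts for why the $2d=12$ clause is phrased the way it is).

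The main obstacle is this last step: keeping the multiplicity bookkeeping under control and organizing the number-theoretic case analysis cleanly — in particular separating the cases $st$ a square versus $st$ not a square, and handling the borderline value $d=6$, where the integrality obstruction for thick geometries is real but only emerges from the full multiplicity computation rather than from a short parity or divisibility count.
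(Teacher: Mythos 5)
The paper offers no proof of this statement: it is quoted as a classical theorem and attributed to Feit and Higman \cite{Feit-Higman}, so there is nothing internal to compare your argument against. What you outline is the standard spectral proof of the Feit--Higman theorem (essentially the route of \cite[Sec.~6.5]{BCN}, which recasts the original character-theoretic argument in terms of eigenvalue multiplicities), and the skeleton is correct: with $c_i=1$, $a_i=s-1$, $b_i=st$ the standard-sequence recurrence does linearize under the substitution $\theta=s-1+2\sqrt{st}\cos\phi$; the boundary relation at $i=d$ (where $c_d=t+1$ and $a_d=(s-1)(t+1)$ enter) is satisfied exactly at $\phi=j\pi/d$ for $j=1,\dots,d-1$; the two remaining eigenvalues are $s(t+1)$ and $-(t+1)$; and integrality of the Biggs multiplicities, combined with the fact that Galois-conjugate eigenvalues of an integral matrix have equal multiplicity, is precisely the mechanism that rules out all other values of $d$ and forces $s=1$ when $d=6$. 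The one substantive reservation is that the final step --- the Diophantine case analysis split on whether $st$ is a perfect square, and the extraction of the $d=6$ obstruction for $s\geq 2$ --- is asserted rather than carried out, and that step is the entire mathematical content of the theorem; as written, your text is a correct proof strategy with the decisive computation deferred. Since the paper itself defers the whole theorem to the literature, this is not a defect relative to the paper, but the sketch should not be mistaken for a self-contained proof.
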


Finally, we use the following bound on the zero-weight spectral radius of generalized $2d$-gon of order $(s, s)$ for $2d\leq 6$.

\begin{fact}[{\cite[Table 6.4]{BCN}}]\label{fact:gon-spectral} Let $X$ be a generalized $2d$-gon of order $(s, s)$ for $2d\leq 6$, $s>1$. Then the zero-weight spectral radius of $X$ satisfies $\xi(X)\leq 2s$.
\end{fact}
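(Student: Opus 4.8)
The plan is to reduce the statement to two explicit eigenvalue computations. Since $2d\leq 6$ and $s>1$, the graph $X$ has diameter $d\leq 3$; a generalized $2$-gon being a complete graph (for which $\xi=1$), we may assume $d\in\{2,3\}$, so $X$ is either a generalized quadrangle $GQ(s,s)$ or a generalized hexagon $GH(s,s)$. In both cases the intersection array is completely determined by the defining parameters: for $GQ(s,s)$ it is $\{s^2+s,\, s^2;\, 1,\, s+1\}$, and for $GH(s,s)$ it is $\{s^2+s,\, s^2,\, s^2;\, 1,\, 1,\, s+1\}$. Consequently the eigenvalues of $X$ are exactly the eigenvalues of the tridiagonal intersection matrix $L_1$ of Section~\ref{sec:drg}, and the zero-weight spectral radius is $\xi(X)=\max\{|\theta_1|,\dots,|\theta_d|\}$, where $\theta_0=k>\theta_1>\dots>\theta_d$ are the distinct eigenvalues. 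So it suffices to exhibit these eigenvalues and check that every one of them other than $k$ has absolute value at most $2s$.

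For the quadrangle case I would invoke the standard strongly-regular eigenvalue relations. Here $\lambda=s-1$ and $\mu=c_2=s+1$, so the two non-principal eigenvalues are the roots of $x^2-(\lambda-\mu)x+(\mu-k)=0$, that is, of $x^2+2x-(s^2-1)=0$, namely $s-1$ and $-s-1$. Hence $\xi(GQ(s,s))=s+1\leq 2s$.

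For the hexagon case I would compute the standard sequence $(u_i(\theta))_{i=0}^{3}$ directly from the three-term recurrence: $u_0=1$, $u_1=\theta/k$, then $s^2u_2=(\theta-s+1)u_1-1$ and $s^2u_3=(\theta-s+1)u_2-u_1$, and finally impose the terminal relation $(s+1)u_2=(\theta-s^2+1)u_3$. Substituting $u_1,u_2,u_3$ into the terminal relation and clearing denominators produces a polynomial of degree $4$ in $\theta$; it has $\theta=k=s^2+s$ as a root, and dividing out this factor leaves a cubic. One checks — most quickly by testing candidates against the terminal relation — that the three remaining roots are $2s-1$, $-1$, and $-s-1$. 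Therefore $\xi(GH(s,s))=\max(2s-1,\, 1,\, s+1)=2s-1<2s$ since $s\geq 2$. Together with the quadrangle case this gives $\xi(X)\leq 2s$.

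The only step with any friction is identifying the cubic's roots in the hexagon case; this is routine in principle (its coefficients are low-degree polynomials in $s$) and can be bypassed entirely by quoting the classical spectra of generalized polygons — for a generalized hexagon of order $(s,t)$ the point graph has eigenvalues $s(t+1)$, $s-1\pm\sqrt{st}$, and $-t-1$ (see {\cite[Ch.~6]{BCN}}) — which specialize to $s^2+s$, $2s-1$, $-1$, $-s-1$ at $t=s$.
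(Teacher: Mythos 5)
Your computations are correct: for $GQ(s,s)$ the non-principal eigenvalues are $s-1$ and $-s-1$, and for $GH(s,s)$ they are $2s-1$, $-1$, $-s-1$, so $\xi(X)=\max(s+1,\,2s-1)\leq 2s$ in all cases; I verified your terminal-relation check for $\theta=2s-1$ and it closes. The comparison with the paper is slightly unusual here because the paper gives no proof at all — the statement is labelled a Fact and is simply read off from {\cite[Table 6.4]{BCN}}, which lists the spectra of the point graphs of generalized polygons (a generalized hexagon of order $(s,t)$ has eigenvalues $s(t+1)$, $s-1\pm\sqrt{st}$, $-t-1$, exactly as you quote at the end). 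So your argument is a self-contained verification of a cited table entry rather than an alternative to an existing proof; what it buys is independence from the reference, at the cost of the small amount of tridiagonal-matrix algebra in the hexagon case. One cosmetic remark: under the paper's definition a ``generalized $2$-gon of order $(s,s)$'' with $s>1$ is vacuous as a diameter-$1$ distance-regular graph (it would force $\lambda=k-1$, i.e.\ $t=0$), and in the only place the Fact is used (Proposition~\ref{prop:mu1-line}, via Fact~\ref{fact:bip-gon}) the halved graph has even gonality, so only the quadrangle and hexagon cases — the ones you actually compute — matter.
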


\begin{proposition}\label{prop:mu1-line} Let $X$ be a geometric distance-regular graph of diameter $d\geq 2$ on $n$ vertices. Suppose $\mu=1$, $k>4$ and the smallest eigenvalue of $X$ is $-2$. Then 
\[ \motion(X)\geq \frac{1}{16}n.\]
\end{proposition}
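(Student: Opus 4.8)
The plan is to leverage the structure theory built up in this section: since $X$ is geometric with smallest eigenvalue $-2$, by Lemma~\ref{m2-line} it is the line graph $L(Y)$ of some graph $Y$, and since $X$ is distance-regular with $\mu = 1$, Theorem~\ref{thm:Mohar-Shawe} forces $Y$ to be a Moore graph, a generalized $2d$-gon of order $(1, s)$, or a star $K_{1, s}$. First I would dispose of the degenerate cases. If $Y = K_{1, s}$, then $X = L(Y)$ is a complete graph, contradicting $d \geq 2$. If $Y$ is a Moore graph, then by the Hoffman--Singleton theorem $Y$ is either a complete graph (again making $X$ complete or of diameter $\leq 2$ with small degree, handled directly), a polygon (so $X$ is also a polygon, whose motion is clearly linear), or one of the Petersen graph ($k=3$), the Hoffman--Singleton graph ($k=7$), or the putative degree-$57$ graph. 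The Petersen case is excluded since it would force the degree of $X$ to be $2(3-1) = 4$, contradicting $k > 4$; the remaining two Moore-graph cases give only finitely many graphs of bounded size, for which the motion is trivially a positive fraction of $n$ (and one can absorb this into the constant, or note $k > 4$ still leaves these as a bounded family where $\motion(X) \geq n/16$ can be checked, or simply that these exceptional $X$ have bounded $n$ so the bound holds vacuously for large enough $n$ — I would state the cleanest version).

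The main case is $Y$ a generalized $2d$-gon of order $(1, s)$ with $s \geq 2$ (the case $s = 1$ makes $Y$ a polygon, so $X$ is a polygon again). Here I would pass to the halved graph: by Fact~\ref{fact:bip-gon}, $2d$ is even — write $d = 2e$ — and a halved graph $Y'$ of $Y$ is a generalized $2e$-gon of order $(s, s)$. By the Feit--Higman theorem (Theorem~\ref{thm:Feit-Higman}), since $s > 1$ we have $2e \in \{4, 6\}$, hence $2d \in \{8, 12\}$. The key point is that $X = L(Y)$ is closely tied to $Y'$: in fact $Y$ being bipartite semiregular, the line graph $L(Y)$ and the halved graph $Y'$ have essentially the same vertex set structure, and more to the point I want to apply the spectral mixing lemma (Lemma~\ref{mixing-lemma-tool}) to an appropriate graph. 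The cleanest route: apply the mixing lemma directly to $X = L(Y)$, using that $\mu = 1$ (so any two vertices of $X$ have at most $q = \max(\lambda, 1) = \max(s-1, 1) = s - 1$ common neighbors, since $\lambda = s - 1$), and using a bound on the zero-weight spectral radius $\xi(X)$. For this I relate $\spec(X)$ to $\spec(Y')$ — or directly to $\spec(Y)$ via the standard line-graph eigenvalue relation $\spec(L(Y)) \subseteq \{\theta + s + 1 - 2 : \ldots\}$-type identity — and then invoke Fact~\ref{fact:gon-spectral}, which gives $\xi(Y') \leq 2s$ when $2e \leq 6$. Translating, $\xi(X) \leq 2s + O(1)$ while $k = s(t+1) = 2s$ for order $(1,s)$... wait, here $k = $ degree of $X = L(Y)$, and for $Y$ a generalized $2d$-gon of order $(1,s)$ each vertex of $Y$ has degree $2$ or $s+1$; the line graph degree is $\lambda + 1 + b_1 = k$, with $\lambda = s - 1$. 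I would carry out the bookkeeping to get $\xi(X) \leq (1 - c)k$ for an absolute constant $c$, then plug into Lemma~\ref{mixing-lemma-tool}: $\motion(X) \geq n \cdot (k - \xi - q)/k \geq n \cdot (ck - (s-1))/k$, and since $q = s - 1 \approx k/2$ is comparable to $k$, I need $c$ large enough that $ck - (s-1) \geq k/16$; with the explicit constants from Fact~\ref{fact:gon-spectral} this should work out, possibly after using $k > 4$ to control the lower-order terms.

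The hard part will be the spectral bookkeeping in the main case: precisely matching the eigenvalues of $X = L(Y)$ to those of the halved graph $Y'$ (or of $Y$ itself), getting the right relation between $k = \deg(X)$, the parameter $s$, and the Delsarte-clique structure, and then verifying that the gap $k - \xi(X) - q(X)$ is at least $k/16$ rather than merely positive — since both $\xi$ and $q$ are a constant fraction of $k$ here, the constant $1/16$ is not automatic and depends on the sharp spectral bound $\xi(Y') \le 2s$. A secondary subtlety is making sure the finitely many exceptional graphs (small Moore cases) genuinely satisfy $\motion(X) \geq n/16$ or can be excluded; the cleanest fix is to observe that $k > 4$ rules out Petersen, and for the Hoffman--Singleton and degree-$57$ graphs one either checks the bound by hand or notes they are absorbed elsewhere in the global argument — I would pick whichever the paper's conventions make cleanest.
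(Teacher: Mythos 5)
Your skeleton (Lemma~\ref{m2-line}, Theorem~\ref{thm:Mohar-Shawe}, halved graphs, Feit--Higman, Fact~\ref{fact:gon-spectral}, Lemma~\ref{mixing-lemma-tool}) is the paper's, but the step where you apply the mixing lemma is wrong, and this is not a matter of bookkeeping: the direct approach cannot work. In the main case the base graph $\widetilde{X}$ (which is the dual graph of $X$) is a generalized $2e$-gon of order $(1,s)$ with $s=k/2$, and $X=L(\widetilde{X})$ has degree $k=2s$ and $\lambda=s-1$, so $q(X)=s-1$. Meanwhile the eigenvalues of $X$ are obtained from those of $\widetilde{X}$ by the shift $\eta\mapsto \eta+s-1$ (Lemma~\ref{lem:dual-spec} with $m=2$), and for, say, the generalized octagon of order $(1,s)$ one has $\theta_1(\widetilde{X})=\sqrt{2s}$, whence $\xi(X)\geq s-1+\sqrt{2s}$ and
\[
k-\xi(X)-q(X)\;\leq\; 2s-(s-1+\sqrt{2s})-(s-1)\;=\;2-\sqrt{2s}\;<\;0 .
\]
So Lemma~\ref{mixing-lemma-tool} applied to $X$ itself yields a vacuous (negative) bound. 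The paper's essential move is to apply the mixing lemma not to $X$ but to the halved graph $Y$ of $\widetilde{X}$, a generalized $e$-gon of order $(s,s)$ of degree $s(s+1)$, where $\xi(Y)\leq 2s$ and $q(Y)\leq s+O(1)$ are of lower order than the degree; this gives $\motion(Y)\geq \tfrac14|V(Y)|$, hence $\motion(\widetilde{X})\geq\motion(Y)\geq\tfrac18|V(\widetilde{X})|$ since $|V(\widetilde{X})|=2|V(Y)|$, and finally Corollary~\ref{cor:dual-motion} converts this into $\motion(X)\geq n/16$. Your proposal never invokes Corollary~\ref{cor:dual-motion} or the comparison between the motion of a bipartite graph and of its halved graph, and without these transfer steps the argument does not close.

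A second, smaller gap is the Moore-graph case. ``Absorb into the constant'' and ``vacuous for large $n$'' are not available, since the proposition asserts the fixed bound $n/16$ for every such $X$, including $X=L(\text{Hoffman--Singleton})$ and the putative degree-$57$ case; nor is a hand check of these graphs needed. The paper handles this uniformly: $\mu=1$ rules out $\widetilde{X}$ complete and $k>4$ rules out $\widetilde{X}$ a polygon, so $\widetilde{X}$ is strongly regular, Theorem~\ref{thm:babai-srg} gives $\motion(\widetilde{X})\geq |V(\widetilde{X})|/8$, and again Corollary~\ref{cor:dual-motion} yields $\motion(X)\geq n/16$.
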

\begin{proof}
By Lemma~\ref{m2-line}, $X$ is a line graph. Let $\widetilde{X}$ be the dual graph of $X$. Thus, by Theorem~\ref{thm:Mohar-Shawe}, $\widetilde{X}$ is a Moore graph or a generalized $2d$-gon of order $(1, s)$ for $s=k/2>2$. 
If $\widetilde{X}$ is a Moore graph, then $\mu=1$ implies that $\widetilde{X}$ is not complete, and $k>4$ implies $\widetilde{X}$ is not a polygon. Thus $\widetilde{X}$ is a strongly regular graph. Hence, Theorem~\ref{thm:babai-srg} implies that $\motion(\widetilde{X})\geq n/8$ and the desired bound on the motion of $X$ follows from Corollary~\ref{cor:dual-motion}.

 Therefore, we may assume that $\widetilde{X}$ is a generalized $2d$-gon of order $(1, s)$ for $s>2$.  Then, by Fact~\ref{fact:bip-gon}, a halved graph $Y$ of $\widetilde{X}$ is a generalized $d$-gon of order $(s, s)$ (and $d$ is even). Moreover, by Theorem~\ref{thm:Feit-Higman}, $d\leq 6$ and by Fact~\ref{fact:gon-spectral}, $\xi(Y)\leq 2s$. Note that any pair of vertices in $Y$ has at most $q(Y) = s-1$ common neighbors. Therefore, by Lemma~\ref{mixing-lemma-tool},
\[\motion(Y)\geq \frac{s(s+1)-3s}{s(s+1)}|V(Y)|\geq \frac{s-2}{s+1}|V(Y)|\geq \frac{1}{4}|V(Y)|.\]

We note that $|V(\widetilde{X})| = 2|V(Y)|$ and $\motion(\widetilde{X})\geq \motion(Y)$ (see \cite[Prop.~5.5]{kivva-drg}). Therefore, the statement of the proposition follows from Corollary~\ref{cor:dual-motion}.  
\end{proof}
\begin{remark} We note that one can show a linear lower bound (with a worse constant) on motion in this case without using the Feit-Higman classification theorem.  Since a dual graph $\widetilde{X}$ is bipartite or of diameter 2, one can use Theorem~\ref{thm:babai-srg} and the  bounds on the motion of bipartite graphs, which we proved in \cite[Theorem 5.6 and Prop.~5.11]{kivva-drg}.  
\end{remark}

\subsection{Combining all pieces together}

Finally, we combine the results of this paper and of \cite{kivva-drg} to get the proof of Babai's conjecture (Conjecture~\ref{conj-dist-reg}). From~\cite{kivva-drg}, in addition to Theorems~\ref{thm:main-motion-cited} and~\ref{thm:cited-main-spectral-gap}, we need the following observation.

\begin{proposition}[{\cite[Proposition 4.9]{kivva-drg}}]\label{primitive-distinguish} Let $X$ be a primitive distance-regular graph of diameter $d\geq 2$. Fix some real number $\alpha>0$. Suppose that for some $1\leq j \leq d-1$ the inequalities $b_j\geq \alpha k$ and $c_{j+1}\geq \alpha k$ hold. Then  $$\motion(X)\geq \frac{\alpha}{d}n.$$
\end{proposition}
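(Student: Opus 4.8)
The plan is to show that every non-identity $\sigma\in\Aut(X)$ moves at least $\tfrac{\alpha}{d}n$ vertices. The observation driving everything is that if $\sigma(v)\ne v$ then every fixed point $x$ of $\sigma$ satisfies $\dist(x,v)=\dist(\sigma x,\sigma v)=\dist(x,w)$ with $w:=\sigma(v)$; hence $\mathrm{Fix}(\sigma)$ lies in the equidistant set $E(v,w):=\{x:\dist(x,v)=\dist(x,w)\}$, and it is enough to bound its complement $D(v,w):=V(X)\setminus E(v,w)$ from below by $\tfrac{\alpha}{d}n$. If $\mathrm{Fix}(\sigma)=\emptyset$ there is nothing to prove (as $\alpha/d\le 1$); otherwise connectivity of $X$ yields an edge from a moved vertex $v$ to a fixed vertex, and applying $\sigma$ to this edge shows $\dist(v,\sigma v)\le 2$. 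So it suffices to prove $|D(v,w)|\ge\tfrac{\alpha}{d}n$ for distinct $v,w$ at distance at most $2$.

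The case $\dist(v,w)=1$ is the model, and it is where the hypotheses $b_j\ge\alpha k$, $c_{j+1}\ge\alpha k$ are used. First I would record, by a double count, the identity $|N_i(v)\cap N_{i+1}(w)|=k_ib_i/k=k_{i+1}c_{i+1}/k$, valid for $v\sim w$ and all $0\le i\le d-1$: counting the pairs $(x,y)$ with $x\in N_{i+1}(v)$, $y\in N_1(v)$, $\dist(x,y)=i$ from the side of $x$ gives $k_{i+1}c_{i+1}$ (each $x$ has exactly $c_{i+1}$ such $y$), and from the side of $y$ gives $k\cdot|N_i(v)\cap N_{i+1}(w)|$ (for a fixed neighbour $y$ of $v$ the count is, by distance-regularity, $|N_i(v)\cap N_{i+1}(w)|$). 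Summing over $i$ and using $k_ib_i=k_{i+1}c_{i+1}$ and $a_i+b_i+c_i=k$ gives $|D(v,w)|=\tfrac1k\sum_i k_ib_i+\tfrac1k\sum_i k_ic_i=n-\tfrac1k\sum_i k_ia_i$. Now $(b_i)$ is non-increasing and $(c_i)$ non-decreasing, so $b_i\ge b_j\ge\alpha k$ for $i\le j$ and $c_i\ge c_{j+1}\ge\alpha k$ for $i\ge j+1$; thus $a_i=k-b_i-c_i\le(1-\alpha)k$ for \emph{every} $i$, whence $\sum_i k_ia_i\le(1-\alpha)kn$ and $|D(v,w)|\ge\alpha n$ — even stronger than required.

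The case $\dist(v,w)=2$ is where the work lies, and it is the passage to larger distance that degrades the constant from $\alpha$ to $\alpha/d$. The bound $a_i\le(1-\alpha)k$ for all $i$ is still in force, but the one-step identity no longer isolates a single intersection number, so one must estimate $|E(v,w)|=\sum_\ell|N_\ell(v)\cap N_\ell(w)|$ directly, bounding from below, for each sphere $N_\ell(v)$, the number of its vertices whose distance to $w$ is not $\ell$, and combining these contributions by hand. An alternative that sidesteps the distance-$2$ bookkeeping works for arbitrary $\dist(v,w)=s$: since $|\dist(v,x)-\dist(w,x)|\le s$ for all $x$ while it is nonzero exactly on $D(v,w)$, one has $|D(v,w)|\ge\tfrac1s\sum_x|\dist(v,x)-\dist(w,x)|$, and $s\le d$; hence it would be enough to bound $\sum_x|\dist(v,x)-\dist(w,x)|$ below by $\alpha n$ for all distinct $v,w$, which for $s=1$ is precisely the computation of the previous paragraph. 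Carrying out the distance-$2$ estimate (or this uniform bound) rigorously, and locating where primitivity of $X$ is actually needed, is the technical heart of the argument; the $\dist(v,w)=1$ case is the template to follow.
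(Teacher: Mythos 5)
Your proposal does not reach a complete proof, and the part you leave open is precisely the content of the proposition. (Note the paper does not prove this statement itself; it is imported from \cite{kivva-drg}, so I am judging your argument on its own terms against the standard proof of that result.) Your distance-one computation is correct and is indeed the right first step: for $u\sim v$ one has $|N_i(u)\cap N_{i+1}(v)|=k_ib_i/k$, hence $D_1=n-\frac1k\sum_i k_ia_i$, and the hypotheses give $a_i\le(1-\alpha)k$ for every $i$ (via monotonicity of $(b_i)$ and $(c_i)$), so $D_1\ge\alpha n$. But you then explicitly defer ``the technical heart'' — the lower bound on $D(v,w)$ for non-adjacent $v,w$ — and neither of your two suggested routes closes it. The direct computation does not transfer to distance $2$, because $\sum_i p_{ii}^2$ is not controlled by $a_i,b_i,c_i$ alone; and your averaged inequality $|D(v,w)|\ge\frac1s\sum_x|\dist(v,x)-\dist(w,x)|$ only \emph{reduces} the problem to a lower bound on $\sum_x|\dist(v,x)-\dist(w,x)|$, which you do not establish (and which does not follow from the $s=1$ case by summing along a geodesic, since the triangle inequality points the wrong way). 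You also never use primitivity, which you correctly sense must enter somewhere.

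The missing idea is the following pair of standard facts. First, the distinguishing number $D(u,w)=|\{x:\dist(x,u)\ne\dist(x,w)\}|$ satisfies the triangle inequality $D(u,w)\le D(u,v)+D(v,w)$. Second, primitivity means every distance-$j$ graph $X_j$ is connected, and a connected constituent of an association scheme of rank $d+1$ has diameter at most $d$ (its distance classes are unions of the relations $X_1,\dots,X_d$). Hence any two adjacent vertices $x\sim y$ of $X$ are joined by a path of length $\ell\le d$ in $X_j$, and the triangle inequality along that path gives $D_1=D(x,y)\le\ell\,D_j\le d\,D_j$, i.e.\ $D_j\ge D_1/d\ge\frac{\alpha}{d}n$ for every $j$. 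Combined with $\mathrm{Fix}(\sigma)\subseteq\{x:\dist(x,v)=\dist(x,\sigma v)\}$ this finishes the proof for every non-identity $\sigma$. Your reduction to $\dist(v,\sigma v)\le 2$ is valid but unnecessary once one has this, and by itself it does not remove the need for primitivity or for the above argument.
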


\begin{theorem} For any $d\geq 3$ there exists $\gamma_d>0$, such that for any primitive distance-regular graph $X$ of diameter $d$ on $n$ vertices either
\[ \motion(X) \geq \gamma_d n,\]
or $X$ is the Hamming graph $H(d, s)$ or the Johnson graph $J(s, d)$. 
\end{theorem}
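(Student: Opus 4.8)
The plan is to assemble the reduction of~\cite{kivva-drg} with the two characterization theorems of Sections~\ref{sec:Johnson}--\ref{sec:Hamming}, the $\mu=1$ motion bounds, and Proposition~\ref{primitive-distinguish}. First I would apply Theorem~\ref{thm:main-motion-cited}: either $\motion(X)\geq\gamma_d'n$, in which case we are done, or $X$ is geometric with smallest eigenvalue $-m$ for some $2\leq m\leq m_d$; assume the latter from now on. I would also assume $k=k(X)\geq k_0(d)$ for a suitable constant, since otherwise $n\leq dk^d$ is bounded in terms of $d$ and the trivial bound $\motion(X)\geq 2$ suffices for every such $X$ not already in the exceptional list. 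Finally, fix a small constant $\varepsilon_0=\varepsilon_0(d)>0$ that is less than each of $\varepsilon^*$, $1/(6m_d^4d)$, the constant $\varepsilon(d)$ of Theorem~\ref{thm:cited-main-spectral-gap}, and $1/2$.

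Next I would run the dominant-distance dichotomy. If $b_i\geq\varepsilon_0 k$ and $c_{i+1}\geq\varepsilon_0 k$ for some index $i$ (necessarily $i\geq 1$ once $k\geq k_0(d)$, since $c_1=1$), then Proposition~\ref{primitive-distinguish} --- this is the only place primitivity is used --- gives $\motion(X)\geq(\varepsilon_0/d)n$ and we are done. Otherwise $b_i<\varepsilon_0 k$ or $c_{i+1}<\varepsilon_0 k$ for every $i$, and using $b_0=k$, $b_d=0$ together with the monotonicity of $(b_i)$ and $(c_i)$ one extracts an index $t\leq d$ with $\max(b_t,c_t)\leq\varepsilon_0 k$ (and $c_t\geq 1$ reconfirms $k\geq 1/\varepsilon_0$). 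Since $\mu=c_2\geq 1$, I would now split into $\mu=1$ and $\mu\geq 2$.

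In the case $\mu=1$: if $m=2$ then $X$ is a line graph (Lemma~\ref{m2-line}) and Proposition~\ref{prop:mu1-line} gives $\motion(X)\geq n/16$; if $m\geq 3$, then the index $t$ and $k\geq k_0(d)$ meet the hypotheses of Theorem~\ref{thm:mu=1}, which gives $\motion(X)\geq(\eta(d)/4)n$. In the case $\mu\geq 2$: by Lemma~\ref{lem:connected-disconn} the neighbourhood graphs are either all connected or all disconnected. If $\theta_1\geq(1-\varepsilon_0)b_1$ (so also $\theta_1+1>(1-\varepsilon^*)b_1$), then Theorem~\ref{thm:pseudo-main-intr} in the connected case forces $X=J(s,d)$, and Theorem~\ref{thm:main-Hamming-intr} (applied with $\varepsilon=\varepsilon_0$ and the index $t$) in the disconnected case forces $X=H(d,s)$; these are exactly the permitted exceptions. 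If instead $\theta_1<(1-\varepsilon_0)b_1$, then geometricity keeps every non-principal eigenvalue of $X$ in the interval $[-m_d,(1-\varepsilon_0)b_1]$, so the zero-weight spectral radius satisfies $\xi\leq(1-\varepsilon_0)b_1$ once $k$ is large, while the maximum number of common neighbours of a pair of vertices is $q=\lambda$ (because $\lambda\geq k/m-1>m_d^2\geq\mu$ by Corollary~\ref{cor:mu-m2}). Hence $k-\xi-q\geq(k-\lambda)-(1-\varepsilon_0)b_1=\varepsilon_0 b_1+1\geq(\varepsilon_0/4)k$ (using $b_1\geq k/4$ for $k\geq k_0(d)$), and the mixing lemma (Lemma~\ref{mixing-lemma-tool}) yields $\motion(X)\geq(\varepsilon_0/4)n$. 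Taking $\gamma_d$ to be the minimum of the finitely many positive constants that appear completes the proof.

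The real content has already been supplied by the characterization theorems of Sections~\ref{sec:Johnson}--\ref{sec:Hamming} and the $\mu=1$ analysis, so what remains is organisational; the main point that requires attention is the regime $\theta_1<(1-\varepsilon_0)b_1$. By Proposition~\ref{prop:mu-eigen-bound} this regime is unavoidable --- it occurs whenever the neighbourhood graphs are disconnected and $\mu\geq 3$ --- so one must verify that the spectral gap it provides, measured against the bound $q\leq\max(\lambda,\mu)$, is large enough to feed the mixing lemma; this works precisely because geometricity bounds the eigenvalues below by $-m_d$ and pins $\lambda$ near $k/m$. Beyond that, the only care needed is to choose $\varepsilon_0$ compatibly with the hypotheses of all four structural results and to note that all the ``$k$ sufficiently large'' thresholds in play (from Theorems~\ref{thm:pseudo-main-intr} and~\ref{thm:mu=1}, from Corollary~\ref{cor:mu-m2}, and from the mixing-lemma estimate) depend on $d$ alone, so a single $k_0(d)$, and hence a single $\gamma_d$, works.
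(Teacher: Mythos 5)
Your proposal is correct and follows essentially the same route as the paper: reduce to the geometric, bounded-eigenvalue case via Theorem~\ref{thm:main-motion-cited}, split on the dominant-distance dichotomy (Proposition~\ref{primitive-distinguish} versus an index $t$ with $b_t,c_t\leq\varepsilon k$), dispose of $\mu=1$ via Theorem~\ref{thm:mu=1} and Proposition~\ref{prop:mu1-line}, and for $\mu\geq 2$ either feed the characterization theorems (when $\theta_1\geq(1-\varepsilon)b_1$) or the mixing lemma (when $\theta_1<(1-\varepsilon)b_1$, using $q=\lambda$ and $b_1\geq k/4$). The only differences are organizational (you split the large-$\theta_1$ case by connectivity of the neighbourhood graphs rather than by $\mu=2$ versus $\mu\geq 3$, and you handle small $k$ up front), and you make explicit the extraction of the index $t$, which the paper leaves implicit.
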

\begin{proof} Recall, Theorem~\ref{thm:main-motion-cited} implies that either $\motion(X)\geq \gamma_d' n$ for some $\gamma_d'>0$, or $X$ is geometric with smallest eigenvalue $\geq -m_d$, for some $m_d\geq 2$. 

Let $\varepsilon' = \varepsilon(d)$ and $\eta_d = \eta(d)$ be the constants provided by Theorem~\ref{thm:cited-main-spectral-gap}. Set $$0<\varepsilon =\frac{1}{2} \min\left(\frac{1}{6m_d^4 d}, \varepsilon'\right)<1/200.$$
\begin{description} 

\item[Case A.] $X$ is not geometric or the smallest eigenvalue of $X$ is less than $-m_d$.\\ Then, by Theorem~\ref{thm:main-motion-cited}, $\motion(X)\geq \gamma_d' n$.

\item[Case B.] There exists $t\in [d]$ such that $c_{t+1}\geq \varepsilon k$ and $b_{t}\geq \varepsilon k$.\\ Then, by Proposition~\ref{primitive-distinguish}, $\motion(X)\geq \varepsilon n/d$.

\item[Case C.] $X$ is geometric with smallest eigenvalue at least $-m_d$ and there exists $t\in [d]$ such that $c_{t}\leq \varepsilon k$ and $b_t\leq \varepsilon k$.\\
By Theorem~\ref{thm:cited-main-spectral-gap}, the zero-weight spectral radius of $X$ satisfies $\xi(X)\leq (1-\eta_d)k$.

\begin{description}

\item[Case C.1.] $k< \max(29, 2m_d^3, 4m_d/\eta_d)$. \\
Then $X$ has at most $N_d = \max(29, 2m_d^3, 4m_d/\eta_d)^d+1$ vertices. Moreover, every non-trivial automorphism moves at least 2 points, so $\displaystyle{\motion(X)\geq \frac{2}{N_d}n}$.  

\item[Case C.2.] $k\geq \max(2m_d^3, 29)$ and $\mu\geq 2$.

\noindent\textbf{Case C.2.i.} $\theta_1<(1-\varepsilon)b_1$. \\ Using Corollary~\ref{cor:mu-m2} we obtain, $\displaystyle{\lambda\geq \frac{k}{m_d}-1\geq m_d^2\geq \mu}$. By Lemma~\ref{lem:lambda-mu}, we have  $2\lambda\leq \mu+k$, so  $b_1 \geq (k-\mu)/2\geq k/4$. Thus $$\xi(X)+q(X)\leq k-\varepsilon b_1\leq \left(1-\frac{\varepsilon}{4}\right)k.$$ Hence, by Lemma~\ref{mixing-lemma-tool}, $\displaystyle{\motion(X)\geq \frac{\varepsilon}{4}n}$.

\noindent\textbf{Case C.2.ii.} $\theta_1 \geq (1-\varepsilon)b_1$ and $\mu\geq 3$.\\
Since $\varepsilon<\frac{1}{200}<\varepsilon^*$, by Theorem~\ref{thm:pseudo-main-intr} and Proposition~\ref{prop:mu-eigen-bound}, $X$ is a Johnson graph. 

\noindent\textbf{Case C.2.iii.} $\theta_1 \geq (1-\varepsilon)b_1$ and $\mu=2$.\\
By Corollary~\ref{cor:Hamming}, $X$ is a Hamming graph.

\item[Case C.3.] $\mu = 1$ and $k\geq \max(4m_d/\eta_d, m_d^2)$. 

\noindent\textbf{Case C.3.i.} The smallest eigenvalue $-m$ of $X$ satisfies $-m\leq -3$.
\\Then by Theorem~\ref{thm:mu=1}, $\displaystyle{\motion(X)\geq \frac{\eta_d}{4}n}$.

\noindent\textbf{Case C.3.ii.} The smallest eigenvalue $-m$ of $X$ satisfies $-m>-3$.\\
Since, by Lemma~\ref{lem:eigenvalue-clique-geom}, $m$ is integer, we get that $m\leq 2$. Hence, $m=2$, and by Proposition~\ref{prop:mu1-line} $\motion(X)\geq n/16$.
\end{description}
\end{description}
Therefore, the statement of the theorem holds with $\displaystyle{\gamma_d = \min\left(\frac{\eta_d}{4}, \frac{\varepsilon}{d}, \frac{2}{N_d}, \gamma_d', \frac{1}{16}\right)}$.
\end{proof}

Finally, our main result on the motion, Theorem~\ref{thm:main-motion}, follows from the theorem above and the following result we proved in~\cite{kivva-drg}.  

\begin{theorem}[{\cite[Theorem 1.7]{kivva-drg}}] Assume Conjecture~\ref{conj-dist-reg} is true. Then for any $d\geq 3$ there exists $\widetilde{\gamma}_d>0$, such that for any distance-regular graph $X$ of diameter $d$ on $n$ vertices either
$$\motion(X)\geq \widetilde{\gamma}_d n,$$
or $X$ is a Johnson graph $J(s, d)$, or a Hamming graph $H(d, s)$, or a cocktail-party graph. 
\end{theorem}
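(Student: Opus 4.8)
The statement is a reduction of the imprimitive case of Babai's conjecture to the primitive case, which is exactly Conjecture~\ref{conj-dist-reg} (assumed here), so I would argue by induction on the diameter $d$. By Smith's theorem, a distance-regular graph of diameter $d\geq 3$ is primitive, or bipartite, or antipodal (possibly both of the latter two). In the primitive case Conjecture~\ref{conj-dist-reg} applies directly and yields either $\motion(X)\geq\gamma_d n$ or $X\in\{J(s,d),\,H(d,s)\}$. The work is therefore in the bipartite and antipodal cases: in each I would pass to a canonically associated distance-regular graph of strictly smaller diameter, transfer a motion bound to it, and recurse, bottoming out at diameter $2$ (strongly regular graphs, where Theorem~\ref{thm:babai-srg} applies) or diameter $1$ (complete graphs, handled directly).

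In the bipartite case, let $Y_1,Y_2$ be the halved graphs of $X$, each distance-regular of diameter $d'=\lfloor d/2\rfloor<d$. The subgroup $\Aut(X)^+$ preserving the bipartition has index at most $2$; any automorphism outside $\Aut(X)^+$ swaps the two halves and hence moves all $n$ vertices, while a nonidentity element of $\Aut(X)^+$ acts nontrivially on at least one half and so moves at least $\motion(Y_i)$ vertices there. Thus $\motion(X)\geq\min\bigl(n,\ \motion(Y_1),\ \motion(Y_2)\bigr)$, and I would recurse: if $d'\geq 3$ use the inductive hypothesis (a halved graph may again be imprimitive), if $d'=2$ use Theorem~\ref{thm:babai-srg}, and if $d'=1$ then $X$ is a bipartite distance-regular graph of diameter $3$ with complete halved graphs, a short list of graphs that I would classify explicitly and inspect one by one. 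One then has to identify which bipartite distance-regular graphs carry each exceptional halved graph and match the result against $J(s,d)$, $H(d,s)$, and the cocktail-party graphs — the last being exactly what shows up here and in the antipodal analysis.

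In the antipodal case, let $\widehat X$ be the antipodal quotient, distance-regular of diameter $\lfloor d/2\rfloor<d$, with antipodal classes of some fixed size $r$, so $n=r\,|V(\widehat X)|$. Automorphisms of $X$ permute the antipodal classes, giving a homomorphism $\Aut(X)\to\Aut(\widehat X)$; an automorphism mapping to a nonidentity element displaces at least $\motion(\widehat X)$ classes and moves all $r$ vertices of each displaced class, hence moves at least $\tfrac{n}{|V(\widehat X)|}\,\motion(\widehat X)$ vertices, and again I would recurse on $\widehat X$ (or invoke the diameter-$1$ or diameter-$2$ base cases). The main obstacle is the kernel of $\Aut(X)\to\Aut(\widehat X)$ — automorphisms fixing every antipodal class setwise but permuting within the classes — for which I must show that a nonidentity such automorphism moves a linear number of vertices; for a connected antipodal $2$-cover it is the unique ``deck'' involution and moves all $n$ vertices, and the general-$r$ case should follow from the rigidity of the distance-regular structure around a fixed vertex. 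Once the kernel is controlled, I would assemble the case analysis together with the base cases (the diameter-$2$ one contributing the cocktail-party graphs through the ``complement of a union of cliques'' alternative in Theorem~\ref{thm:babai-srg}) and set $\widetilde\gamma_d$ to be the minimum of the finitely many constants produced along the recursion. The genuinely delicate part throughout is the bookkeeping needed to certify that no exceptional graph other than the Johnson graphs, the Hamming graphs, and the cocktail-party graphs survives.
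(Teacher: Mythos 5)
You should first note that the paper does not prove this statement at all: it is quoted verbatim from the companion paper \cite{kivva-drg} (Theorem 1.7 there) and used as a black box, so there is no in-paper argument to compare yours against. Your skeleton --- Smith's theorem splitting into primitive/bipartite/antipodal, passing to halved graphs and antipodal quotients, recursing on diameter, and controlling the kernel of $\Aut(X)\to\Aut(\widehat X)$ --- is the standard and surely the intended reduction. The transfer inequalities you state are correct as far as they go (with the harmless loss of a factor $2$ from $|V(Y_i)|=n/2$), and the kernel issue you flag in the antipodal case actually has a short clean resolution you could have supplied: for $d\geq 3$ each antipodal fiber meets $N(v)$ in at most one vertex, so a kernel element fixing $v$ fixes $N(v)$ pointwise and hence is the identity by connectivity; thus the kernel acts semiregularly and every nonidentity element of it moves all $n$ vertices.

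The genuine gap is the step you dismiss as ``bookkeeping.'' The recursion gives \emph{no bound whatsoever} precisely when the halved graph or antipodal quotient lands in the exceptional list --- a Johnson graph, a Hamming graph, a cocktail-party graph, or (at the diameter-$2$ base case) a union of cliques or its complement --- because then the inductive hypothesis supplies no motion lower bound for the smaller graph, and these graphs genuinely have sublinear motion. At that point one must determine which bipartite doubles and antipodal $r$-covers of these specific families exist, and either show each such cover has linear motion by a separate (typically spectral) argument or identify it as one of the allowed exceptions; examples such as $H(d,2)$ (simultaneously bipartite and antipodal), folded Johnson and halved-cube configurations, and antipodal covers of complete and strongly regular graphs show this is a substantial classification-plus-estimation task, not an afterthought. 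This is where essentially all of the content of the cited Theorem 1.7 lives (the companion paper devotes separate spectral motion bounds to bipartite and antipodal distance-regular graphs for exactly this reason), and also where the graphs that are both bipartite and antipodal require a combined rather than case-by-case treatment. As written, your argument establishes the easy reductions but not the theorem.
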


\section{Outlook: Minimal degree of permutation groups. Coherent configurations}\label{sec:outlook}

The study of the minimal degree of permutation groups started in the 19th century and experienced a revival in the post-classification\footnote{We refer here to the Classification of the Finite Simple Groups (CFSG)} era.

\paragraph{Elementary results.} It seems that the minimal degree was first studied in the 1871 paper~\cite{Jordan} by Jordan, who proved that there are only finitely many primitive groups with any given minimal degree $m> 3$. A remarkable result of Bochert~\cite{Bochert} (1894) asserts that a doubly transitive permutation group of degree $n$, other than $A_n$ and $S_n$, has minimal degree at least $n/4-1$.


Lower bounds on the minimal degree impose strong structural constraints on the group. For a group $G$, following Babai \cite{babai-srg},  by the \textit{thickness} $\theta(G)$ we mean the maximal $t$ for which the alternating group $A_t$ is involved in $G$ as a quotient of a subgroup.  A classical result of Wielandt \cite{Wielandt} shows that a linear lower bound on the minimal degree of a permutation group implies a logarithmic upper bound on the thickness of the group.

\begin{theorem}[Wielandt \cite{Wielandt}, see {\cite[Theorem 6.1]{Babai-doublytransitive}}]\label{Wielandt-thm}
Let $n>k>\ell$ be positive integers, $k\geq 7$, and let $0<\alpha<1$. Suppose that $G$ is a permutation group of degree $n$ and minimal degree at least $\alpha n$. If
\[\ell(\ell-1)(\ell-2)\geq (1-\alpha)k(k-1)(k-2),\]
and $\theta(G)\geq k$, then $n\geq \binom{k}{\ell}$.
\end{theorem}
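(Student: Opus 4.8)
\medskip
\noindent\emph{Proof idea.}
The plan is to play the fixed-point restriction forced by a large minimal degree against the structure of the ``large'' subgroups of $A_k$, which are essentially the stabilizers of small subsets of $[k]=\{1,\dots,k\}$. Since $A_k$ is involved in $G$ as a quotient of a subgroup, I would fix $H\le G$ and $N\trianglelefteq H$ with $H/N\cong A_k$ and write $\psi\colon H\twoheadrightarrow A_k$ for the quotient map. Because $H\le G$, every non-identity element of $H$ --- hence of every non-trivial subgroup $U\le H$ --- moves at least $\alpha n$ points, so
\[
|\mathrm{Fix}_\Omega(U)|\le(1-\alpha)n\qquad\text{whenever }1\ne U\le H.
\]
This inequality is the only place where the hypothesis $\mindeg(G)\ge\alpha n$ is used.

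Suppose, for contradiction, that $n<\binom{k}{\ell}$. Then for every $H$-orbit $\Delta$ on $\Omega$ with representative $\delta$ we have $[A_k:\psi(H_\delta)]\le[H:H_\delta]=|\Delta|\le n<\binom{k}{\ell}$. Here I would invoke the structure of the low-index subgroups of $A_k$ (legitimate since $k\ge7$): imprimitive and primitive subgroups of $A_k$ have index far exceeding $\binom{k}{\ell}$, so a subgroup of index $<\binom{k}{\ell}$ is intransitive on $[k]$, and iterating on its largest orbit one sees that it contains $\mathrm{Alt}(S)$ for some $S\subseteq[k]$ with $|S|>\ell$ (using $\binom{k}{\ell}=\binom{k}{k-\ell}$ to cover the range $\ell\ge k/2$). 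Hence each $\psi(H_\delta)$, and so each $\psi(H_\omega)$ for $\omega$ in that orbit (up to conjugacy), contains a subgroup $\mathrm{Alt}(S_\omega)$ with $|S_\omega|>\ell$, and in particular contains every $3$-cycle supported inside $S_\omega$.

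The core is a double count. For each $3$-element set $\{a,b,c\}\subseteq[k]$ put $W_{\{a,b,c\}}:=\psi^{-1}\bigl(\langle(a\,b\,c)\rangle\bigr)\le H$; this subgroup is non-trivial (it contains $N$), so $|\mathrm{Fix}_\Omega(W_{\{a,b,c\}})|\le(1-\alpha)n$, and summing over the $\binom{k}{3}$ such sets gives $\sum_{\{a,b,c\}}|\mathrm{Fix}_\Omega(W_{\{a,b,c\}})|\le(1-\alpha)\,n\binom{k}{3}$. On the other hand $W_{\{a,b,c\}}$ fixes $\omega$ exactly when $N\le H_\omega$ and $\langle(a\,b\,c)\rangle\le\psi(H_\omega)$, so by the previous paragraph every point $\omega$ with $N\le H_\omega$ is fixed by at least $\binom{|S_\omega|}{3}\ge\binom{\ell+1}{3}$ of the $W_{\{a,b,c\}}$. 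When $N=1$ (i.e.\ $A_k\le G$) this applies to \emph{all} $\omega$, so $n\binom{\ell+1}{3}\le(1-\alpha)n\binom{k}{3}$, i.e.\ $(\ell+1)\ell(\ell-1)\le(1-\alpha)k(k-1)(k-2)$; combined with the hypothesis $\ell(\ell-1)(\ell-2)\ge(1-\alpha)k(k-1)(k-2)$ this forces $(\ell+1)\ell(\ell-1)\le\ell(\ell-1)(\ell-2)$, which is impossible (note the hypothesis already forces $\ell\ge3$). Hence $n\ge\binom{k}{\ell}$ when $N=1$.

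The step I expect to be the main obstacle is extending this to $N\ne1$: since every $W_{\{a,b,c\}}$ contains $N$, only the points of $\mathrm{Fix}_\Omega(N)$ contribute to the lower count, and $\mathrm{Fix}_\Omega(N)$ may be a vanishing fraction of $\Omega$. The remedy I would pursue is to work also on the $H$-invariant set $\supp(N)$: there $N$ acts faithfully with all orbits of size $\ge2$, so $A_k=H/N$ acts on the set $\mathcal{O}$ of $N$-orbits inside $\supp(N)$ with $|\mathcal{O}|\le n/2$, its point stabilizers are again the groups $\psi(H_\delta)$ (hence still contain $\mathrm{Alt}(S_\omega)$ with $|S_\omega|>\ell$), and one tries to rerun the $3$-cycle count on $\mathcal{O}$. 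The genuinely delicate point is that a large minimal degree does \emph{not} transfer to this induced action --- an element of $H$ moving many points of $\Omega$ may move few $N$-orbits --- so bridging that gap (a Schur--Zassenhaus splitting when $\gcd(|N|,|A_k|)=1$, and a transfer or induction-on-$|N|$ argument in general) is the heart of the matter. Two lesser technical matters: the whole argument must be run globally on $\Omega$ rather than on a single transitive constituent of $H$ (again because minimal degree does not restrict well); and the low-index subgroup input for $A_k$ must be supplemented, in the regime where $\ell$ is so close to $k$ that $\binom{k}{\ell}$ is only polynomially large, by the elementary fact that $A_k$ is a section of $S_n$ only if $k\le n$ (proved by induction on the orbits of $H$ together with the direct-product lemma for simple sections, using that $A_k$ has no faithful permutation representation of degree $<k$).
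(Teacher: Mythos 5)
This theorem is quoted in the paper from Wielandt via \cite[Theorem 6.1]{Babai-doublytransitive}; the paper supplies no proof, so your proposal can only be judged on its own terms. Your core mechanism --- bounding $|\mathrm{Fix}_\Omega(U)|\le(1-\alpha)n$ for nontrivial $U\le H$ and double-counting incidences between points and the $\binom{k}{3}$ preimages of $3$-cycle subgroups, so that the three consecutive integers in the hypothesis emerge as $\binom{\ell+1}{3}$ versus $(1-\alpha)\binom{k}{3}$ --- is indeed the right engine for this statement, and your treatment of the case $N=1$ is essentially complete modulo the low-index input discussed below.

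However, what you have written is not a proof of the theorem, for exactly the reason you flag: thickness means $A_k$ is a \emph{quotient of a subgroup}, so the generic case is $N\ne 1$, and there your lower count collapses --- every $W_{\{a,b,c\}}$ contains $N$, so only points of $\mathrm{Fix}_\Omega(N)$ contribute, and $N\ne 1$ forces $|\mathrm{Fix}_\Omega(N)|\le(1-\alpha)n$, which is an upper bound in the wrong direction. The remedies you list (Schur--Zassenhaus, induction on $|N|$, rerunning the count on $N$-orbits) are not carried out, and you yourself observe the central obstruction: the minimal-degree hypothesis does not descend to the induced action on $N$-orbits. Since this is the heart of Wielandt's theorem, the proposal is a correct reduction of the problem plus a solved special case, not a proof. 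A second, unflagged gap: your low-index step asserts that every subgroup of $A_k$ of index $<\binom{k}{\ell}$ contains $\mathrm{Alt}(S)$ with $|S|>\ell$. For $k\ge 7$ this is false as stated at the boundary: $\mathrm{PSL}(2,7)<A_7$ and $\mathrm{AGL}(3,2)<A_8$ have index $15<\binom{7}{3}\le\binom{k}{3}$, are primitive, and contain no $3$-cycles at all, so a point whose stabilizer image is such a subgroup contributes \emph{zero} to your lower count rather than $\binom{\ell+1}{3}$. These exceptional configurations must either be excluded using the numerical hypotheses (which, for $k=7$, force $\alpha\ge 34/35$ and hence $n<35$ together with near-fixed-point-freeness, quickly contradicting $|A_7|\mid|H/N|$) or handled by a version of the low-index theorem with explicit exceptions; as written, the step is simply asserted.
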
 

\begin{corollary}\label{Wielandt-cor}
Let $G$ be a permutation group of degree $n$. Suppose $\mindeg(G)\geq \alpha n$. Then the thickness $\theta(G)$ of $G$ satisfies $\displaystyle{\theta(G)\leq \frac{3}{\alpha}\ln(n)}$.
\end{corollary}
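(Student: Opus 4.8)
The plan is to read Theorem~\ref{Wielandt-thm} contrapositively: if the thickness $k:=\theta(G)$ is large, the theorem forces the degree $n$ to be enormous, and quantifying this yields the logarithmic bound. So I would fix $G\le \Sym(n)$ with $\mindeg(G)\ge \alpha n$, noting that $0<\alpha\le 1$ automatically since no permutation of an $n$‑set moves more than $n$ points. If $\mindeg(G)\le 3\ln n$, then $\alpha n\le 3\ln n$, hence $\tfrac{3}{\alpha}\ln n\ge n\ge \theta(G)$ using the trivial bound $\theta(G)\le n$, and we are done; so I may assume $\mindeg(G)>3\ln n$, and also $k\ge 7$, the case $k\le 6$ being one of the finitely many small cases dealt with at the end.

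Next I would choose $\ell$ in Theorem~\ref{Wielandt-thm} as close to $k$ as the cubic hypothesis allows. Put $j:=\lfloor \alpha(k-2)/3\rfloor$ and $\ell:=k-j$ (assuming $j\ge 1$, i.e.\ $\alpha(k-2)\ge 3$; if not, then $k<2+3/\alpha$, again a small case). Then $1\le j<k/3$, so $0<\ell<k<n$. Since $x\mapsto (x-2)/x$ and $x\mapsto(x-1)/x$ are increasing, $\tfrac{\ell}{k}\ge\tfrac{\ell-2}{k-2}$ and $\tfrac{\ell-1}{k-1}\ge\tfrac{\ell-2}{k-2}$, so by Bernoulli's inequality and $j\le\alpha(k-2)/3$,
\[ \frac{\ell(\ell-1)(\ell-2)}{k(k-1)(k-2)}\ \ge\ \Bigl(\frac{\ell-2}{k-2}\Bigr)^{\!3}\ =\ \Bigl(1-\frac{j}{k-2}\Bigr)^{\!3}\ \ge\ 1-\frac{3j}{k-2}\ \ge\ 1-\alpha. \]
Hence Theorem~\ref{Wielandt-thm} applies (its hypotheses $k\ge 7$ and $0<\alpha<1$ are met) and gives $n\ge\binom{k}{\ell}=\binom{k}{j}$.

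Finally I would bound this binomial coefficient from below. From $\binom{k}{j}\ge (k/j)^{j}$ (or, with more room to spare, a Stirling-type estimate $\binom{k}{j}\gtrsim (ek/j)^{j}$) together with $k/j\ge 3k/(\alpha(k-2))>3/\alpha\ge 3>e$, taking logarithms yields $\ln n\ge j\ln(3/\alpha)$. Combining with $j>\alpha(k-2)/3-1$ and using $\ln(3/\alpha)\ge\ln 3>1$, this rearranges to $k\le \tfrac{3}{\alpha}\ln n$: the slack in $\ln(3/\alpha)\ge\ln 3>1$ is exactly what absorbs the $O(1/\alpha)$ losses coming from the floor in the definition of $j$ and from passing between $k$ and $k-2$.

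The only real obstacle is this last bookkeeping step: verifying that the slack $\ln(3/\alpha)>1$ — together, if needed, with the sharper $e^{j}$ factor in the binomial estimate and with the reduction $\mindeg(G)>3\ln n$, which pushes $n$ past a moderate threshold — genuinely clears the additive error terms, so that the constant comes out exactly $3$ rather than $3+o(1)$. The residual cases ($k\le 6$, $\alpha(k-2)<3$, and the bounded values of $n$ left over) are not a serious obstacle: in each, $\theta(G)$ is at most an explicit constant ($\theta(G)\le 6$, or $k<2+3/\alpha$, or $\theta(G)\le n$), while $\mindeg(G)\ge\alpha n$ constrains $\alpha$ and $n$ enough that $\theta(G)\le\tfrac{3}{\alpha}\ln n$ follows by direct inspection.
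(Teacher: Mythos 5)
The paper does not actually prove this corollary --- it is stated as a classical consequence of Theorem~\ref{Wielandt-thm} with no argument given --- so I am judging your attempt on its own. Your skeleton is the right (and essentially the only) one: read Theorem~\ref{Wielandt-thm} contrapositively with $k=\theta(G)$, take $\ell=k-j$ with $j=\lfloor\alpha(k-2)/3\rfloor$, verify the cubic hypothesis via
$\frac{\ell(\ell-1)(\ell-2)}{k(k-1)(k-2)}\ge\bigl(\frac{\ell-2}{k-2}\bigr)^{3}\ge 1-\frac{3j}{k-2}\ge 1-\alpha$,
and lower-bound $\binom{k}{j}$. All of that is correct.

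The genuine gap is exactly the step you flag as ``bookkeeping'' and then wave through. What your estimates actually give is $\ln n\ge j\ln(3/\alpha)$ with $j>\alpha(k-2)/3-1$, hence
\[
k\;<\;\frac{3}{\alpha}\cdot\frac{\ln n}{\ln(3/\alpha)}+\frac{3}{\alpha}+2 ,
\]
i.e.\ $\theta(G)\le\frac{3}{\alpha}\ln n+O(1/\alpha)$. To upgrade this to the stated clean bound you must absorb the additive $\frac{3}{\alpha}+2$ into the multiplicative saving $1-\frac{1}{\ln(3/\alpha)}$, which for $\alpha$ near $1$ is only $1-\frac{1}{\ln 3}\approx 0.09$; the requirement becomes $\ln n\,(1-\tfrac{1}{\ln(3/\alpha)})\ge 1+\tfrac{2\alpha}{3}$, i.e.\ $\ln n\gtrsim 18$. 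The Stirling-improved bound $\binom{k}{j}\gtrsim (ek/j)^{j}$ lowers this threshold but does not eliminate it (and brings its own $-\tfrac12\ln(2\pi j)$ correction, so it only helps once $j$ is moderately large). Crucially, the leftover range is \emph{not} a finite list of trivial cases settled ``by direct inspection'': for, say, $n=1000$ and $\alpha=1/2$ you must show $\theta(G)\le \frac{3}{\alpha}\ln n\approx 41$, and none of the fallbacks you invoke ($\theta(G)\le 6$, $k<2+3/\alpha$, $\theta(G)\le n$, or the reduction to $\mindeg(G)>3\ln n$, which does not force $n$ large) covers it. So as written your argument proves the right statement up to an additive $O(1/\alpha)$ error in the thickness bound, but not the inequality $\theta(G)\le\frac{3}{\alpha}\ln n$ itself; closing it would require either a genuinely sharper accounting for all $j\ge 1$ (including the small-$j$ regime, where $\binom{k}{j}$ must be estimated exactly) or accepting a weaker constant.
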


It is known, as a corollary of CFSG, that doubly transitive groups $G\leq S_n$, other than $A_n$ and $S_n$, have order at most $n^{1+\log(n)}$ \cite{2-transitiveCFSG}. An elementary proof of a slightly weaker result was obtained by Babai and Pyber \cite{Babai-doublytransitive}, \cite{Pyber} using Corollary~\ref{Wielandt-cor} and Bochert's bound on minimal degree. 

In light of Wielandt's result, an immediate corollary of our Theorem~\ref{thm:main-motion} is the following.

\begin{theorem} For any $d\geq 3$ there exists an $\alpha_d>0$ such that
for any distance-regular graph $X$ of diameter $d\geq 3$ with $n$ vertices either $$\theta(\Aut(X))\leq \alpha_d \log n,$$ or $X$ is a Johnson graph, or a Hamming graph, or 
a cocktail-party graph.
\end{theorem}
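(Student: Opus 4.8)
The plan is to combine our motion bound, Theorem~\ref{thm:main-motion}, with the classical observation of Wielandt recorded in Corollary~\ref{Wielandt-cor}, namely that a linear lower bound on the minimal degree of a permutation group forces its thickness to be only logarithmic in the degree. First I would invoke Theorem~\ref{thm:main-motion}: it supplies a constant $\gamma_d>0$ such that any distance-regular graph $X$ of diameter $d\geq 3$ on $n$ vertices either satisfies $\motion(X)\geq \gamma_d n$, or is a Johnson graph, a Hamming graph, or a cocktail-party graph. In the latter case the desired conclusion holds trivially, so the only case left to treat is $\motion(X)\geq \gamma_d n$.

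In that case, note that $\Aut(X)$ acts faithfully on the vertex set of $X$, so it is a permutation group of degree $n$, and by Definition~\ref{def-motion} its minimal degree is exactly $\motion(X)\geq \gamma_d n$. Applying Corollary~\ref{Wielandt-cor} with $\alpha=\gamma_d$ gives
\[ \theta(\Aut(X))\leq \frac{3}{\gamma_d}\ln n. \]
Taking $\alpha_d$ to be $3/\gamma_d$, rescaled by the constant converting $\ln$ to $\log$, yields $\theta(\Aut(X))\leq \alpha_d\log n$, as required. If one wishes to be scrupulous about the hypotheses underlying the Wielandt bound (Theorem~\ref{Wielandt-thm} requires $k\geq 7$), one observes that a thickness below $7$ is automatically accommodated as soon as $\alpha_d\log n\geq 7$, i.e.\ for all but finitely many $n$; for those boundedly many remaining graphs one simply enlarges $\alpha_d$ so that $\alpha_d\log n\geq n\geq \theta(\Aut(X))$ holds for a trivial reason.

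I expect no genuine obstacle in this argument: all of the substance has been absorbed into Theorem~\ref{thm:main-motion}, which itself rests on the characterization Theorems~\ref{thm:pseudo-main-intr} and~\ref{thm:main-Hamming-intr} proved in this paper together with the reduction results of~\cite{kivva-drg}, and on the purely group-theoretic Corollary~\ref{Wielandt-cor}. The only points requiring care are the bookkeeping of the constant $\alpha_d$ in terms of $\gamma_d$ and the harmless treatment of small $n$, both of which are routine.
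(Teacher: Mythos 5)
Your proposal is correct and is exactly the argument the paper intends: the theorem is stated there as an immediate corollary of Theorem~\ref{thm:main-motion} combined with Corollary~\ref{Wielandt-cor}, which is precisely the combination you carry out. Your extra remarks on converting $\ln$ to $\log$ and on absorbing the finitely many small-$n$ cases into the constant $\alpha_d$ are harmless bookkeeping that the paper leaves implicit.
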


\paragraph{Post-classification results.} For a permutation group $G\leq \Sym(\Omega)$ define $G^{(k)}$ to be the permutation group acting on the set of $k$-tuples $\binom{\Omega}{k}$ via the induced action. Using the CFSG, Liebeck and Saxl \cite{Liebeck}, \cite{Liebeck-Saxl} characterized all primitive permutation groups with minimal degree less than $n/3$.  They showed that a group $G$ with $\mindeg(G)<n/3$ is of the form $(A_{m}^{(k)})^{d}\leq G\leq S_{m}^{(k)}\wr S_d$, where $S_{m}^{(k)}\wr S_d$ is represented in the product action on $\binom{m}{k}^d$ elements. These groups are called \textit{Cameron groups}, as primitive groups of degree $n$ and order at least $n^{1+\log n}$ are of this form, by a result of Cameron \cite{Cameron} based on consequences of the CFSG.

Cameron groups act on ``Cameron schemes" (see below). These structures can be seen as hybrids between Hamming and Johnson graphs.

\paragraph{Coherent configurations.} Following Schur~\cite{Schur} (1933), one can relate the following combinatorial structure with a permutation group $G\leq \Sym(\Omega)$. Let $R_1, R_2, \ldots, R_k$ be the orbits of the $G$-action on $\Omega\times \Omega$ (the \textit{orbitals} of $G$). The tuple $\mathfrak{X}(G) = (\Omega, \{R_1, R_2, \ldots, R_k\})$ is called the \textit{orbital configuration} of $G$. The orbital configurations are a special case of more general combinatorial structures, called \textit{coherent configurations}, which were also essentially defined in the paper of Schur and were reintroduced later in various contexts \cite{Bose-Shimamoto}, \cite{Weisfeiler-Leman} (see~\cite{Weisfeiler}). The term ``coherent configuration" was introduced by Higman \cite{Higman}, who revived Schur's program and developed a representation theory of coherent configurations. A graph-theoretic study of coherent configurations was initiated by Babai~\cite{Babai-annals}.

\begin{definition} Let $\Omega$ be a set and $\mathfrak{R} = \{R_1, R_2, \ldots, R_d\}$ be a partition of $\Omega\times \Omega$ ($R_{i}\neq \emptyset$). The pair $\mathfrak{X} = (\Omega, \mathfrak{R})$ is called a \textit{coherent configuration} if the following properties hold.
\begin{enumerate}
\item For every $i\in [d]$, if $(x, x)\in R_i$ for some $x\in \Omega$, then $(y, z)\notin R_i$ for any $y\neq z\in \Omega$.
\item For every $i\in [d]$ there exists $i^*$ such that, if $(x, y)\in R_i$, then $(y, x)\in R_{i^*}$.
\item For every $i, j, t\in [d]$ there exists a number $p_{i,j}^{t}$, such that for any pair $(x, y)\in R_t$ there exist exactly $p_{i,j}^{t}$ vertices $z\in \Omega$ such that $(x, z) \in R_i$ and $(z, y) \in R_j$.    
\end{enumerate}
The number $d$ of classes in the partition $\mathfrak{R}$ is called the \textit{rank} of $\mathfrak{X}$.
\end{definition}

 The \textit{Cameron schemes} are the orbital configurations of the Cameron groups. 
  Babai conjectured that the following combinatorial analog of the Liebeck-Saxl classification should be true.

\begin{conjecture}[Babai]\label{conj:babai-coherent-motion} There exists an $\alpha>0$ such that for any primitive coherent configuration $\mathfrak{X}$ on the set of $n$ vertices either
\[\motion(\mathfrak{X})\geq \alpha n,\]
or $\mathfrak{X}$ is a Cameron scheme. 
\end{conjecture}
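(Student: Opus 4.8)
This conjecture remains open, and any plausible attack has to run a case analysis parallel to the proof of Theorem~\ref{thm:main-motion} but without an ambient metric. The starting point is that every $\sigma\in\Aut(\mathfrak{X})$ preserves each constituent, so a linear lower bound on the motion of a single constituent graph $X_i=(\Omega,R_i)$ already proves the bound for $\mathfrak{X}$. By Lemma~\ref{mixing-lemma-tool} we are done as soon as some constituent has both a spectral gap that is a positive fraction of its degree and a common-neighbour count $q$ small relative to its degree, so we may assume every constituent violates one of these two conditions. In parallel one wants a purely combinatorial ``distinguishing'' reduction in the spirit of Proposition~\ref{primitive-distinguish}: if individualizing a bounded number of vertices pins down a constant fraction of $\mathfrak{X}$, then the motion is linear. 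After these reductions the surviving configurations should be highly constrained --- each constituent is either very sparse (matching-like), or essentially a disjoint union of large cliques, or carries eigenvalue multiplicities and local structure typical of Hamming- and Johnson-type schemes --- exactly the analogue of the reduction to geometric distance-regular graphs with bounded smallest eigenvalue carried out in~\cite{kivva-drg} and in the present paper.

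From the surviving constituents I would then try to build a distance-regular graph: take the union $Y$ of a minimal family of constituents forming a connected graph, and argue that once all the above bounds fail, $(\Omega,Y)$ is forced to be amply regular with small $\mu$ and to admit a Delsarte-type clique geometry via Metsch's theorem (Theorem~\ref{Metsch}). Theorems~\ref{thm:pseudo-main-intr} and~\ref{thm:main-Hamming-intr} would then identify $Y$ (or an appropriate power of it) with a Johnson or a Hamming graph, so that $\mathfrak{X}$ acquires a product-action shadow to which Theorem~\ref{thm:main-motion} applies. A final bookkeeping step has to reconstruct the full Cameron structure $(A_m^{(k)})^{d}\le G\le S_m^{(k)}\wr S_d$ from this metric skeleton --- including the Johnson-type twist $S_m^{(k)}$ --- so that $\mathfrak{X}$ is recognised as a Cameron scheme.

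The hard part --- and the reason the conjecture is still open, while its group-theoretic shadow follows from CFSG via Liebeck--Saxl and Cameron --- is that a coherent configuration need be neither commutative nor an association scheme, and in particular need not possess a single ``distance'' constituent at all; there is then no diameter $d$ to induct on, and none of the standard-sequence or Biggs-formula machinery used throughout this paper is available. One must therefore either first prove a rank-reduction statement --- a primitive coherent configuration with no linear-motion constituent has bounded rank, after which one argues rank by rank, the base case rank $3$ being Theorem~\ref{thm:babai-srg} --- or develop a substitute for the metric theory directly at the level of the intersection numbers $p_{i,j}^{t}$. I would begin by running the whole program on configurations of small rank, where the case analysis terminates after finitely many subcases, before attempting the general statement.
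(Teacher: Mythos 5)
The statement you were asked to prove is a conjecture, and the paper offers no proof of it: it only records that the rank~$3$ case follows from Babai's motion bounds for strongly regular graphs (Theorem~\ref{thm:babai-srg}) and strongly regular tournaments, and that the rank~$4$ case is treated in the companion paper \cite{kivva-cc}. You correctly recognize this, so there is nothing to check against; your submission is a research program, not a proof, and it does not pretend otherwise.

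As a program sketch it is broadly consonant with what is actually done in the known low-rank cases: reduce to a single ``dominant'' constituent, apply the spectral tool (Lemma~\ref{mixing-lemma-tool}) or a distinguishing/individualization argument when the constituent has a good spectral gap or small $q$, and otherwise force clique-geometry structure via Metsch's theorem and identify the resulting metric skeleton with a Johnson or Hamming object. Your diagnosis of the obstruction --- that a general primitive coherent configuration has no distance metric, hence no diameter to induct on and no standard-sequence or Biggs-formula machinery --- is also the correct one. Two small caveats: the rank~$3$ base case requires the non-symmetric (tournament) result of \cite{Babai-annals} in addition to Theorem~\ref{thm:babai-srg}, which your sketch omits; and the proposed ``rank-reduction'' step (a primitive configuration with no linear-motion constituent has bounded rank) is itself an open problem of comparable difficulty, so it cannot be treated as a black box. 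None of this is a gap in a proof, since no proof exists; but be careful not to present the sketch as more than a plan of attack.
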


In the case of rank 3, this conjecture  follows from Babai's motion bound for strongly regular graphs \cite{babai-srg} (see Theorem~\ref{thm:babai-srg}) and  strongly regular tournaments \cite{Babai-annals}. We confirm this conjecture in the case of rank 4 in \cite{kivva-cc}.

Distance-regular graphs give rise to a special class of coherent configurations, in which relations are induced by the distance metric in the graph. Specializing Conjecture~\ref{conj:babai-coherent-motion} to the class of primitive distance-regular graphs we obtain the following statement. 

\begin{conjecture}[Babai]\label{conj:drg-uniform}
There exists an $\alpha>0$ such that for any primitive distance-regular graph $X$ of diameter $d\geq 3$ on $n$ vertices, either
\[\motion(X)\geq \alpha n,\]
or $X$ is a Hamming graph, or a Johnson graph.
\end{conjecture}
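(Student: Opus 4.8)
The plan is to re-run the case analysis behind Theorem~\ref{thm:main-motion}, tracking precisely where the diameter $d$ enters the constants and replacing each $d$-dependent estimate by an absolute one. Three places in that argument produce a loss that grows with $d$: (a) the case of small valency, which in the proof of Theorem~\ref{thm:main-motion} only yields a bound that degrades with $d$; (b) the reduction from arbitrary primitive distance-regular graphs to geometric ones with smallest eigenvalue bounded below by $-m_d$ (Theorem~\ref{thm:main-motion-cited}), where both $m_d$ and the motion constant $\gamma_d'$ depend on $d$; and (c) the spectral-gap dichotomy (Theorem~\ref{thm:cited-main-spectral-gap}) together with Proposition~\ref{primitive-distinguish}, whose constant $\varepsilon(d)$ --- and hence the resulting bound $\varepsilon(d)\,n/d$ --- decays with $d$. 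The genuinely structural branches are, by contrast, already uniform: the Johnson branch goes through Theorem~\ref{thm:pseudo-main-intr} with the \emph{absolute} constant $\varepsilon^*$, and although the Hamming branch (Theorem~\ref{thm:main-Hamming-intr}) asks for a hypothesis with $\varepsilon<1/(6m^4d)$, its conclusion is purely structural, so the smallness of $\varepsilon$ there is harmless once that hypothesis has been arranged.

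Sources (a) and (b) should be removable using known finiteness phenomena for distance-regular graphs. Odd cycles are handled directly, since $\motion(C_n)=n-1$, so one may assume the valency is at least $3$; then by the solution of the Bannai--Ito conjecture a primitive distance-regular graph of diameter $d$ has valency $k\ge K(d)$ with $K(d)\to\infty$, and hence for $d$ beyond an absolute threshold the small-valency case cannot occur at all, while for $d$ below that threshold Theorem~\ref{thm:main-motion} already supplies an absolute bound --- the care needed is to control the interplay between $K(d)$ and the valency thresholds $m_d^3$, $4m_d/\eta_d$ that appear in the case analysis. For the reduction (b), one should re-read ``bounded smallest eigenvalue'' as $m\le Cd$ for an absolute $C$: this is the correct scale, since Johnson and Hamming graphs of diameter $d$ both have $m=d$ and, by Corollary~\ref{cor:mu-m2}, every geometric example has $\mu\le m^2$. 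The remaining task is to revisit the proof of Theorem~\ref{thm:main-motion-cited} and show that it can be carried out with $\gamma_d'$ replaced by an absolute constant; as in that paper, this is ultimately a question about spectral gaps. The $\mu=1$ analysis, which currently uses $\eta_d$ through Theorem~\ref{thm:mu=1}, likewise reduces to the same spectral input.

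Everything therefore funnels into source (c), and this is where I expect the real difficulty. One needs a version of Theorem~\ref{thm:cited-main-spectral-gap} with \emph{absolute} constants --- for every primitive distance-regular graph $X$, either some index $i$ has $b_i\ge\varepsilon_0 k$ and $c_{i+1}\ge\varepsilon_0 k$, or the zero-weight spectral radius satisfies $\xi(X)\le(1-\eta_0)k$, with $\varepsilon_0,\eta_0>0$ absolute --- together with a way to extract an absolute-constant motion bound from a balanced cut $b_i,c_{i+1}\ge\varepsilon_0 k$, improving the $1/d$ loss in Proposition~\ref{primitive-distinguish}. The present proof of Theorem~\ref{thm:cited-main-spectral-gap} proceeds through a ``dominant distance'' analysis whose error terms accumulate across the $d$ levels; a uniform proof would have to argue globally with the standard sequences $(u_i(\theta))_i$. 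Concretely, I would try to show that a primitive distance-regular graph with no balanced cut and no absolute spectral gap is forced --- via Terwilliger's inequality (Theorem~\ref{thm:ter-inequality}) and the increase $c_3>\mu$ (Corollary~\ref{cor:c3-mu}) applied uniformly in $i$, together with the local eigenvalue bound $-1-b_1/(\theta_1+1)$ --- to have \emph{all} of its parameters $\tau_i,\psi_i$ pinned down simultaneously, so that its intersection array is exactly that of a Johnson or a Hamming graph; one then concludes by Theorems~\ref{thm:Hamming-array-class} and~\ref{thm:Johnson-classif}. Producing such a global argument, in place of the level-by-level estimates used throughout this paper, is the main obstacle, and is the reason the uniform statement is recorded here only as a conjecture.
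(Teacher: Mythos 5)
You have correctly identified that the statement in question is \emph{not} proved in the paper: Conjecture~\ref{conj:drg-uniform} is recorded in the Outlook section (Section~\ref{sec:outlook}) explicitly as an open, uniform strengthening of Theorem~\ref{thm:main-motion}, in which the constant $\gamma_d$ is allowed to depend on the diameter. Your proposal, which ends by conceding the main obstacle and stating that the result remains a conjecture, is therefore in agreement with the paper rather than an alternative proof, and there is no proof in the paper for me to compare it against.

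Your accounting of where the $d$-dependence enters is accurate. It comes through $m_d$ and $\gamma_d'$ in the reduction Theorem~\ref{thm:main-motion-cited}, through $\varepsilon(d)$ and $\eta(d)$ in Theorem~\ref{thm:cited-main-spectral-gap}, and through the $\alpha/d$ loss in Proposition~\ref{primitive-distinguish}; and you correctly observe that the Johnson branch is already governed by the \emph{absolute} constant $\varepsilon^*$ while the Hamming branch is purely structural once its hypothesis $\varepsilon<1/(6m^4d)$ has been arranged. A few cautions on your proposed repairs. First, the Bannai--Ito theorem does yield $K(d)\to\infty$, but the small-valency case in the proof of Theorem~\ref{thm:main-motion} is not ``$k$ small in an absolute sense'' but ``$k < \max(29, 2m_d^3, 4m_d/\eta_d)$,'' whose threshold itself grows with $d$ through $m_d$ and $\eta_d$; so source (a) cannot be dispatched independently of (b) and (c) as your outline suggests. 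Second, regarding odd cycles: only prime cycles $C_p$ are primitive distance-regular graphs, and for those $\motion(C_p)=p-1$, so this side case is benign, but it does not actually arise for $\mu \geq 1$ with diameter $\geq 3$ beyond a handful of small cases. Third, your scale ``$m\leq Cd$'' is the right target (since both $J(s,d)$ and $H(d,s)$ have $m=d$), but establishing it requires reworking the reduction in \cite{kivva-drg}, which you do not attempt. Finally, your proposed global replacement for Theorem~\ref{thm:cited-main-spectral-gap}---forcing all $\tau_i,\psi_i$ at once via Terwilliger's inequality and the local eigenvalue bound---is a sensible research direction but is a program, not an argument; as you acknowledge, no mechanism is given for avoiding the accumulation of error across the $d$ levels. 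In sum, your diagnosis of the bottleneck is sound, and the candid conclusion that the uniform statement remains open is the correct reading of the paper.
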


This conjecture is a stronger version of Conjecture~\ref{conj-dist-reg} and remains open.

\renewcommand{\thesection}{A}
\section{Appendix}\label{appendix}
Below we prove the inequality used in the proof of Proposition~\ref{prop:multiplicity}.
\begin{lemma} Let $2\leq t\leq x+1\leq m$ be integers, then
\begin{equation}\label{eq:appendix-ineq}
\frac{(m-x)(m-1)^2}{(m-x+t-2)^2}\geq \frac{m-1}{t-1}.
\end{equation} 
\end{lemma}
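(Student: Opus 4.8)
## Proof Strategy

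The plan is to prove the inequality by clearing denominators and reducing it to a manifestly nonnegative expression in the integer parameters. Writing $a = m - x$ (so $1 \le a$, since $x+1 \le m$) and keeping $t$ with $2 \le t \le x+1 = m - a + 1$, the inequality $\frac{(m-x)(m-1)^2}{(m-x+t-2)^2} \ge \frac{m-1}{t-1}$ becomes, after multiplying both sides by the positive quantity $(m-x+t-2)^2/(m-1)$ (note $m \ge 2$ so $m - 1 \ge 1 > 0$, and $m - x + t - 2 = a + t - 2 \ge a \ge 1 > 0$), the equivalent claim
\[
a\,(m-1)\,(t-1) \ge (a + t - 2)^2.
\]
So the whole problem is to verify this last polynomial inequality under the constraints $a \ge 1$, $t \ge 2$, and $a + t - 1 \le m$, i.e. $m - 1 \ge a + t - 2$.

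The key step is to exploit the constraint $m - 1 \ge a + t - 2$: since the left-hand side is increasing in $m$, it suffices to prove the inequality in the worst case $m - 1 = a + t - 2$, where it reads $a\,(a+t-2)\,(t-1) \ge (a+t-2)^2$. Dividing through by the positive factor $a + t - 2$ (again $\ge 1$), this reduces to $a(t-1) \ge a + t - 2$, i.e. $a(t-1) - (t-1) - (a - 1) \ge 0$, i.e. $(a-1)(t-1) - (a-1) \ge 0$... let me instead just rearrange directly: $a(t-1) - a - t + 2 = a t - a - a - t + 2 = at - 2a - t + 2 = a(t-2) - (t - 2) = (a-1)(t-2) \ge 0$, which holds because $a \ge 1$ and $t \ge 2$. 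This is the crux of the argument, and it is elementary; the only subtlety is making sure all the quantities one divides by are genuinely positive, which follows from the stated integrality and range constraints on $t, x, m$.

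I do not anticipate a serious obstacle here — the statement is a routine but slightly fiddly inequality whose proof amounts to (i) clearing denominators correctly while tracking signs, (ii) reducing to the extremal value of $m$ using monotonicity, and (iii) a one-line factorization $(a-1)(t-2) \ge 0$. The one place to be careful is the reduction to $m - 1 = a + t - 2$: one must note that for fixed $a$ and $t$ the difference (LHS $-$ RHS) of the cleared inequality, namely $a(m-1)(t-1) - (a+t-2)^2$, is a strictly increasing function of $m$, so proving it at the minimal admissible value of $m$ suffices for all larger $m$. With that observation in place, the argument is complete.
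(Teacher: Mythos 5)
Your proof is correct, and it takes a genuinely different route from the paper's. Both arguments begin by clearing denominators to the equivalent polynomial inequality $(m-x)(m-1)(t-1)\geq(m-x+t-2)^2$ (all the quantities you divide by are indeed positive under the hypotheses). The paper then keeps $x$ as the variable: it expands the square, isolates $x$ on one side to get $m(m-1)(t-1)-(m+t-2)^2\geq x\bigl(x+m(t-3)-3t+5\bigr)$, checks the boundary case $x=m-1$ directly, and argues for $t\geq 4$ that the right-hand side is maximized at $x=m-1$, handling $t=2$ and $t=3$ by separate ad hoc computations. You instead substitute $a=m-x$ and observe that, for fixed $a\geq 1$ and $t\geq 2$, the left side $a(m-1)(t-1)$ is increasing in $m$ while the right side $(a+t-2)^2$ is independent of $m$; the constraint $t\leq x+1$ pins the minimal admissible value at $m=a+t-1$, where the inequality reduces after cancelling $a+t-2>0$ to $a(t-1)\geq a+t-2$, i.e.\ $(a-1)(t-2)\geq 0$. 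Your monotonicity-in-$m$ reduction eliminates the paper's case split on $t$ entirely and ends in a clean one-line factorization, so it is both shorter and more transparent; the paper's version buys nothing extra here beyond being the first argument the authors happened to write down.
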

\begin{proof}
Note that when $x = m-1$ the inequality is true, as $m-1\geq t-1$. We can rewrite inequality~\eqref{eq:appendix-ineq} as
\[ (m-x)(m-1)(t-1)\geq (m-x+t-2)^2,\]
\[ m(m-1)(t-1)-x(m-1)(t-1)\geq x^2-2x(m+t-2)+(m+t-2)^2,\]
\begin{equation}\label{eq:app-1}
 m(m-1)(t-1)-(m+t-2)^2\geq x(x+m(t-3)-3t+5)
\end{equation}
If $t\geq 4$, then $x\geq 3t-5-m(t-3)$. Indeed, for $t\geq 5$ this is true as 
\[ x\geq t-1\geq 3t-5-2m\geq 3t-5-m(t-3),\]
and for $t=4$ this holds as $x\geq  t-1 = 3 \geq 7-m$. Thus, for $t\geq 4$ the maximal value of the RHS of inequality~\eqref{eq:app-1} is achieved at maximal value of $x$, i.e., when $x=m-1$. But as noted above, inequality~\eqref{eq:appendix-ineq} holds for $x = m-1$,  and inequality~\eqref{eq:app-1} is equivalent to it. 

The statement of the lemma is obvious if $t=2$. Therefore, the only case we still need to check is $t=3$. Since the desired inequality holds for $x=m-1$, we can assume $x\leq m-2$. In this case inequality~\eqref{eq:app-1} follows from
\[ 2m(m-1)-(m+1)^2 = m^2-4m+1 > (m-2)^2-4 \geq x^2-4\geq x^2-4x.\]
\end{proof}


\begin{thebibliography}{20}
\bibitem{Babai-annals}
L\'aszl\'o Babai. ``On the order of uniprimitive permutation groups." \textit{Ann. Math.} \textbf{113.3} (1981): 553-568.

\bibitem{Babai-doublytransitive}
L\'aszl\'o Babai. ``On the order of doubly transitive permutation groups." \textit{Invent. Math} \textbf{65.3} (1982): 473-484.

\bibitem{babai-srg}
L\'aszl\'o Babai. ``On the Automorphism Groups of Strongly Regular Graphs I.''  \textit{Proc. 5th Innovations in Theoretical Comp. Sci. conf.} \textit{(ITCS'14), ACM}, (2014): 359-368.

\bibitem{babai-srg-2}
L\'aszl\'o Babai. ``On the automorphism groups of strongly regular graphs II.'' \textit{J. Algebra} \textbf{421} (2015): 560-578.

\bibitem{delsarte-geom}
Sejeong Bang, Akira Hiraki, and Jacobus H. Koolen. ``Delsarte clique graphs.'' \textit{European J. of Combin.} \textbf{28.2} (2007): 501-516.

\bibitem{biggs}
Norman L. Biggs ``Intersection matrices for graphs.'' \textit{Comb. Mathematics and its applications}, Acad. Press, (1971): 15-23.

\bibitem{Bochert}
Alfred Bochert.``Ueber die Classe der transitiven Substitutionengruppen." \textit{Mathematische Annalen} \textbf{40.2} (1892): 176-193. 

\bibitem{Bose-Shimamoto}
Ray C. Bose, and T. Shimamoto. ``Classification and analysis of partially balanced incomplete block designs with two associate classes." \textit{J. Amer. Statistical Assoc.} \textbf{47.258} (1952): 151-184.

\bibitem{BCN}
Andries E. Brouwer, Arjeh M. Cohen, and Arnold Neumaier. \textit{``Distance-Regular Graphs.''} Springer, 1989.

\bibitem{Bussem-Neum}
Frans C. Bussemaker, and Arnold Neumaier ``Exceptional graphs with smallest eigenvalue-2 and related problems.'' \textit{Math. of Computation} \textbf{59.200} (1992): 583-608.

\bibitem{regular-classif} 
Peter J. Cameron, Jean-Marie Goethals, Johan Jacob Seidel, E.E. Shult. ``Line graphs, root systems and elliptic geometry.'' \textit{J. Algebra} \textbf{43} (1976): 305-327.

\bibitem{Cameron}
Peter J. Cameron.``Finite permutation groups and finite simple groups." \textit{Bull. London Math. Soc.} \textbf{13.1} (1981): 1-22.

\bibitem{2-transitiveCFSG}
Charles W. Curtis, William M. Kantor, and Gary M. Seitz. ``The 2-transitive permutation representations of the finite Chevalley groups." \textit{Transactions of the AMS} (1976): 1-59.

\bibitem{Koolen-survey}
Edwin R. van Dam, Jack H. Koolen, and Hajime Tanaka. ``Distance-regular graphs."  \textit{Preprint arXiv:1410.6294} (2014).


\bibitem{Delsarte}
Philippe Delsarte. ``An algebraic approach to the association schemes of coding theory.'' \textit{Philips Res. Reports Suppls.} \textbf{10} (1973): vi+-97.

\bibitem{Egawa}
Yoshimi Egawa. ``Characterization of $H(n, q)$ by the parameters.'' \textit{J. Combin. Theory Ser. A } \textbf{31} (1981): 108-125.

\bibitem{Enomoto}
Hikoe Enomoto. ``Characterization of families of finite permutation groups by the subdegrees. II.'' J. of the Faculty of Science, the Univ. of Tokyo. Sect. 1 A, \textbf{20.1} (1973): 2-11.

\bibitem{Feit-Higman}
Walter Feit, and Graham Higman. ``The nonexistence of certain generalized polygons.'' \textit{Journal of algebra} \textbf{1.2} (1964): 114-131.

\bibitem{Godsil}
Chris D. Godsil, \textit{``Algebraic Combinatorics.''} Chapman and Hall Math. Ser., Chapman and Hall, New York, 1993.

\bibitem{Higman}
Donald G. Higman. ``Coherent configurations I." \textit{Rend. Sem. Mat. Univ. Padova} \textbf{44} (1970): 1-25.

\bibitem{Hoffman}
Alan J. Hoffman. ``On graphs whose least eigenvalue exceeds $-1-\sqrt{2}$.'' \textit{Lin. Alg. Appl.} \textbf{16} (1977): 153-166.

\bibitem{Jordan}
Camille Jordan. ``Theoremes sur les groupes primitifs.'' \textit{J.  Math Pures Appl.} \textbf{17} (1871): 351-367.

\bibitem{kivva-drg}
Bohdan Kivva. ``On the spectral gap and the automorphism group of the distance-regular graphs.'' \textit{Preprint arXiv:1912.10571} (2019).

\bibitem{kivva-cc}
Bohdan Kivva. ``On the automorphism groups of coherent configurations.'' \textit{In preparation.} 

\bibitem{Bang-Koolen-conj}
Jacobus H. Koolen, and Sejeong Bang. ``On distance-regular graphs with smallest eigenvalue at least $-m$.'' \textit{J. of Combinatorial Theory, Ser. B} \textbf{100} (2010): 573-584.

\bibitem{Liebeck}
Martin W. Liebeck.  ``On minimal degrees and base sizes of primitive permutation groups." \textit{Archiv der Mathematik} \textbf{43.1} (1984): 11-15.

\bibitem{Liebeck-Saxl}
Martin W. Liebeck, Jan Saxl. ``Minimal Degrees of Primitive Permutation Groups, with an Application
to Monodromy Groups of Covers of Riemann Surfaces."
\textit{Proc. London Math. Soc.} \textbf{3.2}  (1991): 266-314.

\bibitem{Metsch}
Klaus Metsch. ``A characterization of Grassmann graphs.'' \textit{European J. Combin.} \textbf{16} (1995): 639-644.

\bibitem{Mohar-Shave}
Bojan Mohar, John Shawe-Taylor. ``Distance-biregular graphs with 2-valent vertices and distance-regular line graphs.'' \textit{J. Combin. Theory, Ser. B} \textbf{38.3} (1985): 193-203.

\bibitem{Pyber}
L\'aszl\'o Pyber. ``On the orders of doubly transitive permutation groups, elementary estimates.'' \textit{J. Combin. Theory, Ser. A} \textbf{62.2} (1993): 361-366.

\bibitem{RS-motion}
Alexander Russell, and Ravi Sundaram. ``A note on the asymptotics and computational complexity of graph distinguishability.'' \textit{Electronic J. Combin.} \textbf{5.1} (1998): 23-30.

\bibitem{Schur}
Issai Schur. ``Zur Theorie der einfach transitiven Permutationsgruppen." \emph{Sitzungsberichte der Preussischen Akademie der Wissenschaften, Physikalisch-Mathematische Klasse}, (1933): 598-623.

\bibitem{terwilliger}
Paul Terwilliger. ``Distance-regular graphs with girth 3 or 4, I.'' \textit{J. Combin. Theory, Ser. B}, \textbf{39} (1985): 265-281.

\bibitem{ter-local}
Paul Terwilliger. ``A new feasibility condition for distance-regular graphs.'' \textit{Discrete Math.} \textbf{61} (1986): 311-315.

\bibitem{Weisfeiler-Leman}
Boris Weisfeiler, and Andrey A. Leman.``A reduction of a graph to a canonical form and an algebra arising during this reduction." \textit{Nauchno-Technicheskaya Informatsia} \textbf{2.9} (1968): 12-16.

\bibitem{Weisfeiler} 
Boris Weisfeiler, ed. \textit{``On construction and identification of graphs."}  Springer, 1976.

\bibitem{Wielandt}
Helmut Wielandt.  ``Absch\"atzungen f\"ur den Grad einer Permutationsgruppe von vorgeschriebenem Transitivit\"atsgrad." \textit{Schriften des Math. Seminars und des Instituts f\"ur angew. Math. der Universit\"at Berlin} 2, (1934): 151-174.



\end{thebibliography}
\end{document}